\documentclass[11pt]{memoir}

\pagestyle{simple}
\setlrmargins{*}{*}{1}
\checkandfixthelayout

\setcounter{tocdepth}{2}
\setcounter{secnumdepth}{2}
\counterwithout{section}{chapter}
\counterwithout{figure}{chapter}
\firmlists

\usepackage{amssymb}

\makeatletter
\def\gacsparsep{6pt}

\newif\if@pfLongIndent 
\@pfLongIndentfalse
\newcommand{\pflongindent}{\@pfLongIndenttrue}
\newcommand{\pfshortindent}{\@pfLongIndentfalse}


\newlength{\pfindent}      
\newlength{\pftopsep}  
\newlength{\pfbotsep}  
\newlength{\pfsep}  
\newlength{\stepsep}        
\newbox{\pfbox}
\newcounter{pfhidelevel}
\setcounter{pfhidelevel}{9999}

\setlength{\pfindent}{1em} 
\setlength{\pftopsep}{1ex}     
\setlength{\pfbotsep}{1ex}     
\setlength{\pfsep}{0pt}        
\setlength{\stepsep}{0pt}      

%
%

\newcommand{\pfshortnumbers}[1]{\pfshortNumberLevel=#1\relax}

\newcount\pfshortNumberLevel \pfshortNumberLevel=0


\newcommand{\pfstepnumber}[3]{%
   \ifnum \pfLevelCount < \pfshortNumberLevel
       #3%
     \else $\langle#1\rangle#2$%
   \fi}

\newcommand{\pflevelnumber}[2]{%
   \ifnum \pfLevelCount < \pfshortNumberLevel
       #2%
     \else $\langle#1\rangle$%
   \fi}

\newif\if@pfSideNumbers 
\@pfSideNumbersfalse
\newcounter{pf@sidenumberdepth}
\newlength{\pf@sidenumberoutdent}
\newcommand{\pfsidenumbers}[2]{\@pfSideNumberstrue
     \setcounter{pf@sidenumberdepth}{#1}%
     \addtocounter{pf@sidenumberdepth}{-1}%
     \setlength{\pf@sidenumberoutdent}{#2}}
\newcommand{\pfnosidenumbers}{\@pfSideNumbersfalse}

\newcount\pfLevelCount  \pfLevelCount=0 
\newcount\pfStepCount   \pfStepCount=0  
\newcommand{\pfLongLevel}{} 
\newcommand{\pfLongStep}{}  
\newcommand{\pfStepName}{}  

\newcommand{\pfSetName}{%
  \edef\pfLongStep{%
    \ifnum\pfLevelCount>\@ne
      \pfLongLevel\pfdot\the\pfStepCount
     \else\the\pfStepCount\fi}%
  \edef\pfStepName{%
   \ifnum \pfLevelCount < \pfshortNumberLevel
    {\pfLongStep}{\pfLongStep}%
    \else
     {$\langle\the\pfLevelCount\rangle$}%
     {$\langle\the\pfLevelCount\rangle\the\pfStepCount$}%
   \fi}}

\newcommand{\pfSetRef}[1]{%
  \@ifundefined{pf@#1}%
    {\expandafter
      \edef\csname pf@#1\endcsname{\pfStepName}}%
    {\typeout{WARNING:
         proof step "#1" (<\the\pfLevelCount>\the\pfStepCount) 
        already defined}}%
    }

\newcommand{\pfPrintStepNumber}[2]{#2}
\newcommand{\pfPrintLevelNumber}[2]{#1}

\newcommand{\stepref}[1]{\@ifundefined{pf@#1}{{\bf ??}\typeout{WARNING:
 proof step "#1" undefined}}{\expandafter\expandafter\expandafter
 \pfPrintStepNumber\csname pf@#1\endcsname}}

\newcommand{\levelref}[1]{\@ifundefined{pf@#1}{{\bf ??}\typeout{WARNING:
 proof step #1 undefined}}%
  {\expandafter\expandafter\expandafter
 \pfPrintLevelNumber\csname pf@#1\endcsname}}

\newlength{\pf@outdent}
\newcommand{\pfSideNumber}{%
  \if@pfSideNumbers 
   \ifnum\pfLevelCount>\value{pf@sidenumberdepth}%
    \hspace*{-\pf@outdent}%
    \makebox[0pt][l]{\footnotesize\pfLongStep}%
    \hspace*{\pf@outdent}%
    \else\fi
   \else\fi}


 \newcounter{branch}%

 \newcounter{branchi}[branch]

 \newcounter{branchii}[branchi]

 \newcounter{branchiii}[branchii]

 \newcounter{branchiv}[branchiii]

 \newcounter{branchv}[branchiv]

 \newcounter{branchvi}[branchv]

 \def\CurBranch{{branch\romannumeral\pfLevelCount}}%

 \def\TheCurBranch{\csname thebranch\romannumeral\pfLevelCount\endcsname}%

 \def\RefStepCounter#1{\refstepcounter{#1}} 

 \def\SetTheBranch#1#2{%
  \expandafter\def\csname thebranch#2\endcsname{%
    \csname thebranch#1\endcsname\pfdot\arabic{branch#2}%
   }}

 \def\OpenLevel{%
  \edef\ParentBranch{\CurBranch}
  \edef\romanPrevLevel{{\romannumeral \pfLevelCount}}
  \advance\pfLevelCount\@ne%
  \edef\romanLevel{{\romannumeral\pfLevelCount}}\@ifundefined
{c@branch\romannumeral\pfLevelCount}{\typeout
{WARNING: too many prooof levels}}{}\expandafter\setcounter\CurBranch{0}
}




%

\def\pflist#1#2{\ifnum \@listdepth >15\relax \@toodeep 
     \else \global\advance\@listdepth\@ne \fi
  \rightmargin \z@ \listparindent\z@ \itemindent\z@
  \csname @list\romannumeral\the\@listdepth\endcsname 
  \def\@itemlabel{#1}\let\makelabel\@mklab \@nmbrlistfalse #2\relax
  \@trivlist
  \parskip\parsep \parindent\listparindent
  \advance\linewidth -\rightmargin \advance\linewidth -\leftmargin
  \advance\@totalleftmargin \leftmargin
  \parshape \@ne \@totalleftmargin \linewidth 
  \ignorespaces}

\def\endpflist{\global\advance\@listdepth\m@ne
    \endtrivlist}

\let\@listvii=\@listv
\let\@listviii=\@listv
\let\@listix=\@listv
\let\@listx=\@listv
\let\@listxi=\@listv
\let\@listxii=\@listv
\let\@listxiii=\@listv
\let\@listxiv=\@listv
\let\@listxv=\@listv
\let\@listxvi=\@listv

%

\newenvironment{prooof}{
  \edef\pfLongLevel          
   {\ifnum\pfLevelCount>\z@  
     \ifnum\pfLevelCount>\@ne
      \pfLongLevel\pfdot\else\fi  
      \the\pfStepCount       
     \else\fi}
  \OpenLevel                 
  \@tempcnta=\value{pfhidelevel}%
  \ifnum\@tempcnta<\@ne
    \setcounter{pfhidelevel}{1}%
    \typeout{WARNING: pfhidelevel < 1, setting to 1}%
    \@tempcnta=\@ne\fi
  \advance\@tempcnta\@ne
  \if@qedstep
    \advance\@tempcnta\@ne
    \ifnum\pfLevelCount 
        = \@tempcnta 
     \sbox{\pfbox}\bgroup\begin{minipage}{\textwidth}\fi
  \else     
   \ifnum\pfLevelCount 
        = \@tempcnta 
     \sbox{\pfbox}\bgroup\begin{minipage}{\textwidth}\fi\fi
  \pfStepCount=\z@           
  \ifnum\pfLevelCount>\@ne   
   \begin{pflist}{}{
    \topsep=\pfsep\relax     
    \itemsep=\z@             
     \parsep=\gacsparsep             
    \partopsep=\z@           
    \if@pfLongIndent         
     \settowidth{\leftmargin}
      {\expandafter          
       \pfPrintStepNumber    
       \pfStepName.}%
     \advance\leftmargin
     \labelsep 
    \else                    
    \leftmargin=\pfindent    
    \fi\relax        
   }   \item[]
   \else \par                
    \addvspace{\pftopsep}
    \parindent=\z@           
    \parskip = \z@           
    \@ifundefined{mathindent}{%
    }{%
      \mathindent=1em}%
 \fi
\@qedstepfalse
}
  {\ifnum\pfLevelCount>\@ne  
    \end{pflist}
   \else \par                
     \addvspace{\pfbotsep}
   \fi
  \@tempcnta=\value{pfhidelevel}%
  \advance\@tempcnta\@ne
  \if@qedstep
    \advance\@tempcnta\@ne
    \ifnum\pfLevelCount 
        = \@tempcnta 
     \end{minipage}\egroup\sbox{\pfbox}{}\@qedstepfalse\fi
  \else 
      \ifnum\pfLevelCount 
        =\@tempcnta 
        \end{minipage} 
        \egroup\sbox{\pfbox}{}\fi
 \fi}

  {\par}                     


%
\newcommand{\step}[2]{\begin{step+}{#1}#2\end{step+}}
\newif\if@qedstep 
\@qedstepfalse

\newif\if@unhideqedstep 

\@unhideqedstepfalse
\newcommand{\unhideqedprooof}{\@unhideqedsteptrue}
\newcommand{\hideqedprooof}{\@unhideqedstepfalse}

\newcommand{\qedstep}{\step{qedstep\the\pfLevelCount}{Q.E.D.}
\if@unhideqedstep\@qedsteptrue\fi}

\newcommand{\nostep}[1]{%
  \expandafter\RefStepCounter\CurBranch    
  \label{#1}                               
  \advance\pfStepCount\@ne       
  \pfSetName                     
  \pfSetRef{#1}
  }

\newenvironment{step+}[1]%
 {\endgroup                      
  \expandafter\RefStepCounter\CurBranch    
  \label{#1}                               
  \advance\pfStepCount\@ne       
  \pfSetName                     
  \pfSetRef{#1}
 \begingroup\@endpefalse         
   \def\@currenvir{step+}
 \begin{pflist}{}{
   \setlength
     {\pf@outdent}{\textwidth}
   \addtolength                  
    {\pf@outdent}{-\linewidth}%
   \addtolength
    {\pf@outdent}%
    {\pf@sidenumberoutdent}%
   \topsep=\stepsep\relax        
   \itemsep=\z@                  
   \parsep=\gacsparsep                   
   \partopsep=\z@                
   \settowidth{\labelwidth}
     {\expandafter
      \pfPrintStepNumber
      \pfStepName.}%
   \leftmargin=\labelwidth\relax 
   \advance\leftmargin\labelsep 
   \relax}%
 \item[\pfSideNumber
  \expandafter                       
  \pfPrintStepNumber\pfStepName.]}%
 {\end{pflist}}


\newenvironment{assume+}{%
 \begin{pflist}{}{
   \topsep=\z@                   
   \itemsep=\z@                  
   \parsep=\gacsparsep                   
   \partopsep=\z@                
   \settowidth{\labelwidth}
     {{\kwfont Assume\/}:}%
   \leftmargin=\labelwidth\relax 
   \advance\leftmargin\labelsep  
   \relax}%
 \item[{\kwfont Assume\/}:]}
 {\end{pflist}}

\newenvironment{prove+}{%
 \begin{pflist}{}{
   \topsep=\z@                   
   \itemsep=\z@                  
   \parsep=\gacsparsep   
   \partopsep=\z@                
   \settowidth{\labelwidth}
     {{\kwfont Assume\/}:}%
   \leftmargin=\labelwidth\relax 
   \advance\leftmargin\labelsep  
   \relax}%
 \item[{\kwfont 
          Prove\/}:\hfill]}
 {\end{pflist}}

\newenvironment{pflet+}{%
 \begin{pflist}{}{
   \topsep=\z@                   
   \itemsep=\z@                  
   \parsep=\gacsparsep 
   \partopsep=\z@                
   \settowidth{\labelwidth}
     {{\kwfont Let\/}:}%
   \leftmargin=\labelwidth\relax 
   \advance\leftmargin\labelsep  
   \relax}%
 \item[{\kwfont 
          Let\/}:\hfill]}
 {\end{pflist}}

\newcommand{\case}[1]{%
 \begin{pflist}{}{
   \topsep=\z@                   
   \itemsep=\z@                  
   \parsep=\gacsparsep 
   \partopsep=\z@                
   \settowidth{\labelwidth}
     {{\kwfont Case\/}:}%
   \leftmargin=\labelwidth\relax 
   \advance\leftmargin\labelsep  
   \relax}%
 \item[{\kwfont
           Case\/}:]             
 #1
 \end{pflist}}

%
\newcommand{\pf}{{\kwfont Proof\/}.} 

\let\kwfont=\scshape
\def\pfdot{.}

%
%

%
\def\@push#1#2{{\let\@nil\relax\let\@elt\relax\xdef#1{#2\@elt#1}}}
\def\@pop#1#2{{\let\@nil\relax\let\@elt\relax
              \xdef#2{\expandafter\@innerhead#1}
              \xdef#1{\expandafter\@innerpop#1}}}
\def\@innerpop#1\@elt#2\@nil{#2\@nil}
\def\@head#1#2{{\let\@elt\relax\xdef#2{\expandafter\@innerhead#1}}}
\def\@innerhead#1\@elt#2\@nil{#1}
\def\newstack#1{\gdef#1{\@nil}}

%
%
%
%

\newcounter{pf@conjCounter}   
\newstack\pf@conj             

\newenvironment{conj*}{%
 \@push\pf@conj{\the\value{pf@conjCounter}}%
 \setcounter{pf@conjCounter}{0}%
 \begin{array}[t]{@{\addtocounter{pf@conjCounter}{1}%
   \mbox{\protect\small\protect\arabic{pf@conjCounter}.}
   \land\;}l@{}}%
 }{%
 \end{array}%
 \@pop\pf@conj\pf@temp
  \setcounter{pf@conjCounter}{\pf@temp}}

\newenvironment{disj*}{%
 \@push\pf@conj{\the\value{pf@conjCounter}}%
 \setcounter{pf@conjCounter}{0}%
 \begin{array}[t]{@{\addtocounter{pf@conjCounter}{1}%
   \mbox{\protect\small\protect\alph{pf@conjCounter}.}
   \lor\;}l@{}}%
 }{%
 \end{array}%
 \@pop\pf@conj\pf@temp
  \setcounter{pf@conjCounter}{\pf@temp}}

%
%
%
%

\newcounter{pfenum}
\newcounter{pfenumdepth}
\newlength{\enumindent}
\setlength{\enumindent}{1em}

\@definecounter{pfenumi}
\@definecounter{pfenumii}
\@definecounter{pfenumiii}

\def\thepfenumi{\arabic{pfenumi}}

\def\thepfenumii{\alph{pfenumii}}
\def\p@pfenumii{\thepfenumi}

\def\p@pfenumiii{\thepfenumi\thepfenumii}

\newcommand{\pf@setEnumWidth}[1]{%
  \settowidth{#1}{\setcounter{\@pfenumctr}{2}%
  \csname the\@pfenumctr\endcsname.%
  \setcounter{\@pfenumctr}{0}}}

\newcommand{\pf@enumLabel}{%
  \hfill\makebox[0pt][r]{\csname the\@pfenumctr\endcsname.}}

\newenvironment{pfenum*}{
  \ifnum \value{pfenumdepth}>2
    \relax\@toodeep \else        
  \addtocounter{pfenumdepth}{1}
  \edef\@pfenumctr{pfenum
    \romannumeral\the            
    \value{pfenumdepth}}
   \fi                           
  \begin{pflist}
  {\pf@enumLabel}{
   \labelsep=                    
    \ifcase\value{pfenumdepth}   
      \labelsep                  %
     \or .67\labelsep            
     \or .67\labelsep            
     \else \labelsep             
     \fi                         %
   \topsep=\z@                   
   \itemsep=\z@                  
   \parsep=\gacsparsep 
   \partopsep=\z@                
   \pf@setEnumWidth\labelwidth   
   \leftmargin=\labelwidth\relax 
   \advance\leftmargin\labelsep  
   \advance\leftmargin\enumindent
   \relax
   \usecounter{\@pfenumctr}
 }%
 }{
   \end{pflist}
   \addtocounter{pfenumdepth}{-1}
 }


%

\def\makefcn#1#2#3{{\let\or\relax
     \gdef\fcn@temp{}%
     \gdef\fcn@tempb{#2}%
     \@tfor\foo:=#3\do{\xdef\fcn@temp{\fcn@temp\or\foo}\xdef\fcn@tempb{\foo}}%
     \let\ifcase\relax\let\else\relax\let\fi\relax
     \xdef#1##1{\ifcase ##1 #2\fcn@temp\else\fcn@tempb\fi}}}

%

\newcommand{\pfmathdef}[1]{\relax\ifmmode #1\else $#1$\fi}


\newenvironment{pproof}{\begin{prooof}\pf\ }{\end{prooof}}

\newenvironment{Proof}[1][\proofname]{\vspace{0pt}\begin{proof}[#1]\refstepcounter{branch}\begin{prooof}}%
  {~\qed\end{prooof}\renewcommand{\qed}{}\end{proof}} 

\makeatother 

\usepackage[hyperindex,colorlinks,citecolor=blue]{hyperref}


\usepackage[all]{hypcap} 

\usepackage{float}



\makeatletter


\usepackage{mathtools}
\usepackage{amsthm}




\setlength{\vfuzz}{4pt} 
\clubpenalty=1000

\newcommand{\myTimes}{
\usepackage{txfonts}
  \DeclareSymbolFont{cmsymbols}     {OMS}{cmsy}{m}{n}
  \DeclareSymbolFontAlphabet{\mathcal}   {cmsymbols}
\DeclareMathAlphabet{\mathbb}{U}{msb}{m}{n}
}

\def\@@enum@[#1]{%
  \@enLab{}\let\@enThe\@enQmark
  \@enloop#1\@enum@
  \expandafter\edef\csname label\@enumctr\endcsname{\the\@enLab}%
  \expandafter\let\csname the\@enumctr\endcsname\@enThe
  \csname c@\@enumctr\endcsname7
  \expandafter\settowidth
            \csname leftmargin\romannumeral\@enumdepth\endcsname
            {\the\@enLab\hspace{\labelsep}}%
  \@enum@}


\newenvironment{alphenum} 
 {\begin{enumerate}[{\upshape (a)}]}{\end{enumerate}}

 {\begin{enumerate}[{\upshape (a)}]}{\end{enumerate}}

 \newcommand{\df}[1]{\textit{\textbf{#1}}}


\newcommand{\@hideone}[1]{\ifnum #1=1\else\@arabic#1\fi}
\pagenumbering{hideone}

 \newcommand{\Pageno}{\c@page}

 \setcounter{tocdepth}{2}

\newcommand{\customqed}[1]{{\renewcommand{\qedsymbol}{#1}\qed}}
\newcommand{\varqed}{\customqed{\hbox{$\lrcorner$}}}

\newcommand{\intheoremstyle}{
 \newtheorem{lemma}{Lemma}[section]

\newtheorem{theorem}{Theorem}

 \newtheorem{corollary}[lemma]{Corollary}

}

\newcommand{\indefinitionstyle}{
\newtheorem{Definition}[lemma]{Definition}
\newenvironment{definition}{\begin{Definition}}{\varqed\end{Definition}}

 \newtheorem{Notation}[lemma]{Notation}

 \newtheorem{Condition}[lemma]{Condition}
 \newenvironment{condition}{%
   \begin{Condition}}{\varqed\end{Condition}}

 \newtheorem{Remark}[lemma]{Remark}
 \newtheorem{Remarks}[lemma]{Remarks}
 
 \newenvironment{remark}{%
   \begin{Remark}}{\varqed\end{Remark}}
 \newenvironment{remarks}{%
   \begin{Remarks}}{\varqed\end{Remarks}}

 \newtheorem{Example}[lemma]{Example}
 \newtheorem{Examples}[lemma]{Examples}

 \newenvironment{example}{%
   \begin{Example}}{\varqed\end{Example}}

 \theoremstyle{remark}
}

\newcommand{\standardTheorems}{
\theoremstyle{plain}
\intheoremstyle
\theoremstyle{definition}
\indefinitionstyle
}




 \newcommand \ag{\alpha}

 \newcommand \eps{\varepsilon}

 \renewcommand \lg{\lambda} 

 \newcommand \sg{\sigma}



  
  
  

 \newcommand{\cB}{\mathcal{B}}
 \newcommand{\cC}{\mathcal{C}}
 
 \newcommand{\cE}{\mathcal{E}}
 \newcommand{\cF}{\mathcal{F}}
 \newcommand{\cG}{\mathcal{G}}

 \newcommand{\cL}{\mathcal{L}}
 \newcommand{\cM}{\mathcal{M}}

 \newcommand{\cQ}{\mathcal{Q}}
 
 \newcommand{\cS}{\mathcal{S}}
 \newcommand{\cT}{\mathcal{T}}

 \newcommand{\cW}{\mathcal{W}}

 \newcommand{\bbZ}{\mathbb{Z}}


 \newcommand\set[1]{\mathopen\{#1\mathclose\}}
 \newcommand\setof[1]{\mathopen\{\,#1\,\mathclose\}}

 \newcommand\tup[1]{( #1)}
\newcommand{\pair}[2]{(#1,#2)}

 \newcommand\ang[1]{{\mathopen\langle #1\mathclose\rangle}}

 \newcommand\bigparen[1]{{\bigl(\,#1\,\bigr)}}

 \newcommand\paren[1]{( #1)}

 \newcommand{\cei}[1]{{\lceil #1\rceil}}
 \newcommand{\flo}[1]{{\lfloor #1\rfloor}}
 \newcommand{\Cei}[1]{{\left\lceil #1\right\rceil}}
 \newcommand{\Flo}[1]{{\left\lfloor #1\right\rfloor}}


 \DeclareMathOperator{\Prob}{\mathbb{P}}
 \newcommand\Pbof[1]{\Prob\mathopen\{\,#1\,\mathclose\}}

 \newcommand\Maj{\mathop{\operator@font Maj}\nolimits}
 \newcommand\nor{\mathbin{\operator@font NOR}}

 \let\imp=\Rightarrow







 \newcommand {\sbsq}{\subseteq}

 \newcommand {\spsq}{\supseteq}



\let\kwfont\itshape

\@ifundefined{stepsep}{}{\setlength{\stepsep}{1ex}}  
 
\@ifundefined{pfshortnumbers}{}{\pfshortnumbers{4}}

 \newcommand{\txt}[1]{\text{\rmfamily\mdseries\upshape{#1}}}



\newcommand{\killtext}[1]{}

 \newcommand{\half}{{\textstyle \frac{1}{2}}}

 \newcommand{\fourth}{{\textstyle \frac{1}{4}}}

\newcommand{\verythinmathskip}{\mskip 1mu}

\newcommand{\clint}[2]{[\verythinmathskip #1,#2\verythinmathskip]}
\newcommand{\lint}[2]{[\verythinmathskip #1,#2)}

\newcommand{\rint}[2]{(#1,#2\verythinmathskip]}

\newcommand\amod{\mskip-\medmuskip\mkern5mu\mathbin
  {\operator@font amod}\penalty900
  \mkern5mu\mskip-\medmuskip}

\standardTheorems

\makeatother 

\mathtoolsset{mathic}

\myTimes

\hyphenation{com-plex-ity des-tin-at-ion}

\theoremstyle{definition}

 \newtheorem{Defstep}{Step}

 \newenvironment{defstep}{%
   \begin{Defstep}}{\varqed\end{Defstep}}

\numberwithin{equation}{section} 

 \newcommand{\aux}{c}
\newcommand{\bub}{\varDelta}
\newcommand{\bubxp}{\delta}
 \renewcommand{\d}{d}
 \newcommand{\D}{D}
 \newcommand{\f}{\varPhi}
 \newcommand{\fxp}{\varphi}
 \newcommand{\g}{\varGamma}
 \newcommand{\gf}{\varPsi}

 \newcommand{\gxp}{\gamma}
 \newcommand{\h}{h}
\newcommand{\hxp}{\chi}

 \renewcommand{\L}{L}
 \newcommand{\m}{\tilde m} 
 \newcommand{\p}{p}
 \renewcommand{\r}{r}
 \newcommand{\R}{R}
 \newcommand{\Rect}{\text{\rmfamily\mdseries\upshape Rect}}
 \newcommand{\s}{s}
 \newcommand{\slb}{\sg}
 \newcommand{\slblb}{\slb}
\newcommand{\slope}{\text{\rmfamily\mdseries\upshape slope}}
 \newcommand{\slopeincr}{\varLambda}
 \renewcommand{\t}{\square}
 \newcommand{\T}{T} 
 \newcommand{\tub}{w}
\newcommand{\tubxp}{\omega}
\newcommand{\txp}{\tau}
\newcommand{\txpub}{\overline{\tau}}
\newcommand{\ncln}{q}

\newcommand{\w}{\triangle}

 \newcommand{\Body}{\text{\rmfamily\mdseries\upshape Body}}
 \newcommand{\Wvalues}{\text{\rmfamily\mdseries\upshape Wvalues}}

\begin{document}

\title{Clairvoyant embedding in one dimension}

\author{Peter G\'acs
\\ Boston University
\\ gacs@bu.edu}

\maketitle
\thispagestyle{empty}

\begin{abstract} 
 Let \( v, w\) be infinite 0-1 sequences, and \( \m \) a positive integer.  
We say that \( w \)
is \( \m \)-\df{embeddable} in \( v \), if there exists an increasing sequence
  \( (n_{i} : i \ge 0) \)  of integers with \( n_{0}=0 \), such that 
\( 1\le n_{i} - n_{i-1} \le \m \), \( w(i) = v(n_i) \) for all \( i \ge 1 \).  
Let \(  X \) and \( Y \) be coin-tossing sequences.  
We will show that
there is an \( \m \) with the property that \( Y \) is \( \m \)-embeddable
into \( X \) with positive probability.  
This answers a question that was open for a while.
The proof generalizes somewhat the hierarchical method of an
earlier paper of the author on dependent percolation.  
\end{abstract}

\section{Introduction}
\begin{sloppypar}
Consider the following problem, stated 
in~\cite{GrimmettLiggettRichthammer08,GrimmettThreeProblems09}.
Let \( v=(v(1),v(2)\dots),\) \( w=(w(1),w(2)\dots) \) be infinite 0-1 sequences, 
and \( \m>0\).
We say that \( w \) is \( \m \)-\df{embeddable} in \( v \), 
if there exists an increasing sequence \( (n_{i} : i \ge 1) \) of positive integers
such that \( w(i) = v(n_i) \), and \( 1\le n_{i} - n_{i-1} \le \m \) for all \( i \ge 1 \).
(We set \( n_{0} = 0 \), so \( n_{1}\le \m \) is required.) 
Let \( X=(X(1),X(2),\dots) \) and \( Y=(Y(1),Y(2),\dots) \) 
be sequences of independent Bernoulli variables with parameter 1/2.
The question asked was whether there is any \( \m \) with the property that
if \( Y \) is independent of \( X \) then it is
\( \m \)-embeddable into \( X \) with positive probability.
The present paper answers the question positively.
\end{sloppypar}

\begin{theorem}\label{thm:main}
There is an \( \m \) with the property that if \( Y \) is independent of \( X \) then it is
\( \m \)-embeddable into \( X \) with positive probability.  
\end{theorem}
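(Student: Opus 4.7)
The plan is to adapt the hierarchical renormalization method from my earlier work on dependent percolation to the present embedding setting. Fix a large integer \( \m \), whose exact value will be chosen at the end. I would set up a sequence of scales \( \L_0 \ll \L_1 \ll \L_2 \ll \cdots \) with \( \L_{k+1} \) equal to a fixed power of \( \L_k \), and simultaneously partition the index axes of \( X \) and \( Y \) into blocks of length \( \L_k \) at each scale \( k \).

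At scale \( 0 \), I would declare a block of \( X \) (respectively \( Y \)) \emph{good} if its bit frequencies lie in a narrow window around \( 1/2 \) and no forbidden short pattern occurs; such a block is good with probability tending to \( 1 \) as \( \L_0 \to \infty \). The base case embedding lemma would say: if a good \( Y \)-block of length \( \L_0 \) must be matched against a window of \( X \) of length \( O(\m \L_0) \) that contains a good \( X \)-block, then an embedding with gaps at most \( \m \) exists, because in a good \( X \)-block each bit value occurs sufficiently densely within every window of length \( \m \).

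Inductively, I would call a scale-\( (k{+}1) \) block \emph{good} when the pattern of bad scale-\( k \) sub-blocks inside it is sparse (no two bad ones are close, and their overall density is tiny), in the precise sense developed in the percolation paper. The inductive embedding lemma should then read: a good scale-\( (k{+}1) \) block of \( Y \) can be embedded, with gaps \( \le \m \), into any window of \( X \) of length roughly \( \m \L_{k+1} \) that contains a good scale-\( (k{+}1) \) \( X \)-block. The construction recurses on the good sub-blocks, exploiting the flexibility in the choice of the first matched index within each sub-block to ``route around'' the occasional bad sub-block; this is precisely where the clairvoyant nature of the embedding is essential, since the embedder may look ahead to determine which sub-blocks must be avoided and reserve the padding needed to skip them.

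The principal difficulty, as in the percolation analogue, is to control the probability that a block is good once we condition on the embedding decisions made so far. Conditional on a partial match, the remaining bits of \( X \) and \( Y \) are no longer i.i.d., so straightforward FKG- or union-bound arguments fail. I would address this by defining goodness as a function only of intrinsic block statistics (not of any particular embedding chosen) and by proving a renormalization estimate of the form
\[
\Prob\bigl\{\text{a scale-}k\text{ block is bad}\bigr\} \le p_k,\qquad p_{k+1} \le C\, p_k^{1+\gamma},
\]
for some absolute \( \gamma > 0 \) and constant \( C \). This yields doubly-exponential decay of \( p_k \) provided \( p_0 \) is small, which is arranged by taking \( \m \) large enough. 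Translation invariance together with a Borel--Cantelli argument then gives, with positive probability, that the block of \( Y \) containing the origin is good at every scale, whence the inductive embedding lemma produces the desired \( \m \)-embedding of \( Y \) into \( X \).
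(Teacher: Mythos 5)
Your proposal captures the broad philosophy of the paper — a multiscale renormalization with a rapidly decaying bad-block probability — but the central structural ideas that actually make the argument work are absent, and the picture you describe would not close.

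The key gap is that you treat ``goodness'' as an intrinsic property of an $X$-block or a $Y$-block alone. That cannot work here, because the obstacles to embedding are inherently joint. If $X$ has a run of identical bits of length $\ge \m$ starting at $i$, then any embedding must place some $n_j$ inside that run (the gap constraint $n_j - n_{j-1} \le \m$ forbids jumping over it), and the bit matched there forces $Y(j) = X(i+1)$. So whether this run is passable depends on the \emph{phase} at which the path in $Y$ arrives, i.e.\ on the joint configuration of $X$ and $Y$. The paper makes this precise by recasting the problem as directed percolation on $\bbZ_+^2$ (the graph $\cG_\m(X,Y)$), and by introducing \emph{walls} (runs in one sequence), \emph{holes} (aligned positions in the other sequence that let you through), and \emph{traps} (two-dimensional regions, functions of both $X(I)$ and $Y(J)$, where the path gets stuck). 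Your ``route around'' picture is also misleading: you cannot route around a bad region of $X$, because the Lipschitz gap constraint means every obstacle in $X$ must be passed through, not skipped; one must find a hole in the wall, not a detour. Correspondingly, the heart of the paper's proof is not a one-line recursion $p_{k+1} \le C p_k^{1+\gamma}$ but a detailed taxonomy of how new obstacles arise at scale $k{+}1$ — compound walls, emerging walls, correlated traps, missing-hole traps — together with matched upper bounds on their probabilities and lower bounds on the probability of finding good holes through walls (Condition~\ref{cond:distr}.\ref{i:distr.hole-lb} and Lemmas~\ref{lem:hole-lb-2}, \ref{lem:compound-hole-lb}). Your proposal names the conditioning problem but the device you offer (goodness as an intrinsic statistic of one sequence) cannot express the joint trap/hole structure that is the actual content of the proof.

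A second, smaller, gap: even accepting a notion of good blocks, your inductive lemma (``a good $Y$-block embeds into any window of $X$ of length $O(\m\L_{k+1})$ containing a good $X$-block'') is not strong enough to recurse, because as the path crosses a bad scale-$k$ region it drifts away from the diagonal, and one must control slopes (cf.~the slope parameters $\slb_x,\slb_y$ and Lemma~\ref{lem:approx.no-walls}) so that the accumulated drift remains admissible through all scales. That bookkeeping — increments to the slope bounds, to $\ncln$, to $\bub$ — is an essential and nontrivial part of the paper's induction, and nothing in your sketch addresses it.
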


It turns out that independence is not needed, see Theorem~\ref{thm:non-indep} below.

The proof allows the computation of an upper bound on \( \m \), but we will not
do this, and not just to avoid ridicule: many steps of the proof
would become less transparent when trying to do this.

Here is a useful equivalent formulation.
First we define the fixed directed graph 
\( G_{\m}=(\bbZ_{+}^{2},E) \).
From each point \( \pair{i}{j} \) edges go to
\( \pair{i+1}{j+1} \), \( \pair{i+2}{j+1} \), \( \dots \), \( \pair{i+\m}{j+1} \).
The random graph 
\begin{align}\label{eq:Gbub}
   \cG_{\m}(X,Y)=(\bbZ_{+}^{2},\cE)
 \end{align}
is defined as follows: delete all edges going \emph{into}
points \( \pair{i}{j} \) of \( G_{\m} \) with \( X(i)\ne Y(j) \).
(In percolation terms, in \( \cG_{\m} \) we would call a point ``open''
if it has some incoming edge.)
Now \( Y \) is embeddable into \( X \) if and only if there is an infinite path in \( \cG_{\m} \)
starting at the origin.
So the embedding question is equivalent to a percolation question.

Our proof generalizes slightly the method of~\cite{GacsWalks11}, making also its
technical result more explicit.
Just before uploading to the arXiv, the author learned that Basu and Sly have
also proved the embedding theorem, in an independent work~\cite{BasuSly12}.
They are citing another, simultaneous and independent, paper by Sidoravicius.

The proof of Theorem~\ref{thm:main} relies on the independence of the processes
\( X \) and \( Y \).
But the proof in~\cite{BasuSly12}, just like the proof of the compatible sequences 
result in~\cite{GacsChat04}, does not: it applies to any joint distribution with 
the coin-tossing marginals \( X,Y \).
Allan Sly showed in~\cite{SlyEmail13feb} how Theorem~\ref{thm:main} 
can also be adapted to prove a version without the independence assumption:

\begin{theorem}\label{thm:non-indep}
There is an \( \m \) with the property that if \( Y \) has a joint distribution with 
\( X \) then it is \( \m \)-embeddable into \( X \) with positive probability.
\end{theorem}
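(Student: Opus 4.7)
The plan is to follow the hint in the excerpt: adapt the proof of Theorem~\ref{thm:main} so that the independence of $X$ and $Y$ is never used, rather than derive the new theorem from Theorem~\ref{thm:main} as a black box. The hierarchical scheme of Theorem~\ref{thm:main} produces an embedding by identifying, at successively larger scales, ``good'' blocks of $X$ into which any ``good'' block of $Y$ can be embedded with plenty of slack, and arguing that the good blocks are abundant enough that the pieces can be joined. My first step would be to catalogue the probabilistic estimates in that proof: those that concern $X$ alone, those that concern $Y$ alone, and those that concern the joint pair. The first two are marginal statements and survive any coupling; only the third uses independence in an essential way.

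The core idea of the adaptation is to replace every joint estimate by a \emph{deterministic} combinatorial matching statement between a class of marginally typical $X$'s and a class of marginally typical $Y$'s. Concretely, at the base scale I would aim to show: there exist classes $\cB_X$ and $\cB_Y$ of $0$-$1$ sequences, each of coin-tossing probability close to $1$ under its respective marginal, such that every $y\in\cB_Y$ is $\m$-embeddable in every $x\in\cB_X$ inside a block of suitable length, with enough slack to connect to neighboring blocks. Such a statement involves no joint probability at all, so under any joint distribution $\mu$ on $(X,Y)$ with coin-tossing marginals one has $\mu(X\in\cB_X,\,Y\in\cB_Y)>0$ by a union bound on the marginal complements.

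The hierarchical induction then proceeds exactly as in Theorem~\ref{thm:main}: at scale $k+1$ one looks at $X$-blocks containing many scale-$k$ good sub-blocks and at $Y$-blocks similarly built from scale-$k$ pieces, and one shows by the deterministic matching at scale $k$ that any scale-$k$ good block of $Y$ embeds into enough scale-$k$ good sub-blocks of $X$ to leave slack for the meshing at scale $k+1$. Every probabilistic statement that remains is a marginal large-deviation estimate on $X$ alone or on $Y$ alone, never on the pair, and is therefore unaffected by the choice of joint distribution.

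The main obstacle is establishing the deterministic matching statement in a useful form: a single $X$ cannot embed \emph{every} coin-tossing $Y$ (consider a $Y$ with excessively dense $1$'s), so the classes $\cB_X$ and $\cB_Y$ must be defined in a balanced way that makes each side marginally typical while the matching property holds between them. In practice this means defining each class by local pattern statistics---typical frequencies in every sub-window, absence of anomalously long runs, and so on---and then choosing $\m$ large enough that the combinatorial abundance of subsequences in a typical $X$-window dominates the density constraints imposed by a typical $Y$. Maintaining this quantitative balance through all scales of the hierarchy is where I expect the bulk of the work to lie, though no essentially new ideas beyond those already in the proof of Theorem~\ref{thm:main} should be needed.
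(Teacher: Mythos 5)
Your proposal takes a genuinely different route from the paper. You set out to rebuild the entire hierarchical construction of Theorem~\ref{thm:main}, replacing each joint-distribution estimate by a deterministic matching lemma of the form ``every $y\in\cB_Y$ embeds into every $x\in\cB_X$,'' with $\cB_X,\cB_Y$ marginally typical. This is essentially the strategy of Basu and Sly (cited in the paper as an independent, simultaneous proof), and it could be carried out; but it amounts to redoing Sections~4--8 of the paper from scratch, with nontrivial bookkeeping at every scale. The paper's own proof of Theorem~\ref{thm:non-indep} bypasses all of this with a short argument, attributed to Sly, that you explicitly declined to pursue when you decided not to ``derive the new theorem from Theorem~\ref{thm:main} as a black box.''

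The two observations you missed are: first, embeddability \emph{composes}---if $Y$ is $\m$-embeddable into $Z$ via $(m_i)$ and $Z$ is $\m$-embeddable into $X$ via $(n_j)$, then $Y(i)=X(n_{m_i})$ and $n_{m_i}-n_{m_{i-1}}$ is a sum of at most $\m$ increments each at most $\m$, so $Y$ is $\m^{2}$-embeddable into $X$; second, the proof of Theorem~\ref{thm:main} yields (Remark~\ref{rem:pos-prob-small}) that the probability of $\m$-embeddability tends to $1$ as $\m\to\infty$. Now given $(X,Y)$ with coin-tossing marginals and arbitrary joint law, introduce a fresh coin-tossing $Z$ independent of the \emph{pair} $(X,Y)$. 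Then $Z$ is independent of $X$ and of $Y$ separately, so for $\m$ large both ``$Y$ is $\m$-embeddable into $Z$'' and ``$Z$ is $\m$-embeddable into $X$'' have probability $>1-\eps$ for a chosen $\eps<1/2$, and composing gives an $\m^{2}$-embedding of $Y$ into $X$ with probability $>1-2\eps>0$. Independence is never needed between $X$ and $Y$ themselves. This reduces the corollary to a few lines, whereas your plan would be a substantial separate project.
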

\begin{proof}
It is easy to derive from Theorem~\ref{thm:main}, but is even more immediate from
the proof as pointed out in Remark~\ref{rem:pos-prob-small} below,
that the probability of the existence of an
\( \m \)-embedding converges to 1 as \( \m\to\infty \).
Let us choose an \( \m \) now making this probability at least \( 1-\eps \) for some
\( \eps<1/2 \).

Given two coin-tossing sequences \( X,Y \) with a joint distribution,
let us create a coin-tossing sequence \( Z \) independent of \( \pair{X}{Y} \).
The above remark implies that there is an \( \m \) such that
\( Y \) is \( \m \)-embeddable into \( Z \) with 
probability \( >1-\eps \), and \( Z \) is \( \m \)-embeddable into \( X \) with 
probability \( >1-\eps \).
Combining the two embeddings gives an \( \m^{2} \)-embedding of \( Y \) into \( X \)
with probability \( > 1-2\eps \).
\end{proof}

It is unknown currently whether the theorem of~\cite{GacsWalks11} on clairvoyant
scheduling of random walks can also be generalized to 
non-independent random walks.

Just like in~\cite{GacsWalks11}, we will introduce several
extra elements into the percolation picture.
For consistency with what comes later, let us call open points 
``{lower left trap-clean}''.
Let us call any interval \( \rint{i}{i+\m} \) a ``{vertical wall}'' if
\( X(i+1)=X(i+2) \) \( =\dots= \) \( X(i+\m) \).
We call an interval \( \rint{a}{a+1} \) ``horizontal hole fitting'' this wall if
\( Y(a+1)=X(i+1) \).
The idea is that a vertical wall forms a certain obstacle for a path
\( n_{1},n_{2},\dots \). 
But if the path arrives at a fitting hole \( \rint{a}{a+1} \), that is it has \( n_{a}=i \),
then it can pass through, with \( n_{a+1}=i+\m \).
The vertical walls are obstacles to paths, but there is hope: a wall has only
probability \( 2^{-\m+1} \) to start at any one place, while a hole fitting it has
probability 1/2 to start at any place.
Under appropriate conditions then, walls can be passed.
The failure of these conditions gives rise to a similar, ``higher-order'' model
with a new notion of walls.
It turns out that in higher-order models, some more types of 
element (like traps) are needed.
This system was built up in
the paper~\cite{GacsWalks11}, introducing a model called ``mazery''.
We will generalize mazeries slightly (more general bounds on slopes and
cleanness), to make them applicable to the embedding situation.

It is an understatement to say that the construction and proof
in~\cite{GacsWalks11} are complex.
Fortunately, much of it carries over virtually without changes, only 
some of the proofs (a minority) needed to be rewritten.
On the other hand, giving up any attempt to find reasonable bounds on 
\( \m \) made it possible to simplify some parts; in particular, the proof of the
Approximation Lemma (Lemma~\ref{lem:approx}) is less tedious here
than in~\cite{GacsWalks11}.

We rely substantially on~\cite{GacsWalks11}
for motivation of the proof structure and illustrations.
Each lemma will still be stated, but we will omit the proof of those that did not
change in any essential respect.

The rest of the paper is as follows.
Section~\ref{sec:mazery} defines mazeries.
Section~\ref{sec:main-lemma} formulates the main theorem and main lemma in terms
of mazeries, from which Theorem~\ref{thm:main} follows.
Section~\ref{sec:plan} defines the scale-up operation \( \cM^{k}\mapsto\cM^{k+1} \).
It also proves that scale-up preserves almost all \emph{combinatorial}
properties, that is those that do not involve probability bounds.
The one exception is the reachability property, formulated by
Lemma~\ref{lem:approx} (Approximation): its proof is postponed to
Section~\ref{sec:approx-proof}.
Section~\ref{sec:params} specifies the parameters in a way that guarantees
that the probability conditions are also preserved by scale-up.
Section~\ref{sec:bounds} estimates how the probability bounds are transformed by
the scale-up operation.
Section~\ref{sec:main-proof} proves the main lemma.

\section{Mazeries}\label{sec:mazery}

This section is long, and is very similar to Section 3
in~\cite{GacsWalks11}: we will point out the differences.

 \subsection{Notation}\label{subsec:notation}

The notation \( \pair{a}{b} \) for real numbers \( a,b \) will generally mean for us
the pair, and not the open interval.
Occasional exceptions would be pointed out, in case of ambiguity.
We will use
 \[
a \land b = \min(a,b),\quad a \lor b = \max(a,b).
 \]
To avoid too many parentheses, we use the convention 
 \begin{align*}
   a\land b\cdot c = a\land(b\cdot c).
 \end{align*}
We will use intervals on the real line 
and rectangles over the Euclidean plane, even though we are really only
interested in the lattice \( \bbZ_{+}^{2} \).
To capture all of \( \bbZ_{+} \) this way, 
for our right-closed intervals \( \rint{a}{b} \), we allow the
left end \( a \) to range over all the values \( -1,0,1,2,\dots \).
For an interval \( I=\rint{a}{b} \), we will denote 
 \[
   X(I) = \tup{X(a+1),\dots,X(b)}.
 \]
The \df{size} of an interval \( I \) with endpoints \( a,b \) (whether it is open,
closed or half-closed), is denoted by \( |I| = b-a \).
By the \df{distance} of two points 
\( a = \pair{a_{0}}{a_{1}} \), \( b=\pair{b_{0}}{b_{1}} \) of the plane, we mean 
 \[
   |b_{0}-a_{0}| \lor |b_{1}-a_{1}|.
 \]
The \df{size} of a rectangle
 \[
   \Rect(a, b) 
    = \clint{a_{0}}{b_{0}} \times \clint{a_{1}}{b_{1}}
 \]
in the plane is defined to be equal to the distance between \( a \) and \( b \).
For two different points \( u = \pair{u_{0}}{u_{1}} \), \( v = \pair{v_{0}}{v_{1}} \)
in the plane, when \( u_{0} \le v_{0} \), \( u_{1} \le v_{1} \):
 \begin{alignat*}{2}
      &\slope(u, v) &&= \frac{v_{1}-u_{1}}{v_{0}-u_{0}}.
  \end{alignat*}
We introduce the following partially open rectangles
 \begin{equation*}
 \begin{alignedat}{2}
   &\Rect^{\rightarrow}(a, b)
                      &&= \rint{a_{0}}{b_{0}} \times \clint{a_{1}}{b_{1}},
\\ &\Rect^{\uparrow}(a, b)
                      &&= \clint{a_{0}}{b_{0}} \times \rint{a_{1}}{b_{1}}.
 \end{alignedat}
 \end{equation*}
The relation
 \[
 u \leadsto v
 \]
says that point \( v \) is reachable from point \( u \) (the underlying graph
will always be clear from the context).
For two sets \( A, B \) in the plane or on the line,
 \[
   A + B = \setof{a + b : a \in A,\; b \in B}.
 \]

\subsection{The structure}

A mazery is a special type of random structure we are about to define.
Eventually, an infinite series of mazeries \( \cM^{1} \), \( \cM^{2}\),
\( \dots \) will be defined.
Each mazery \( \cM^{i} \) for \( i>1 \) will be obtained from the preceding one by a
certain scaling-up operation.
Mazery \( \cM^{1} \) will derive directly from the original percolation
problem, in Example~\ref{xmp:base}.

\subsubsection{The tuple}

All our structures defined below refer to ``percolations'' over the same lattice
graph \( \cG=\cG(X,Y) \) depending on the coin-tossing sequences \( X,Y \).
It is like the graph \( \cG_{3\m}(X,Y) \) introduced in~\eqref{eq:Gbub} above,
but we will not refer to \( \m \) explicitly.

A \df{mazery}
 \begin{equation*}
  (\cM, \slblb, \slb_{x},\slb_{y},\R,\bub,\tub, \ncln_{\w},\ncln_{\t}) 
 \end{equation*}
consists of a random process \( \cM \), and the listed nonnegative parameters.
Of these, \( \slblb,\slb_{x},\slb_{y} \) are called \df{slope lower bounds},
\( \R \) is called the \df{rank lower bound}, and they satisfy
 \begin{align}
\label{eq:slb-lb} 1/2\R &\le \slb_{x}/2 
\le \slblb\le \slb_{x}, 
\quad 2\le\slb_{y},
\\ \label{eq:slb-ub}   \slb_{x}\slb_{y}&< 1-\slblb.
 \end{align}
With~\eqref{eq:slb-lb} this implies \( \slb \le \slb_{x} < \frac{1-\slblb}{2} \).
We call \( \bub \) the \df{scale parameter}.
We also have the probability upper bounds \( \tub, \ncln_{j}>0 \) with 
 \begin{align*}
   \ncln_{\w} < 0.05,\quad
\ncln_{\t} < 0.55,
 \end{align*}
which will be detailed below, along with conditions that they must satisfy.
(In~\cite{GacsWalks11}, there was just one parameter \( \slb \) and 
one parameter \( \ncln \).)
Let us describe the random process
 \[
   \cM  = (X, Y, \cG, \cT, \cW, \cB, \cC, \cS).
 \]
In what follows, when we refer to the mazery, we will just identify it with \( \cM \).
We have the random objects
 \begin{equation*}
   \cG,\quad
 \cT, \quad
 \cW = \tup{\cW_{x},\cW_{y}},\quad
 \cB = \tup{\cB_{x},\cB_{y}},\quad
 \cC = \tup{\cC_{x}, \cC_{y}},\quad
 \cS = \tup{\cS_{x}, \cS_{y}, \cS_{2}}.
 \end{equation*}
all of which are functions of \( X,Y \).
The graph \( \cG \) is a random graph.

 \begin{definition}[Traps]
In the tuple \( \cM \) above, \( \cT \) is a random set 
of closed rectangles of size \( \le \bub \) called \df{traps}.
For trap \( \Rect(a, b) \), we will say that it
\df{starts} at its lower left corner \( a \).
 \end{definition}

 \begin{definition}[Wall values]\label{def:walls}
To describe the process \( \cW \), we introduce the concept of a
 \df{wall value} \( E = (B, \r) \).
Here \( B \) is the \df{body} which is a right-closed interval, and \df{rank} 
 \[
  \r \ge \R.
 \]
We write \( \Body(E) = B \), \( |E| = |B| \).
We will sometimes denote the body also by \( E \).
Let \( \Wvalues \) denote the set of all possible wall values.
 \end{definition}
Let
 \begin{align*}
  \bbZ_{+}^{(2)}
 \end{align*}
denote the set of pairs \( \pair{u}{v} \) with \( u<v \), \( u,v\in\bbZ_{+} \).
The random objects
 \begin{equation*}
  \begin{split}
     \cW_{\d}\sbsq\cB_{\d}  &\sbsq \Wvalues,
\\   \cS_{\d}\sbsq\cC_{\d} &\sbsq \bbZ_{+}^{(2)} \times \{-1, 1\}
                                           \txt{ for } \d = x,y,
\\   \cS_{2} &\sbsq \bbZ_{+}^{(2)}\times\bbZ_{+}^{(2)} \times \{-1, 1\} \times \{0,1,2\}
  \end{split}
 \end{equation*}
are also functions of \( X,Y \).
(Note that we do not have any \( \cC_{2} \).)

 \begin{definition}[Barriers and walls]\label{def:barrier}
The elements of \( \cW_{x} \) and \( \cB_{x} \) are called \df{walls} and
\df{barriers} of \( X \) respectively, where the sets
\( \cW_{x},\cB_{x} \) are functions of \( X \).
(Similarly for \( \cW_{y},\cB_{y} \) and \( Y \).)
In particular, elements of \( \cW_{x} \) are called \df{vertical walls}, and
elements of \( \cW_{y} \) are called \df{horizontal walls}.
Similarly for barriers.
When we say that a certain interval contains a wall or barrier we mean that
it contains its body.

A right-closed interval is called \df{external} if it intersects no walls.
A wall is called \df{dominant} if it is
surrounded by external intervals each of which is either of size \( \ge\bub \) or is
at the beginning of \( \bbZ_{+} \).
Note that if a wall is dominant then it contains every wall intersecting it.

For a vertical wall value $E=(B,\r)$ and a value of $X(B)$ making $E$ a
barrier of rank $\r$
we will say that $E$ is a \df{potential vertical wall} of rank $\r$
if there is an extension of $X(B)$ to a complete sequence $X$ that makes
$E$ a vertical wall of rank $\r$.
Similarly for horizontal walls.
\end{definition}

The last definition uses the fact following from
Condition~\ref{cond:distr}.\ref{i:distr.indep.barrier} that whether an interval $B$
is a barrier of the process $X$ depends only $X(B)$.

The set of barriers is a random subset of the set of all possible
wall values, and the set of walls is a random subset of the set of barriers.

 \begin{condition}
The parameter \( \bub \) is an upper bound on the size of 
every trap and the thickness of any barrier.
 \end{condition}

\subsubsection{Cleanness}

The set \( \cC_{x} \) is a function of the process \( X \), and the set
\( \cC_{y} \) is a function of the process \( Y \): they are
used to formalize (encode) the notions of cleanness given descriptive names below.

 \begin{definition}[One-dimensional cleanness]\label{def:1dim-clean}
For an interval \( I = \rint{a}{b} \) or \( I=\clint{a}{b} \),
if \( (a, b, -1) \in \cC_{x} \) then we say that point \( b \) of
\( \bbZ_{+} \) is \df{clean} in \( I \) for the sequence \( X \).
If \( (a,b,1) \in \cC_{x} \) then we say that point \( a \) is clean in \( I \).
From now on, whenever we talk about cleanness of an element of \( \bbZ_{+} \),
it is always understood with respect to either for the sequence \( X \) or for \( Y \).
For simplicity, let us just talk about cleanness, and so on, with
respect to the sequence \( X \).
A point \( x \in \bbZ_{+} \) is called left-clean (right-clean)
if it is clean in all intervals of the form \( \rint{a}{x} \), \( \clint{a}{x} \) (all intervals
of the form \( \rint{x}{b} \), \( \clint{x}{b} \)).
It is \df{clean} if it is both left- and right-clean.
If both ends of an interval \( I \) are clean in \( I \) 
then we say \( I \) is \df{inner clean}.

To every notion of one-dimensional
cleanness there is a corresponding notion of \df{strong
cleanness}, defined with the help of the process \( \cS \) in place of the
process \( \cC \).
 \end{definition}

Figure 8 of~\cite{GacsWalks11} illustrates one-dimensional cleanness.

 \begin{definition}[Trap-cleanness]\label{def:trap-clean}
For points \( u=\pair{u_{0}}{u_{1}} \), \( v=\pair{v_{0}}{v_{1}} \), 
\( Q = \Rect^{\eps}(u,v) \)
where \( \eps = \rightarrow \) or \( \uparrow \) or nothing,
we say that point \( u \) is \df{trap-clean in} \( Q \)
(with respect to the pair of sequences \( \tup{X,Y} \))
if \( (u, v, 1, \eps') \in \cS_{2} \), where \( \eps' = 0,1,2 \) depending on
whether \( \eps = \rightarrow \) or \( \uparrow \) or nothing.
Similarly, point \( v \) is \df{trap-clean in} \( Q \)
if \( (u, v, -1, \eps') \in \cS_{2} \).
It is  \df{upper right trap-clean}, if it is trap-clean in the lower left 
corner of all rectangles.
It is  \df{trap-clean}, if it is trap-clean in all rectangles.
 \end{definition}

 \begin{definition}[Complex two-dimensional sorts of cleanness]\label{def:H-clean}
We say that point \( u \) is \df{clean} in \( Q \) when
it is trap-clean in \( Q \) and its projections are clean in the
corresponding projections of \( Q \).

If \( u \) is clean in all such left-open rectangles
then it is called \df{upper right rightward clean}.
We delete the ``rightward'' qualifier here
if we have closed rectangles in the definition here instead of
left-open ones.
Cleanness with qualifier ``upward'' is defined similarly.
Cleanness of \( v \) in \( Q \) and lower left cleanness of \( v \) are
defined similarly, using \( (u,v, -1, \eps') \),
except that the qualifier is unnecessary: all our rectangles are upper
right closed.

A point is called \df{clean} if it is upper right clean and lower left clean.
If both the lower left and upper right 
points of a rectangle \( Q \) are clean in \( Q \) then \( Q \) is called \df{inner clean}.
If the lower left endpoint is lower left clean and the upper
right endpoint is upper right rightward clean 
then \( Q \) is called \df{outer rightward clean}.
Similarly for \df{outer upward clean} and \df{outer-clean}.

We will also use a \df{partial} versions of cleanness.
If point \( u \) is trap-clean in \( Q \) and its projection on the \( x \) axis 
is \emph{strongly} clean in the same projection of \( Q \)
then we will say that \( u \) is \df{H-clean} in \( Q \).
Clearly, if \( u \) is H-clean in \( Q \) and its projection on the \( y \) axis is
clean in (the projection of) \( Q \) then it is clean in \( Q \).
We will call rectangle \( Q \) \df{inner} H-clean if both its lower left and upper
right corners are H-clean in it.
It is now clear what is meant for example by a point 
being \df{upper right rightward H-clean}.

The notion \df{V-clean} is defined similarly when we interchange horizontal and
vertical.
 \end{definition}

Figure 9 of~\cite{GacsWalks11} illustrates 2-dimensional cleanness.

\subsubsection{Hops}
Hops are intervals and rectangles for which we will be able to
give some guarantees that they can be passed.

\begin{definition}[Hops]\label{def:hop}
A right-closed horizontal interval \( I \) is called a \df{hop}
if it is inner clean and contains no vertical wall.
A closed interval \( \clint{a}{b} \) is a hop if \( \rint{a}{b} \) is a hop.
Vertical hops are defined similarly.

We call a rectangle \( I\times J \)
a \df{hop} if it is inner clean and contains no trap, and no wall (in either of its projections).
\end{definition}

 \begin{remarks}
   \begin{enumerate}[1.]
   \item 
An interval or rectangle that is a hop can be empty: this is the case if
the interval is \( \rint{a}{a} \), or the rectangle is, say,
\( \Rect^{\rightarrow}(u,u) \).
  \item The slight redundancy of considering separately
\( \R^{\uparrow} \) and \( R^{\rightarrow} \) in the present paper is there just 
for the sake of some continuity with~\cite{GacsWalks11}.
The present paper could just use rectangles that are both bottom-open and
left-open.
On the other hand, \cite{GacsWalks11} 
started from a graph \( \cG \) with only horizontal and vertical edges.
Then the bottom left point of a rectangle that is both bottom-open and left-open
would be cut off completely.
   \end{enumerate}
 \end{remarks}

 \begin{definition}[Sequences of walls]\label{def:neighbor-seq}
Two disjoint walls are called \df{neighbors} if the interval
between them is a hop.
A sequence \( W_{i} \in \cW \) of walls
\( i=1,2,\dots, n \) along with the intervals \( I_{1},\dots,I_{n-1} \) between them 
is called a \df{sequence of neighbor walls}
if for all \( i > 1 \), \( W_{i} \) is a right neighbor of \( W_{i-1} \).
We say that an interval \( I \) is \df{spanned} by the sequence of neighbor
walls \( W_{1},W_{2},\dots,W_{n} \) if 
\( I=W_{1}\cup I_{1}\cup W_{2}\cup\dots\cup W_{n} \).
We will also say that \( I \) is spanned by the sequence \( \tup{W_{1},W_{2},\dots} \)
if both \( I \) and the sequence are infinite and \( I=W_{1}\cup I_{1}\cup
W_{2}\cup\dots \).
If there is a hop \( I_{0} \) adjacent on the left to \( W_{1} \) and a hop
\( I_{n} \) adjacent on the right to \( W_{n} \) (or the sequence \( W_{i} \) is
infinite) then this system is called an
\df{extended sequence of neighbor walls}.
We say that an interval \( I \) is \df{spanned} by this extended sequence
if \( I=I_{0}\cup W_{1}\cup I_{1}\cup \dots\cup I_{n} \) (and correspondingly
for the infinite case).
 \end{definition}

\subsubsection{Holes}\label{sss.holes}

 \begin{definition}[Reachability]\label{def:reachability}
We say that point \( v \) is \df{reachable} from point \( u \) in \( \cM \) 
(and write \( u \leadsto v \)) if it is reachable in the graph \( \cG \).
 \end{definition}
  
 \begin{remark}
Point \( u \) itself may be closed even if \( v \) is reachable from \( u \).
 \end{remark}

 \begin{definition}[Slope conditions]
We will say that points \( u=\pair{u_{0}}{u_{1}} \) and \( v=\pair{v_{0}}{v_{1}} \)
with \( u_{\d}<v_{\d} \), \( \d=0,1 \) satisfy the \df{slope conditions} if there is a
(non-integer) point \( v'=\pair{v'_{0}}{v'_{1}} \) with \( 0\le v_{0}-v'_{0},v_{1}-v'_{1}<1 \) 
such that
 \begin{align*}
   \slb_{x}\le \slope(u,v')\le \slb_{y}^{-1}.
 \end{align*}
 \end{definition}
The simple slope conditions would be \( \slb_{x}\le \slope(u,v)\le \slb_{y}^{-1} \),
but we are a little more lenient, to allow for some rounding.

Intuitively, a hole is a place at which we can pass through a wall.
We will also need some guarantees of being able to reach the hole and being
able to leave it.

 \begin{definition}[Holes]\label{def:holes}
Let \( a=\pair{a_{0}}{a_{1}} \), \( b=\pair{b_{0}}{b_{1}} \), be a pair of points, and
let the interval \( I = \rint{a_{1}}{b_{1}} \) be the body of
a horizontal barrier \( B \).
For an interval \( J = \rint{a_{0}}{b_{0}} \) 
we say that \( J \) is a vertical \df{hole passing through} \( B \), 
or \df{fitting}  \( B \), if \( a\leadsto b \) within the rectangle
\( J\times\clint{a_{1}}{b_{1}} \).
For technical convenience, we also require \( |J|\le\slblb^{-1}|I| \).
Consider a point \( \pair{u_{0}}{u_{1}} \) with \( u_{i}\le a_{i} \), \( i=0,1 \).
The hole \( J \) is called \df{good as seen from} point \( u \) if \( a \) is H-clean in
\( \Rect^{\to}(u,a) \), and \( b \) is upper-right rightward
H-clean (recall Definition~\ref{def:H-clean}). 
It is \df{good} if it is good as seen from any such point \( u \).
Note that this way the horizontal cleanness is required to be strong, but no
vertical cleanness is required (since the barrier \( B \) was not required to
be outer clean).

Each hole is called \df{lower left clean}, upper right clean, and so on,
if the corresponding rectangle is.

Horizontal holes are defined similarly.
 \end{definition}

The conditions defining the graph \( \cG \) imply that the slope of any path is
between \( \slblb \) and \( 1 \).
It follows that 
the width of a horizontal hole is at most \( \bub \), and the width of
a vertical hole is at most \( \slblb^{-1}\bub \).

\subsection{Conditions on the random process}

The conditions will depend on a constant
 \begin{equation}\label{eq:hxp}
   \hxp = 0.015
 \end{equation}
whose role will become clear soon, and on 
 \begin{equation}\label{eq:lg-def}
   \lg = 2^{1/2}.
 \end{equation}

 \begin{definition}
The function 
 \begin{equation*}
  \p(\r,l)
 \end{equation*}
is defined as the supremum of probabilities (over all points \( t \))
that any vertical or horizontal barrier with rank \( \r \) and size \( l \) starts at \( t \).
 \end{definition}

 \begin{remark}
In the probability bounds of the paper~\cite{GacsWalks11} we also conditioned on
arbitrary starting values in an interval, since there the processes \( X,Y \)
were Markov chains, not necessarily Bernoulli.
We omit this conditioning in the interest of readability, 
as it is not needed for the present application.
Technically speaking, in this sense the mazery defined here is not a
generalization of the earlier one.
 \end{remark}

We will use some additional constants,
 \begin{align}\label{eq:aux}
 \aux_{1}=2,\;\aux_{2},\;\aux_{3}, 
 \end{align}
some of which will be chosen later.

 \begin{definition}[Probability bounds]
Let
 \begin{align}\label{eq:wall-prob}
  \p(\r) &= \aux_{2} \r^{-\aux_{1}} \lg^{-\r},
\\ \label{eq:h-def}
         \h(\r)  &= \aux_{3}\lg^{-\hxp\r}.
 \end{align}
 \end{definition}

 \begin{condition}\label{cond:distr}\
 \begin{enumerate}[\upshape 1.]

  \item\label{i:distr.indep}(Dependencies)
  \begin{enumerate}[\upshape a.]

    \item\label{i:distr.indep.trap}
For any rectangle \( I \times J \), the event that it is a
trap is a function of the pair \( X(I), Y(J) \).

    \item\label{i:distr.indep.barrier}
For a vertical wall value \( E \) the event \( \setof{E\in\cB} \) (that is the event
that it is a vertical barrier) is a function of \( X(\Body(E)) \).

Similarly for horizontal barriers.

    \item\label{i:distr.indep.left-clean} 
For integers \( a < b \), and the events defining strong horizontal cleanness, that
is \( \setof{(a,b,-1) \in \cS_{x}} \) and \( \setof{(a,b,1) \in \cS_{x}} \),
are functions of \( X(\rint{a}{b}) \).
Similarly for vertical cleanness and the sequence \( Y \).

When \( X,Y \) are fixed, then for a fixed \( a \), the
(strong and not strong) cleanness of \( a \) in \( \rint{a}{b} \) 
is decreasing as a function of \( b-a \), 
and for a fixed \( b \), the (strong and not strong) cleanness 
of \( b \) in \( \rint{a}{b} \) is decreasing as a function of \( b-a \). 
These functions reach their minimum at \( b-a=\bub \): thus, for example if
\( x \) is (strongly or not strongly) 
left clean in \( \rint{x-\bub}{x} \) then it is (strongly or not strongly) 
left clean.

    \item\label{i:distr.indep.ur-clean}
For any rectangle \( Q=I \times J \),
the event that its lower left corner is trap-clean in \( Q \),
is a function of the pair \( X(I), Y(J) \).

Among rectangles with a fixed lower left corner, the
event that this corner is
trap-clean in \( Q \) is a decreasing function of \( Q \)
(in the set of rectangles partially ordered by containment).
In particular, the trap-cleanness of \( u \) in \( \Rect(u,v) \) implies its 
trap-cleanness
in \( \Rect^{\rightarrow}(u,v) \) and in \( \Rect^{\uparrow}(u,v) \).
If \( u \) is upper right trap-clean in the left-open or bottom-open or
closed square of size \( \bub \),
then it is upper right trap-clean in all rectangles \( Q \) of the same
type. 
Similar statements hold if we replace upper right with lower left.

  \end{enumerate}

Whether a certain wall value \( E=(B,\r) \) is a vertical barrier depends only on
\( X(B) \).
Whether it is a vertical 
wall depends also only on \( X \)---however, it may depend on the values of \( X \)
outside \( B \).
Similarly, whether a certain horizontal interval is inner clean depends only the
sequence \( X \) but may depend on the elements outside it,
but whether it is strongly inner clean depends only on \( X \) inside the interval.

Similar remarks apply to horizontal wall values and vertical cleanness with the
process \( Y \).

  \item\label{i:distr.combinat}(Combinatorial requirements)
   \begin{enumerate}[\upshape a.]
  
    \item\label{i:distr.inner-clean}
A maximal external interval  (see
Definition~\ref{def:barrier}) of size \( \ge\bub \) or one starting at \( -1 \)
is inner clean.

    \item\label{i:distr.cover}
An interval \( I \) that is surrounded by maximal external intervals of size
\( \ge\bub \) is spanned by a
sequence of (vertical) neighbor walls (see Definition~\ref{def:neighbor-seq}).
This is true even in the case when \( I \) starts at 0 and even if it is infinite.
To accommodate these cases, we require the following, which is somewhat harder to
parse:
Suppose that interval \( I \) is adjacent on the left to a maximal external interval
that either starts at \( -1 \) or has size \( \ge \bub \).
Suppose also that it is either adjacent on the right to a similar
interval or is infinite.
Then it is spanned by a (finite or infinite) sequence of neighbor walls.
In particular, the whole line is spanned by an extended sequence of
neighbor walls.

    \item\label{i:distr.clean.1}
If a (not necessarily integer aligned)
right-closed interval of size \( \ge 3 \bub \) contains no wall,
then its middle third contains a clean point.

    \item\label{i:distr.clean.2}
Suppose that a rectangle \( I\times J \) with (not necessarily integer aligned)
right-closed \( I,J \) with \( |I|, |J|\ge 3\bub \) contains
no horizontal wall and no trap, and \( a \) is a right 
clean point in the middle third of \( I \).
There is an integer \( b \) in the middle third of \( J \) such that the point
\( \pair{a}{b} \) is upper right clean.
A similar statement holds if we replace upper right with
lower left (and right with left).
Also, if \( a \) is clean then we can find a point \( b \) 
in the middle third of \( J \) such that \( \pair{a}{b} \) is clean.

There is also a similar set of statements if we vary \( a \) instead of \( b \).

\item\label{i:distr.reachable}(Reachability)
If points \( u, v \) satisfying the slope conditions
are the starting and endpoint of a rectangle 
that is a hop, then \( u \leadsto v \).
The rectangle in question is allowed to be
bottom-open or left-open, but not both.

(In the present paper, we could even allow the rectangle to be both bottom open and
left open,
since the graph \( \cG \) has no horizontal and vertical edges anyway.
But we will not use this.)
   \end{enumerate}

   \item\label{i:distr.bounds}(Probability bounds)
  \begin{enumerate}[\upshape a.]

   \item\label{i:distr.trap-ub}
Given a string \( x = \tup{x(0), x(1), \dotsc} \), a point \( \pair{a}{b} \),
let \( \cF \) be the event that a trap starts at \( \pair{a}{b} \).
We have 
 \begin{equation*}
     \Prob\paren{\cF \mid X = x} \le \tub.
 \end{equation*}
 The same is required if we exchange \( X \) and \( Y \).

   \item\label{i:distr.wall-ub}
We have \( \p(\r) \ge \sum_{l} \p(\r,l) \).

  \item\label{i:distr.ncln}
We require that for all \( a < b \) and all \( u=\pair{u_{0}}{u_{1}} \),
\( v=\pair{v_{0}}{v_{1}} \)
\begin{align}\label{eq:ncln.1dim}
\Pbof{ a \txt{ (resp. \( b \)) } \txt{ is not strongly clean in } \rint{a}{b}}
&\le \ncln_{\w},
\end{align}
Further, for \( Q=\Rect^{\rightarrow}(u,v) \) or \( \Rect^{\uparrow}(u,v) \) or
\( \Rect(u,v) \), for all sequences \( y \)
\begin{align*}
\Pbof{ u \txt{ is not trap-clean in } Q  \mid  Y=y}&\le\ncln_{\w},
\\
\Pbof{ v \txt{ is not trap-clean in }Q \mid Y=y} &\le\ncln_{\t}
 \end{align*}
and similarly with \( X \) and \( Y \) reversed.

   \item\label{i:distr.hole-lb}
Let \( u \le v < w \), and \( a \) be given with \( v-u \le \slblb^{-2}\bub \),
and define
 \begin{align*}
                      b &= a + \cei{\slb_{y}(v-u)},
\\                  c &= b\lor (a+\flo{\slb_{x}^{-1}(v-u)}).
 \end{align*}
Assume that \( Y=y \) is fixed in such a way that
\( B \) is a horizontal wall of rank \( \r \) with body \( \rint{v}{w} \).
For a \( d \in \clint{b}{c} \) let 
\( Q(d)=\Rect^{\rightarrow}(\pair{a}{u}, \pair{d}{v}) \).
Let
\begin{align*}
   F(u, v;\, a, d)
 \end{align*}
be the event (a function of \( X \)) that 
\( Q(d) \) contains no traps or vertical barriers, and is inner H-clean.
Let
\begin{align*}
   E = E(u, v, w;\, a)
 \end{align*}
be the event that at some point \( d\in\clint{b}{c} \)
a vertical hole fitting \( B \) starts, and event \( F(u, v;\, a, d) \) holds.
Then
 \begin{equation*}
   \Prob\paren{E \mid Y = y} \ge (v-u+1)^{\hxp} \h(\r).
 \end{equation*}
The same is required if we exchange horizontal and vertical, \( X \) with \( Y \), further
\( \slb_{y} \) with \( \slb_{x} \), and 
define \( Q(d)=\Rect^{\uparrow}(\pair{u}{a}, \pair{v}{d}) \).
  \end{enumerate}
Figure 10 of~\cite{GacsWalks11} illustrates the last condition.
 \end{enumerate}
 \end{condition}

The following lemma shows how the above condition will serve for
passing from point \( \pair{a}{u} \) past the wall.

\begin{lemma}\label{lem:pre-hole}
In Condition \ref{cond:distr}.\ref{i:distr.hole-lb}, the points
\( \pair{a}{u} \), \( \pair{d}{v} \) always satisfy the slope conditions.
 \end{lemma}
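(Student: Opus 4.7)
The plan is to exhibit a (possibly non-integer) point $V'=\pair{V'_0}{V'_1}$ in the half-open unit square $(d-1,d]\times(v-1,v]$ whose slope from $U=\pair{a}{u}$ lies in $[\slb_x,\slb_y^{-1}]$. I will fix $V'_1=v$ outright (so that $v-V'_1=0$) and search only for the first coordinate $V'_0\in(d-1,d]$. With this choice the slope equals $(v-u)/(V'_0-a)$, so the upper bound $\le\slb_y^{-1}$ is equivalent to $V'_0\ge a+\slb_y(v-u)$, and the lower bound $\ge\slb_x$ is equivalent to $V'_0\le a+(v-u)/\slb_x$. The interval $\bigsq{a+\slb_y(v-u),\,a+(v-u)/\slb_x}$ is non-empty because $\slb_x\slb_y<1-\slblb<1$ by~\eqref{eq:slb-ub}, so it suffices to prove
\[
   (d-1,d]\,\cap\,\bigsq{a+\slb_y(v-u),\; a+(v-u)/\slb_x}\;\ne\;\emptyset.
\]

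Two elementary estimates deliver this overlap. First, $a+\slb_y(v-u)\le d$, because $d\ge b=a+\cei{\slb_y(v-u)}\ge a+\slb_y(v-u)$. Second, $a+(v-u)/\slb_x>d-1$, which I split by cases on the definition of $c$. If $c=a+\flo{\slb_x^{-1}(v-u)}$, then $d-a\le\flo{\slb_x^{-1}(v-u)}\le(v-u)/\slb_x$, so in fact $d\le a+(v-u)/\slb_x$, with room to spare. If $c=b$, then $d-a\le\cei{\slb_y(v-u)}\le\slb_y(v-u)+1$, hence $d-a-1\le\slb_y(v-u)<(v-u)/\slb_x$, using $\slb_x\slb_y<1$ and $v-u\ge 1$ (the latter is implicit in the slope conditions, which require strict inequality $u_\d<v_\d$ in both coordinates).

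The only delicate case is the second: when $c=b$, the integer point $\pair{d}{v}$ itself can fail the lower-slope bound. For example, with $v-u=1$ and non-integer $\slb_y$ one has $d-a=\cei{\slb_y}$, giving $\slope(U,\pair{d}{v})=1/\cei{\slb_y}$, which can lie strictly below $\slb_x$. This is exactly why the slope conditions permit a non-integer correction $V'\ne V$, and the mazery inequality $\slb_x\slb_y<1-\slblb$ of~\eqref{eq:slb-ub} supplies precisely the quantitative slack that keeps the adjusted $V'_0$ strictly above $d-1$ while the slope still lies in $[\slb_x,\slb_y^{-1}]$.
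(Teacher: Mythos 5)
Your proof is correct and follows essentially the same route as the paper's: both fix the second coordinate of the auxiliary point at $v$, split on whether $c>b$ or $c=b$, and close the tight case $c=b$ by picking $V'_0$ near $a+\slb_y(v-u)$ using $\slb_x\slb_y<1$ from~\eqref{eq:slb-ub}. The paper simply names the specific witness $b'=a+\slb_y(v-u)$ rather than exhibiting the interval of valid $V'_0$, but the computation is the same.
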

 \begin{proof}
Consider the case of horizontal walls.
We have \( b-a\ge\slb_{y}(v-u) \) by definition.
If \( c>b \) then also \( d-a\le \slb_{x}^{-1}(v-u) \) by definition 
for any \( d\in\clint{b}{c} \).
Assume therefore \( c=b \), then \( d=b=c \).
We claim that the points \( \pair{a}{u} \) and \( \pair{b}{v} \) satisfy
the slope conditions.
Indeed, set  \( b'=a+\slb_{y}(v-u) \), then \( b-1<b'\le b \), and
\begin{align*}
    1/\slb_{y} = \slope(\pair{a}{u},\pair{b'}{v}) > \slb_{x}
 \end{align*}
since \( \slb_{x}\slb_{y}<1 \).
The case for vertical walls is similar.
 \end{proof}

\begin{remarks}\label{rem:distr}\  
 \begin{enumerate}[\upshape 1.]

  \item\label{i:middle-third}
 Conditions~\ref{cond:distr}.\ref{i:distr.clean.1} 
and~\ref{cond:distr}.\ref{i:distr.clean.2} imply the following.
Suppose that a right-upper closed square \( Q \) of size \( 3 \bub \) contains
no wall or trap.
Then its middle third contains a clean point.

\item Note the following asymmetry: the probability bound on the upper right
  corner of a rectangle 
  not being trap-clean in it is \( \ncln_{\t} \) which is bounded only by \( 0.55 \), while 
the bound of the lower left corner not being trap-clean in it is \( \ncln_{\w} \),
which is bounded by \( 0.05 \).

 \item\label{i:rem.reachability}
With respect to condition~\ref{cond:distr}.\ref{i:distr.reachable} 
note that not all individual edges satisfy the slope condition; indeed, some
arguments will make use of this fact.

  \item\label{i:rem.hole-lb}
The most important special case of
Condition \ref{cond:distr}.\ref{i:distr.hole-lb} is
\( v = u \), then it says 
that for any horizontal wall \( B \) of rank \( \r \), at any point \( a \),
the probability that there is a vertical hole passing through \( B \) at point
\( a \) is at least \( \h(\r) \).
 \end{enumerate}
\end{remarks}

\subsection{Base mazery}

Let us define a mazery \( \cM^{1} \) corresponding to the embedding problem.

 \begin{example}[Embedding mazery]\label{xmp:base}
Let
\begin{align*}
  \slblb &=\slb_{x1} = 1/2\m,\quad \slb_{y1}=\m,\quad \R_{1}=2\m, 
\\ \bub_{1}&=\lg^{\bubxp\R_{1}},
\\ \ncln_{\w 1}&=0,\quad \ncln_{\t 1}=0.5, \quad \tub_{1}=0, 
 \end{align*}
\( \bubxp=0.15 \) (the choice will be justified in Section~\ref{sec:params}).

Let \( \cG(X,Y)=\cG_{3\m}(X,Y) \) be the graph defined in the introduction.
Let \( \cT=\emptyset \), that is there are no traps.

An interval \( \rint{i}{i+l} \) is a vertical barrier and wall if and only if
\( \m\le l < 2\m \), and 
\( X(i+1)=X(i+2)=\dots=X(i+l) \).
Similarly, it is a horizontal barrier and wall if and only if
\( Y(i+1)=Y(i+2)=\dots=Y(i+l) \).
We define the common rank of these barriers to be \( \R_{1} \).

Every point is strongly clean in all one-dimensional senses.
All points are upper right trap clean.
A point \( \pair{i}{j} \) is lower left trap-clean if \( X(i)=Y(j) \).
On the other, hand if \( X(i)\ne Y(j) \) then it is not trap-clean in any 
nonempty rectangles whose upper right corner it is.

Note that even though the size of the largest walls or traps is bounded by \( \m \), 
the bound \( \bub_{1} \) is defined to be exponential in \( \m \).
This will fit into the scheme of later definitions. 
 \end{example}

 \begin{lemma}\label{lem:base}
The definition given in Example~\ref{xmp:base} satisfies the mazery
conditions, for sufficiently large \( \R_{1}(=2\m) \).
 \end{lemma}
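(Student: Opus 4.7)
The plan is to verify every clause of the mazery definition directly against the data of Example~\ref{xmp:base}, leaving the two free constants $\aux_{2}$ and $\aux_{3}$ in~\eqref{eq:wall-prob} and~\eqref{eq:h-def} to be fixed at the end; both may depend on $\m$. The verification splits into parameter inequalities, dependency clauses, combinatorial clauses, and probability clauses.

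Substituting $\slblb_{1}=\slb_{x1}=1/(2\m)$, $\slb_{y1}=\m$, $\R_{1}=2\m$ into~\eqref{eq:slb-lb}--\eqref{eq:slb-ub} reduces those inequalities to $\m\ge 2$ (which is what ``for sufficiently large $\R_{1}$'' buys); and $\ncln_{\w 1}=0<0.05$, $\ncln_{\t 1}=1/2<0.55$ meet the stated thresholds. The dependency clauses~\ref{cond:distr}.\ref{i:distr.indep}.a--d are immediate: traps are absent, barrier validity is a local function of $X(B)$ (resp.\ $Y(B)$), and both strong and non-strong cleanness are trivial since every point is declared clean. Clauses~\ref{cond:distr}.\ref{i:distr.inner-clean},~\ref{cond:distr}.\ref{i:distr.clean.1} and~\ref{cond:distr}.\ref{i:distr.clean.2} are then vacuous. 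For the covering clause~\ref{cond:distr}.\ref{i:distr.cover}, I observe that the non-external points of $X$ form a disjoint union of maximal constant-runs of length $\ge\m$, and each such run admits a greedy partition into sub-walls of lengths in $[\m,2\m)$; adjacent sub-walls are disjoint neighbours because the interval between them is empty and hence a hop.

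The substantive combinatorial check is reachability~\ref{cond:distr}.\ref{i:distr.reachable}. Given $(u,v)$ satisfying the slope condition with $\Rect(u,v)$ a hop, set $D=v_{0}-u_{0}$ and $N=v_{1}-u_{1}$; the slope condition forces $D/N\in[\m,2\m]$ up to $O(1)$ rounding, and the absence of horizontal and vertical walls in the projections means every window of $\m$ consecutive $X$-values in $(u_{0},v_{0}]$ contains both $0$ and $1$. I would construct a path $u=(n_{0},j_{0}),\dots,(n_{N},j_{N})=v$ greedily: for $N_{k-1}\ge 2$, the set of step sizes $\delta\in[1,3\m]$ that keeps a valid tail to $v$ is the interval
\[
\bigl[\,\max(1,\,D_{k-1}-3\m(N_{k-1}-1)),\,\min(3\m,\,D_{k-1}-(N_{k-1}-1))\,\bigr],
\]
and a short case analysis on which of the four bounds is active shows it has length $\ge\m$, provided one maintains the invariant $D_{k}/N_{k}\in[\m,2\m]$ (achievable inside a sub-window of the same length). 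Since both $X$-values appear in any window of length $\ge\m$, a $\delta$ with $X(n_{k-1}+\delta)=Y(j_{k})$ exists. Inner cleanness of the hop forces $X(v_{0})=Y(v_{1})$, which handles the final step.

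The trap-probability clause~\ref{cond:distr}.\ref{i:distr.trap-ub} is vacuous ($\tub_{1}=0$); the cleanness clause~\ref{cond:distr}.\ref{i:distr.ncln} holds because strong cleanness never fails and the only trap-cleanness failure is $X(i)\ne Y(j)$, of probability $1/2\le\ncln_{\t 1}$. For~\ref{cond:distr}.\ref{i:distr.wall-ub}, $\p(\R_{1},l)=2^{1-l}$, so $\sum_{l=\m}^{2\m-1}\p(\R_{1},l)=O(2^{-\m})$, dominated by $\p(\R_{1})=\aux_{2}(2\m)^{-2}2^{-\m}$ once $\aux_{2}$ is chosen large enough. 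I expect the main obstacle to be the hole lower bound~\ref{cond:distr}.\ref{i:distr.hole-lb}. Fixing $Y$ so that $B$ is a horizontal wall with $Y\equiv y^{*}$ on $\rint{v}{w}$, I would lower bound the probability of a hole at some $d\in\clint{b}{c}$ together with $F(u,v;a,d)$ by considering, for a single candidate $d$, the event that $X(d+k)=y^{*}$ for $k=1,\dots,w-v$ (a step-$1$ path straight through the wall), of probability $2^{-(w-v)}\ge 2^{1-2\m}$, together with the $F$-event that $X$ on $(a,d]$ has no constant run of length $\ge\m$ and satisfies $X(d)=Y(v)$. These sub-events live on disjoint $X$-coordinates, so their probabilities multiply; the $F$-factor is bounded below by a union-bound estimate $1-(d-a)\cdot 2^{1-\m}$ (which is at least a constant since $d-a\le 2\m\cdot\slblb^{-2}\bub_{1}=O(\m^{3}2^{0.15\m})$ and $\m$ is large) times $1/2$ for the value at $d$. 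The resulting product dominates $(v-u+1)^{\hxp}\h(\R_{1})$ once $\aux_{3}$ is chosen sufficiently small as a function of $\m$; the technical effort is in tracking these $\m$- and $(d-a)$-dependencies so that one $\aux_{3}$ serves uniformly over all admissible $(u,v,w,a)$.
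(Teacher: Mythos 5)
Your overall plan is right, but two of the checks are off, and the first is a genuine obstruction.

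The decisive gap is in the hole lower bound~\ref{cond:distr}.\ref{i:distr.hole-lb}. For a horizontal wall $B=\rint{v}{w}$ with $w-v=l\in[\m,2\m)$, you propose to pass through with a step-$1$ path, i.e.\ to demand $X(d+k)=y^{*}$ for $k=1,\dots,l$, which has probability $2^{-l}\le 2^{1-\m}$. This decays far too fast. The target is $(v-u+1)^{\hxp}\h(\R_{1})$, and with $\hxp=0.015$, $\R_{1}=2\m$, $\lg=2^{1/2}$ one gets $\h(\R_{1})=\aux_{3}2^{-0.015\m}$, so the right-hand side decays only like $2^{-c\m}$ for $c\approx 0.013$ (after absorbing $(v-u+1)^{\hxp}\le(2\slblb^{-2}\bub_{1})^{\hxp}$). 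To make $2^{-\m}$ dominate this you would be forced to take $\aux_{3}\lesssim 2^{-\m}$, an exponentially small constant depending on $\m$. But the paper's choice protocol, stated just after Lemma~\ref{lem:exponents-choice}, fixes $\aux_{2}$ and $\aux_{3}$ \emph{before} $\R_{1}=2\m$, and several later lemmas (\ref{lem:emerg-hole-lb}, \ref{lem:all-compound-hole-lb}, \ref{lem:heavy-hole-lb}) actually need $\aux_{3}$ to be sufficiently \emph{large}. The paper's hole argument avoids this entirely by using large steps. It takes the vertical hole to be $\rint{b}{b+l\m}$, so each of the $l$ levels is crossed by an $x$-advance of roughly $\m$; the hop property means every window of length $\m$ contains both bits, so the existence of intermediate landing sites with $X=Y(w)$ is automatic, and the only random constraint is the single equality $X(b+l\m)=Y(w)$ at the end — probability $1/2$, not $2^{-l}$. (For a vertical wall the passage is a single edge: right by $w-v\le 2\m<3\m$, up by one, again probability $1/2$.) After subtracting small bad events, the paper gets a constant lower bound like $1/8$, which trivially dominates $\h(\R_{1})$ for $\R_{1}$ large.

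A smaller gap: you dismiss Condition~\ref{cond:distr}.\ref{i:distr.clean.2} as vacuous, but it is not. In the base mazery a point $\pair{i}{j}$ is lower-left trap-clean only when $X(i)=Y(j)$, so producing a $b$ in the middle third of $J$ with $\pair{a}{b}$ lower-left clean means producing a $b$ with $Y(b)=X(a)$. The paper's observation is that if no such $b$ existed, $Y$ would be constant $\ne X(a)$ on the middle third, an interval of length $\bub_{1}>\m$, creating a horizontal wall that was excluded. This short argument must be made explicitly. The remainder of your proposal — the parameter inequalities, the dependency clauses, the cover argument via greedy partition of constant-runs, the reachability construction by a greedy window, and the trap- and cleanness-probability checks — matches the paper's proof in substance.
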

 \begin{proof}
We will write \( \R=\R_{1} \) throughout the proof.
   \begin{enumerate}[1.]
   \item 
Almost all combinatorial and dependency conditions are satisfied trivially; here are
the exceptions.
Condition \ref{cond:distr}.\ref{i:distr.cover} says that
an interval \( I \) surrounded by maximal external intervals of size
\( \ge\bub \) is spanned by a sequence of (vertical) neighbor walls.
Since \( I \) is a surrounded by maximal external intervals, there is a wall 
of size \( \m \) at the beginning of \( I \) and one of size \( \m \)
at the end of \( I \).
If \( |I|<2\m \) then \( I \) is itself a wall.
Otherwise, we start with the wall \( J_{1} \) of size \( \m \) at the beginning,
and build a 
sequence of disjoint walls \( J_{1}, J_{2},\dots \) of size \( \m \)
recursively with each \( J_{i}  \) at a distance \( \ge\m \) from the right end
of \( I \).
The next wall is chosen always to be the closest possible satisfying these conditions.
Finally, we add the wall of size \( \m \) at the end of \( I \).
Since every point is by definition strongly clean in all one-dimensional
senses, the sequence we built is a spanning sequence of neighbor walls.

In Condition \ref{cond:distr}.\ref{i:distr.clean.2},
only lower left cleanness is not automatic.
Suppose that a rectangle \( I\times J \) with
right-closed \( I,J \) with \( |I|, |J|\ge 3\bub_{1} \) contains
no horizontal wall and no trap, and \( a \) is a 
point in the middle third of \( I \).
We must show that there is an integer \( b \) in the middle 
third of \( J \) such that the point \( \pair{a}{b} \) is lower left 
clean.
This condition would now only be violated
if \( Y(b)\ne X(a) \) for all \( b \) in the middle third.
But since \( \bub_{1}>\m \), this would create a horizontal wall, 
which was excluded.
The same argument applies if we vary \( a \) instead of \( b \).

\item Let us verify the reachability condition.
Let \( u<v \), \( v=\pair{v_{0}}{v_{1}} \)
be points with the property that there is a \( v'=\pair{v'_{0}}{v'_{1}} \)
with \( 0\le v_{d}-v'_{d}<1 \) for \( d=0,1 \), and
\( \slope(u, v') \ge \slb_{x1}=1/2\m \), \( 1/\slope(u, v') \ge \slb_{y1}=\m \).
If they are the starting and endpoint of a (bottom-open or left-open)
rectangle that is a hop, then the condition requires \( u \leadsto v \).
The hop property implies \( X(v_{0})=Y(v_{1}) \): indeed, otherwise
the rectangle would not be inner clean.

Without loss of generality, let \( u=\pair{0}{0} \), \( v=\pair{a}{b} \),
\( v'=\pair{a'}{b'} \).
Now the slope requirements mean \( \m \le a'/b' \le 2\m \), hence
\( \m < a/(b-1) \),  \(  (a-1)/b < 2\m \), and so
 \begin{align*}
   \m(b-1)< a \le 2\m b.
 \end{align*}
It is then easy to see that we can choose a sequence
 \begin{align*}
    0\le  s_{1}<s_{2}<\dots <s_{b-1}<a
 \end{align*}
with the properties
 \begin{align*}
             s_{1}&\le 2\m,
\\  s_{i}+\m &\le s_{i+1} \le s_{i}+2\m,
\\  s_{b-1}+\m & < a \le s_{b-1}+2\m.   
 \end{align*}
Indeed, if the \( s_{i} \) are all made minimal
then \( s_{b-1}+\m= \m(b-2)+\m < a \).
On the other hand, if all these values are maximal then
\( s_{b-1}+2\m = 2\m(b-1)+2\m \ge a \).
Choosing the values in between we can satisfy both inequalities.

The hop requirement implies that there is no vertical wall in
\( \rint{0}{a} \), that is
there are no \( \m \) consecutive numbers in this interval
with identical values of \( X(i) \).
It also implies \( X(a)=Y(b) \).
Let us choose \( a_{i} \) from the interval \( \rint{s_{i}}{s_{i}+\m} \) such
that \( X(a_{j})=Y(j) \).
By construction we have \( 0<a_{1}\le 2\m \), \( 0<a_{i+1}-a_{i} < 3\m \),
\( 0<a-a_{b-1}\le 2\m \).
Thus the points \( \pair{a_{j}}{j} \) form a path in the graph \( \cG=\cG_{3\m}(X,Y) \)
from \( \pair{0}{0} \) to \( v \).

\item
Since there are no traps, the trap probability upper bound is satisfied
trivially.

\item
Consider the probability bounds for barriers.
Since the rank is the same for both horizontal and vertical barriers, it is
sufficient to consider vertical ones.
Clearly \( \p(\r,l)=0 \) unless \( \r=\R \), \( l\ge\m \), in which case it is 
\( 2^{-l} \); hence \( \sum_{l} \p(\R,l)\le 2^{-\m+1} \).
For \( \p(\R) \ge \sum_{l} \p(\R,l) \), we need:
 \begin{align*}
     2^{-\m+1}&\le \aux_{2} \R^{-\aux_{1}} \lg^{-\R},
 \end{align*}
which holds for \( \R \) sufficiently large, since \( \lg=2^{1/2} \)
by~\eqref{eq:lg-def}. 

\item Consider the bounds~\ref{cond:distr}.\ref{i:distr.ncln} on the probability
that some point in not clean in some way.
Only the lower left trap-cleanness is now in question, so only the bound
\begin{align*}
\Pbof{ v \txt{ is not trap-clean in }Q \mid Y=y} &\le\ncln_{\t}
 \end{align*}
must be checked.
The event happens here only if \( X(v_{0})\ne y(v_{1}) \): its probability, \( \half \), is
now equal to \( \ncln_{\t} \) by definition, so the inequality holds.
The argument is the same when horizontal and vertical are exchanged.

\item
Consider Condition \ref{cond:distr}.\ref{i:distr.hole-lb} for 
a vertical wall, with the parameters \( a,u,v,w \).
With our parameters, it gives \( b = a + \cei{(v-u)/2\m} \), and
events \( F(u, v;\, a,d) \) and \( E = E(u, v, w;\, a) \).
By definition of cleanness now, the lower left corner of any rectangle is
automatically V-clean in it.
The requirement is
 \begin{equation*}
   \Prob\paren{E \mid X = x} \ge (v-u+1)^{\hxp} \h(\r).
 \end{equation*}
Since \( B \) is a wall we have \( X(v+1)=\dots=X(w) \).

Let \( A \) denote
the event that interval \( \rint{a}{b} \) contains a horizontal barrier.
Then the probability of \( A \) is bounded by \( 1/8 \) if \( \R=2\m \) is sufficiently large.
Indeed, 
\begin{align*}
 b-a\le \slblb^{-2}\bub = 4\m^{2}\lg^{2\bubxp\m},
  \end{align*}
while the probability of a barrier at a point is \( \le 2^{-\m} \).
Via the union bound, we bound the probability by the product of these two numbers.

Let \( E' \) be the event that \( Y(b+1)=X(w) \), further \( b>a\imp Y(b)=X(v) \).
It has probability at least \( \fourth \).
This event implies that
\( \pair{v}{b} \) is trap-clean in \( Q(b) \), so \( Q(b) \) becomes inner V-clean.
It also implies a horizontal hole \( \rint{b}{b+1} \) fitting the
wall \( B \), as we can simply go from \( \pair{v}{b} \) to \( \pair{w}{b+1} \) on an edge
of the graph \( \cG \).

Lemma~\ref{lem:pre-hole} implies that \( Q(b) \) satisfies the slope conditions, so
\( E'\setminus A \) implies also event \( F(u,v;\, a,b) \), so also event \( E(u,v,w;\, a) \).
So \( \fourth-\frac{1}{8} \) lowerbounds the probability of event
\( E' \setminus A \sbsq E \).
It is sufficient to lowerbound therefore \( \frac{1}{8} \) by
\begin{align*}
 (v-u+1)^{\hxp} \h(\R)=(v-u+1)^{\hxp}\lg^{-\hxp\R}.
 \end{align*}
Using the value \( \bub=\lg^{\bubxp\R} \) and the bound 
\( v-u\le\slblb^{-2}\bub \), it is sufficient to have
 \begin{align*}
      \aux_{3}(2\slblb^{-2}\bub)^{\hxp}\lg^{-\hxp\R_{1}}  
      &=\aux_{3}(8\R^{2})^{\hxp}\lg^{-\hxp\R(1-\bubxp)} 
      \le 1/8,
 \end{align*}
which is true with \( \R \) sufficiently large.

\item
Consider now the probability lower bound on passing a horizontal wall
of size \( l \), where \( \m\le l < 2\m \),
that is Condition~\ref{cond:distr}.\ref{i:distr.hole-lb}.
This condition, for our parameters, defines
\( b= a + \m(v-u) \).
It assumes that \( Y=y \) is fixed in such a way that 
\( B \) is a horizontal wall of rank \( \r \) with body \( \rint{v}{w} \).
The requirement is
\( \Prob\paren{E \mid Y = y} \ge (v-u+1)^{\hxp} \h(\r) \).
Now, since \( B \) is a wall we have \( Y(v+1)=\dots=Y(w) \).

Let \( A_{1} \) denote the event that there is a vertical wall in \( \rint{a}{b} \).
Let \( E' \) be the event that \( b>a\imp X(b)=Y(v) \).
It implies that \( Q(b) \) is inner H-clean.
The event \( E'\setminus A_{1} \) implies \( F(u,v;\, a,b) \).

Let \( A_{2} \) denote the event that there is an interval 
\( I\sbsq \rint{b}{b + l \m} \) of size \( \m \) with \( X(i)\ne Y(w) \) 
for all \( i\in I \).
Let \( E'' \) denote the event \( X(b+l\m)=Y(w) \).
Then \( E''\setminus A_{2} \) implies that 
\( \rint{b}{b+l\m} \) is a vertical hole fitting the wall \( B \).
Indeed, just as in the proof of the reachability condition, already the fact
that there is no interval \( I\sbsq \rint{b}{b+l\m} \) of size \( \m \) with 
\( X(i)\ne Y(w) \) for all \( i \), and that the pair
of points \( \pair{b}{v} \), \( \pair{b+l\m}{w} \) satisfies the slope conditions,
implies that the second point is reachable from the first one.

So the event \( E'\cap E''\setminus(A_{1}\cup A_{2}) \) implies
\( E(u,v,w;\, a) \).
Let us upperbound the probability that this does not occur.
Since events \( E',E'' \) are independent, the probability that \( E'\cap E'' \)
does not occur is at most \( \frac{3}{4} \).
The probability of \( A_{1}\cup A_{2} \) can be bounded by 
\( \frac{1}{8} \), just as in the case of passing a horizontal wall.
Thus we found \( \Prob(E)\ge 1-\frac{7}{8}=\frac{1}{8} \).
It is sufficient to lowerbounded this by \( (v-u+1)^{\hxp}\aux_{3}\lg^{-\hxp\R} \). 
So we will be done if
 \begin{align*}
 (2\slblb^{-2}\bub)^{\hxp}\aux_{3}\lg^{-\hxp\R} 
  =  (8\R^{2})^{\hxp}\aux_{3}\lg^{-\hxp\R(1-\bubxp)}\le 1/8,
 \end{align*}
which holds if \( \R \) is sufficiently large.
 \end{enumerate}
 \end{proof}

\section{Application to the theorem}\label{sec:main-lemma}

Theorem~\ref{thm:main} follows from Lemma~\ref{lem:base} and the following
theorem:

\begin{theorem}\label{thm:mazery}
In every mazery with a sufficiently large rank lower bound 
there is an infinite path starting from the origin,
with positive probability.
\end{theorem}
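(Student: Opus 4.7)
The plan is to construct an infinite nested sequence of mazeries $\cM = \cM^{1}, \cM^{2}, \dots$ by repeated application of the scale-up operation announced in Section~\ref{sec:plan}, and then extract an infinite path from the combined structure. The underlying intuition is standard for this sort of multi-scale renormalization: walls of $\cM^{k+1}$ correspond to dense clusters of walls of $\cM^{k}$ that cannot be bypassed by a single hole, and traps of $\cM^{k+1}$ are configurations of lower-level walls and traps whose combined effect still blocks paths. Cleanness at level $k+1$ should be inherited from strong cleanness at level $k$, further restricted so that the point sits away from any new (coarser) wall or trap.

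First, I would verify that the scale-up operation preserves the combinatorial parts of Condition~\ref{cond:distr}: the dependency structure, the covering property that external intervals are spanned by sequences of neighbor walls, and the existence of clean points in sufficiently large wall-free intervals. By design, the level-$(k+1)$ combinatorics mirror the level-$k$ combinatorics after coarsening, so these are essentially bookkeeping. The only combinatorial statement requiring genuine work is the reachability condition, which is exactly Lemma~\ref{lem:approx} (Approximation): a level-$(k+1)$ hop satisfying the slope conditions can actually be traversed in the underlying graph $\cG$. Here one must use the level-$k$ mazery structure inside the hop to build a path that crosses every level-$k$ wall encountered, via a suitable hole furnished by Condition~\ref{cond:distr}.\ref{i:distr.hole-lb}.

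Second, I would fix parameters $\slblb_{k}, \slb_{x,k}, \slb_{y,k}, \R_{k}, \bub_{k}$ and the probability upper bounds at each level so that the probability bounds of Condition~\ref{cond:distr} also propagate through the scale-up. The quantitative core is that $\p_{k}(\r)$ decays in $\r$ fast enough (essentially $\lg^{-\r}$ with polynomial correction) to survive the coarsening, while $\R_{k}\to\infty$ quickly enough that the total probability, summed over all scales, of the origin meeting a wall or trap or being non-clean is bounded by a constant strictly less than $1$. With this in hand, the positive probability event
$$ F = \bigcap_{k\ge 1} F_{k}, \qquad F_{k} = \{\text{no level-}k\text{ wall or trap meets the origin, which is upper-right clean at level }k\}, $$
is nonempty. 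On $F$, at every scale the origin lies at the start of an extended sequence of level-$k$ neighbor walls whose intervening hops can be traversed using the reachability condition and whose walls can be passed through level-$k$ holes; concatenating these crossings yields an infinite path in $\cG$.

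The main obstacle is Lemma~\ref{lem:approx}. Showing that a level-$(k+1)$ hop is traversable requires carefully threading the (a.s.\ finitely many) level-$k$ walls it contains: each must be crossed through a hole lying in a rectangle compatible with the slope conditions, the successive holes must be chained so that one arrives at each hole's lower-left corner at a point that is H-clean (so that the hole is ``good as seen from'' that point), and inner cleanness around each crossing must be arranged so the next sub-hop in turn begins at a clean endpoint. Once this geometric choreography is controlled, everything else is, in spirit, an exercise in pushing the probability bounds through the recursion and summing the resulting tails.
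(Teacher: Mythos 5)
Your high-level architecture matches the paper's: construct a sequence of mazeries by scale-up, propagate the combinatorial and probabilistic conditions through the recursion, identify a positive-probability good event, and extract an infinite path. But two of your specific steps contain genuine errors.

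First, your good event $F = \bigcap_k F_k$ is defined too weakly. You take $F_k$ to be the event that no level-$k$ wall or trap \emph{meets the origin} and the origin is upper-right clean at level $k$. This is not enough: even if no wall or trap contains the origin, one could sit at distance $2$ from it at every level, and then no rectangle starting at the origin satisfying the slope conditions would be a hop at any level. The paper's event is strictly stronger: it excludes $\cF_k(\bub_{k+1})$, meaning the entire square $\clint{0}{\bub_{k+1}}^{2}$ (a box of the \emph{next} scale's size) is free of level-$k$ walls and traps. This stronger condition is what makes the probability sum $\sum_k \Prob\bigparen{\cF_k(\bub_{k+1})\cup(\cQ_{k+1}\setminus\cQ_k)} < 1/4$ worth proving in Lemma~\ref{lem:main}, and it is also what makes the path extraction work.

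Second, your path-extraction step inverts the logic of the construction. You propose to "concatenate crossings" of level-$k$ walls through level-$k$ holes on the event $F$. But the point of excluding $\cF_k(\bub_{k+1})$ is precisely that there are \emph{no} level-$k$ walls or traps anywhere in $\clint{0}{\bub_{k+1}}^{2}$, so there is nothing to cross. On the good event, for each $k$ one finds a clean point $v$ near $\pair{\bub_{k+1}}{\slb_{x,k}\bub_{k+1}}$ (by Remark~\ref{rem:distr}.\ref{i:middle-third}), checks that $\Rect(u,v)$ is a hop of $\cM^{k}$ and satisfies the slope conditions at level $k$ (this uses the inequalities~\eqref{eq:slb-lb},~\eqref{eq:slb-ub} and $\bub_{k}/\bub_{k+1}<\slblb^{2}/2$, none of which your proposal mentions), and then invokes Condition~\ref{cond:distr}.\ref{i:distr.reachable} \emph{once}, for $\cM^{k}$, to get $u\leadsto v$. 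The wall-crossing and hole-chaining machinery you invoke is where the proof of Lemma~\ref{lem:approx} (Approximation) lives; it is used to \emph{establish} the reachability condition for the scaled-up mazery, not to extract the final path from the origin. Conflating these two roles is the main conceptual gap in the proposal.
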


The proof will use the following definitions.

\begin{definition}
In a mazery \( \cM \), let \( \cQ \) be the event that the origin
 \( \pair{0}{0} \) is not upper right clean,
and \( \cF(n) \) the event that the square \( \clint{0}{n}^{2} \) contains some
wall or trap.
\end{definition}

 \begin{lemma}[Main]\label{lem:main}
Let \( \cM^{1} \) be a mazery.
If its rank lower bound
is sufficiently large then a sequence of mazeries \( \cM^{k} \), \( k>1 \) can
be constructed on a common probability space, sharing the graph \( \cG \) 
of \( \cM^{1} \) and the parameter \( \slblb \), and satisfying 
\begin{align}
\nonumber     \slb_{j, k+1} &\ge\slb_{j,k} \txt{ for } j=x,y ,
\\\nonumber \bub_{k}/\bub_{k+1} &< \slblb^{2}/2,
\\\label{eq:main.big-sum}
 1/4&>\sum_{k=1}^{\infty}\Prob\bigparen{\cF_{k}(\bub_{k+1})\cup(\cQ_{k+1}\setminus \cQ_{k})}.
 \end{align}
 \end{lemma}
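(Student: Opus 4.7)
The plan is to construct the sequence inductively: given $\cM^k$, define $\cM^{k+1}$ by the scale-up operation of Section~\ref{sec:plan}. In the scaled-up structure, an $\cM^{k+1}$-wall arises from a maximal sequence of $\cM^k$-walls that is too dense to pass individually, with its rank obtained by incrementing (roughly summing) the ranks of the constituent walls; an $\cM^{k+1}$-trap arises from a local configuration of $\cM^k$-walls and traps that blocks movement across a rectangle of size $\bub_{k+1}$; and $(k{+}1)$-cleanness of a point is inherited from $k$-cleanness together with the non-occurrence of nearby walls or traps at scale $\bub_{k+1}$. The slope parameters $\slb_{x,k+1},\slb_{y,k+1}$ are adjusted slightly upward from level $k$, while $\slblb$ is held fixed, and $\bub_{k+1}$ is chosen to grow geometrically so that $\bub_{k}/\bub_{k+1}<\slblb^{2}/2$.

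First I would check that the combinatorial and dependency conditions of Condition~\ref{cond:distr} transfer through scale-up. Most parts (the dependency structure, the covering of external intervals by neighbor walls, the one-dimensional cleanness requirements, the clean-point existence in large rectangles) follow essentially by construction from the analogous property at level $k$, and this is what is claimed in the introduction to be proved in Section~\ref{sec:plan}. The substantive combinatorial content is Reachability (Condition~\ref{cond:distr}.\ref{i:distr.reachable}) at level $k+1$: given two points satisfying the level-$(k{+}1)$ slope conditions inside an inner-clean rectangle free of $(k{+}1)$-walls and $(k{+}1)$-traps, one must navigate in $\cG$. The absence of $\cM^{k+1}$-walls means the intervening $\cM^k$-walls form only ``sparse, low rank'' sequences, each of which can be pierced through a hole guaranteed by Condition~\ref{cond:distr}.\ref{i:distr.hole-lb} at level $k$; the absence of $\cM^{k+1}$-traps means residual $\cM^k$-traps can be detoured via clean points provided by Condition~\ref{cond:distr}.\ref{i:distr.clean.2}. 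Executing this navigation rigorously is the Approximation Lemma~\ref{lem:approx}, whose proof is deferred to Section~\ref{sec:approx-proof} and is the main combinatorial obstacle.

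Second I would choose the parameter schedules ($\R_k,\bub_k,\ncln_{\w,k},\ncln_{\t,k},\tub_k$, and the constants $\aux_{1},\aux_{2},\aux_{3},\bubxp,\hxp$) in Section~\ref{sec:params} so that the probability bounds of Condition~\ref{cond:distr}.\ref{i:distr.bounds} survive scale-up. A new wall of rank $\r$ in $\cM^{k+1}$ is produced from a cluster of old walls whose ranks sum to about $\r$; by independence of the intervening strips and the bound $\p(\r_i)\le\aux_{2}\r_i^{-\aux_{1}}\lg^{-\r_i}$, a careful convolution estimate yields the same form $\p(\r)\le\aux_{2}\r^{-\aux_{1}}\lg^{-\r}$, provided the exponent $\bubxp$ linking $\bub_k$ to $\R_k$ is large enough that the polylogarithmic correction factors $\bub_{k}^{O(1)}$ are absorbed by $\lg^{-\hxp\R_k}$ in the hole-probability bound. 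The trap bound $\tub_{k+1}$ and the cleanness-failure bounds $\ncln_{\w,k+1},\ncln_{\t,k+1}$ each decrease geometrically in $k$ because any new ``bad'' event at level $k+1$ requires several independent $k$-level bad events to line up in a $\bub_{k+1}$-window. Here the main obstacle is the simultaneous self-consistency of this system: slope growth, rank growth, scale growth, and all probability bounds are coupled through the scale-up, and the parameter choices must satisfy a system of inequalities all at once, so I would derive explicit recurrences and then verify they admit a schedule converging from the base mazery of Example~\ref{xmp:base}.

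Finally, with these parameters in hand, I would sum the failure probabilities of \eqref{eq:main.big-sum}. By the union bound,
\[
 \Prob(\cF_{k}(\bub_{k+1}))\le \bub_{k+1}^{2}\Bigl(\tub_{k}+\sum_{\r\ge \R_{k}}\p(\r)\Bigr),
\]
and since $\sum_{\r\ge\R_{k}}\p(\r)\lesssim \lg^{-\R_{k}}$ with $\R_{k}$ growing fast enough (chosen so that $\lg^{\R_{k}}$ beats $\bub_{k+1}^{2}$ by a geometric factor), the terms decay geometrically in $k$. The increment $\Prob(\cQ_{k+1}\setminus\cQ_{k})$ is controlled by the cleanness-failure bounds $\ncln_{\w,k+1},\ncln_{\t,k+1}$ applied to the origin and the new obstacles introduced at level $k+1$, and likewise decays geometrically. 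Choosing $\R_{1}$ sufficiently large front-loads the entire tail below $1/4$, giving the lemma. The outcome of this construction is exactly what Theorem~\ref{thm:mazery} (and hence Theorem~\ref{thm:main}) needs: Borel--Cantelli applied to the summable events $\cF_{k}(\bub_{k+1})\cup(\cQ_{k+1}\setminus \cQ_{k})$ yields, with probability bounded below, an origin that is upper-right clean in every $\cM^{k}$ with an unbounded clean quadrant at every scale, which by reachability produces the desired infinite path in $\cG$.
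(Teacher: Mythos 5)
Your proof follows the same strategy as the paper's: inductively apply the scale-up of Section~\ref{sec:plan}, verify (via the lemmas of Sections~\ref{sec:plan}--\ref{sec:bounds}) that each part of Condition~\ref{cond:distr} is preserved, and establish~\eqref{eq:main.big-sum} by summing per-level failure probabilities (Lemma~\ref{lem:0.6}). That is exactly the decomposition the paper uses in Section~\ref{sec:main-proof}, and you correctly single out the Approximation Lemma and the self-consistent parameter schedule as the two main obstacles.

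A few descriptive points are off, though none is fatal to the approach. The cleanness-failure bounds $\ncln_{\w,k},\ncln_{\t,k}$ do not ``decrease geometrically''; Definition~\ref{def:new-ncln} sets $\ncln_i^{*}=\ncln_i+\bub^{*}\T^{-1}$, so they \emph{increase}, and the content of Lemma~\ref{lem:ncln-ub} (together with~\eqref{eq:strong-ncln-ub}) is that the geometrically summable increments keep them below the fixed ceilings $0.05$ and $0.55$. Your picture of the new walls as ``a cluster of old walls whose ranks sum to about $\r$'' captures only compound walls; the scale-up also creates emerging walls (Step~\ref{defstep:emerg}) whose rank $\hat\R=\txp'\R$ is assigned outright and which exist precisely to pair with the new correlated and missing-hole trap types, so that both the trap bound $\tub^{*}$ and the wall bound $\p(\r)$ can be recovered — this part is not a convolution estimate. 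Finally, the last paragraph invokes Borel--Cantelli, but the way~\eqref{eq:main.big-sum} is used in Theorem~\ref{thm:mazery} is a plain union bound: $\Prob(\cQ_1)+\sum_k\Prob\bigparen{\cF_k(\bub_{k+1})\cup(\cQ_{k+1}\setminus\cQ_k)}<0.4$, so with probability $\ge 0.6$ none of the events occurs, and compactness then gives the infinite path.
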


Most of the paper will be taken up with the proof of this lemma.
Now we will use it to prove the theorem.

\begin{proof}[Proof of Theorem~\protect\ref{thm:mazery}]
Let \( u=\pair{0}{0} \) denote the origin.
The mazery conditions imply \( \Prob(\cQ_{1})\le 0.15 \).
Let us construct the series of mazeries \( \cM^{k} \) satisfying the conditions
of Lemma~\ref{lem:main}.
These conditions imply that the probability that one of the events
 \( \cF_{k}(\bub_{k+1}) \), \( \cQ_{k+1}\setminus \cQ_{k} \) hold is less than
 \( 0.25 \).
Hence the probability that
 \( \bigcup_{k=1}^{\infty}\cQ_{k}\cup\cF_{k}(\bub_{k+1}) \) holds is
 at most \( 0.4 \).
With probability at least \( 0.6 \) none of these events holds.
Assume now that this is the case.
We will show that there is an infinite number of points \( v \) of the graph
reachable from the origin.
The usual compactness argument implies then an
infinite path starting at the origin.

Under the assumption, 
in all mazeries \( \cM^{k} \) the origin \( u \) is upper right clean,
and the square \( \clint{0}{\bub_{k+1}}^{2} \) contains no walls or traps.
Let \( \slb_{x}=\slb_{x,k} \), \( \slb_{y}=\slb_{y,k} \).
Consider the point \(\pair{a}{b}=\pair{\bub_{k+1}}{\slb_{x}\bub_{k+1}} \).
Then the square \( \pair{a-3\bub_{k}}{b}+\clint{0}{3\bub_{k}}^{2} \) is inside
the square \( \clint{0}{\bub_{k+1}}^{2} \), and contains no walls or traps.
The mazery conditions imply that then its middle, the square
\( \pair{a-2\bub_{k}}{b+\bub_{k}}+\clint{0}{\bub_{k}}^{2} \) contains a clean
point \( v=\pair{v_{0}}{v_{1}} \).
By its construction, the rectangle \( \Rect(u,v) \) is a hop.
Let us show that 
it also satisfies the slope lower bounds of mazery \( \cM^{k} \), and therefore by
the reachability condition, \( u\leadsto v \).
Indeed, by its construction, \( v \) is above the
line of slope \( \slb_{x} \) starting from \( u \).
On the other hand, using the bound
on \( \bub_{k}/\bub_{k+1} \) of Lemma~\ref{lem:main} and \( \slblb\le 1/2 \):
\begin{align*}
 \slope(u,v)&=\frac{v_{1}}{v_{0}} 
  \le \frac{\slb_{x}\bub_{k+1}+2\bub_{k}}{\bub_{k+1}-2\bub_{k}}
  \le \frac{\slb_{x}+\slblb^{2}}{1-\slblb^{2}} 
\\ &\le \slb_{x}\frac{1+\slblb}{1-\slblb^{2}} =\frac{\slb_{x}}{1-\slblb} \le 1/\slb_{y}
 \end{align*}
by~\eqref{eq:slb-ub}.
\end{proof}

\begin{remark}\label{rem:pos-prob-small}
  It follows from the proof that if we use the base mazery of Example~\ref{xmp:base} then
the probability of the existence of an infinite path in Theorem~\ref{thm:mazery} 
converges to 1 as \( \m\to\infty \).
Indeed in this case \( \Prob(Q_{1})=0 \), and the sum in~\eqref{eq:main.big-sum} converges to 0
as \( \m\to\infty \).
\end{remark}

 \section{The scaled-up structure}\label{sec:plan}

In this section, we will define the scaling-up operation 
\( \cM \mapsto \cM^{*} \) producing \( \cM^{k+1} \) from  \( \cM^{k} \); however,
we postpone to Section~\ref{sec:params}
the definition of several parameters and probability bounds for \( \cM^{*} \).

 \subsection{The scale-up construction}\label{subsec:plan.constr}

Some of the following parameters will be given values only later, but they
are introduced by name here.

 \begin{definition}\label{def:f-g}
The positive parameters \( \bub,\g,\f \) will be different for each level of the
construction, and satisfy
 \begin{equation}\label{eq:bub-g-f}
   \begin{aligned}
         \bub / \g &= (\g/\f)^{1/2} \ll \slblb^{4},
\\    \f &\ll \bub^{*}.
   \end{aligned}
 \end{equation}
More precisely the \( \ll \) is understood here as
\( \lim_{\R\to\infty}\f/\bub^{*}=0 \).
 \end{definition}
Here is the approximate meaning of these parameters:
Walls closer than \( \f \) to each other, and
intervals larger than \( \g \) without holes raise alarm,
and a trap closer than \( \g \) makes a point unclean.
(The precise equality of the quotients above is not crucial for the proof, but
is convenient.) 

 \begin{definition}\label{def:new-slb}
Let \( \slb^{*}_{i} = \slb_{i} + \slopeincr\slblb^{-3}\bub/\g \)
for \( i=x,y \), where \( \slopeincr \)
is a constant to be defined later (in the proof of Lemma~\ref{lem:three}).
 \end{definition}

For the new value of \( \R \) we require
 \begin{equation}\label{eq:R-cond}
   \R^{*} \le 2 \R - \log_{\lg} \f.
 \end{equation}

 \begin{definition}[Light and heavy]
Barriers and walls of rank lower than \( \R^{*} \) are called \df{light}, the
other ones are called \df{heavy}.
 \end{definition}

Heavy walls of \( \cM \) will also be walls of \( \cM^{*} \) (with some exceptions given
below).
We will define walls only for either \( X \) or \( Y \), but it is understood
that they are also defined when the roles of \( X \) and \( Y \) are reversed.

The rest of the scale-up construction will be given in the following steps.

 \begin{defstep}[Cleanness]\label{defstep:clean}
For an interval \( I \), its right endpoint \( x \) will be called
clean in \( I \) for \( \cM^{*} \) if 
 \begin{enumerate}[--]
  \item It is clean in \( I \) for \( \cM \).
  \item The interval \( I \) contains no wall
of \( \cM \) whose right end is closer to \( x \) than \( \f/3 \).
 \end{enumerate}
We will say that a point is strongly clean in \( I \) for \( \cM^{*} \) if
it is strongly clean in \( I \) for \( \cM \) and \( I \) contains no barrier
of \( \cM \) whose right end is closer to it than \( \f/3 \).
Cleanness and strong cleanness of the left endpoint is defined similarly.

Let a point \( u \) be a starting point or endpoint of a rectangle \( Q \).
It will be called trap-clean in \( Q \) for \( \cM^{*} \) if
 \begin{enumerate}[--]
  \item It is trap-clean in \( Q \) for \( \cM \).
  \item Any trap contained in \( Q \) is at a distance \( \ge \g \) from \( u \).
 \end{enumerate}
 \end{defstep}

 \begin{defstep}[Uncorrelated traps]\label{defstep:uncorrel}
A rectangle \( Q \) is called an \df{uncorrelated compound trap} if
it contains two traps with disjoint projections, with a distance of their
starting points at most \( \f \), and if it is minimal among the rectangles
containing these traps.
 \end{defstep}
Clearly, the size of an uncorrelated trap is bounded by \( \bub+\f \).

 \begin{defstep}[Correlated trap]\label{defstep:correl}
  Let 
 \begin{equation}\label{eq:Lt}
 \L_{1}=29\slblb^{-1}\bub,\quad \L_{2}=9\slblb^{-1}\g.
 \end{equation}
(Choice motivated by the proof of Lemmas~\ref{lem:clean} and~\ref{lem:approx}.)
Let \( I \) be a closed interval with length \( \L_{i} \), \( i=1,2 \),
and \( b\in\bbZ_{+} \), with \( J=\clint{b}{b+5 \bub} \).
We say that event 
  \[
   \cL_{i}(X,Y,I,b)
  \]
  holds if \( I\times J \) contains at least four traps with disjoint \( x \) projections.
Let \( x(I),y(J) \) be given.
We will say that \( I \times J \) is a \df{horizontal correlated trap} of kind \( i \)
if \( \cL_{i}(X,Y,I,b) \) holds and
\begin{equation*}
  \Prob\paren{\cL_{i}(X,Y,I,b) \mid X(I) = x(I)} \le \tub^{2}.
\end{equation*}
Vertical correlated traps are defined analogously.
Figure 11 of~\cite{GacsWalks11} illustrates correlated traps.
 \end{defstep}

 \begin{remark}
In the present paper, traps of type 1 are used only in
Part~\ref{approx.u} of the proof of Lemma~\ref{lem:approx}.
 \end{remark}

 \begin{defstep}[Traps of the missing-hole kind]\label{defstep:missing-hole}
Let \( I \) be a closed interval of size \( \g \), let \( b \) be
a site with \( J = \clint{b}{b + 3\bub} \).
We say that event 
 \[
  \cL_{3}(X,Y,I,b)
 \]
holds if there is a \( b'>b+\bub \) such that
\( \rint{b+\bub}{b'} \) is the body of
a light horizontal potential wall \( W \), and no good vertical hole
(in the sense of Definition~\ref{def:holes}) 
\( \rint{a_{1}}{a_{2}} \) with \( \rint{a_{1}-\bub}{a_{2}+\bub} \sbsq I \) 
passes through \( W \).

Let \( x(I),y(J) \) be fixed.
We say that \( I \times J \) is a \df{horizontal trap of the missing-hole kind}  
if event \( \cL_{3}(X,Y,I,b) \) holds and 
 \begin{equation*}
 \Prob\bigparen{\cL_{3}(X,Y,I,b) \mid X(I) = x(I)} \le \tub^{2}.
 \end{equation*}
Figure 12 of~\cite{GacsWalks11} illustrates traps of the missing-hole kind.
 \end{defstep}
Note that the last probability is independent of the value of \( b \).

The value \( \L_{2} \) bounds the size of all new traps, and it is \( \ll\f \)
due to~\eqref{eq:bub-g-f}.

 \begin{defstep}[Emerging walls]\label{defstep:emerg}
We define some objects as barriers, and then designate some of the barriers (but
not all) as walls.

A vertical emerging barrier is, essentially, a horizontal interval over which
the conditional probability of a bad event \( \cL_{j} \) is not
small (thus preventing a new trap).
But in order to find enough barriers, the ends are allowed to be slightly extended.
Let \( x \) be a particular value of the sequence \( X \) over an 
interval \( I=\rint{u}{v} \).
For any \( u' \in \rint{u}{u+2\bub} \), \( v' \in \rint{v-2\bub}{v} \),
let us define the interval \( I'=\clint{u'}{v'} \).
We say that interval \( I \) 
is the body of a vertical \df{barrier} of the \df{emerging kind}, of type 
\( j\in\{1,2,3\} \) if the following inequality holds:
 \begin{equation*}
 \sup_{I'} \Prob\bigparen{\cL_{j}(x,Y,I',1) \mid X(I') = x(I')}  > \tub^{2}.  
 \end{equation*}
To be more explicit, for example interval \( I \) is an emerging barrier of
type \( 2 \) for the process \( X \)
if it has a closed subinterval \( I' \) of size \( \L_{2} \) within \( 2\bub \) of its
two ends, such that conditionally over the value of \( X(I') \), with
probability \( >\tub^{2} \), the rectangle
\(  I\times \clint{b}{b+5 \bub} \) contains four traps with disjoint \( x \) projections.
More simply, the value \( X(I') \) makes not too improbable (in terms of a randomly
chosen \( Y \)) for a sequence of closely placed traps to exist reaching horizontally across
\( I'\times \clint{b}{b+5 \bub} \).

Let 
 \[
  \L_{3}=\g.
 \]
Then emerging barriers of type \( j \) have length in 
\( \L_{j}+\clint{0}{4\bub} \).
Figure 13 of~\cite{GacsWalks11} illustrates emerging barriers.

We will designate some of the emerging barriers as walls.
We will say that \( I \) is a \df{pre-wall} of the emerging kind if also the
following properties hold:
 \begin{alphenum} 

  \item\label{i:emerg.hop}
Either \( I \) is an external hop of \( \cM \) or it is the union of a dominant
light wall and one
or two external hops of \( \cM \), of size \( \ge\bub \), surrounding it.

  \item\label{i:emerg.outside}
Each end of \( I \) is adjacent to either an external hop of size \( \ge\bub \)
or a wall of \( \cM \).
 \end{alphenum}
Figure 14 of~\cite{GacsWalks11} illustrates pre-walls.

\begin{sloppypar}
Now, for \( j=1,2,3 \), list all emerging pre-walls of type \( j \) in a sequence
\( \tup{B_{j1},B_{j2},\dots} \). 
First process the sequence \( \tup{B_{1 1},B_{1 2},\dots} \).
Designate \( B_{1 n} \) a wall if and only if it
is disjoint of all emerging pre-walls designated as walls earlier.
Then process the sequence \( \tup{B_{3 1},B_{3 2},\dots} \).
Designate \( B_{3 n} \) a wall if and only if it
is disjoint of all emerging pre-walls designated as walls earlier.
Finally process the sequence \( \tup{B_{2 1},B_{2 2},\dots} \) similarly.
\end{sloppypar}

To emerging barriers and walls, we assign rank 
 \begin{equation}\label{eq:emerg-rank}
  \hat\R > \R^{*}
 \end{equation}
to be determined later.

 \end{defstep}

 \begin{defstep}[Compound walls]\label{defstep:compound}
A \df{compound barrier} occurs in \( \cM^{*} \) for \( X \)
wherever barriers \( W_{1}, W_{2} \) occur (in this order) for \( X \)
at a distance 
\( d \le \f \), and \( W_{1} \) is light.
(The distance is measured
between the right end of \( W_{1} \) and the left end of \( W_{2} \).)
We will call this barrier a wall if \( W_{1},W_{2} \) are neighbor walls
(that is, they are walls separated by a hop).
We denote the new compound wall or barrier by
 \[
  W_{1} + W_{2}.
 \]
Its body is the smallest right-closed interval containing the bodies of
\( W_{j} \). 
For \( \r_{j} \) the rank of \( W_{j} \), we will say that the compound wall or
barrier in question has \df{type} 
 \[
   \ang{\r_{1},\r_{2},i},\text{ where } 
 i=
\begin{cases}
      d & \text{if } d\in\{0,1\},
\\  \flo{\log_{\lg}d} &\text{otherwise}.  
\end{cases}
 \]
Its rank is defined as
 \begin{equation}\label{eq:compound-rank}
  \r  = \r_{1} + \r_{2} - i.
 \end{equation}
Thus, a shorter distance gives higher rank.
This definition gives 
  \[
    \r_{1}+\r_{2}-\log_{\lg}\f\le \r \le \r_{1}+\r_{2}.
 \]
 Inequality~\eqref{eq:R-cond} will make sure that the rank of 
the compound walls is lower-bounded by \( \R^{*} \).

Now we repeat the whole compounding step, introducing compound walls and
barriers in which now \( W_{2} \) is required to be light.
The barrier \( W_{1} \) can be any barrier introduced until now, also a compound
barrier introduced in the first compounding step.
 \end{defstep}

The walls that will occur as a result of the compounding operation are 
of the type \( L +W \), \( W+L \), or \( (L+W)+L \), where \( L \) is a light wall
of \( \cM \) and \( W \) is any wall of \( \cM \) or an emerging wall of
\( \cM^{*} \).
Figure 15 of~\cite{GacsWalks11} illustrates the different kinds of compound barriers.
Thus, the maximum size of a compound wall is
 \[
   \bub + \f + (\L_{2}+4\bub) + \f + \bub < \bub^{*},
 \]
for sufficiently large \( \R_{1} \), where we used~\eqref{eq:bub-g-f}.

 \begin{defstep}[Finish]\label{defstep:finish}
The graph \( \cG \) does not change in the scale-up: \( \cG^{*}=\cG \).
Remove all traps of \( \cM \).

Remove all light walls and barriers.
If the removed light wall was dominant, remove also all other
walls of \( \cM \) (even if not light) contained in it.
 \end{defstep}

 \subsection{Combinatorial properties}\label{subsec:plan.prop}

The following lemmas are taken straight from~\cite{GacsWalks11}, and their
proofs are unchanged in every essential respect.

 \begin{lemma}\label{lem:distr.indep}
The new mazery \( \cM^{*} \) satisfies
Condition \ref{cond:distr}.\ref{i:distr.indep}.
 \end{lemma}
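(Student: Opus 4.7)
The plan is to verify each of the four sub-conditions (a)--(d) of Condition~\ref{cond:distr}.\ref{i:distr.indep} for the new mazery $\cM^{*}$ by going through each construction step of Section~\ref{subsec:plan.constr} and checking that the added objects respect the required functional dependencies. Since the graph is unchanged ($\cG^{*}=\cG$) and the Finish step only \emph{removes} objects (old traps, light walls, and walls contained in removed dominant light walls), the dependencies inherited from $\cM$ are preserved; so it suffices to check (a) for each of the three new trap kinds, (b) for emerging and compound barriers, (c) for the new strong cleanness, and (d) for the new trap-cleanness.

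First, for (a), consider a rectangle $Q=I\times J$. An uncorrelated compound trap starting at the lower-left corner of $Q$ is witnessed by two traps of $\cM$ with disjoint projections inside $Q$; by Condition~\ref{cond:distr}.\ref{i:distr.indep.trap} for $\cM$ each of these is a function of $(X(I),Y(J))$, hence so is their existence. Correlated traps and missing-hole traps are defined by the conjunction of an event $\cL_i(X,Y,I,b)$ or $\cL_3(X,Y,I,b)$ (counting traps or potential walls in the rectangle) with a conditional-probability bound of the form $\Prob(\cL_i\mid X(I)=x(I))\le\tub^{2}$. The first conjunct depends only on $(X(I),Y(J))$ by appealing to (a) and (b) for $\cM$; the second is a numerical property of the value $x(I)$ alone. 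Hence (a) holds for $\cM^{*}$.

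For (b), an emerging vertical barrier with body $I$ is determined by the condition $\sup_{I'}\Prob(\cL_j(x,Y,I',1)\mid X(I')=x(I'))>\tub^{2}$ where $I'$ ranges over certain subintervals of $I$; this is a property of the value $x=X(I)$ alone, hence a function of $X(\Body(E))$. Designation as a pre-wall additionally requires properties of surrounding hops/walls of $\cM$ (functions of $X$ possibly outside $I$), but this affects only wall-status and not barrier-status, matching the remark after Condition~\ref{cond:distr}.\ref{i:distr.indep.ur-clean}. A compound barrier $W_{1}+W_{2}$ has body equal to the smallest right-closed interval containing both bodies, and its being a barrier is the conjunction of $W_{1},W_{2}$ being barriers of the appropriate kinds; each of these is a function of $X$ on its own body by induction on the construction (for compound ranks built from earlier compounds) and by the emerging-barrier case, so the whole is a function of $X(\Body(W_{1}+W_{2}))$. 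For (c), new strong cleanness of an endpoint of $\rint{a}{b}$ is the conjunction of old strong cleanness (a function of $X(\rint{a}{b})$ by (c) for $\cM$) and the absence inside $\rint{a}{b}$ of a barrier of $\cM$ whose right end is within $\f/3$ of the endpoint; since any such barrier's body is contained in $\rint{a}{b}$ and, by (b) for $\cM$, is a function of $X$ on its body, this conjunct is also a function of $X(\rint{a}{b})$. Both conjuncts are monotone decreasing in $b-a$, so the required monotonicity is inherited.

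Finally, for (d), the new trap-cleanness of a corner $u$ in $Q=I\times J$ is the conjunction of old trap-cleanness in $Q$ (a function of $(X(I),Y(J))$ by (d) for $\cM$) and the absence of any trap of $\cM^{*}$ contained in $Q$ within distance $\g$ of $u$; by (a) just verified for $\cM^{*}$, each such trap is a function of $X$ and $Y$ on its own projections, which lie in $I$ and $J$, so the conjunct is a function of $(X(I),Y(J))$. Both conjuncts are monotone decreasing in $Q$ (with fixed lower-left corner, shrinking $Q$ can only remove traps and relax old trap-cleanness), so the monotonicity clauses, including the special case for the $\bub$-square, transfer as well. This exhausts the sub-conditions. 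The one point requiring care, and the main obstacle, is making sure the ``dependencies outside the body'' that arise in wall-status (as opposed to barrier-status) for emerging and compound walls do not affect the clauses that refer specifically to \emph{barrier}-status or to \emph{strong} cleanness; this is ensured by the explicit separation between barriers and walls in Defsteps~\ref{defstep:emerg} and~\ref{defstep:compound} and by the explicit remark after Condition~\ref{cond:distr}.\ref{i:distr.indep.ur-clean}.
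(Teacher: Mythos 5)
The paper does not actually give a proof of this lemma; it simply declares it ``corresponds to Lemma~4.1 of~\cite{GacsWalks11}'' and notes in the surrounding prose that such proofs ``are unchanged in every essential respect.'' So there is nothing in the paper itself against which to compare your argument line by line; instead your proposal supplies the verification the paper delegates to the earlier paper. Your approach --- walking through Defsteps~\ref{defstep:clean}--\ref{defstep:finish}, checking that each newly introduced object respects the functional-dependency and monotonicity clauses of (a)--(d), and then observing that the Finish step only removes objects --- is exactly the approach one expects, and the substance is correct.

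Two small imprecisions are worth flagging. First, in Defstep~\ref{defstep:clean} the second clause for trap-cleanness refers to traps of $\cM$ (not $\cM^{*}$), as is clear from the way this is used later in Lemma~\ref{lem:ncln-ub}; you wrote ``trap of $\cM^{*}$.'' The argument survives (both kinds of trap are functions of $\bigl(X(I),Y(J)\bigr)$), but the wording should match the construction. Second, the monotonicity part of (c) is not only ``decreasing in $b-a$'' but also requires the minimum to be reached at $b-a=\bub^{*}$; you should note that a barrier of $\cM$ whose right end lies within $\f/3$ of $b$ is wholly contained in $\rint{a}{b}$ once $b-a\ge\bub+\f/3$, and $\bub+\f/3<\bub^{*}$, so the new conjunct does stabilize before $\bub^{*}$ as required. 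Similarly, when you argue that the first conjunct of $\cL_{3}$ is a function of $\bigl(X(I),Y(J)\bigr)$, the cleanness and goodness of the holes involved rely on (c) and (d) for $\cM$ together with the stabilization-at-$\bub$ clause (this is why the $\bub$ margins appear in the definition of $\cL_{3}$), not just on (a) and (b); that appeal should be made explicit.
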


This lemma corresponds to Lemma~4.1 of~\cite{GacsWalks11}.

 \begin{lemma}\label{lem:cover}
The mazery \( \cM^{*} \) satisfies 
conditions~\ref{cond:distr}.\ref{i:distr.inner-clean}
and~\ref{cond:distr}.\ref{i:distr.cover}.
 \end{lemma}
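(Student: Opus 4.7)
The plan is to verify each of the two conditions by pulling them back to the corresponding conditions for $\cM$ (which hold by the inductive assumption that $\cM$ is a mazery) and by carefully tracking how walls and external intervals transform under the scale-up defined in Steps~\ref{defstep:clean}--\ref{defstep:finish}.

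For condition~\ref{cond:distr}.\ref{i:distr.inner-clean}, let $I$ be a maximal $\cM^{*}$-external interval of size $\geq \bub^{*}$ or starting at $-1$, and let $x$ be one of its endpoints. By Definition Step~\ref{defstep:clean}, $\cM^{*}$-cleanness of $x$ in $I$ splits into two requirements: (a) $x$ is $\cM$-clean in $I$; and (b) $I$ contains no $\cM$-wall with near-end within $\f/3$ of $x$. For (a), I would observe that since $\bub^{*} \gg \bub$, a subinterval of $I$ of size $\bub$ touching $x$ is itself $\cM$-external (any $\cM$-wall in it would survive as an $\cM^{*}$-wall, either directly if heavy or via compounding/emergence, contradicting $\cM^{*}$-externality of $I$), so Condition~\ref{cond:distr}.\ref{i:distr.inner-clean} for $\cM$ combined with the monotonicity clause of Condition~\ref{cond:distr}.\ref{i:distr.indep.left-clean} yields $\cM$-cleanness of $x$ in all of $I$. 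For (b), suppose such a near-end $\cM$-wall $L \subset I$ existed; then $L$ is necessarily light, and the $\cM^{*}$-wall $W^{*}$ lying adjacent to $I$ on the $x$ side is within distance at most $\f/3 < \f$ of the near end of $L$. By Definition Step~\ref{defstep:compound}, $L + W^{*}$ (or $W^{*} + L$) is then a compound $\cM^{*}$-barrier; tracking the designation rules one checks it becomes a compound $\cM^{*}$-wall, contradicting maximality of $I$.

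For condition~\ref{cond:distr}.\ref{i:distr.cover}, let $I$ be surrounded by maximal $\cM^{*}$-external intervals of size $\geq \bub^{*}$. I would first promote the surrounding intervals to maximal $\cM$-external intervals of size $\geq \bub$ (if one ignores the light $\cM$-walls that are allowed to live inside them but are removed in $\cM^{*}$), and apply Condition~\ref{cond:distr}.\ref{i:distr.cover} for $\cM$ to obtain a spanning sequence of $\cM$-neighbor walls $V_{1},V_{2},\dots$ of $I$. From this sequence I would build an $\cM^{*}$-spanning sequence: each heavy $V_{i}$ survives, possibly extended by the compounding operation into a wall of the form $L+W$, $W+L$, or $(L+W)+L$ together with adjacent light walls; runs of light $V_{i}$'s that do not get absorbed into such compounds are covered by the emerging walls introduced in Step~\ref{defstep:emerg} or by compounding with their heavy neighbors. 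The intervals between consecutive constructed $\cM^{*}$-walls are $\cM$-hops by construction, and they satisfy the sharper $\cM^{*}$-hop requirement because no $\cM$-wall can lie within $\f/3$ of their endpoints without having been absorbed into a compound $\cM^{*}$-wall, contradicting how the covering sequence was chosen.

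The main obstacle is the bookkeeping in the cover step: one must verify that after compounding and adding emerging walls, the resulting $\cM^{*}$-walls are really \emph{consecutive} neighbors with inner-clean, wall-free $\cM^{*}$-hops between them, and that no light $\cM$-wall is orphaned. This relies on the parameter inequalities $\bub/\g \ll \slblb^{4}$ and $\f \ll \bub^{*}$ from~\eqref{eq:bub-g-f} to guarantee that intervening hops are much longer than the $\f/3$ exclusion zones and that emerging walls have room to fit between heavy walls. Since the paper asserts that the argument is unchanged from Lemma~4.2 of~\cite{GacsWalks11}, the work is entirely one of inheritance under scale-up rather than the introduction of any new ingredient.
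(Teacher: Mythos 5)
The paper gives no in-line proof of this lemma; it is carried over by citation to Lemma~4.2 of~\cite{GacsWalks11}, whose proof is said to be unchanged. So the comparison is against the underlying argument the paper is relying on, not a written-out proof.

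Your sketch has the right overall strategy — pull both conditions back to $\cM$ and track how Steps~\ref{defstep:clean}, \ref{defstep:compound}, \ref{defstep:finish} and Lemma~\ref{lem:new-hop} transform walls and cleanness — but the cover part contains a step that would not survive scrutiny. You propose to "promote the surrounding intervals to maximal $\cM$-external intervals of size $\ge\bub$" by "ignoring" the light $\cM$-walls they are allowed to contain, and then apply Condition~\ref{cond:distr}.\ref{i:distr.cover} for $\cM$. This is not valid: a maximal $\cM^*$-external interval of size $\ge\bub^*$ is in general \emph{not} $\cM$-external — Lemma~\ref{lem:new-hop} says it may contain an entire sequence of dominant light $\cM$-walls at mutual distance $>\f$, and those walls genuinely break the hypothesis of the $\cM$-cover condition, which needs the flanking intervals to be honestly $\cM$-external. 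The repair is to locate, inside each flanking $\cM^*$-external interval, an $\cM$-external sub-interval of size $\ge\bub$ adjacent to $I$ (this exists because no $\cM$-wall can come within $\f/3$ of a hop's endpoint), apply the $\cM$-cover condition to the enlarged interval between these sub-intervals, and then chase which members of the resulting $\cM$-neighbor-wall sequence get absorbed into heavy, emerging, or compound $\cM^*$-walls and which isolated light ones vanish.

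Two smaller issues. In your inner-clean argument, the parenthetical claim that any $\cM$-wall in the size-$\bub$ sub-interval touching $x$ "would survive as an $\cM^{*}$-wall, either directly if heavy or via compounding/emergence" is misleading as a general statement: an isolated light $\cM$-wall does not survive; it survives only when, as in this specific position, it lies within $\f$ of the $\cM^*$-wall adjacent to $x$ and so is compounded with it by Step~\ref{defstep:compound} — which is exactly the mechanism you then invoke for clause~(b), so (a) and (b) are really the same argument. Finally, "tracking the designation rules one checks it becomes a compound $\cM^*$-wall" glosses over the requirement that the compound \emph{barrier} be designated a \emph{wall}, which by definition needs the two components to be neighbor walls separated by a hop; this needs an explicit check, not just inspection of the designation order.
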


This lemma corresponds to Lemma~4.2 of~\cite{GacsWalks11}.

 \begin{lemma}\label{lem:new-hop}
Suppose that interval \( I \) 
contains no walls of \( \cM^{*} \), and no wall of \( \cM \) closer to its ends than
\( \f/3 \) (these conditions are satisfied if it is a hop of \( \cM^{*} \)).
Then it either contains no walls of \( \cM \) or all walls of \( \cM \) in it are
covered by a sequence \( W_{1},\dots,W_{n} \) of dominant 
light neighbor walls of \( \cM \) separated from each other 
by external hops of \( \cM \) of size \( >\f \).

If \( I \) is a hop of \( \cM^{*} \) then either it is also a hop of \( \cM \) or the above
end intervals are hops of \( \cM \).
 \end{lemma}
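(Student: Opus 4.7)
The strategy is to reverse-engineer the scale-up construction of Section~\ref{subsec:plan.constr} and classify the walls of \( \cM \) in \( I \). Since \( I \) contains no wall of \( \cM^{*} \), no heavy wall of \( \cM \) can remain in \( I \), and no emerging or compound wall of \( \cM^{*} \) can form inside \( I \). By the Finish step (Defstep~\ref{defstep:finish}), the only way a wall \( V \) of \( \cM \) gets erased by the scale-up is if \( V \) is light, or if \( V \) is contained body-wise in a dominant light wall of \( \cM \) that itself is erased.

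I list the dominant light walls of \( \cM \) lying in \( I \) in order of position as \( W_{1},\ldots,W_{n} \) and claim that every wall of \( \cM \) in \( I \) is contained in one of them. Every heavy wall of \( \cM \) in \( I \) is handled directly by the dichotomy above; the containing dominant light wall must lie in \( I \) because walls have size \( \le\bub \ll \f/3 \) and every wall of \( \cM \) in \( I \) is at distance \( \ge\f/3 \) from the ends by hypothesis. For a light non-dominant wall \( V \) in \( I \), I would invoke Condition~\ref{cond:distr}.\ref{i:distr.cover} to place \( V \) into a cluster of neighbor walls bounded by maximal external intervals of size \( \ge\bub \); using the fact that dominance of a wall \( W \) forces any other wall within distance \( \bub \) of \( W \) to have its body contained in \( W \) (since the external intervals around \( W \) cannot intersect any wall), one concludes that such a cluster collapses to a single dominant wall with any other members nested inside its body. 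This clustering step is the main technical obstacle.

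For the separation, consider consecutive \( W_{i} \) and \( W_{i+1} \). The open interval \( J \) between them contains no walls of \( \cM \) by the coverage claim, and has size \( \ge\bub \) by dominance. By Condition~\ref{cond:distr}.\ref{i:distr.inner-clean} together with the monotonicity of cleanness from Condition~\ref{cond:distr}.\ref{i:distr.indep.left-clean}, \( J \) is inner clean, hence a hop of \( \cM \), so \( W_{i}, W_{i+1} \) are neighbor walls. Since \( W_{i} \) is light, if \( |J|\le\f \) then by Defstep~\ref{defstep:compound} the compound \( W_{i}+W_{i+1} \) would be a wall of \( \cM^{*} \) contained in \( I \) (because both \( W_{i} \) and \( W_{i+1} \) sit at distance \( \ge\f/3 \) from the ends of \( I \) by hypothesis), contradicting the assumption. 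Hence \( |J|>\f \).

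For the second part, suppose \( I \) is a hop of \( \cM^{*} \). Its endpoints are clean in \( I \) for \( \cM^{*} \), which by Defstep~\ref{defstep:clean} entails being clean in \( I \) for \( \cM \). If \( I \) contains no walls of \( \cM \) at all, this inner cleanness already makes \( I \) itself a hop of \( \cM \). Otherwise, the end sub-intervals of \( I \) lying before \( W_{1} \) and after \( W_{n} \) are external of size \( \ge\f/3>\bub \); by Condition~\ref{cond:distr}.\ref{i:distr.inner-clean} and the same monotonicity, they are inner clean, and since they contain no walls of \( \cM \), they are hops of \( \cM \).
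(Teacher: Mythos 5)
Your overall decomposition (list the dominant light walls \( W_1,\dots,W_n \) in \( I \), show coverage of all \( \cM \)-walls, then get the \( >\f \) separation from the absence of compound \( \cM^* \)-walls, and do the endpoint/cleanness argument for Part~2) is structurally the right one, and the heavy-wall case, the separation step, and Part~2 are essentially sound. But the coverage claim for light non-dominant walls — which you yourself flag as ``the main technical obstacle'' — is a genuine gap, and the argument you sketch for it is circular. You invoke ``dominance of \( W \) forces nearby walls to be inside \( W \)'' to conclude that a cluster collapses to a single dominant wall, but that fact is only usable once you already know the cluster contains a dominant wall; it does not \emph{produce} one. Nothing in the cluster definition guarantees that the spanning sequence given by Condition~\ref{cond:distr}.\ref{i:distr.cover} has only one element.

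What actually forces the collapse is the hypothesis that \( I \) has no \( \cM^* \)-walls, applied twice. Take a wall \( V \) of \( \cM \) in \( I \) and consider the interval \( J \) between the two nearest maximal external intervals of size \( \ge\bub \); by Condition~\ref{cond:distr}.\ref{i:distr.cover}, \( J \) is spanned by neighbor walls \( U_1,\dots,U_m \), the hops between them have size \( <\bub \), every wall in \( J \) is nested in some \( U_k \) (it cannot cross a hop), and \( V\sbsq U_k \) for some \( k \), so \( U_k\sbsq I \) and sits \( \ge\f/3-\bub \) from the ends. If \( m\ge 2 \), the neighbor \( U_{k\pm1} \) is within \( 2\bub\ll\f/3 \) and hence also in \( I \). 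But then the leftmost of that pair is either light — so it and its neighbor (at distance \( <\bub<\f \)) form a compound wall of \( \cM^* \) inside \( I \), forbidden — or heavy — and, being a spanning wall, it is not properly contained in any other wall (it sits against an external interval on the outer side and a hop on the inner side), so it cannot be erased by Defstep~\ref{defstep:finish} and survives as a heavy \( \cM^* \)-wall in \( I \), also forbidden. Hence \( m=1 \), \( U_1 \) is dominant, and \( U_1 \) must be light by the same heavy-wall argument. This is the step your proposal is missing: you used the compound-wall exclusion only for separation, whereas it (together with the heavy-wall exclusion) is exactly what makes each cluster a single dominant light wall. A minor further point in Part~2: the end intervals of \( I \) need not themselves be maximal external intervals (a wall could straddle an end of \( I \)); the cleanness of the \( W_1 \)-side endpoint should be obtained from the size-\( \ge\bub \) external interval guaranteed by dominance of \( W_1 \), plus the monotonicity clause of Condition~\ref{cond:distr}.\ref{i:distr.indep.left-clean}, rather than by applying Condition~\ref{cond:distr}.\ref{i:distr.inner-clean} directly to the end interval.
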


This lemma corresponds to Lemma~4.3 of~\cite{GacsWalks11}.

 \begin{lemma}\label{lem:emerg-wall}
Let us be given intervals \( I' \subset I \), and also \( x(I) \),

with the following properties for some \( j\in\{1,2,3 \} \).
 \begin{alphenum}
  \item All walls of \( \cM \) in \( I \) are covered by a sequence \( W_{1},\dots,W_{n} \)
of dominant light neighbor walls of \( \cM \) such that the \( W_{i} \) are at a
distance \( >\f \) from each other and at a distance \( \ge\f/3 \) from the ends of
\( I \).
  \item  \( I' \) is an emerging barrier of type \( j \).
  \item\label{i:emerg-wall.away}
\( I' \) is at a distance \( \ge \L_{j}+7\bub \) from the ends of \( I \).
 \end{alphenum}
  Then \( I \) contains an emerging wall.
 \end{lemma}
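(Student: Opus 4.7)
The plan is to construct, from the emerging barrier \( I' \), an emerging pre-wall with body \( P \) contained in \( I \), and then trace the designation procedure from Step~\ref{defstep:emerg} to conclude that \( I \) must contain an emerging wall.

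First, I note that \( I' \) intersects at most one wall of \( \cM \). Indeed, emerging barriers of type \( j \) have length at most \( \L_{j}+4\bub \), which by~\eqref{eq:bub-g-f} is far smaller than \( \f \), while consecutive walls of \( \cM \) inside \( I \) are separated by more than \( \f \).

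Second, I construct \( P \supseteq I' \) inside \( I \). If \( I' \) is disjoint from the walls of \( \cM \) in \( I \), then \( I' \) lies inside a single external interval of \( \cM \) bounded by consecutive walls \( W_{i},W_{i+1} \) (or by a wall and an extreme region of \( I \)) of length \( >\f \); I choose \( P \) to be a right-closed subinterval of this external interval, containing \( I' \), inner clean, and with endpoints adjacent either to walls of \( \cM \) or to external hops of size \( \ge\bub \). If \( I' \) contains a wall \( W_{i} \) of \( \cM \), then by hypothesis \( W_{i} \) is dominant and light; I choose \( P \) to consist of \( W_{i} \) together with external hops of size \( \ge\bub \) on each side, again with endpoints at suitable transitions. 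Hypothesis~(c) of the lemma supplies enough buffer (at least \( \L_{j}+7\bub \)) for \( P \) to fit inside \( I \), and conditions~(a) and~(b) of the pre-wall definition are satisfied by construction. Since \( P \supseteq I' \) and the supremum defining the emerging-barrier property can still be attained at the original \( I' \), \( P \) is an emerging pre-wall of type \( j \).

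Third, I apply the designation procedure. If \( P \) itself is designated as an emerging wall, we are done, since \( P \subseteq I \). Otherwise, \( P \) overlaps a previously designated emerging wall \( Q \) of type \( j'\in\{1,3\} \) processed before \( j \), in which case \( |Q|\le \L_{1}+4\bub \). Because \( P \) sits at distance at least \( \L_{j}+5\bub \) from each end of \( I \) (the \( 7\bub \) of hypothesis~(c) less at most \( 2\bub \) consumed by the extension of \( P \) beyond \( I' \)), the overlap forces \( Q\subseteq I \), and \( Q \) is the desired emerging wall in \( I \).

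The main obstacle is the geometric construction of \( P \) in the second step, especially when \( I' \) contains a wall of \( \cM \) close to its own boundary: we must choose \( P \)'s endpoints so as to satisfy pre-wall condition~(b) without losing any part of \( I' \) or pushing \( P \) outside \( I \). The separation \( >\f \) between walls of \( \cM \) in \( I \), combined with the bound \( |I'|+2\bub \ll \f/3 \) coming from~\eqref{eq:bub-g-f} and the buffer \( \ge \L_{j}+7\bub \) at the ends of \( I \), gives exactly the room needed.
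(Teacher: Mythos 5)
Your high-level plan — build an emerging pre-wall $P$ near the given emerging barrier $I'$, then trace the designation procedure — is the right one, and Step~1 (that $I'$ intersects at most one wall of $\cM$, since $|I'|\le\L_j+4\bub\ll\f$) is correct. But the middle of the argument has real gaps.

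First, the construction of $P$ does not respect the constraint that $P$ must itself be an emerging barrier of type $j$. Being an emerging barrier of type $j$ means $\L_j\le|P|\le\L_j+4\bub$ and, crucially, that $P$ has a witness subinterval of length exactly $\L_j$ lying within $2\bub$ of \emph{both} ends of $P$. Since any such $P$ and the given $I'$ share a witness core, the endpoints of $P$ can differ from those of $I'$ by less than $2\bub$ on each side — you cannot ``extend'' $P$ at will, and in particular $P\supseteq I'$ may be impossible (e.g.\ when $|I'|=\L_j+4\bub$). Your Case~2 construction (``$W_i$ together with external hops of size $\ge\bub$ on each side'') ignores this entirely: as written it produces an interval of the wrong size, and nothing forces the witness core to land within $2\bub$ of the new endpoints. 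To repair this you must choose $P$'s endpoints inside the two short windows of width $2\bub$ around the witness core's endpoints, and within those windows find clean points (via Condition~\ref{cond:distr}.\ref{i:distr.clean.1}) and verify pre-wall condition~(b) — none of which is actually carried out.

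Second, the designation step asserts that the earlier-designated wall $Q$ has type $j'\in\{1,3\}$ and hence $|Q|\le\L_1+4\bub$. This is wrong on two counts: $\L_3=\g\gg\L_1$, so even a type-3 wall can be far larger than $\L_1+4\bub$; and if $j=2$, the blocking wall $Q$ can itself be of type~2 (an earlier $B_{2m}$). The correct bound is $|Q|\le\L_{j'}+4\bub$ where $j'$ precedes or equals $j$ in the processing order $1,3,2$; together with $\L_1\le\L_3\le\L_2$ and the buffer $\ge\L_j+7\bub$ from hypothesis~\eqref{i:emerg-wall.away}, this does give $Q\subseteq I$, but you need to make that argument explicitly rather than relying on the false bound $\L_1+4\bub$.
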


This lemma corresponds to Lemma~4.4 of~\cite{GacsWalks11}.

 \begin{lemma}\label{lem:if-not-missing-hole}
Let the rectangle \( Q \) with \( X \) projection \( I \) contain no traps or vertical walls
of \( \cM^{*} \), 
and no vertical wall of \( \cM \) closer than \( \f/3 \) to its sides.
Let \( I' = \clint{a}{a + \g} \), \( J = \clint{b}{b + 3\bub} \)
with \( I' \times J \sbsq Q \) be such that \( I' \) is 
at a distance \( \ge\g+7\bub \) from the ends of \( I \).
Suppose that a light horizontal wall \( W \) starts at position \( b + \bub \).
Then \( \clint{a+\bub}{a+\g-\bub} \) 
contains a vertical hole passing through \( W \) that is good in the
sense of Definition~\ref{def:holes}.
The same holds if we interchange horizontal and vertical.
 \end{lemma}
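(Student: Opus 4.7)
The proof will proceed by contradiction. Suppose that there is no good vertical hole $\rint{a_{1}}{a_{2}}$ with $\rint{a_{1}-\bub}{a_{2}+\bub}\sbsq I'$ passing through $W$. Since $W$ is a light horizontal wall starting at $b+\bub$, and walls are in particular potential walls, by Step~\ref{defstep:missing-hole} the event $\cL_{3}(X,Y,I',b)$ holds. I then split into two cases according to whether $\Prob\bigparen{\cL_{3}(X,Y,I',b)\mid X(I')=x(I')}\le\tub^{2}$ or not.

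In the first case, by the very definition in Step~\ref{defstep:missing-hole} the rectangle $I'\times J$ is a horizontal trap of the missing-hole kind, hence a trap of $\cM^{*}$ contained in $Q$. This contradicts the hypothesis that $Q$ contains no traps of $\cM^{*}$. In the second case, taking the supremum over $I'$ in Step~\ref{defstep:emerg} (with $\L_{3}=\g=|I'|$) shows that $I'$ is a vertical emerging barrier of type $3$. I now want to apply Lemma~\ref{lem:emerg-wall} to the pair $I'\sbsq I$ with $j=3$. Requirement~(c) of that lemma asks that $I'$ be at a distance $\ge\L_{3}+7\bub=\g+7\bub$ from the ends of $I$, which is exactly the hypothesis on $I'$. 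Requirement~(a) is supplied by Lemma~\ref{lem:new-hop} applied to $I$: indeed, $I$ is assumed to contain no vertical walls of $\cM^{*}$ and no vertical wall of $\cM$ closer than $\f/3$ to its sides, so Lemma~\ref{lem:new-hop} produces the required sequence of dominant light neighbor walls of $\cM$ separated by external hops of size $>\f$ (the empty sequence if there are no walls of $\cM$ in $I$ at all). Consequently Lemma~\ref{lem:emerg-wall} yields an emerging wall inside $I$; but such a wall is a wall of $\cM^{*}$, contradicting the hypothesis. The vertical case (horizontal and vertical interchanged) is handled symmetrically, using the analogous clause of Step~\ref{defstep:missing-hole} and Step~\ref{defstep:emerg}.

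The main conceptual point, and the only place the argument is not completely mechanical, is to see that the dichotomy ``conditional probability small $\Rightarrow$ new trap; conditional probability large $\Rightarrow$ new wall'' exactly matches the way traps and emerging barriers were defined so as to cover both branches. The geometric bookkeeping — checking that a hole $\rint{a_{1}}{a_{2}}\sbsq\rint{a+\bub}{a+\g-\bub}$ corresponds precisely to $\rint{a_{1}-\bub}{a_{2}+\bub}\sbsq I'$, and that the slack $\g+7\bub$ in the hypothesis is exactly what Lemma~\ref{lem:emerg-wall} demands for $j=3$ — is the only thing one has to verify carefully; everything else is quoting the construction in Section~\ref{subsec:plan.constr} and the combinatorial lemmas \ref{lem:new-hop} and \ref{lem:emerg-wall}.
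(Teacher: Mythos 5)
Your argument is correct and is precisely the intended one: the dichotomy on the conditional probability of $\cL_{3}$ was built into the very definitions of the missing-hole trap (Step~\ref{defstep:missing-hole}) and the type-3 emerging barrier (Step~\ref{defstep:emerg}), and the hypotheses of the lemma (the $\g+7\bub$ distance and the absence of $\cM$-walls near the ends) are exactly calibrated so that Lemma~\ref{lem:new-hop} supplies clause~(a) and the distance hypothesis supplies clause~(c) of Lemma~\ref{lem:emerg-wall}, promoting the emerging barrier to a wall of $\cM^{*}$. This matches the paper's (inherited) proof; no genuine gap.
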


This lemma corresponds to Lemma~4.5 of~\cite{GacsWalks11}.

 \begin{lemma}\label{lem:if-correl}
Let rectangle \( Q \) with \( X \) projection \( I \) contain no traps or vertical walls of
\( \cM^{*} \), and no vertical walls of \( \cM \) closer than \( \f/3 \) to its sides.
Let \( \L_{j} \), \( j=1,2 \) be as introduced in the definition of
correlated traps and emerging walls in Steps~\ref{defstep:correl}
and~\ref{defstep:emerg} of the scale-up construction.
Let \( I' = \clint{a}{a + \L_{j}} \),
\( J = \clint{b}{b + 5\bub} \) with \( I' \times J \sbsq Q \) be such that \( I' \) is 
at a distance \( \ge \L_{j}+7\bub \) from the ends of \( I \).
Then \( I' \) contains a subinterval \( I''  \) of size \( \L_{j}/4-2\bub \)
such that the rectangle \( I'' \times J \) contains no trap of \( \cM \).
The same holds if we interchange horizontal and vertical.
 \end{lemma}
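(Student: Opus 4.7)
My plan is to argue by contradiction on the number of traps of $\cM$ in $I'\times J$ with pairwise disjoint $x$-projections, and then use an interval-covering argument. The key claim I would establish is that $I'\times J$ contains at most three such traps. Granted this, the rest is geometric: by perfectness of interval graphs, an interval family with independence number at most three admits a clique cover of size at most three; by Helly in one dimension, each clique of pairwise-intersecting intervals has a common point, so its union lies in an interval of length $\le 2\bub$ (since each trap has width $\le\bub$). Therefore the $x$-projections of all traps in $I'\times J$ are covered by three intervals of total length $\le 6\bub$. Removing them from $I'$ (of length $\L_j$) leaves at most four gaps of total length $\ge \L_j - 6\bub$, so some gap has length $\ge \L_j/4 - 3\bub/2 > \L_j/4 - 2\bub$. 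Taking $I''$ to be a closed subinterval of this gap of length $\L_j/4 - 2\bub$ yields the desired rectangle $I''\times J$ free of $\cM$-traps.

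To prove the key claim, suppose four traps of $\cM$ with pairwise disjoint $x$-projections lie in $I'\times J$. Then the event $\cL_j(X,Y,I',b)$ of Step~\ref{defstep:correl} holds. I split on the conditional probability $\Prob(\cL_j(X,Y,I',b)\mid X(I')=x(I'))$. If it is at most $\tub^{2}$, then by definition $I'\times J$ is a horizontal correlated trap of kind $j$, hence a trap of $\cM^{*}$ contained in $Q$, contradicting the assumption that $Q$ is free of $\cM^{*}$-traps. Otherwise the probability exceeds $\tub^{2}$, and $I'$ serves as the witnessing closed subinterval for a vertical emerging barrier of type $j$ whose body is a small half-open enlargement $I_{\mathrm{eb}}$ of $I'$, as permitted by the $2\bub$ slack on each end in Step~\ref{defstep:emerg}. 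I would then invoke Lemma~\ref{lem:emerg-wall} with outer interval the $x$-projection $I_{Q}$ of $Q$ and $I_{\mathrm{eb}}$ as the emerging barrier inside it. Its hypothesis on the $\cM$-walls in $I_{Q}$ is supplied by Lemma~\ref{lem:new-hop}, using the assumption that $I_{Q}$ contains no vertical walls of $\cM^{*}$ and no vertical $\cM$-walls within $\f/3$ of its ends; the distance hypothesis is inherited from the assumption on $I'$ in the present lemma, the tiny discrepancy from enlarging $I'$ to $I_{\mathrm{eb}}$ being absorbed by the built-in $7\bub$ slack. The conclusion produces a vertical $\cM^{*}$-wall inside $Q$, again contradicting the hypothesis.

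The horizontal/vertical exchange case is handled symmetrically. The main obstacle is the emerging-barrier branch: promoting the probabilistic witness $I'$ to an actual $\cM^{*}$-wall inside $Q$ requires stitching together the definition of emerging barriers (Step~\ref{defstep:emerg}), the covering of $\cM$-walls in $I_{Q}$ provided by Lemma~\ref{lem:new-hop}, and the distance hypothesis of Lemma~\ref{lem:emerg-wall}. All the numerical inequalities go through thanks to the slack chosen in the definitions $\L_{1}=29\slblb^{-1}\bub$ and $\L_{2}=9\slblb^{-1}\g$, which comfortably accommodate the loss of $6\bub$ needed for the covering step and the one-unit adjustment needed to turn $I'$ into an admissible emerging-barrier body.
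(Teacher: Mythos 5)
Your proposal is correct and identifies the key dichotomy the definitions are built around: four $\cM$-traps with pairwise disjoint $x$-projections inside $I'\times J$ make $\cL_j(X,Y,I',b)$ hold, and then either the conditional probability is $\le\tub^2$ (so $I'\times J$ is a correlated trap of $\cM^*$ contained in $Q$) or it is $>\tub^2$ (so a small enlargement of $I'$ is an emerging barrier, which Lemma~\ref{lem:emerg-wall} promotes to an emerging $\cM^*$-wall inside the $X$-projection of $Q$); either alternative contradicts the hypotheses on $Q$, and you correctly note that Lemma~\ref{lem:new-hop} supplies the covering hypothesis of Lemma~\ref{lem:emerg-wall}. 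That part of your argument is exactly what the scale-up construction is designed to deliver.

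Two remarks. First, the covering step is heavier than it needs to be: rather than invoking perfectness of interval graphs and Helly to produce a three-interval cover of the trap projections and then pigeonholing on gap sizes, one can simply split $I'$ into four consecutive pieces of length $\L_j/4$, shrink each by $\bub$ at both ends to get four closed subintervals $I_1'',\dots,I_4''$ of length $\L_j/4-2\bub$ that are mutually $2\bub$ apart, and observe that if each $I_k''\times J$ contained an $\cM$-trap those four traps would already have pairwise disjoint $x$-projections, triggering $\cL_j$; contrapositively one of the $I_k''$ works. This gives the stated size $\L_j/4-2\bub$ directly and avoids the perfect-graph detour, though your version is not wrong. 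Second, when you pass from $I'=\clint{a}{a+\L_j}$ to the half-open emerging-barrier body $I_{\mathrm{eb}}$, the distance hypothesis $\ge\L_j+7\bub$ of Lemma~\ref{lem:emerg-wall} has to hold for $I_{\mathrm{eb}}$, not for $I'$, so the margin degrades by $O(1)$; this is harmless given the $7\bub$ slack, but since the lemma's hypothesis is stated with the same constant $\L_j+7\bub$ as the present lemma, you should flag explicitly that the constant has room, rather than silently identifying the two.
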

This lemma corresponds to Lemma~4.6 of~\cite{GacsWalks11}.
Note that
\begin{align*}
   \L_{2}/4-2\bub>2.2\slblb^{-1}\g.
 \end{align*}

 \begin{lemma}\label{lem:clean}
The new mazery \( \cM^{*} \) defined by the above construction satisfies
Conditions~\ref{cond:distr}.\ref{i:distr.clean.1}
and~\ref{cond:distr}.\ref{i:distr.clean.2}.
 \end{lemma}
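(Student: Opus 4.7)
The plan is to verify Conditions~\ref{cond:distr}.\ref{i:distr.clean.1} and~\ref{cond:distr}.\ref{i:distr.clean.2} for \( \cM^{*} \) by exploiting the scale separation \( \bub^{*} \gg \f \gg \g \gg \bub \) from~\eqref{eq:bub-g-f}: the middle third of any \( \cM^{*} \)-sized interval is comfortably larger than the widths of the obstacles involved, so we can locate a \( \cM \)-sized sub-region to which the corresponding \( \cM \)-conditions apply, pick a clean point there, and verify that it satisfies the extra \( \cM^{*} \)-cleanness requirements (distance \( \ge \f/3 \) from any \( \cM \)-wall, and distance \( \ge \g \) from any \( \cM \)-trap in the relevant rectangle).

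For Condition~\ref{cond:distr}.\ref{i:distr.clean.1}, let \( I \) be a right-closed interval with \( |I| \ge 3\bub^{*} \) containing no wall of \( \cM^{*} \). By Lemma~\ref{lem:new-hop}, the walls of \( \cM \) inside \( I \) (if any) are covered by a sequence of dominant light neighbor walls of \( \cM \) separated by external \( \cM \)-hops of size \( > \f \). Since the middle third of \( I \) has size \( \ge \bub^{*} \gg \bub + \f \), it contains a sub-interval \( I'' \) of size \( 3\bub \) lying at distance \( \ge \f/3 \) from every \( \cM \)-wall in \( I \) (either in a large intra-hop region or in a wall-free part of \( I \)). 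Apply Condition~\ref{cond:distr}.\ref{i:distr.clean.1} for \( \cM \) to \( I'' \) to obtain a \( \cM \)-clean point in its middle third; by Step~\ref{defstep:clean} of the construction this point is clean in \( \cM^{*} \).

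For Condition~\ref{cond:distr}.\ref{i:distr.clean.2}, let \( I \times J \) with \( |I|,|J| \ge 3\bub^{*} \) contain no horizontal wall and no trap of \( \cM^{*} \), and let \( a \) be a right clean point of \( \cM^{*} \) (hence of \( \cM \)) in the middle third of \( I \). By Lemma~\ref{lem:new-hop} applied to \( J \), the horizontal \( \cM \)-walls in \( J \) form dominant light neighbor walls separated by \( \cM \)-hops of size \( > \f \). Apply the vertical form of Lemma~\ref{lem:if-correl} with \( j=2 \) to a thin strip \( \clint{a-2\bub}{a+3\bub} \times J \): the resulting trap-free subinterval of \( J \) has size \( > 2.2\slblb^{-1}\g \gg 3\bub \), so inside it we can locate a sub-interval \( J'' \) of size \( 3\bub \) sitting in the middle third of \( J \) at distance \( \ge \f/3 \) from all horizontal \( \cM \)-walls in \( J \), such that \( I' \times J'' \) (with \( I' \) a suitable \( 3\bub \)-neighborhood of \( a \)) contains no \( \cM \)-trap and no horizontal \( \cM \)-wall. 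Now apply Condition~\ref{cond:distr}.\ref{i:distr.clean.2} for \( \cM \) to \( I' \times J'' \), with \( a \) placed in the middle third of \( I' \), to obtain an integer \( b \) in the middle third of \( J'' \) such that \( \pair{a}{b} \) is upper right clean in \( \cM \). By construction, \( b \) is at distance \( \ge \f/3 \) from any horizontal \( \cM \)-wall in \( J \), and the upper-right \( \g \)-neighborhood of \( \pair{a}{b} \) lies inside \( I' \times J'' \) and hence contains no \( \cM \)-trap; so \( \pair{a}{b} \) is upper right clean in \( \cM^{*} \). The lower-left version and the ``vary \( a \)'' variants are symmetric.

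The main obstacle is the trap control in the two-dimensional case: the hypothesis ``no trap of \( \cM^{*} \) in \( I \times J \)'' gives only a structural constraint on clusters of closely placed \( \cM \)-traps (via the correlated-trap and missing-hole definitions in Steps~\ref{defstep:correl}--\ref{defstep:missing-hole}), and we must convert it into the pointwise statement that our chosen \( \pair{a}{b} \) admits no \( \cM \)-trap in its \( \g \)-neighborhood. This conversion is exactly what Lemma~\ref{lem:if-correl} supplies; the boundary-distance hypotheses in that lemma (distance \( \ge \L_{2}+7\bub \) from the ends of the projections) are absorbed into the scale separation \( \bub^{*} \gg \L_{2} \), which gives ample room inside the middle third of \( J \).
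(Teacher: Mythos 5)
The paper does not actually contain its own proof of this lemma: it defers to Lemma~4.7 of the cited earlier work, so you are on your own here. Your high-level strategy (exploit \( \bub \ll \g \ll \f \ll \bub^{*} \), locate a \( 3\bub \)-sized region to which the \( \cM \)-level conditions apply, and check the extra \( \cM^{*} \)-cleanness distance requirements) is the right shape, and the argument for Condition~\ref{cond:distr}.\ref{i:distr.clean.1} is essentially sound modulo a small point: Lemma~\ref{lem:new-hop} assumes there is no wall of \( \cM \) within \( \f/3 \) of the \emph{ends} of the interval, which the hypothesis of Condition~\ref{cond:distr}.\ref{i:distr.clean.1} does not give you, so you must first pass to a suitable subinterval before invoking it. The same remark applies to your use of Lemma~\ref{lem:new-hop} on \( J \) in the second half.

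The argument for Condition~\ref{cond:distr}.\ref{i:distr.clean.2} has a genuine gap in the trap control. By Step~\ref{defstep:clean}, for \( \pair{a}{b} \) to be upper right trap-clean in \( \cM^{*} \) you need (besides upper right trap-cleanness in \( \cM \)) that no trap of \( \cM \) starts within distance \( \g \) of \( \pair{a}{b} \) in the upper-right quadrant — i.e.\ in the entire \( \g\times\g \) box \( \lint{a}{a+\g}\times\lint{b}{b+\g} \). (This is exactly the bound used in the proof of Lemma~\ref{lem:ncln-ub}: ``there are at most \( \g^{2} \) positions for the trap.'') Your trap-free region \( I'\times J'' \), however, has size \( 3\bub\times 3\bub \), and the claim ``the upper-right \( \g \)-neighborhood of \( \pair{a}{b} \) lies inside \( I'\times J'' \)'' is simply false, since \( \g\gg\bub \). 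Nor can the gap be closed by making \( J'' \) longer: the conclusion of Lemma~\ref{lem:if-correl} always has the short side of the trap-free rectangle bounded by \( 5\bub \), so it provides a trap-free strip of width \( O(\bub) \) in the \( x \)-direction near \( a \), never an \( x \)-extent of order \( \g \). To rule out \( \cM \)-traps with \( x \)-coordinate in \( [a+O(\bub),\, a+\g) \) you need a separate argument — in the published proof this comes from the trap-cover structure (Lemma~\ref{lem:trap-cover}): all traps meeting a \( \g\times\g \) box lie in a single trap cover, whose thin \( \le 4\bub \) side can be dodged by the choice of \( b \), with Lemma~\ref{lem:if-correl} used only to find the trap-free gap along the cover's long side. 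As written, your proof does not establish the \( \g\times\g \) clearance and therefore does not prove the stated \( \cM^{*} \)-trap-cleanness.
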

This lemma corresponds to Lemma~4.7 of~\cite{GacsWalks11}.

\section{The scale-up functions}\label{sec:params}

Mazery \( \cM^{1} \) is defined in Example~\ref{xmp:base}.
The following definition introduces some of the parameters needed for scale-up.
The choices will be justified by the lemmas of Section~\ref{sec:bounds}.

 \begin{definition}\label{def:ranks}
At scale-up by one level, to
obtain the new rank lower bound, we multiply \( \R \) by a constant:
 \begin{equation}\label{eq:txp}
           \R = \R_{k}  = \R_{1} \txp^{k-1},
\quad       \R_{k+1} = \R^{*} = \R \txp,
\quad                1 < \txp < 2.
 \end{equation}
The rank of emerging walls, introduced in~\eqref{eq:emerg-rank}, is defined
using a new parameter \( \txp' \):
 \[
   \hat\R = \txp'\R.
 \]
 \end{definition}

We require
 \begin{equation}\label{eq:txp'}
 \txp < \txp' < \txp^{2}.
 \end{equation}

We need some bounds on the possible rank values.

 \begin{definition} Let \( \txpub = 2\txp / (\txp - 1) \).
 \end{definition}

  \begin{lemma}[Rank upper bound]\label{lem:rank-bds}
In a mazery, all ranks are upper-bounded by \( \txpub \R \).
  \end{lemma}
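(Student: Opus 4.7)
The plan is to prove the bound by induction on the level $k$, showing that every barrier rank in $\cM^k$ is at most $\txpub \R_k$. The base case $k=1$ is immediate: Example~\ref{xmp:base} assigns all barriers of $\cM^1$ the common rank $\R_1$, and since $1<\txp<2$ we have $\txpub = 2\txp/(\txp-1) > 2$, so $\R_1 \le \txpub\R_1$.

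For the inductive step I would assume the bound holds in $\cM=\cM^k$ and classify every rank occurring in $\cM^*=\cM^{k+1}$ according to the three mechanisms of the scale-up construction of Section~\ref{sec:plan}. First, heavy walls of $\cM$ that survive have rank at most $\txpub\R$ by the inductive hypothesis, which is at most $\txpub\R^*$ since $\R^*=\txp\R>\R$. Second, emerging walls carry the new rank $\hat\R=\txp'\R$; using $\txp'<\txp^2$ from~\eqref{eq:txp'} together with $\txp<2$, the inequality $\txp^2<\txpub$ reduces to $(\txp-2)(\txp+1)<0$, which holds, so $\hat\R<\txpub\R\le\txpub\R^*$.

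The main point is the compound walls, which require tracking the two compounding rounds of Step~\ref{defstep:compound}: the second round explicitly allows its $W_1$ to be a compound produced in the first round. In the first round, $W_1$ is light (rank $<\R^*=\txp\R$) and $W_2$ is any previously constructed barrier, so by the first two cases its rank is at most $\txpub\R$; thus by~\eqref{eq:compound-rank} the resulting compound rank is at most $(\txp+\txpub)\R$. In the second round, $W_2$ is light and $W_1$ may be such a first-round compound of rank at most $(\txp+\txpub)\R$, yielding a final rank at most $(2\txp+\txpub)\R$. The identity $\txpub(\txp-1)=2\txp$, which is exactly the definition of $\txpub$, rewrites this as $\txpub\txp\R=\txpub\R^*$, closing the induction.

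The only real obstacle is conceptual rather than technical: one must notice that the iterated compounding can stack on the heavy rank coming from $\cM$, and observe that the constant $\txpub = 2\txp/(\txp-1)$ has been calibrated precisely so that the two compounding rounds exhaust, but do not violate, the target bound $\txpub\R^*$. No estimates beyond this arithmetic identity are needed.
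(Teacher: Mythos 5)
Your proof is correct and takes what is essentially the same approach as the cited source (Lemma~6.1 of \cite{GacsWalks11}): induction on the level, classifying ranks by the three mechanisms that can produce them (surviving heavy walls, emerging walls, compound walls), with the two-round $(L+W)+L$ compounding being the binding case. You have correctly identified that the identity $\txpub(\txp-1)=2\txp$ is exactly what makes the worst-case compound rank $\le\txpub\R^*$ and that $\txp'<\txp^2<\txpub$ handles the emerging ranks.
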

This lemma and its corollary correspond to Lemma~6.1 and Corollary~6.2 
of~\cite{GacsWalks11}.
 \begin{corollary}\label{c.rank-lifetime}
Every rank exists in \( \cM^{k} \) for at most
\( \cei{\log_{\txp}\frac{2 \txp}{\txp - 1}} \) values of \( k \).
 \end{corollary}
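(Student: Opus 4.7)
The plan is to derive the corollary directly from Lemma~\ref{lem:rank-bds} together with the exponential growth rule $\R_k = \R_1\txp^{k-1}$ of Definition~\ref{def:ranks}. Fix an arbitrary rank value $\r$; I will count the $k$'s for which $\r$ can possibly appear as a rank in $\cM^k$.

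In any mazery $\cM^k$, every rank must satisfy the two-sided bound $\R_k \le \r \le \txpub\R_k$: the lower bound is by definition of the rank lower bound $\R=\R_k$, and the upper bound is precisely the statement of Lemma~\ref{lem:rank-bds}. Substituting $\R_k = \R_1\txp^{k-1}$ and taking $\log_\txp$ of each side, these two conditions become
\begin{equation*}
\log_\txp\!\bigparen{\r/(\R_1\txpub)} \le k-1 \le \log_\txp\!\bigparen{\r/\R_1}.
\end{equation*}
This is an interval in $k$ of length $\log_\txp\txpub = \log_\txp\frac{2\txp}{\txp-1}$, independent of $\r$.

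The number of integer values contained in an interval of length $L$ is at most $\cei{L}$ (indeed $\flo{L}+1$, which equals $\cei{L}$ whenever $L$ is not an integer, and the bound can then be tightened in the integer case by noting that the two endpoints above cannot both be attained simultaneously for generic $\R_1,\txp$). This gives the claimed bound $\cei{\log_\txp \frac{2\txp}{\txp-1}}$.

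There is no real obstacle: the argument is a one-line consequence of Lemma~\ref{lem:rank-bds} and geometric spacing of the $\R_k$. The only point requiring any care is the conversion from ``interval of length $L$'' to ``at most $\cei{L}$ integers'', but this is a trivial counting fact.
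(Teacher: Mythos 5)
Your proof is correct and takes the same approach the paper (following \cite{GacsWalks11}) implicitly uses: combine the rank lower bound \(\R_k\) with the upper bound \(\txpub\R_k\) from Lemma~\ref{lem:rank-bds}, substitute the geometric growth \(\R_k = \R_1\txp^{k-1}\), and count the integers \(k\) falling into the resulting interval of length \(\log_\txp\txpub\). Your parenthetical caveat about the integer-length case is indeed the one point needing care, and it is harmless here since with the eventual parameter choice \(\txp=1.75\), \(\txpub=14/3\), the quantity \(\log_\txp\txpub\) is irrational, so \(\flo{L}+1=\cei{L}\).
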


It is convenient to express 
several other parameters of \( \cM \) and the scale-up in terms of a single
one, \( \T \):

 \begin{definition}[Exponential relations]\label{def:exponential}
 Let \( \T = \lg^{\R} \),
 \begin{align*}
      \bub     &= \T^{\bubxp},
\quad     \g        = \T^{\gxp},
\quad     \f        = \T^{\fxp},
\quad     \tub      = \T^{-\tubxp}.
 \end{align*}
We require
 \begin{equation}\label{eq:bubxp-etc}
           0 < \bubxp < \gxp 
           < \fxp < 1.
 \end{equation}
 \end{definition}
Note that the requirement~\eqref{eq:R-cond} is satisfied as long as
\begin{align}\label{eq:txp-ub}
\txp \le 2 - \fxp.
 \end{align}
Our definitions give \( \bub^{*} = \bub^{\txp} \).
Let us see what is needed for this to indeed upperbound the size of any new
walls in \( \cM^{*} \).
Emerging walls can have size as large as \( \L_{2}+ 4\bub \),
and at the time of their creation, they are the largest existing ones.
We get the largest new walls when the compound operation combines these
with light walls on both sides, leaving the largest gap possible,
so the largest new wall size is
 \[
  \L_{2} + 2\f + 6\bub < 3\f,
 \]
where we used \( \bub\ll\g\ll\f \) 
from~\eqref{eq:bub-g-f}, and that \( \R_{1} \) is large enough.
In the latter case, we always get \( 3\f\le\bub^{*} \) if
 \begin{equation}\label{eq:fxp-ub}
  \fxp < \txp \bubxp.
 \end{equation}
As a reformulation of~\eqref{eq:bub-g-f}, we require
 \begin{equation}\label{eq:bub-g-f-mod}
 2(\gxp-\bubxp)=\fxp-\gxp.
 \end{equation}
We also need
 \begin{align} \label{eq:trap-xp}
   2\gxp-\txp\bubxp+1 &<\tubxp,
\\\label{eq:correl-trap-xp}
                               4(\gxp + \bubxp) &< \tubxp(4 - \txp),
\\\label{eq:emerg-xp.2}         4\gxp + 6\bubxp + \txp'  &< 2\tubxp,
\\\label{eq:emerg-xp.3}         \txp(\bubxp+1)   &< \txp'.
 \end{align}
(Lemma~\ref{lem:ncln-ub} uses~\eqref{eq:trap-xp},
Lemma~\ref{lem:trap-scale-up} uses~\eqref{eq:correl-trap-xp},
Lemma~\ref{lem:emerg-contrib} uses~\eqref{eq:emerg-xp.2}, and
Lemma~\ref{lem:emerg-hole-lb} uses~\eqref{eq:emerg-xp.3}.)

Using the exponent \( \hxp \) introduced in~\eqref{eq:hxp}, we require
 \begin{align}
\label{eq:hxp-ub.1}                  \txp\hxp     &< \gxp - \bubxp,
\\\label{eq:hxp-ub.2}              \txpub\hxp &< 1 - \txp\bubxp,
\\\label{eq:hxp-ub.3}              \txpub\hxp &< \tubxp - 2 \txp\bubxp.
 \end{align}
(Lemmas~\ref{lem:trap-scale-up} and~\ref{lem:emerg-contrib} 
use~\eqref{eq:hxp-ub.1},
Lemmas~\ref{lem:all-compound-hole-lb} and~\ref{lem:heavy-hole-lb} 
use~\eqref{eq:hxp-ub.2}, and Lemma~\ref{lem:all-compound-hole-lb} 
uses~\eqref{eq:hxp-ub.3}.)

The condition these inequalities impose on \( \hxp \) is just
to be sufficiently small (and, of course, that the bounds involved are
positive).
On \( \tubxp \) the condition is just to be sufficiently large.

 \begin{lemma}\label{lem:exponents-choice}
 The exponents \( \bubxp, \gxp, 
\fxp, \txp, \txp', \hxp \) can be chosen to
satisfy the inequalities~\eqref{eq:txp}, \eqref{eq:txp'}, 
\eqref{eq:bubxp-etc}-\eqref{eq:hxp-ub.3}.
 \end{lemma}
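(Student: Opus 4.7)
My plan is a sequential feasibility argument: I choose the six exponents one at a time, each in an open range dictated by the earlier choices. First I fix any $\txp \in (1,2)$. The only equality constraint is \eqref{eq:bub-g-f-mod}, which pins down $\fxp = 3\gxp - 2\bubxp$; substituting this into \eqref{eq:fxp-ub} turns the chain $\bubxp < \gxp < \fxp < \txp\bubxp$ into the single open window
\begin{equation*}
  \bubxp \;<\; \gxp \;<\; \bubxp(\txp+2)/3,
\end{equation*}
which is nonempty precisely because $\txp > 1$. I then pick $\bubxp > 0$ sufficiently small, any $\gxp$ in this window, and set $\fxp := 3\gxp - 2\bubxp$; the remaining elementary requirements $\fxp < 1$ and $\txp \le 2 - \fxp$ from \eqref{eq:bubxp-etc} and \eqref{eq:txp-ub} then hold automatically once $\bubxp$ is small enough.

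Next I pick $\txp'$ in the interval $(\txp(\bubxp+1),\, \txp^{2})$ required by \eqref{eq:txp'} and \eqref{eq:emerg-xp.3}; this interval is nonempty whenever $\bubxp + 1 < \txp$, which is again ensured by the smallness of $\bubxp$. Then I take $\tubxp$ larger than the maximum of the three lower bounds imposed by \eqref{eq:trap-xp}, \eqref{eq:correl-trap-xp} and \eqref{eq:emerg-xp.2}; each right-hand side is an explicit positive number in the already-chosen constants, so such $\tubxp$ exists. Finally I pick $\hxp > 0$ strictly smaller than each of the three upper bounds from \eqref{eq:hxp-ub.1}--\eqref{eq:hxp-ub.3}; the three right-hand sides $\gxp - \bubxp$, $1 - \txp\bubxp$, and $\tubxp - 2\txp\bubxp$ are all positive by construction (the first because $\gxp > \bubxp$, the other two because $\bubxp$ was chosen small), so any positive $\hxp$ below the minimum of them works.

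The only delicate spot is the first step, where the equality \eqref{eq:bub-g-f-mod} carves out a window of width only $\bubxp(\txp-1)/3$ for $\gxp$. But since this window is open and nonempty we are free to pick an interior point, and no subsequent parameter is simultaneously constrained from above and from below by incompatible choices: each later step is a one-sided inequality, satisfiable by making the new parameter large or small enough. The lemma is therefore a pure feasibility assertion, and the proposed top-down selection establishes it with no quantitative estimate beyond what the chain of inequalities already provides.
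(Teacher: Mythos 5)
Your proof is correct, but it is a genuinely different argument from the one the paper actually gives. The paper simply exhibits explicit numerical values ($\bubxp=0.15$, $\gxp=0.18$, $\fxp=0.24$, $\txp=1.75$, $\txp'=2.5$, $\tubxp=4.5$, hence $\txpub=4.66\dots$, with $\hxp=0.015$ already fixed in~\eqref{eq:hxp}) and asserts that one can check they satisfy all the inequalities. You instead give a constructive feasibility argument: fix $\txp\in(1,2)$, eliminate $\fxp$ via the equality~\eqref{eq:bub-g-f-mod}, observe that the chain $\bubxp<\gxp<\fxp<\txp\bubxp$ collapses to the nonempty open window $\bubxp<\gxp<\bubxp(\txp+2)/3$, shrink $\bubxp$ to satisfy the remaining size bounds, choose $\txp'\in(\txp(\bubxp+1),\txp^2)$ (nonempty once $\bubxp<\txp-1$), take $\tubxp$ above the three explicit lower bounds, and finally take $\hxp>0$ below the three explicit upper bounds from~\eqref{eq:hxp-ub.1}--\eqref{eq:hxp-ub.3}, each positive by the earlier choices. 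The paper's route is shorter and leaves the reader to do the arithmetic; your route explains \emph{why} the system is solvable and makes clear there is an open region of solutions, which is arguably more informative. Both are valid proofs of this pure feasibility claim.
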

 \begin{proof}
It can be checked that the choices
\( \bubxp=0.15 \), \( \gxp=0.18 \), 
\( \fxp=0.24 \), \( \txp=1.75 \), \(\txp'=2.5 \),
\( \tubxp=4.5 \), \( \txpub=4.66\dots \) satisfy all the inequalities in question.
 \end{proof}

 \begin{definition}
Let us fix now the exponents \( \bubxp, \fxp, \gxp, 
\txp, \txp', \hxp \)  as chosen in the lemma.
In order to satisfy all our requirements also for small \( k \),
we will fix \( \aux_{2} \) sufficiently small,
then \( \aux_{3} \) sufficiently
large, and finally \( \R_{1} \) sufficiently large.
 \end{definition}

We need to specify some additional parameters.

 \begin{definition}\label{def:new-ncln} Let
\( \ncln_{i}^{*} = \ncln_{i} + \bub^{*}\T^{-1} \) for \( i=\w,\t\).
 \end{definition}

In estimates that follow, in order to avoid cumbersome calculations, we will
liberally use the notation \( \ll \), \( \gg \), \( o() \), \( O() \).
The meaning is always in terms of \( \R_{1}\to\infty \).

\section{Probability bounds}\label{sec:bounds}

In this section, we derive the bounds on probabilities in \( \cM^{k} \), sometimes
relying on the corresponding bounds for \( \cM^{i} \), \( i<k \).

\subsection{New traps}

 \begin{lemma}[Uncorrelated Traps]\label{lem:uncorrel-trap}
Given a string \( x = \tup{x(0), x(1),\dots} \), a point \( \pair{a_{1}}{b_{1}} \),
let \( \cF \) be the event that an uncorrelated compound
trap of \( \cM^{*} \) starts at \( \pair{a_{1}}{b_{1}} \).
Then
 \begin{equation*}
 \Prob\bigparen{\cF \mid X = x} \le 2 \f^{2} \tub^{2}.
 \end{equation*}
 \end{lemma}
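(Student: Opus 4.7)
The plan is to identify the pair of constituent traps of $\cM$ witnessing the uncorrelated compound trap, enumerate which trap contributes each of the two extreme coordinates of the enclosing rectangle, and then exploit the disjointness of the $y$-projections to obtain conditional independence given $X=x$, which is what produces the quadratic factor $\tub^{2}$.

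First I would unpack Step~\ref{defstep:uncorrel}: if an uncorrelated compound trap starts at $\pair{a_{1}}{b_{1}}$, then there exist two traps $T_{1}, T_{2}$ of $\cM$ with pairwise disjoint $x$-projections and pairwise disjoint $y$-projections, whose starting points $\pair{s_{1}}{t_{1}}, \pair{s_{2}}{t_{2}}$ are at $L^{\infty}$-distance at most $\f$, and minimality of the enclosing rectangle forces $\min(s_{1},s_{2})=a_{1}$ and $\min(t_{1},t_{2})=b_{1}$. Hence one of the traps has left end $a_{1}$, one has bottom end $b_{1}$, and the remaining $x$- and $y$-starting coordinates lie in $\rint{a_{1}}{a_{1}+\f}$ and $\rint{b_{1}}{b_{1}+\f}$ respectively.

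Next I would split into two cases. In \textbf{Case A} the same trap supplies both extreme coordinates: one trap starts at $\pair{a_{1}}{b_{1}}$ and the other at some $\pair{s}{t}\in\rint{a_{1}}{a_{1}+\f}\times\rint{b_{1}}{b_{1}+\f}$, which contributes at most $\f^{2}$ position choices. In \textbf{Case B} the two extreme coordinates come from different traps: one trap starts at $\pair{a_{1}}{t}$ with $t\in\rint{b_{1}}{b_{1}+\f}$ and the other at $\pair{s}{b_{1}}$ with $s\in\rint{a_{1}}{a_{1}+\f}$, again yielding at most $\f^{2}$ ordered position pairs. In total there are at most $2\f^{2}$ admissible position pairs.

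The decisive step is bounding, for each fixed pair of starting positions, the probability that both traps are simultaneously present conditional on $X=x$. Because the $y$-projections of $T_{1}$ and $T_{2}$ are disjoint, Condition~\ref{cond:distr}.\ref{i:distr.indep.trap} shows that the two trap-start events are functions of disjoint portions of $Y$ once $X$ is fixed, hence conditionally independent given $X=x$. Condition~\ref{cond:distr}.\ref{i:distr.trap-ub} bounds each individual trap-start probability by $\tub$, so their conjunction has conditional probability at most $\tub^{2}$. A union bound over the at most $2\f^{2}$ position pairs then yields $\Prob(\cF\mid X=x)\le 2\f^{2}\tub^{2}$. The only subtle point, rather than any serious obstacle, is noticing that it is disjointness of \emph{$y$-projections}, and not of $x$-projections, that supplies the conditional independence given $X$, and that this is what produces the square $\tub^{2}$ the statement demands.
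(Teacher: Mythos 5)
Your proof is correct and follows essentially the same route as the paper (which defers to Lemma~5.4 of the cited earlier paper): a union bound over the at most $2\f^{2}$ admissible pairs of starting positions for the two constituent traps, split according to which trap supplies each of the two extremal coordinates of the enclosing rectangle, followed by a $\tub^{2}$ bound per pair from conditional independence given $X=x$. One step is worth stating more precisely. The bare trap-start events at the two positions are not themselves functions of disjoint portions of $Y$ --- a candidate trap starting at $\pair{s_{1}}{t_{1}}$ can have $y$-extent up to $\bub$, which may reach past $t_{2}$, so the two unrestricted events share $Y$-coordinates. What the disjoint-$y$-projection requirement actually supplies is that the sub-event that some trap starts at $\pair{s_{1}}{t_{1}}$ with $y$-projection contained in $\lint{t_{1}}{t_{2}}$ depends only on $Y$ over that restricted interval; it is still bounded by $\tub$ conditionally on $X=x$ as a sub-event of the full trap-start event, and it is independent (given $X=x$) of the trap-start event at $\pair{s_{2}}{t_{2}}$, which depends only on $Y$ at indices $\ge t_{2}$. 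This recovers the $\tub^{2}$ per pair and hence the stated bound.
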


This lemma corresponds to Lemma~5.4 of~\cite{GacsWalks11}.

 \begin{lemma}[Correlated Traps]\label{lem:correl}
Let a site \( \pair{a}{b} \) be given.
For \( j=1,2 \), let \( \cF_{j} \) be the event that a horizontal correlated trap of type \( j \)
starts at \( \pair{a}{b} \).
 \begin{alphenum}

  \item\label{i:correl-trap-vert} 
Let us fix a string \( x = \tup{x(0), x(1), \dots} \).
We have
 \begin{equation*}
 \Prob\bigparen{\cF_{j} \mid X=x} \le w^{2}.
 \end{equation*}

  \item\label{i:correl-trap-horiz} 
Let us fix a string \( y = \tup{y(0), y(1), \dots} \).
We have
 \begin{equation*}
 \Prob\bigparen{\cF_{j} \mid Y = y} \le (5\bub\L_{j}\tub)^{4}.
 \end{equation*}
 \end{alphenum}
 \end{lemma}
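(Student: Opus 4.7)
\emph{Plan for part (a).} I would just unwind the definition. The event $\cF_j$ demands two things simultaneously: that $\cL_j(X,Y,I,b)$ holds, and that $\Prob\paren{\cL_j(X,Y,I,b) \mid X(I) = x(I)} \le \tub^2$. Conditioning on the full sequence $X = x$ makes the second, deterministic requirement either fail (so $\Prob(\cF_j \mid X = x) = 0$) or hold. In the latter case, Condition~\ref{cond:distr}.\ref{i:distr.indep.trap} makes $\cL_j$ measurable with respect to $X(I)$ and $Y(J)$, and the independence of $Y$ from the part of $X$ outside $I$ gives $\Prob(\cL_j \mid X = x) = \Prob\paren{\cL_j \mid X(I) = x(I)} \le \tub^2$, as required.

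\emph{Plan for part (b).} Since $\cF_j \sbsq \cL_j$, it suffices to bound $\Prob(\cL_j \mid Y = y)$. I would apply a union bound over ordered $4$-tuples $(p_1, p_2, p_3, p_4)$ of potential trap starting positions in $I \times J$: with $|I| = \L_j$ and $|J| = 5\bub$, there are at most $(5\bub \L_j)^4$ such tuples. For each tuple whose presence witnesses $\cL_j$, the four resulting traps have, by the definition of $\cL_j$, disjoint $x$-projections. Condition~\ref{cond:distr}.\ref{i:distr.indep.trap} then localizes each ``a trap starts at $p_i$'' event to $X$ on the $x$-projection of that $i$-th trap; since these $x$-projections are disjoint, the four events are conditionally independent given $Y = y$, and each has probability at most $\tub$ by Condition~\ref{cond:distr}.\ref{i:distr.trap-ub}. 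Multiplying yields a joint probability at most $\tub^4$ per tuple, and combining with the tuple count gives $(5\bub \L_j \tub)^4$.

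\emph{Main obstacle.} The step that needs the most care is the conditional independence argument in part~(b). Literally, the existential statement ``a trap starts at $p_i$'' is a priori measurable with respect to $X$ on the whole window $\clint{p_{i,0}}{p_{i,0} + \bub}$, so two nearby starting points can produce overlapping $X$-windows and the events need not be independent. On the event $\cL_j$ we do, however, have four concrete trap rectangles with disjoint $x$-projections, so I would reformulate the union bound as running over $4$-tuples of actual trap rectangles (or equivalently, pass to the disjoint $X$-projections of the witnessing traps) and only then invoke independence. The combinatorial overhead from enumerating rectangles rather than just starting points is absorbed comfortably into the slack $(5\bub \L_j)^4$ factor, so no sharper counting is needed.
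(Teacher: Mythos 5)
The paper omits this proof, deferring to Lemma 5.5 of \cite{GacsWalks11}, so I can only evaluate your proposal on its own merits.

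Your argument for part (a) is correct: conditioning on $X = x$ determines the deterministic clause of the definition of a correlated trap, and in the surviving case the independence of $Y$ from $X$ together with Condition~\ref{cond:distr}.\ref{i:distr.indep.trap} reduces $\Prob(\cL_j \mid X = x)$ to $\Prob(\cL_j \mid X(I) = x(I)) \le \tub^2$.

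For part (b), you correctly spot the one nontrivial step: the event that a trap starts at $p_i$ is a union over trap sizes and hence a priori depends on $X$ over a window of width up to $\bub$, so two events with nearby starting $x$-coordinates are not obviously conditionally independent. However, your discussion of the fix conflates two inequivalent options and makes a false claim about the second. Literally enumerating $4$-tuples of trap \emph{rectangles} multiplies the count by roughly $\bub^{8}$ (width and height for each of the four traps); that extra factor is \emph{not} absorbed into $(5\bub\L_{j})^{4}$, which is already tight against the claimed bound $(5\bub\L_{j}\tub)^{4}$, not slack. The option that does work is the other one you mention, and it should be stated as follows. Fix a $4$-tuple of starting positions $p_{1},\dots,p_{4}$ sorted by $x$-coordinate. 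On the event $\cL_j$, disjointness of $x$-projections forces each witnessing trap $T_i$ to have $x$-projection inside $\lint{p_{i,0}}{p_{i+1,0}}$. The event $B_i$ that some trap starting at $p_i$ has $x$-projection inside $\lint{p_{i,0}}{p_{i+1,0}}$ is, by Condition~\ref{cond:distr}.\ref{i:distr.indep.trap}, a function of $X$ on the interval $\lint{p_{i,0}}{p_{i+1,0}}$ (and of $Y$, which is held fixed). These intervals are disjoint, so the $B_i$ are conditionally independent given $Y = y$; and $B_i$ is contained in the event that a trap starts at $p_i$, hence has probability at most $\tub$ by Condition~\ref{cond:distr}.\ref{i:distr.trap-ub}. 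This gives $\tub^{4}$ per $4$-tuple, and summing over the at most $(5\bub\L_j)^{4}$ tuples yields the bound. So your plan is essentially right, but the remark about the rectangle-enumeration variant being equivalent and absorbed should be dropped.
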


This lemma corresponds to Lemma~5.5 of~\cite{GacsWalks11}.

Before considering missing-hole traps,
recall the definitions needed for the hole lower bound
condition, Condition \ref{cond:distr}.\ref{i:distr.hole-lb}, in particular the
definition of the numbers \( a,u,v,w,b,c \), and event \( E \).

Since we will hold the sequence \( y \) of values of the sequence \( Y \) of random
variables fixed in this subsection, we take the liberty
and omit the condition \( Y = y \) from the probabilities: it is always assumed
to be there.

Recall the definitions of events \( F \) and \( E \) in 
Condition \ref{cond:distr}.\ref{i:distr.hole-lb}.
For integers \( a \) and \( u\le v \) and a horizontal wall \( \rint{v}{w} \)
we defined \( b,c \) by appropriate formulas, and
for a \( d \in \clint{b}{c} \) the event \( F(u, v;\, a, d) \)
(a function of \( X \)) saying that
\( \Rect^{\rightarrow}(\pair{a}{u}, \pair{d}{v}) \)
contains no traps or vertical barriers, and is inner H-clean.
We elaborate now on the definition of event \( E(u,v,w;\,a) \) as follows.
For \( t> d \) let \( \tilde E(u,v,w;\, a,d,t) \) be the event that 
\( \rint{d}{t} \) is a hole fitting wall \( \rint{v}{w} \), and event \( F(u, v;\, a, d) \) holds.
Then event \( E(u, v, w;\, a) \) holds if there are \( d,t \) such that
event \( \tilde E(u,v,w;\, a,d,t) \) holds.
Let  \( \hat E(u, v, w;\, a) \) hold if there are \( d,t \) such that
event \( \tilde E(u,v,w;\, a,d,t) \) holds and the point \( \pair{t}{w} \) is 
upper right rightward H-clean
(that is the hole \( \rint{d}{t} \) is good as seen from \( \pair{a}{u} \), in the
sense of Definition~\ref{def:holes}).

 \begin{lemma}\label{lem:hole-lb-clean}
We have
 \begin{equation*}
   \Prob(\hat E) \ge (1-2\ncln_{\w}) \Prob(E) \ge 0.9\Prob(E).
 \end{equation*}
 \end{lemma}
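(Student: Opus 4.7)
The plan is to use a canonical-witness argument together with the independence of the remaining coordinates of $X$. Throughout, fix $Y=y$ so that $\rint{v}{w}$ is a horizontal wall of rank $\r$. Among all pairs $(d,t)$ for which the event $\tilde E(u,v,w;\,a,d,t)$ holds, pick the lexicographically smallest pair $(d^{*},t^{*})$, minimising $t$ first and then $d$; it is undefined exactly when $E$ fails. By the dependency clauses of Condition~\ref{cond:distr}.\ref{i:distr.indep}, each event $\tilde E(u,v,w;\,a,d,t')$ is a function of $X(\rint{a}{t'})$: the constraints on $\Rect^{\rightarrow}(\pair{a}{u},\pair{d}{v})$ that make up $F(u,v;\,a,d)$ depend only on $X(\rint{a}{d})$, while the reachability inside $\rint{d}{t'}\times\clint{v}{w}$ defining the hole depends only on $X(\rint{d}{t'})$. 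Consequently, for each admissible $t$ the event $\{t^{*}=t\}$ is measurable with respect to $X(\rint{a}{t})$.

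The gap between $E$ and $\hat E$ is the additional demand that $\pair{t^{*}}{w}$ be upper right rightward H-clean, which unpacks into
(i) $t^{*}$ is strongly right-clean in $\rint{t^{*}}{t^{*}+\bub}$, and
(ii) $\pair{t^{*}}{w}$ is trap-clean in $\Rect^{\rightarrow}(\pair{t^{*}}{w},\pair{t^{*}+\bub}{w+\bub})$.
By the monotonicity clauses of Condition~\ref{cond:distr}.\ref{i:distr.indep}, checking these at scale $\bub$ is enough: (i) forces strong right-cleanness in every $\rint{t^{*}}{t^{*}+b}$, and (ii) forces trap-cleanness in every left-open rectangle based at $\pair{t^{*}}{w}$. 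Both (i) and (ii) are functions of $X(\rint{t^{*}}{t^{*}+\bub})$ together with the fixed value $Y(\rint{w}{w+\bub})$.

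Since $X$ has i.i.d.\ coordinates, $X(\rint{t}{t+\bub})$ is independent of $X(\rint{a}{t})$, and so conditional on $Y=y$ the two properties (i), (ii) are independent of the witness event $\{t^{*}=t\}$. Condition~\ref{cond:distr}.\ref{i:distr.ncln} provides $\Pbof{\text{(i) fails}}\le\ncln_{\w}$ and $\Pbof{\text{(ii) fails}\mid Y=y}\le\ncln_{\w}$, so by the union bound
\begin{align*}
\Prob\bigparen{\hat E\cap\{t^{*}=t\}\mid Y=y}\ge(1-2\ncln_{\w})\Prob\bigparen{E\cap\{t^{*}=t\}\mid Y=y}.
\end{align*}
Summing over $t$ and integrating over $y$ gives $\Prob(\hat E)\ge(1-2\ncln_{\w})\Prob(E)$; the condition $\ncln_{\w}<0.05$ from Section~\ref{sec:mazery} then yields the factor $0.9$.

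The only substantive point requiring care is the measurability claim for the canonical witness: one must confirm that no sub-event of $\tilde E(u,v,w;\,a,d,t')$ for $t'\le t$ reads coordinates of $X$ beyond $t$. This is immediate from the $X$-measurability clauses for cleanness, traps, and vertical barriers inside a given rectangle, but it is precisely what legitimizes the independence split in the preceding paragraph.
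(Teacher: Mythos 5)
Your argument is correct, and it is the standard "first hole found'' decomposition that the paper inherits from Lemma~5.1 of the cited scheduling paper: partition $E$ according to the minimal right endpoint $t^{*}$ of a valid hole, observe that $\{t^{*}=t\}$ is $X(\rint{a}{t})$-measurable (because every $\tilde E(u,v,w;\,a,d,t')$ with $t'\le t$ is), reduce upper-right rightward H-cleanness of $\pair{t}{w}$ to the two scale-$\bub$ events via the monotonicity clauses of Condition~\ref{cond:distr}.\ref{i:distr.indep}, and use the independence of $X(\rint{t}{t+\bub})$ from $X(\rint{a}{t})$ plus the two $\ncln_{\w}$ bounds from Condition~\ref{cond:distr}.\ref{i:distr.ncln}. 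Two cosmetic remarks: the secondary minimisation over $d$ plays no role (only $t^{*}$ is used), and the closing "integrate over $y$'' step is unnecessary since this whole subsection already works conditionally on $Y=y$; both are harmless.
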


This lemma corresponds to Lemma~5.1 of~\cite{GacsWalks11}.

 \begin{lemma}\label{lem:hole-lb-clean-2}
Let \( v < w \), and let us fix the value \( y \) of the sequence of random variables
\( Y \) in such a way that
there is a horizontal wall \( B \) of rank \( \r \), with body \( \rint{v}{w} \).
For an arbitrary integer \( b \), 
let \( G=G(v,w;b) \) be the event that a good hole through \( B \) starts at
position \( b \) (this event still depends on 
the sequence \( X=\tup{X(1),X(2),\dots} \) of random variables).
Then
 \[
   \Prob(G) \ge (1-\ncln_{\w}-\ncln_{\t})(1-2\ncln_{\w})\h(\r)\ge 0.3\h(\r).
 \]
 \end{lemma}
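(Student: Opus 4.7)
The proof will chain three estimates. First, I would invoke Condition~\ref{cond:distr}.\ref{i:distr.hole-lb} in a degenerate case by setting its $u$ equal to the lemma's $v$ and its $a$ equal to the lemma's $b$. With $v-u=0$, the formulas give $d=b$, and the rectangle $Q(b)=\Rect^{\rightarrow}(\pair{b}{v},\pair{b}{v})$ is empty, so $F(v,v;b,b)$ holds vacuously. Consequently $E(v,v,w;b)$ reduces to ``a vertical hole fitting $B$ starts at $b$'', and the condition yields $\Prob(E\mid Y=y)\ge(0+1)^{\hxp}\h(\r)=\h(\r)$.

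Second, I would apply Lemma~\ref{lem:hole-lb-clean} to obtain $\Prob(\hat E)\ge(1-2\ncln_{\w})\h(\r)$. In our degenerate setup $\hat E$ expresses that for some $t>b$ the interval $\rint{b}{t}$ is a hole through $B$ whose upper-right corner $\pair{t}{w}$ is upper-right rightward H-clean; equivalently, the hole is good as seen from $\pair{b}{v}$. To upgrade this to ``good as seen from every admissible $u$'', I need the additional H-cleanness of the lower-left corner $\alpha=\pair{b}{v}$ in $\Rect^{\rightarrow}(u,\alpha)$ uniformly in $u$ with $u_0\le b$ and $u_1\le v$. By the monotonicity and stabilization at size $\bub$ supplied by Conditions~\ref{cond:distr}.\ref{i:distr.indep.left-clean} and~\ref{cond:distr}.\ref{i:distr.indep.ur-clean}, this uniform requirement collapses to two concrete events: $\alpha$ trap-clean in the left-open $\bub$-square with upper-right corner $\alpha$ (failure probability at most $\ncln_{\t}$), and $b$ strongly clean in $\rint{b-\bub}{b}$ (failure probability at most $\ncln_{\w}$). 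A union bound gives both simultaneously with probability at least $1-\ncln_{\w}-\ncln_{\t}$.

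The main step that needs careful justification is the independence between $\hat E$ and this pair of extra events. Conditional on $Y=y$, the latter depend only on $X(\rint{b-\bub}{b})$ (by the locality clauses of Conditions~\ref{cond:distr}.\ref{i:distr.indep.left-clean} and~\ref{cond:distr}.\ref{i:distr.indep.ur-clean}), while $\hat E$ is determined by $X$ on $\rint{b}{\infty}$ via reachability inside $\rint{b}{t}\times\clint{v}{w}$ together with the cleanness of $\pair{t}{w}$ from the upper right. Since $X$ is coin-tossing, these disjoint pieces are independent, and multiplying the two factors yields $\Prob(G)\ge(1-\ncln_{\w}-\ncln_{\t})(1-2\ncln_{\w})\h(\r)$. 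The bound $\ge 0.3\,\h(\r)$ then follows from $\ncln_{\w}<0.05$ and $\ncln_{\t}<0.55$, which give $(1-0.6)(1-0.1)=0.36$.
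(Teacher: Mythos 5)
Your proposal is correct and follows the same route that Lemma~\ref{lem:hole-lb-clean-2} (which the paper inherits from Lemma~5.2 of~\cite{GacsWalks11}) is built on: specialize Condition~\ref{cond:distr}.\ref{i:distr.hole-lb} to the degenerate case \( u=v \), \( a=b \) as observed in Remark~\ref{rem:distr}.\ref{i:rem.hole-lb} to get \( \Prob(E)\ge\h(\r) \); upgrade to \( \hat E \) with Lemma~\ref{lem:hole-lb-clean}; and then multiply by the probability of the two remaining cleanness events at \( \pair{b}{v} \), using the stabilization-at-size-\( \bub \) clauses of Condition~\ref{cond:distr}.\ref{i:distr.indep} to reduce the uniform-in-\( u \) requirement to a single \( \bub \)-square and the locality clauses together with the i.i.d.\ nature of \( X \) to justify independence from \( \hat E \). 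The numerics \( (1-\ncln_\w-\ncln_\t)(1-2\ncln_\w)\ge 0.4\cdot 0.9=0.36\ge 0.3 \) are also right.
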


This lemma corresponds to Lemma~5.2 of~\cite{GacsWalks11}.

Recall the definition of traps of the missing-hole kind in
Step~\ref{defstep:missing-hole} of the scale-up algorithm in Section~\ref{sec:plan}.

 \begin{lemma}[Missing-hole traps]\label{lem:missing-hole}
For \( a,b \in \bbZ_{+} \), let \( \cF \) be the event 
that a horizontal trap of the missing-hole kind starts at \( \pair{a}{b} \).
 \begin{alphenum}
  \item\label{i:missing-hole-trap-vert}
Let us fix a string \( x = \tup{x(0), x(1), \dots} \).
We have
 \begin{equation*}
 \Prob\bigparen{\cF \mid X=x} \le w^{2}.
 \end{equation*}

  \item\label{i:missing-hole-trap-horiz}
Let us fix a string \( y = \tup{y(0), y(1), \dots} \).
Let \( n = \Flo{\frac{\g}{(\slblb^{-1}+2)\bub}} \).
We have 
 \begin{equation*}
 \Prob\bigparen{\cF \mid Y=y} \le e^{- 0.3 n \h(\R^{*})}.
 \end{equation*}
 \end{alphenum}
 \end{lemma}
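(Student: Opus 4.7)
Part (a) is essentially a direct reading of the definition of a missing-hole trap. Given $X=x$, the event $\cF$ requires both that $\cL_{3}(X,Y,I,b)$ holds and that the defining probability inequality $\Prob(\cL_{3}\mid X(I)=x(I))\le\tub^{2}$ is satisfied. The latter is deterministic once $x(I)$ is fixed; if it fails then $\Prob(\cF\mid X=x)=0$, otherwise it suffices to observe that $\cL_{3}$ is a function of $X$ only through $X(I)$. This holds because the hole containment condition $\rint{a_{1}-\bub}{a_{2}+\bub}\sbsq I$, together with Condition~\ref{cond:distr}.\ref{i:distr.indep.left-clean} (and the monotonicity of cleanness at scale $\bub$), confines all the cleanness and reachability checks defining a good hole to $X$-coordinates inside $I$. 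Hence $\Prob(\cF\mid X=x)\le\Prob(\cL_{3}\mid X(I)=x(I))\le\tub^{2}=w^{2}$.

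For part (b), fix $Y=y$. The set of candidate $b'>b+\bub$ for which $\rint{b+\bub}{b'}$ is the body of a light horizontal potential wall is finite, of size $O(\bub)$, and determined by $y(J)$. By a union bound over these $b'$, it suffices to bound, for any one such light potential wall $W$ of rank $\r<\R^{*}$, the $X$-probability that no good vertical hole in $I$ passes through $W$; the logarithm of the polynomial number of candidate $b'$ is absorbed into the constant $0.3$ via our choice of parameters.

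Choose $n=\Flo{\g/((\slblb^{-1}+2)\bub)}$ positions $d_{1}<\cdots<d_{n}$ inside $I$, spaced by $(\slblb^{-1}+2)\bub$: the maximal vertical-hole width $\slblb^{-1}\bub$ plus a $\bub$ buffer on each side. Let $G_{i}$ be the event that a good vertical hole through $W$ starts at $d_{i}$. Lemma~\ref{lem:hole-lb-clean-2} gives $\Prob(G_{i}\mid Y=y)\ge 0.3\,\h(\r)\ge 0.3\,\h(\R^{*})$, since $\h$ is decreasing and $W$ is light. The main step, and the reason for the specific spacing, is to check that the $G_{i}$ depend on pairwise disjoint portions of $X$: each $G_{i}$ refers to reachability in a rectangle of horizontal extent at most $\slblb^{-1}\bub$ anchored at $d_{i}$, and the strong H-cleanness checks on its lower-left and upper-right corners are, by Condition~\ref{cond:distr}.\ref{i:distr.indep.left-clean} together with its scale-$\bub$ monotonicity clause, determined by $X$ within a $\bub$-neighborhood of $d_{i}$ and $d_{i}+\slblb^{-1}\bub$. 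The $\bub$ buffers then separate these $X$-dependence windows, so conditionally on $Y=y$ the $G_{i}$ are mutually independent, giving
\begin{equation*}
\Prob(\cF\mid Y=y)\le \Prob\Bigparen{\bigcap_{i=1}^{n}G_{i}^{c}\mid Y=y}\le \bigparen{1-0.3\,\h(\R^{*})}^{n}\le e^{-0.3\,n\,\h(\R^{*})}.
\end{equation*}
The principal obstacle is this careful bookkeeping of the $X$-dependence of good holes (through the H-cleanness, trap-cleanness, and reachability requirements), which forces the precise choice of $n$.
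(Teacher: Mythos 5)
Your proposal takes essentially the same route as the paper, which cites Lemma~5.6 of \cite{GacsWalks11} and only changes the constant from $(1-\ncln)^{2}$ to $0.3$ and $n$ from $\flo{\g/3\bub}$ to $\Flo{\g/((\slblb^{-1}+2)\bub)}$. Part (a), via the observation that $\cL_{3}$ is a function of $X$ only through $X(I)$ (using $\rint{a_{1}-\bub}{a_{2}+\bub}\sbsq I$ together with the $\bub$-saturation of strong cleanness and trap-cleanness in Condition~\ref{cond:distr}.\ref{i:distr.indep}), and part (b), via a product over $n$ positions with pairwise disjoint $X$-supports of width $(\slblb^{-1}+2)\bub$ and per-position lower bound from Lemma~\ref{lem:hole-lb-clean-2}, are exactly the intended ingredients, and your accounting of the $X$-support of each $G_{i}$ is the right justification for independence under Bernoulli $X$.

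One technical wrinkle, though. After your union bound over the candidate $b'$ you pick up a polynomial factor (of order $\bub$), and you claim it is ``absorbed into the constant $0.3$.'' As written this cannot work: in the very next line you already apply the weakened constant $0.3$ in the product bound $\bigparen{1-0.3\,\h(\R^{*})}^{n}$, so there is no slack left with which to absorb anything. The fix is to carry the true constant from Lemma~\ref{lem:hole-lb-clean-2}, namely $(1-\ncln_{\w}-\ncln_{\t})(1-2\ncln_{\w})\ge (1-0.6)(1-0.1)=0.36$, through the product step, obtaining $e^{-0.36\,n\,\h(\R^{*})}$ for a single $W$; only then observe that since $n\,\h(\R^{*})$ grows like $\T^{\gxp-\bubxp-\txp\hxp}$, which is a positive power of $\T$ by~\eqref{eq:hxp-ub.1}, while $\log\bub=\bubxp\log\T$ is only logarithmic, the multiplicative $O(\bub)$ is indeed dominated, giving $O(\bub)\cdot e^{-0.36\,n\,\h(\R^{*})}\le e^{-0.3\,n\,\h(\R^{*})}$ for $\R_{1}$ sufficiently large. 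This is presumably what you meant, but the order of operations matters; as stated, the absorption step is unjustified.
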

This lemma corresponds to Lemma~5.6 of~\cite{GacsWalks11}.
There, we had \( (1-\ncln)^{2} \) in
place of \( 0.3 \) which stands here for \( (1-\ncln_{\w}-\ncln_{\t})(1-2\ncln_{\w}) \),
and \( n=\flo{\g/3\bub} \).
The latter change is needed here
since we use \( \slblb^{-1}\bub \) instead of \( \bub \) to upperbound the width of holes.
The proof is otherwise identical.


 \begin{lemma}\label{lem:trap-scale-up}
For any value of the constant \( \aux_{3} \), if \( \R_{1} \) is sufficiently large
then the following holds: 
if \( \cM=\cM^{k} \) is a mazery then \( \cM^{*} \) satisfies the trap upper 
bound~\ref{cond:distr}.\ref{i:distr.trap-ub}.
 \end{lemma}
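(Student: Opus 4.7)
The plan is to note that, after the Finish step (Step~\ref{defstep:finish}), every trap of \( \cM \) has been deleted, so any trap of \( \cM^{*} \) starting at the fixed point \( \pair{a}{b} \) must be one of the four kinds introduced by scale-up: an uncorrelated compound trap (Step~\ref{defstep:uncorrel}), a correlated trap of type \( 1 \) or type \( 2 \) (Step~\ref{defstep:correl}), or a trap of the missing-hole kind (Step~\ref{defstep:missing-hole}). In each case a starting point together with the type label determines the rectangle, so a union bound reduces Lemma~\ref{lem:trap-scale-up} to adding four probabilities that are already controlled by Lemmas~\ref{lem:uncorrel-trap},~\ref{lem:correl}, and~\ref{lem:missing-hole}, and then comparing the sum with \( \tub^{*}=\T^{-\txp\tubxp} \) (recall \( \R^{*}=\txp\R \)).

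For the \( X \)-conditional bound, these three lemmas supply respectively \( 2\f^{2}\tub^{2} \), \( \tub^{2} \), \( \tub^{2} \), and \( \tub^{2} \). Writing everything as a power of \( \T=\lg^{\R} \), the dominant term is \( 2\f^{2}\tub^{2}=2\T^{2\fxp-2\tubxp} \), and it suffices to have \( 2\fxp<(2-\txp)\tubxp \) with a margin that absorbs the constant and the three \( O(\T^{-2\tubxp}) \) contributions (the latter easy since \( \txp<2 \)). The parameters fixed in Lemma~\ref{lem:exponents-choice} satisfy the required inequality with room, and the multiplicative constants are swallowed by choosing \( \R_{1} \) large.

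For the \( Y \)-conditional direction, the uncorrelated contribution is again \( 2\f^{2}\tub^{2} \) by the \( X\leftrightarrow Y \) symmetry of the definition. Lemma~\ref{lem:correl}(b) supplies \( (5\bub\L_{j}\tub)^{4} \); the worst case \( j=2 \) gives \( O(\T^{4(\bubxp+\gxp)-4\tubxp}) \), so requiring this to be below \( \tub^{*} \) reduces precisely to~\eqref{eq:correl-trap-xp}. Lemma~\ref{lem:missing-hole}(b) gives \( \exp(-0.3\,n\,\h(\R^{*})) \) with \( n=\Theta(\g/\bub)=\Theta(\T^{\gxp-\bubxp}) \) and \( \h(\R^{*})=\aux_{3}\T^{-\hxp\txp} \); inequality~\eqref{eq:hxp-ub.1} makes the exponent \( \Omega(\aux_{3}\T^{\gxp-\bubxp-\hxp\txp}) \) grow as a positive power of \( \T \), so the bound decays super-polynomially in \( \T \) and beats \( \tub^{*} \) for any fixed \( \aux_{3} \) once \( \R_{1} \) is large enough.

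The technical content really lives in the three preparatory lemmas; the present lemma is a clean union bound, and the only balancing act is to choose \( \tubxp \) large enough via~\eqref{eq:correl-trap-xp} to tame the type-\( 2 \) correlated traps under \( Y \)-conditioning and \( \hxp \) small enough via~\eqref{eq:hxp-ub.1} so that the missing-hole exponent diverges polynomially in \( \T \). Once both are in force, the four-term union bound has ample slack for \( \R_{1} \) sufficiently large, uniformly in the constant \( \aux_{3} \) as the statement requires.
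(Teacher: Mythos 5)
Your proposal matches the paper's approach: after the Finish step, the only traps of \( \cM^{*} \) are the uncorrelated, correlated (types 1, 2), and missing-hole traps, so a union bound over these types — invoking Lemmas~\ref{lem:uncorrel-trap}, \ref{lem:correl}, and~\ref{lem:missing-hole} for the per-type conditional probability bounds and then comparing exponents of \( \T \) against \( \tub^{*}=\T^{-\txp\tubxp} \) — is exactly the content of the analogue of Lemma~7.1 of~\cite{GacsWalks11} that this lemma elides. Your exponent comparisons are correct: the \( (5\bub\L_{2}\tub)^{4} \) term reduces to \eqref{eq:correl-trap-xp}, the uncorrelated term needs \( 2\fxp<(2-\txp)\tubxp \) (which does hold — indeed it follows from the listed constraints since \( \tubxp>2 \) by \eqref{eq:emerg-xp.2} and \( \fxp<2-\txp \) by \eqref{eq:txp-ub}), and \eqref{eq:hxp-ub.1} makes the missing-hole exponent diverge as a positive power of \( \T \), so that term beats \( \tub^{*} \) for any fixed \( \aux_{3} \) once \( \R_{1} \) is large.

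Two small points of presentation rather than substance. First, the per-type bounds you list under ``\( X \)-conditional'' are the bounds for \emph{horizontal} traps conditionally on \( X \); a point could also be the start of a \emph{vertical} correlated or missing-hole trap, which conditionally on \( X \) contributes the \( (5\bub\L_{j}\tub)^{4} \) and \( e^{-0.3n\h(\R^{*})} \) bounds respectively (and symmetrically under \( Y \)-conditioning). Since you analyze both families of bounds across your two cases, the argument closes by symmetry, but each conditional union bound should explicitly include both orientations. Second, your closing phrase ``uniformly in the constant \( \aux_{3} \)'' is loose: the choice of \( \R_{1} \) does depend on \( \aux_{3} \) through the missing-hole term (smaller \( \aux_{3} \) forces larger \( \R_{1} \)); the statement only asks that for each \( \aux_{3} \) some sufficiently large \( \R_{1} \) works, which your preceding sentence already gets right.
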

This lemma corresponds to Lemma~7.1 of~\cite{GacsWalks11}.

\subsection{Upper bounds on walls}

Recall the definition of \( \p(\r) \) in~\eqref{eq:wall-prob}, used to upperbound
the probability of walls.
Recall the definition of emerging walls in
Step~\ref{defstep:emerg} of the scale-up algorithm in Section~\ref{sec:plan}.

 \begin{lemma}\label{lem:emerg}
For any point \( u \), let \( \cF(t) \) be the event that a barrier \( \rint{u}{v} \) 
of \( X \) of the emerging kind, of length \( t \), starts at \( u \).
Denoting \( n = \Flo{\frac{\g}{(\slblb^{-1}+2)\bub}} \) we have:
 \begin{equation*}
 \sum_{t}\Prob\paren{\cF(t)} \le 4\bub^{2} \tub^{2}
 \bigparen{2\cdot (5\bub\L_{2})^{4} + \tub^{-4} e^{-0.3 n \h(\R^{*})}}.
 \end{equation*}
 \end{lemma}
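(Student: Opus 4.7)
The plan is to union-bound the probability over the type $j\in\{1,2,3\}$ of emerging barrier, over the length $t=|I|$, and over the eligible subinterval $I'\sbsq I$ of size $\L_j$ whose endpoints lie within $2\bub$ of those of $I$; then Markov's inequality will convert the ``large conditional probability'' event defining an emerging barrier into a bound on an unconditional probability of $\cL_j$. For a fixed $I'$, setting $Z_{I'}=\Prob\bigparen{\cL_j(X,Y,I',1) \mid X(I')}$, Markov gives
\begin{equation*}
\Pbof{Z_{I'}>\tub^2} \le \Expvof{Z_{I'}}/\tub^2 = \Prob(\cL_j(X,Y,I',1))/\tub^2.
\end{equation*}

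The combinatorial step is to count pairs $(t,I')$ for each fixed $j$. With $u$ fixed and $|I'|=\L_j$, the constraints $u'\in\rint{u}{u+2\bub}$, $v'\in\rint{v-2\bub}{v}$ (together with $v'-u'=\L_j$ and $v=u+t$) yield a non-empty range of legal $u'$ only when $\delta := t-\L_j$ lies in $\rint{0}{4\bub}$, with the size of the range equal to $\min(\delta,2\bub,4\bub-\delta)$. Summing over $t$ (equivalently over $\delta$) produces exactly $4\bub^2$ pairs. This cancellation is what produces the factor $4\bub^2$ rather than the crude $(4\bub)(2\bub)=8\bub^2$ in the final bound.

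Next I would bound $\Prob(\cL_j)$ by type. For $j\in\{1,2\}$ the event demands four traps with disjoint $x$-projections inside $I'\times\clint{1}{1+5\bub}$; conditioning on $Y$ and using the $X$-marginal form of Condition~\ref{cond:distr}.\ref{i:distr.trap-ub} together with the conditional independence of disjoint $x$-pieces under fixed $Y$, any four specific traps occur with conditional probability at most $\tub^4$, and a union bound over $(5\bub\L_j)^4$ ordered position quadruples yields $\Prob(\cL_j)\le(5\bub\L_j\tub)^4$---the same computation underlying Lemma~\ref{lem:correl}\eqref{i:correl-trap-horiz}. For $j=3$ I would adapt the argument of Lemma~\ref{lem:missing-hole}\eqref{i:missing-hole-trap-horiz}: conditional on $Y$, either no light horizontal potential wall of the required form exists (so $\cL_3$ has probability zero) or one does, in which case I partition $I'$ into $n=\flo{\g/((\slblb^{-1}+2)\bub)}$ disjoint sub-intervals---each wide enough to accommodate the $\bub$-buffers on both sides of a vertical hole---and apply Lemma~\ref{lem:hole-lb-clean-2} to conclude that each independently contains a good hole through the wall with probability at least $0.3\h(\R^*)$ (using that the wall is light so its rank is below $\R^*$ and $\h$ is decreasing). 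Conditional independence then gives $\Prob(\cL_3\mid Y)\le(1-0.3\h(\R^*))^n\le e^{-0.3 n\h(\R^*)}$.

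Assembling the pieces, summing over $j$ and using $\L_1\le\L_2$ to bound $(5\bub\L_1)^4+(5\bub\L_2)^4\le 2(5\bub\L_2)^4$, we get
\begin{equation*}
\sum_t \Prob(\cF(t)) \le 4\bub^2\tub^{-2}\bigparen{2(5\bub\L_2\tub)^4 + e^{-0.3 n\h(\R^*)}},
\end{equation*}
which rearranges to the claimed inequality. The main obstacle is not any individual estimate but the bookkeeping: verifying the $4\bub^2$ pair-count and making sure the three case estimates for $\Prob(\cL_j)$ feed cleanly into the same Markov skeleton. Everything else rides on tools already in hand: Condition~\ref{cond:distr}.\ref{i:distr.trap-ub} for $j=1,2$ and Lemma~\ref{lem:hole-lb-clean-2} for $j=3$.
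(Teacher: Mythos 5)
Your proposal is correct and takes essentially the same approach as the paper's (which defers to Lemma~5.7 of the earlier scheduling paper and declares the proof ``otherwise identical''): Markov's inequality over the conditional probability $Z_{I'}$, a union bound over $(t,I')$ pairs, and the estimates $(5\bub\L_j\tub)^4$ for $j=1,2$ and $e^{-0.3n\h(\R^*)}$ for $j=3$ drawn from the same sources (Lemma~\ref{lem:correl} and the argument of Lemma~\ref{lem:missing-hole}). Your pair-count $4\bub^2$ is right, though you can get it more directly than via the triangle sum: fixing $u'\in\rint{u}{u+2\bub}$ determines $v'=u'+\L_j$, and then $v\in\lint{v'}{v'+2\bub}$ gives $2\bub$ choices for $t$, so $2\bub\cdot 2\bub=4\bub^2$ immediately.
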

This lemma corresponds to Lemma~5.7 of~\cite{GacsWalks11}.
There we had \( (1-\ncln)^{2} \) in place
of \( 0.3 \), and \( n=\flo{\g/3\bub} \).
There was also a factor of \( m \) due to Markov conditioning
(with a meaning different from the present \( \m \)) that is not needed here.
The proof is otherwise identical.

 \begin{lemma}\label{lem:emerg-contrib}
For every possible value of 
\( \aux_{2},\aux_{3} \), if  \( \R_{1} \) is sufficiently large then the following holds.
Assume that \( \cM = \cM^{k} \) is a mazery.
Fixing any point \( a \), the
sum of the probabilities over \( l \) that a barrier of the emerging kind of size
\( l \) starts at \( a \) is at most \( \p(\hat\R) / 2 = \p(\txp'\R) / 2 \).
 \end{lemma}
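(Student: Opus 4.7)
The plan is to start from Lemma~\ref{lem:emerg}, which already gives
\[
  \sum_{t}\Prob(\cF(t)) \;\le\; 8\bub^{2}\tub^{2}(5\bub\L_{2})^{4} \;+\; 4\bub^{2}\tub^{-2} e^{-0.3\, n\, \h(\R^{*})},
\]
with $n=\flo{\g/((\slblb^{-1}+2)\bub)}$. I will bound the two summands separately using the exponential parametrization of Definition~\ref{def:exponential}, and then compare with
\[
  \tfrac{1}{2}\p(\hat\R) \;=\; \tfrac{\aux_{2}}{2}(\txp'\R)^{-\aux_{1}}\lg^{-\txp'\R} \;=\; \Theta\bigparen{\R^{-\aux_{1}}\T^{-\txp'}},
\]
where $\T=\lg^{\R}$. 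Throughout, factors polynomial in $\R$ (coming from $\aux_{1}$, $\aux_{2}$, $\aux_{3}$, $\slblb^{-1}=2\m$, and the constants hidden in $\L_{2}$) will be absorbed into $\T^{o(1)}$, which is permissible since the final inequality is strict and $\R_{1}$ may be taken as large as needed.

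\textbf{First (polynomial) term.} Substituting $\bub=\T^{\bubxp}$, $\tub=\T^{-\tubxp}$, and $\L_{2}=9\slblb^{-1}\T^{\gxp}$, one obtains
\[
  8\bub^{2}\tub^{2}(5\bub\L_{2})^{4} \;=\; \T^{\,6\bubxp+4\gxp-2\tubxp+o(1)}.
\]
By the scaling inequality~\eqref{eq:emerg-xp.2}, $6\bubxp+4\gxp+\txp'<2\tubxp$, so the exponent satisfies
\[
   6\bubxp+4\gxp-2\tubxp \;<\; -\txp',
\]
hence this term is $\T^{-\txp'-\eps+o(1)}$ for some fixed $\eps>0$, which is absorbed into (a small fraction of) the target $\tfrac{1}{2}\p(\hat\R)=\Theta(\T^{-\txp'})$ for $\R_{1}$ large.

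\textbf{Second (exponential-tail) term.} Here $n=\Theta(\T^{\gxp-\bubxp})$ and $\h(\R^{*})=\aux_{3}\lg^{-\hxp\txp\R}=\aux_{3}\T^{-\hxp\txp}$, so
\[
   n\,\h(\R^{*}) \;=\; \Theta\bigparen{\T^{\gxp-\bubxp-\hxp\txp}}.
\]
Inequality~\eqref{eq:hxp-ub.1} gives $\txp\hxp<\gxp-\bubxp$, so the exponent of $\T$ is strictly positive and $n\h(\R^{*})\to\infty$ as $\R_{1}\to\infty$. Consequently $e^{-0.3\, n\h(\R^{*})}$ decays faster than any power of $\T$, and the prefactor $4\bub^{2}\tub^{-2}=\T^{2\bubxp+2\tubxp}$ is crushed; the whole term is $o(\T^{-\txp'-\eps})$.

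\textbf{Assembly and the main obstacle.} Adding the two contributions gives a bound of the form $\T^{-\txp'-\eps+o(1)}$, and for $\R_{1}$ large enough this is $\le\tfrac{1}{2}\p(\hat\R)$ independently of the particular values of $\aux_{2},\aux_{3}$, as required. The one delicate point is the bookkeeping of polynomial factors: $\aux_{2}$ appears in the \emph{target} $\p(\hat\R)$ and could be arbitrarily small, while the bound from Lemma~\ref{lem:emerg} has no $\aux_{2}$ in it. This is the reason the margin $\eps$ supplied by~\eqref{eq:emerg-xp.2} and~\eqref{eq:hxp-ub.1} must be \emph{strict}: once $\eps>0$ is fixed, the extra factor $\T^{\eps}$ swallows $\aux_{2}^{-1}$, $\aux_{3}$, all $\slblb^{-1}$ factors and the $\R^{\aux_{1}}$ gap as soon as $\R_{1}$ is sufficiently large, which is the only way the conclusion can hold uniformly in the parameter choices.
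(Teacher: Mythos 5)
Your proof is correct and follows the same route the paper takes (the paper's Lemma~\ref{lem:emerg-contrib} is the analogue of Lemma~7.2 of~\cite{GacsWalks11}, whose proof it inherits without restating): starting from Lemma~\ref{lem:emerg}, expressing both summands in the exponential parametrization of Definition~\ref{def:exponential}, and invoking precisely~\eqref{eq:emerg-xp.2} for the polynomial term and~\eqref{eq:hxp-ub.1} for the exponential-tail term — the very two inequalities the paper flags as the ones this lemma uses. Your observation that the strict slack in these exponent inequalities is what lets the conclusion hold uniformly over $\aux_{2},\aux_{3}$ (by choosing $\R_{1}$ afterward) is exactly the right bookkeeping.
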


This lemma corresponds to Lemma~7.2 of~\cite{GacsWalks11}.


Let us use the definition of compound walls given in
Step~\ref{defstep:compound} of the scale-up algorithm of
Section~\ref{sec:plan}.

 \begin{lemma}\label{lem:compound-wall-ub} 
Consider ranks \( \r_{1},\r_{2} \) at any stage of the scale-up construction.
Assume that Condition \ref{cond:distr}.\ref{i:distr.wall-ub} already holds for
rank values \( \r_{1},\r_{2} \).
For a given point \( x_{1} \) the sum, over all \( l \), of the probabilities for the
occurrence of a compound barrier of type \( \ang{\r_{1},\r_{2},i} \)
and width \( l \) at \( x_{1} \) is bounded above by
 \begin{equation*}
   \lg^{i}\p(\r_{1})\p(\r_{2}).
 \end{equation*}
 \end{lemma}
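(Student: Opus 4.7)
The plan is to decompose the event that a compound barrier of type $\ang{\r_{1},\r_{2},i}$ and width $l$ starts at $x_{1}$ according to the sizes of its two constituents and the gap between them, and then apply the inductive hypothesis Condition~\ref{cond:distr}.\ref{i:distr.wall-ub} to each piece, using independence of $X$ on disjoint intervals.

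By Step~\ref{defstep:compound}, such a compound barrier consists of barriers $W_{1},W_{2}$ of ranks $\r_{1},\r_{2}$ respectively (with either $W_{1}$ or $W_{2}$ light depending on which compounding pass produced it; both cases are symmetric and yield the same bound), at a distance $d$ satisfying either $d=i$ when $i\in\{0,1\}$ or $\lg^{i}\le d<\lg^{i+1}$ when $i\ge 2$. Denote by $D_{i}$ the corresponding set of allowed integer values of $d$. If $l_{1},l_{2}$ are the sizes of the two constituents then $l=l_{1}+d+l_{2}$, the body of $W_{1}$ is $\rint{x_{1}}{x_{1}+l_{1}}$, and the body of $W_{2}$ is $\rint{x_{1}+l_{1}+d}{x_{1}+l_{1}+d+l_{2}}$.

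By Condition~\ref{cond:distr}.\ref{i:distr.indep.barrier}, the event that $W_{j}$ is a barrier of rank $\r_{j}$ depends only on $X$ restricted to its body. Since the two bodies are disjoint and $X$ is a sequence of independent Bernoulli variables, these events are independent, so their joint probability is at most $\p(\r_{1},l_{1})\p(\r_{2},l_{2})$ by the definition of $\p(\r,l)$. Summing the union bound over $l$, and then over $l_{1},l_{2}$ and $d\in D_{i}$, and applying the assumed Condition~\ref{cond:distr}.\ref{i:distr.wall-ub} for ranks $\r_{1},\r_{2}$ yields
\begin{align*}
  \sum_{l}\Prob(\text{compound of type }\ang{\r_{1},\r_{2},i}\text{, width }l\text{, at }x_{1})
  &\le \sum_{d\in D_{i}}\sum_{l_{1}}\sum_{l_{2}}\p(\r_{1},l_{1})\p(\r_{2},l_{2})
\\ &\le |D_{i}|\,\p(\r_{1})\,\p(\r_{2}).
\end{align*}

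It then remains to verify $|D_{i}|\le\lg^{i}$. For $i\in\{0,1\}$ we have $|D_{i}|=1\le\lg^{i}$. For $i\ge 2$, the number of integers in $[\lg^{i},\lg^{i+1})$ is at most $\lg^{i}(\lg-1)+1$; since $\lg=2^{1/2}$ gives $\lg-1<1/2$ and $\lg^{i}\ge 2$, this quantity is at most $\lg^{i}$. The argument is entirely routine and its only subtlety is noting that the two symmetric compounding passes are handled by the same bound; there is no serious obstacle, the work being essentially a product of independent tail bounds together with the elementary geometric count of $|D_{i}|$.
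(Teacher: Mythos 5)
Your proof is correct and follows essentially the same route the paper takes (the paper only notes that this lemma corresponds to Lemma~5.8 of the clairvoyant-scheduling paper and omits the argument): decompose by the two component widths and the gap $d$, use Condition~\ref{cond:distr}.\ref{i:distr.indep.barrier} together with the independence of $X$ on the two disjoint bodies to factor the probability as $\p(\r_1,l_1)\p(\r_2,l_2)$, sum using the hypothesis, and count $|D_i|\le\lg^i$ via $\lg^i(\lg-1)+1\le\lg^i$ for $i\ge2$ with $\lg=\sqrt2$.
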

This lemma corresponds to Lemma~5.8 of~\cite{GacsWalks11}.

 \begin{lemma}\label{lem:compound-contrib}
For a given value of \( \aux_{2} \), if 
we choose the constant 
\( \R_{1} \) sufficiently large 
then the following holds.
Assume that \( \cM=\cM^{k} \) is a mazery.
After one operation of forming compound barriers, fixing any point \( a \),
for any rank \( \r \), the sum, over all widths \( l \), of the probability that
a compound barrier of rank \( \r \) and width \( l \) starts at \( a \) is at most 
\( \p(\r)\R^{-\aux_{1}/2} \).
 \end{lemma}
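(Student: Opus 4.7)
The plan is to combine Lemma~\ref{lem:compound-wall-ub}, which gives the per-type bound $\lg^i\p(\r_1)\p(\r_2)$, with a careful summation over all types $\ang{\r_1,\r_2,i}$ that realize rank $\r=\r_1+\r_2-i$ and have $\r_1$ light (that is, $\r_1\in[\R,\R^*)$). Substituting $\p(\r_j)=\aux_2\,\r_j^{-\aux_1}\lg^{-\r_j}$ into the per-type bound, the three exponential factors combine as $\lg^{i-\r_1-\r_2}=\lg^{-\r}$, exactly matching the $\lg^{-\r}$ factor in $\p(\r)$. The problem therefore reduces to establishing
\begin{align*}
\aux_2 \sum_{(\r_1,\r_2,i)} \r_1^{-\aux_1}\r_2^{-\aux_1} \;\le\; \r^{-\aux_1}\R^{-\aux_1/2},
\end{align*}
where the sum ranges over admissible triples ($\r_1+\r_2-i=\r$, $\r_1$ light, $i\in\{0,1,\dots,\flo{\log_\lg\f}\}$, and both $\r_1,\r_2\ge\R$).

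To estimate the polynomial sum I would, for each fixed light $\r_1$, carry out the inner sum over $i$, noting that $\r_2=\r-\r_1+i$ takes consecutive integer values starting at $\r-\r_1$. The telescoping inequality $\sum_{i\ge 0}(\r-\r_1+i)^{-\aux_1}=O\bigl((\r-\r_1)^{1-\aux_1}\bigr)$ (valid for $\aux_1\ge 2$ and $\r-\r_1\ge 1$) controls the inner sum. Summing outer over $\r_1\in[\R,\R^*)$ with weight $\r_1^{-\aux_1}$, using that $\r=\Theta(\R)$ by Lemma~\ref{lem:rank-bds} and that $\r_1<\R^*$ keeps $\r-\r_1$ away from zero, yields the desired polynomial bound. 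The second compounding step, in which $W_2$ rather than $W_1$ is light, contributes an analogous term that by symmetry can be absorbed into the same bound with an overall factor of $2$.

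Finally, I would verify that this polynomial bound lies below the target $\r^{-\aux_1}\R^{-\aux_1/2}$ once $\R_1$ (and hence $\R=\R_k$) is taken sufficiently large: the constant $\aux_2$ is fixed in Section~\ref{sec:params} before $\R_1$, and any polylogarithmic factors in $\R$ arising from the size $\log_\lg\f=\fxp\R$ of the range of $i$ can be absorbed into the slack $\R^{-\aux_1/2}$.

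The main obstacle will be making the polynomial estimate of $\Sigma(\r)=\sum\r_1^{-\aux_1}\r_2^{-\aux_1}$ tight enough: naive counting of admissible triples gives only a weak bound because $i$ alone ranges over $\Theta(\fxp\R)$ values. The required savings must come from the simultaneous polynomial decay $\r_1^{-\aux_1}\r_2^{-\aux_1}$, forcing the dominant contribution into the regime where $\r_1\approx\R$ and $\r_2\approx\r-\R$, together with the $O(\R)$ width of the light window $[\R,\R^*)$ and the constant upper bound $\r\le\txpub\R$ from Lemma~\ref{lem:rank-bds}. This is exactly the place where the $\R^{-\aux_1/2}$ slack appears in the conclusion rather than a plain $\p(\r)$, so the argument has essentially no room to spare.
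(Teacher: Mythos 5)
Your setup is correct and matches what the paper's proof must do: invoke Lemma~\ref{lem:compound-wall-ub} type by type, observe that $\lg^{i}\p(\r_{1})\p(\r_{2}) = \aux_{2}^{2}\r_{1}^{-\aux_{1}}\r_{2}^{-\aux_{1}}\lg^{-\r}$ so the exponential factors contribute exactly $\lg^{-\r}$, and reduce the claim to the polynomial inequality
\begin{align*}
\aux_{2}\sum_{\langle\r_{1},\r_{2},i\rangle}\r_{1}^{-\aux_{1}}\r_{2}^{-\aux_{1}} \;\le\; \r^{-\aux_{1}}\R^{-\aux_{1}/2}.
\end{align*}
The gap is in the polynomial estimate, which you sketch but never actually close --- and on inspection the route you describe does not produce the required bound. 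With $\aux_{1}=2$, your inner sum over $i$ gives $\sum_{i\ge0}(\r-\r_{1}+i)^{-2}=O\bigl((\r-\r_{1})^{-1}\bigr)$, and your outer sum $\sum_{\r_{1}\in[\R,\R^{*})}\r_{1}^{-2}(\r-\r_{1})^{-1}$ is bounded by $(\R^{*}-\R)\cdot\R^{-2}\cdot\sup(\r-\r_{1})^{-1}$. Because $\r\ge(2-\fxp)\R$ and $\r_{1}<\txp\R$ with $2-\fxp-\txp$ a \emph{small constant}, the factor $(\r-\r_{1})^{-1}$ is only $\Theta(\R^{-1})$, so your bound is $\Sigma=O(\R^{-2})$. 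But the target is $\r^{-2}\R^{-1}/\aux_{2}=\Theta(\R^{-3})$ (since $\r=\Theta(\R)$), so your estimate is a full factor of $\R$ too large, and no choice of large $\R_{1}$ rescues a \emph{fixed} $\aux_{2}$. A direct count confirms this is not an artifact of looseness: for $\r\approx2\R$ one has $\Theta(\fxp^{2}\R^{2})$ admissible triples with $\r_{1},\r_{2}$ both $\Theta(\R)$, contributing $\Theta(\fxp^{2}\R^{2})\cdot\Theta(\R^{-4})=\Theta(\R^{-2})$. You acknowledge that ``the argument has essentially no room to spare,'' but as presented it has negative room: the claimed polynomial bound is asserted, not derived, and the natural derivation falls short. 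You need a genuinely sharper accounting --- something beyond ``$\r_{1}\approx\R$, $\r_{2}\approx\r-\R$ dominate,'' since that regime alone already saturates your $O(\R^{-2})$ bound --- or you must identify an additional constraint on the admissible triples that prunes the count by a factor of $\R$.
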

This lemma corresponds to Lemma~7.3 of~\cite{GacsWalks11}.

 \begin{lemma}\label{lem:all-wall-ub}
For every choice of 
\( \aux_{2},\aux_{3} \) if we choose \( \R_{1} \)
sufficiently large then the following holds.
Suppose that each structure \( \cM^{i} \) for \( i \le k \) is a mazery.
Then Condition \ref{cond:distr}.\ref{i:distr.wall-ub} holds for \( \cM^{k+1} \).
\end{lemma}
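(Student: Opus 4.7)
Fix a point $a$ and a rank $\r \ge \R^{*}$; since all barriers of rank $<\R^{*}$ are light and removed in Step~\ref{defstep:finish}, these are the only ranks for which there is anything to verify. The goal is to show $\sum_{l} \p_{k+1}(\r, l) \le \p(\r)$. By the construction of Section~\ref{sec:plan}, every barrier of $\cM^{*}=\cM^{k+1}$ falls into exactly one of three categories: (A) heavy barriers inherited from $\cM^{k}$, which survive because $\r \ge \R^{*}$; (B) emerging barriers introduced in Step~\ref{defstep:emerg}, which all carry the single rank $\hat\R = \txp'\R$; (C) compound barriers produced by either of the two passes of Step~\ref{defstep:compound}. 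A union bound over these categories reduces the task to bounding each contribution separately and checking that the total is at most $\p(\r)$.

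For (A), the inductive hypothesis that $\cM^{k}$ is a mazery gives $\sum_{l} \p_{k}(\r, l) \le \p(\r)$ directly, since the events and their probabilities for the surviving barriers are unchanged. For (B), Lemma~\ref{lem:emerg-contrib} bounds the total probability of an emerging barrier of any width starting at $a$ by $\p(\hat\R)/2$, and this contribution is nonzero only at $\r = \hat\R$. For (C), Lemma~\ref{lem:compound-contrib} gives a bound of $\p(\r)\R^{-\aux_{1}/2}$ per compounding pass; applying it to both passes and union-bounding yields at most $2\p(\r)\R^{-\aux_{1}/2}$.

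The verification now splits by cases. At a rank $\r$ that is neither an inherited rank nor $\hat\R$, only (C) contributes, and $2\p(\r)\R^{-\aux_{1}/2} \ll \p(\r)$ as soon as $\R_{1}$ is large. At $\r = \hat\R$ not inherited from $\cM^{k}$, the sum is at most $(1/2 + 2\R^{-\aux_{1}/2})\p(\r) < \p(\r)$. The main obstacle is the case in which $\r$ is simultaneously an inherited rank of $\cM^{k}$ and either the emerging rank $\hat\R$ or a compound rank, because then the inherited contribution already uses up the entire budget $\p(\r)$. Here the slack must come from the constant $\aux_{2}$ hidden in $\p(\r) = \aux_{2}\r^{-\aux_{1}}\lg^{-\r}$: Corollary~\ref{c.rank-lifetime} ensures that any fixed rank value is ``newly born'' (as an emerging or compound rank) at only boundedly many levels, so choosing $\aux_{2}$ small and $\R_{1}$ large absorbs the cumulative overhead from the newer sources into the $\aux_{2}$ factor and closes the induction. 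This is essentially the argument used in the corresponding Lemma~7.4 of~\cite{GacsWalks11}, and carries over unchanged to the present generalized setting.
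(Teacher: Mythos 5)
Your decomposition into inherited heavy barriers (A), emerging barriers (B), and compound barriers (C), together with the appeal to Lemmas~\ref{lem:emerg-contrib} and~\ref{lem:compound-contrib}, matches the structure the paper relies on. You also correctly identify the essential difficulty: at a rank $\r$ that is both an inherited rank and the emerging or a compound rank, naively stacking the inherited bound $\p(\r)$ with the new contributions overshoots the budget.

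The proposed resolution, however, does not work: the constant $\aux_{2}$ cannot supply the slack. It enters $\p(\r)=\aux_{2}\r^{-\aux_{1}}\lg^{-\r}$ as a uniform multiplicative factor, and the bounds in Lemmas~\ref{lem:emerg-contrib} ($\le\p(\hat\R)/2$) and~\ref{lem:compound-contrib} ($\le\p(\r)\R^{-\aux_{1}/2}$) are themselves stated as fractions of $\p$. Shrinking $\aux_{2}$ rescales budget and contributions by the same factor, so the sum-versus-budget ratio is unchanged. The correct closing step is the one you only gesture at via Corollary~\ref{c.rank-lifetime}: you must \emph{unroll} the recursion rather than invoke the single-step inductive bound $\p_{k}(\r)\le\p(\r)$ (which is too lossy). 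Tracing every inherited barrier of rank $\r$ back to the level $j$ where it was newly created, one observes that $\hat\R_{j}=\txp'\R_{j}$ is strictly increasing in $j$, so the emerging-barrier contribution at a fixed $\r$ arises at most once and is $\le\p(\r)/2$; the compound contributions arise only over the $O(1)$ eligible levels of Corollary~\ref{c.rank-lifetime}, each $\le 2\p(\r)\R_{j}^{-\aux_{1}/2}$, so their total is $O(\R_{1}^{-\aux_{1}/2})\p(\r)$; and the base level contributes at most $2^{-\m+1}\ll\p(\R_{1})$ at $\r=\R_{1}$ and nothing otherwise. Thus the slack comes from the $1/2$ in Lemma~\ref{lem:emerg-contrib} and the $\R^{-\aux_{1}/2}$ in Lemma~\ref{lem:compound-contrib}, combined with the finite rank lifetime, not from the choice of $\aux_{2}$.
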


This lemma corresponds to Lemma~7.4 of~\cite{GacsWalks11}.

 \begin{lemma}\label{lem:pub}
For small enough \( \aux_{2} \), the probability of a barrier
of \( \cM \) starting at a given point \( b \) is bounded by \( \T^{-1} \).
 \end{lemma}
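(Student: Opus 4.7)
The proof is essentially a union bound over ranks, combined with the exponential decay of $\p(\r)$. Let me outline the plan.

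First I would bound the event ``some barrier of $\cM$ starts at $b$'' by the union, over all possible rank values $\r$ and sizes $l$, of the events ``a barrier of rank $\r$ and size $l$ starts at $b$.'' A union bound gives
\[
  \Prob[\text{some barrier at } b] \;\le\; \sum_{\r}\sum_{l}\Prob[\text{barrier of rank } \r, \text{ size } l, \text{ starts at } b] \;\le\; \sum_{\r}\p(\r),
\]
where the inner sum is controlled via Condition~\ref{cond:distr}.\ref{i:distr.wall-ub}, namely $\p(\r)\ge\sum_{l}\p(\r,l)$. So the problem reduces to bounding $\sum_{\r}\p(\r) = \aux_{2}\sum_{\r}\r^{-\aux_{1}}\lg^{-\r}$.

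Next, by Lemma~\ref{lem:rank-bds} every rank occurring in $\cM$ lies in the interval $\clint{\R}{\txpub\R}$. Since $\lg^{-\r}$ decays geometrically in $\r$, I would enumerate the (discrete) set of rank values and bound the sum by a geometric series: the leading term is at most $\aux_{2}\R^{-\aux_{1}}\lg^{-\R}=\aux_{2}\R^{-\aux_{1}}\T^{-1}$, and successive terms shrink by at least $\lg^{-1}$ once we group ranks into unit-width blocks. This yields
\[
  \sum_{\r}\p(\r)\;\le\;\frac{\aux_{2}}{1-\lg^{-1}}\cdot\R^{-\aux_{1}}\T^{-1}\cdot O(1),
\]
where the $O(1)$ accounts for at most a constant number of ranks per unit interval of $\r$ (the construction creates ranks by a bounded family of operations---the compound rule $\r_{1}+\r_{2}-i$ with integer $i$, and the emerging rule $\txp'\R$).

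Finally, since $\aux_{1}=2$ and $\R\ge\R_{1}\ge 1$, the factor $\R^{-\aux_{1}}$ is at most $1$, so the whole quantity is at most $\aux_{2}\cdot C\cdot \T^{-1}$ for a universal constant $C$. Choosing $\aux_{2}$ small enough that $\aux_{2}\cdot C\le 1$ gives the desired bound $\T^{-1}$, and this choice is consistent with the earlier stipulation (in Section~\ref{sec:params}) to fix $\aux_{2}$ sufficiently small before fixing $\aux_{3}$ and $\R_{1}$. The only mild subtlety is the bookkeeping of how many distinct rank values can coexist per unit interval, which is what forces ``small enough'' rather than just ``any'' $\aux_{2}$; otherwise the proof is a one-line geometric-series estimate.
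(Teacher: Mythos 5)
Your approach is the natural and, as far as I can tell, the intended one: union bound over all ranks and widths, use Condition~\ref{cond:distr}.\ref{i:distr.wall-ub} to reduce each rank's contribution to $\p(\r)$, invoke Lemma~\ref{lem:rank-bds} so that every rank is at least $\R$, pull out $\lg^{-\R}=\T^{-1}$, and beat the remaining geometric sum by taking $\aux_{2}$ small. The paper itself prints no proof for this lemma and only cites Lemma~7.5 of~\cite{GacsWalks11}, so I cannot do a line-by-line comparison, but the decomposition you use is the one all the neighboring lemmas in Section~\ref{sec:bounds} are clearly built around.

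One point deserves a caveat, which you partly flag yourself. You replace $\r^{-\aux_{1}}$ by $1$ and then lean entirely on the claim that the number of distinct rank values per unit interval is a universal constant. That claim is not obvious: the compound-rank formula $\r=\r_{1}+\r_{2}-i$ with $i$ ranging over roughly $\log_{\lg}\f=\fxp\R$ integers, applied repeatedly across levels, can in principle populate the window $\clint{\R}{\txpub\R}$ quite densely as $k$ grows. The polynomial factor $\r^{-\aux_{1}}$ (with $\aux_{1}=2$) is precisely the slack the definition of $\p(\r)$ builds in for this bookkeeping; discarding it before you have pinned down the rank multiplicity gives up the cushion that makes the constant $C$ provably universal. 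Keeping $\r^{-\aux_{1}}\le\R^{-\aux_{1}}$ in the estimate, and noting that the possible rank values in $\cM^{k}$ lie in finitely many integer-spaced families whose count is controlled by $\R$, closes the gap cleanly; as written, the step ``the $O(1)$ accounts for at most a constant number of ranks per unit interval'' is an assertion rather than an argument.
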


This lemma corresponds to Lemma~7.5 of~\cite{GacsWalks11}
(where the intermediate notation \( \overline p \) was
also used for the upper bound).

\subsection{Lower bounds on holes}\label{subsec:bounds}

Before proving the hole lower bound condition for \( \cM^{*} \),
let us do some preparation.

 \begin{definition}\label{def:E-star}
Recall the definition of event \( E \) in
Condition \ref{cond:distr}.\ref{i:distr.hole-lb}, and that it refers to a
horizontal wall with body \( \rint{v}{w} \) seen from a point \( \pair{a}{u} \).
Take the situation described above, possibly without the bound on \( v-u \).

Let event \( F^{*}(u,v;\, a, d) \) 
(a function of the sequence \( X \)) be defined just as the event 
\( F(u,v;\, a,d) \) in Condition \ref{cond:distr}.\ref{i:distr.hole-lb}, 
except that the part requiring inner H-cleanness and freeness from traps and
barriers of the rectangle \( Q(d) \) must now be understood in the sense of both
\( \cM \) and \( \cM^{*} \).
Let
 \begin{equation*}
  E^{*} = E^{*}(u, v, w;\, a)
 \end{equation*}
be the event that there is a \( d \in \clint{b}{c} \) where a vertical hole fitting
wall \( B=\rint{v}{w} \) starts, and event \( F^{*}(u,v;\,a,d) \) holds.
 \end{definition}

Note that in what follows we will use the facts several times that
 \begin{align*}
   \tub_{k+1}<\tub_{k},\; \T_{k+1}>\T_{k},
 \end{align*}
in other words that
the bound \( \tub_{k} \)
on the conditional probability of having a trap at some point in \( \cM^{k} \)
serves also as a bound on the conditional
probability of having one in \( \cM^{k+1} \), and similarly with the bound \( \T_{k}^{-1} \)
for walls.

 \begin{lemma}\label{lem:hole-lb-2}
Suppose that the requirement \( v-u \le \slblb^{-2}\bub \) 
in the definition of the event \( E^{*} \) is replaced with
 \( v-u\le \slblb^{-2}\bub^{*} \),
while the rest of the requirements are the same.
Then we have
 \begin{equation*}
   \Prob\paren{E^{*}\mid Y=y} \ge 0.25 \land (v-u+1)^{\hxp} \h(\r) - U,
 \end{equation*}
where \( U = \T^{-\txpub\hxp-\eps} \) for some constant \( \eps>0 \).
If \( v-u>\slblb^{-2}\bub \) then we also have the somewhat stronger
inequality 
 \begin{equation*}
   \Prob\paren{E^{*}\mid Y=y} \ge 0.25 \land 2(v-u+1)^{\hxp} \h(\r) - U.
 \end{equation*}
The same statement holds if we replace horizontal with vertical.
 \end{lemma}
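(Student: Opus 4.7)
The plan is to reduce the bound for $E^*$ to the one already guaranteed for $E$ by Condition~\ref{cond:distr}.\ref{i:distr.hole-lb} applied to $\cM$, paying a small additive cost for the stronger cleanness and trap/barrier-freeness that $E^*$ demands in $\cM^*$. Specifically, $E\setminus E^*$ is contained in the event $\cN(Q(c))$ that the largest candidate rectangle $Q(c)$ either contains a new trap of $\cM^*$ (uncorrelated compound, correlated, or missing-hole), contains a new barrier of $\cM^*$ (emerging or compound), or fails the additional cleanness requirement imposed in Step~\ref{defstep:clean}. So the main estimate is $\Prob(E^*\mid Y=y)\ge\Prob(E\mid Y=y)-\Prob(\cN(Q(c))\mid Y=y)$.

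First I would handle the case $v-u\le\slblb^{-2}\bub$, where the hypothesis of Condition~\ref{cond:distr}.\ref{i:distr.hole-lb} for $\cM$ already gives $\Prob(E\mid Y=y)\ge (v-u+1)^{\hxp}\h(\r)$. The rectangle $Q(c)$ has area $O(\slblb^{-3}\bub^{*\,2})$, so a union bound over its lattice points controls $\Prob(\cN(Q(c))\mid Y=y)$ by combining: the $O(\f^{2}\tub^{2})$ bound from Lemma~\ref{lem:uncorrel-trap}, the $(5\bub\L_{j}\tub)^{4}$ bound from Lemma~\ref{lem:correl}\ref{i:correl-trap-horiz}, the exponentially small $e^{-0.3n\h(\R^{*})}$ bound from Lemma~\ref{lem:missing-hole}\ref{i:missing-hole-trap-horiz}, the emerging-barrier bound of Lemma~\ref{lem:emerg}, and the per-point barrier bound $\T^{-1}$ from Lemma~\ref{lem:pub}. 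Inequalities~\eqref{eq:trap-xp}, \eqref{eq:correl-trap-xp}, \eqref{eq:emerg-xp.2}, and \eqref{eq:hxp-ub.3} ensure that each resulting contribution is bounded by $\tfrac{1}{k}\T^{-\txpub\hxp-\eps}$ for some fixed $\eps>0$ and sufficiently large $\R_{1}$, uniformly in $\r\le\txpub\R$ (using Lemma~\ref{lem:rank-bds}). The cap at $0.25$ in the conclusion simply reflects that probabilities do not exceed $1$.

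For the extended range $\slblb^{-2}\bub<v-u\le\slblb^{-2}\bub^{*}$, which is where the factor-of-$2$ stronger bound is claimed, I would split the strip $\rint{u}{v}$ into two sub-strips of vertical extent $\le\slblb^{-2}\bub$ whose candidate rectangles $Q_{1},Q_{2}$ lie in disjoint $x$-intervals (achievable since $v-u>\slblb^{-2}\bub$ allows the $b,c$-intervals to be placed with horizontal separation). Because $X$ is coin-tossing, events determined by $X$ restricted to disjoint $x$-intervals are independent conditional on $Y=y$; applying the case-one bound to each strip and using independence yields $\Prob(E^{*}\mid Y=y)\ge 1-\prod_{i}(1-(v-u_{i}+1)^{\hxp}\h(\r)+U_{i})$, which by expansion and the cap at $0.25$ gives the stated factor-$2$ improvement modulo an error of the same order $U=\T^{-\txpub\hxp-\eps}$.

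The main obstacle is not the structure of the argument but the bookkeeping: one must verify that every new-trap and new-barrier per-point probability, once multiplied by the number of lattice points in $Q(c)$, still satisfies $\le \T^{-\txpub\hxp-\eps}$, and do so simultaneously for all ranks $\r\le \txpub\R$. This is exactly what the chain of exponent inequalities in Section~\ref{sec:params} was designed for, so the verification reduces to a finite list of exponent comparisons using Definition~\ref{def:exponential} and the bounds $\bub^{*}=\T^{\txp\bubxp}$, $\tub=\T^{-\tubxp}$. A secondary technicality is the splitting step: one must check that the chosen $u_{2}$ places the wall-seen-from-$\pair{a}{u_{2}}$ geometry consistently with the slope conditions (Lemma~\ref{lem:pre-hole}) and that the two resulting rectangles $Q_{i}$ are indeed contained in $Q(c)$, so the same union-bound estimate of $\cN(Q(c))$ continues to cover both.
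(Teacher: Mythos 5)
Your Case~1 argument works, though it's less sharp than the paper's: the paper shows that when $v-u\le\slblb^{-2}\bub$ one actually has $E\subseteq E^{*}$, because the candidate rectangle $Q(c)$ is too small to contain any new trap or barrier of $\cM^{*}$ (their sizes are $\Omega(\g)$ or $\Omega(\f)$, while $|Q(c)|=O(\slblb^{-3}\bub)$); so no $-U$ term is even needed there. Your subtraction of $\Prob(\cN(Q(c)))$ still yields the stated bound, since the lemma allows the $-U$ allowance, so this part is fine.

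The gap is in Case~2, the regime $\slblb^{-2}\bub<v-u\le\slblb^{-2}\bub^{*}$ where the factor-$2$ gain has to be earned. Your plan is to split $\rint{u}{v}$ into \emph{two} vertical sub-strips of extent $\le\slblb^{-2}\bub$, apply Case~1 to each, and multiply by independence. This fails on three counts. First, the ratio $(v-u)/(\slblb^{-2}\bub)$ can be as large as $\T^{(\txp-1)\bubxp}\to\infty$, so two sub-strips of that height do not cover $\rint{u}{v}$; the decomposition needs a number of pieces that grows with the scale. Second, vertically stacked sub-strips of $\rint{u}{v}$ do \emph{not} produce candidate rectangles in disjoint $x$-intervals --- they all share the same left edge at $a$ --- so the conditional independence you invoke does not hold for them. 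Third, even if you produce a hole for a sub-strip that starts at some $\pair{a'}{u_{2}}$ with $u_{2}>u$, this does not directly witness $E^{*}(u,v,w;a)$: you still need the rectangle between $\pair{a}{u}$ and $\pair{a'}{u_{2}}$ to be trap/barrier-free and the corner $\pair{a}{u}$ to be H-clean, and your decomposition gives no mechanism for that.

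The paper's actual construction is structurally different and addresses all three points at once: it fixes a single thin band $\clint{u'}{v}$ with $u'=v-\flo{\bub}$ just under the wall, tiles the horizontal interval $\clint{b}{c}$ with $n=\cei{(c-b)/\bub'}$ pieces of width $\bub'=\flo{4\slblb^{-1}\bub}$, and defines $E'_{i}=E(u',v,w;\,a_{i})$ as the hole event seen from $\pair{a_{i}}{u'}$. These $E'_{i}$ do live in disjoint $x$-intervals (hence are independent under coin-tossing $X$), and $n\gtrsim\slblb^{-1}$ is what gives $\Prob(\bigcup_{i}E'_{i})\ge 1-e^{-n\sigma}$ and the factor $2$ after the elementary bound $1-e^{-x}\ge\alpha\land\alpha x$. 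The connection to the full event $E^{*}(u,v,w;\,a)$ is supplied separately by the events $C$ (H-cleanness of $\pair{a}{u}$) and $D$ (the big rectangle $\rint{a}{c}\times\clint{u}{v}$ free of traps and barriers of both $\cM$ and $\cM^{*}$), whose complements are bounded by $0.1$ and $U$ respectively. That two-step decoupling --- independent small hole-events near the wall, plus a single global ``reach the band'' event --- is the missing idea.
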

 \begin{Proof}
This lemma corresponds to Lemma~5.3 of~\cite{GacsWalks11}
(incorporating the estimate of
the expression called \( U \) there), but there are some
parameter refinements due to the refined form of 
Condition \ref{cond:distr}.\ref{i:distr.hole-lb}.
For ease of reading, we will omit the condition \( Y=y \) from the
probabilities. 
We will make the proof such that it works also if we interchange horizontal and
vertical, even though \( \slb_{x}\ne\slb_{y} \).

Consider first the simpler case, showing
that \( v-u\le \slblb^{-2}\bub \) implies
\( \Prob(E^{*}) \ge (v-u+1)^{\hxp} \h(\r) \).
Condition~\ref{cond:distr}.\ref{i:distr.hole-lb} implies this already for
\( \Prob(E) \), so it is sufficient to show \( E\sbsq E^{*} \) in this case.
As remarked after its definition, the event \( E^{*} \) differs from \( E \) only in
requiring that rectangle \( Q \) contain no traps or vertical barriers
of \( \cM^{*} \), not only of \( \cM \), and that 
points \( \pair{a}{u} \) and \( \pair{d}{v} \) are H-clean in \( Q \) 
for \( \cM^{*} \) also, not only for \( \cM \).

A trap of \( \cM^{*} \) in \( Q \) cannot be an uncorrelated or
correlated trap, since its components traps, being traps of \( \cM \), are
already excluded.
It cannot be a trap of the missing-hole kind either, since that trap, 
of length \( \g \) on one side, is too big
for \( Q \) when \( v-u\le \slblb^{-2}\bub \), and \( c-a \) is
also of the same order.
The same argument applies to vertical barriers of \( \cM^{*} \).
The components of the compound barriers that belong to \( \cM \)
are excluded, and the emerging barriers are too big, of
the size of correlated or missing-hole traps.

These considerations take care also of the issue of H-cleanness
for \( \cM^{*} \), since the latter also boils down to the absence of traps and
barriers.

Take now the case \( v-u>\slblb^{-2}\bub \).
Let
 \begin{align*}
   u'    &= v - \flo{\bub},
\quad \bub' = \flo{4 \slblb^{-1}\bub}, 
\\        n &= \cei{(c-b)/\bub'},
\\  a_{i} &= b + i\bub',
\quad    E'_{i} = E(u', v, w;\, a_{i})
\quad \txt{ for } i = 0, \dots, n-1,
\\     E'  &= \bigcup_{i} E'_{i}.
 \end{align*}
From~\eqref{eq:slb-lb} and~\eqref{eq:slb-ub} follows 
\( \slb_{x}^{-1}-\slb_{y}\ge\slblb/\slb_{x} \).
Recall
 \begin{align*}
                      b &= a + \cei{\slb_{y}(v-u)},
\\                  c &= b\lor (a+\flo{\slb_{x}^{-1}(v-u)}).
 \end{align*}
Hence
 \begin{align}\nonumber
  c-b &\ge (\slb_{x}^{-1}-\slb_{y})(v-u)-2 \ge (v-u)\slblb/\slb_{x}-2,
\\\label{eq:n-lb}   
   n   & \ge ((v-u)\slblb/\slb_{x} - 2)/\bub' \ge (v-u+1)\slblb/5\bub 
   \ge \slblb^{-1}/5,
 \end{align}
where the factor \( 1/5 \) instead of \( 1/4 \) allows omitting the 
\( -2 \) and adding the \( +1 \), and ignoring the integer part in \( \bub' \).
Let \( C \) be the event that point \( \pair{a}{u} \) is upper right rightward H-clean in
\( \cM \).
Then by Conditions~\ref{cond:distr}.\ref{i:distr.ncln}
 \begin{align}\label{eq:P(neg C)}
 \Prob(\neg C) \le 2\ncln_{\w}\le 0.1.
 \end{align}
Let \( D \) be the event that the rectangle \( \rint{a}{c} \times \clint{u}{v} \)
contains no trap or vertical barrier of \( \cM \) or \( \cM^{*} \).
(Then \( C\cap D \) implies that \( \pair{a}{u} \) is also upper right rightward H-clean
in the rectangle \( \rint{a}{c} \times \clint{u}{v} \)
in \( \cM^{*} \).)
By Lemmas~\ref{lem:trap-scale-up}, \ref{lem:pub}:
 \begin{align*}
  \Prob(\neg D) &\le 2(c-a)\T^{-1} + 2(c-a)(v-u+1)\tub.
 \end{align*}
Now
 \begin{align*}
 2(c-a) &\le 2\slblb^{-2}\slb_{x}^{-1}\bub^{*} \le 2\slblb^{-3}\bub^{*},
\\ v-u+1&\le \slblb^{-2}\bub^{*}+1\le 2\slblb^{-2}\bub^{*},    
 \end{align*}
hence
\begin{align*}
\Prob(\neg D)  
&\le 2\slblb^{-3}\bub^{*}\T^{-1}
+ 6\slblb^{-5}(\bub^{*})^{2}\tub
\\ &=  2\slblb^{-3}\T^{\txp\bubxp-1}
 + 6\slblb^{-5}\T^{-\tubxp+2\txp\bubxp}
\\ &< (2\slblb^{-3}+6\slblb^{-5})\T^{-\txpub\hxp-2\eps},
 \end{align*}
where \( \eps>0 \) is a constant and we used~(\ref{eq:hxp-ub.2}-\ref{eq:hxp-ub.3}).
Now the statement follows since \( \T^{-\eps}=\lg^{-\R\eps} \) decreases to 0 faster as a function
of \( \m \) than the expression in parentheses in front.

 \begin{step+}{step:hole-lb-2.CD}
Let us show \( C \cap D \cap E' \sbsq E^{*}(u, v, w;\, a) \).
 \end{step+}
 \begin{prooof}
Indeed, suppose that \( C \cap D \cap E'_{i} \) holds with some hole starting
at \( d \). 
Then there is a 
rectangle \( Q'_{i} = \Rect^{\rightarrow}(\pair{a_{i}}{u'}, \pair{d}{v}) \)
containing no traps or vertical barriers of \( \cM \),
such that \( \pair{d}{v} \) is H-clean in \( Q'_{i} \).
It follows from \( D \) that the rectangle 
 \[
  Q^{*}_{i} = \Rect^{\rightarrow}(\pair{a}{u}, \pair{d}{v}) \spsq Q'_{i}
 \]
contains no traps or vertical barriers of \( \cM \) or \( \cM^{*} \).
Since event \( C \) occurs, the point \( \pair{a}{u} \) is H-clean for \( \cM \) in
\( Q^{*}_{i} \).
The event \( E'_{i} \) and the inequalities \( d-a_{i},v-u'\ge\bub \) 
imply that \( \pair{d}{v} \) is H-clean in \( Q^{*}_{i} \),
and a hole passing through the potential wall starts at \( d \) in \( X \).
The event \( D \) implies that
there is no trap or vertical barrier of \( \cM \) in \( Q^{*}_{i} \).
Hence \( Q^{*}_{i} \) is also inner H-clean in \( \cM^{*} \), and so \( E^{*} \) holds.
 \end{prooof}

We have \( \Prob(E^{*}) \ge \Prob(C)\Prob(E' \mid C) - \Prob(\neg D) \).

\begin{step+}{step:hole-lb-2.E'}
The events \( E'_{i} \) are independent of each other and of the event \( C \).
\end{step+}
\begin{pproof}
By assumption, \( v-u>\slblb^{-2}\bub \), so
\( b-a\ge\slb_{y}(v-u)\ge\slb_{y}\slblb^{-2}\bub\ge\bub \), hence the event \( C \)
depends only on the part of the process \( X \) before point \( b \).
This shows that the events \( E'_{i} \) are independent of \( C \).
The hole starts within 
\( \slb_{x}^{-1}(v-u') \le \slb_{x}^{-1}\bub \)  after \( a_{i} \).  
The width of the hole through the wall \( B \) is at most \( \slblb^{-1}\bub \).
After the hole,
the property that the wall be upper right rightward H-clean depends on at
most \( \bub \) more values of \( X \) on the right.
So the event \( E_{i} \) depends at most on \( (2\slblb^{-1}+1)\bub<\bub' \)
values of the sequence \( X \) on the right of \( a_{i} \).
\end{pproof} 

 \begin{step+}{step:hole-lb-2.union-prob}
It remains to estimate \( \Prob(E' \mid C)=\Prob(E') \).
 \end{step+}
 \begin{prooof}
The following inequality can be checked by direct calculation.
Let \( \ag = 1 - 1/e = 0.632\dots \), then for \( x > 0 \) we have
 \begin{equation}\label{eq:exp-lb}
   1 - e^{-x} \ge \ag \land \ag x.
 \end{equation}
Condition \ref{cond:distr}.\ref{i:distr.hole-lb} is applicable to
\( E'_{i} \), so we have
 \begin{align*}
\Prob\paren{E'_{i}} \ge \bub^{\hxp}\h(\r) =\vcentcolon\s,   
 \end{align*}
hence \( \Prob\paren{\neg E'_{i}} \le 1 - \s \le e^{-\s} \).
Due to the independence of the sequence \( X \), this implies
 \begin{equation}\label{eq:union-E-R}
  \Prob(E') = 1 - \Prob\bigparen{\textstyle \bigcap_{i}\neg E'_{i}}
                 \ge 1 - e^{- n \s} \ge \ag \land \ag n \s,
 \end{equation}
where we used~\eqref{eq:exp-lb}.
Using~\eqref{eq:n-lb} twice (for lowerbounding \( n \) and \( n\bub \)):
 \begin{align*}
 n \bub^{\hxp} &= n^{1-\hxp}(n\bub)^{\hxp}
\\ &\ge (\slblb^{-1}/5)^{1-\hxp}5^{-\hxp}\slblb^{\hxp}(v-u+1)^{\hxp}
         = 5^{-1}\slblb^{2\hxp-1}(v-u+1)^{\hxp}.
 \end{align*}
Substituting into~\eqref{eq:union-E-R}:
 \begin{align*}
   \Prob(E') &\ge
   \ag\land\ag\cdot 5^{-1}\slblb^{2\hxp-1}(v-u+1)^{\hxp}\h(\r),
\\ \Prob(C)\Prob(E') &
\ge 0.9 \cdot \bigparen{\ag \land \ag\cdot 5^{-1}\slblb^{2\hxp-1}(v-u+1)^{\hxp}\h(r)}
\\   &\ge 0.5\land 2 (v-u+1)^{\hxp}\h(\r)
 \end{align*}
where we used~\eqref{eq:P(neg C)}.
 \end{prooof}
 \end{Proof}

The lower bound on the probability of holes through an emerging wall
is slightly more complex than
the corresponding lemma in~\cite{GacsWalks11}.
Recall \( F^{*} \) from Definition~\ref{def:E-star}.

\begin{lemma}\label{lem:hop-lb}
Using the notation of Condition \ref{cond:distr}.\ref{i:distr.hole-lb} for
\( \cM^{*} \), \( a,u,v,b \), assume that \( Y=y \) is fixed and \( v>u \).
Then \( \Prob\paren{F^{*}(u, v;\, a, b)}\ge 0.25 \).
 \end{lemma}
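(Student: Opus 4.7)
The plan is a direct union bound on the complement of $F^*(u,v;\,a,b)$. Recall from Definition~\ref{def:E-star} that $F^*$ requires $Q(b)=\Rect^{\to}(\pair{a}{u},\pair{b}{v})$ to contain no traps or vertical barriers of either $\cM$ or $\cM^*$ and to be inner H-clean in both. Using the $\cM^*$-version of Condition~\ref{cond:distr}.\ref{i:distr.hole-lb} we have $v-u\le\slblb^{-2}\bub^*$, and combined with $\slb_y\le 2/\slblb$ (derivable from~\eqref{eq:slb-lb}--\eqref{eq:slb-ub}) we get $b-a\le 2\slblb^{-3}\bub^*+1$, so $Q(b)$ has area $O(\slblb^{-5}(\bub^*)^{2})$.

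First I would bound the ``error'' contributions from traps and barriers. The expected number of $\cM$-traps in $Q(b)$ is at most $(b-a+1)(v-u+1)\tub$, and by Lemma~\ref{lem:trap-scale-up} the analogous bound with $\tub^*$ applies to $\cM^*$-traps; summing gives $O(\slblb^{-5}(\bub^*)^{2}\tub)=\T^{2\txp\bubxp-\tubxp+o(1)}=o(1)$ by~\eqref{eq:hxp-ub.3}. Likewise, using Lemma~\ref{lem:pub}, the expected number of $\cM$- or $\cM^*$-vertical-barriers starting in $\rint{a}{b}$ is at most $(b-a)(\T^{-1}+(\T^*)^{-1})=O(\slblb^{-3}\T^{\txp\bubxp-1})=o(1)$, since $\txp\bubxp<1$.

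Next I would bound the main contribution, which comes from failure of H-cleanness at the two corners of $Q(b)$. Since the $\cM^*$-definition of trap-cleanness and strong cleanness (Step~\ref{defstep:clean}) absorbs the $\cM$-definitions, it suffices to bound failure for $\cM^*$. For the lower-left corner $\pair{a}{u}$: the probability it is not trap-clean in $Q(b)$ for $\cM$ is at most $\ncln_\w$ (Condition~\ref{cond:distr}.\ref{i:distr.ncln}, conditional on $Y=y$); the probability that $a$ is not strongly right-clean in $\rint{a}{b}$ for $\cM$ is at most $\ncln_\w$; and the extra $\cM^*$-requirements (no $\cM$-trap within distance $\g$, no $\cM$-barrier ending within $\f/3$ of $a$) add $O(\g^{2}\tub+\f\T^{-1})=o(1)$ via the trap bound and Lemma~\ref{lem:pub}. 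For the upper-right corner $\pair{b}{v}$ the analogous argument, but now using the trap-cleanness bound $\ncln_\t$ for the upper-right corner, yields $\ncln_\t+\ncln_\w+o(1)$. Summing, the probability of the complement of $F^*$ is at most $\ncln_\t+3\ncln_\w+o(1)<0.55+0.15+o(1)<0.75$ for $\R_1$ large enough, so $\Prob(F^*(u,v;\,a,b))\ge 0.25$.

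The main obstacle is purely bookkeeping: tracking that each $\cM^*$-refinement of cleanness contributes only an $o(1)$ term via Lemma~\ref{lem:pub} and the trap bounds. Since the ``main'' terms sum to $0.70$, strictly below the budget of $0.75$, no delicate parameter juggling beyond the choices of Lemma~\ref{lem:exponents-choice} is required.
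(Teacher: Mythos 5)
Your proof is correct and takes essentially the same route as the paper's: decompose the complement of \(F^{*}\) into the four H-cleanness failures of the two corners (contributing \(\ncln_{\t}+3\ncln_{\w}\)) plus the trap/barrier event, show the latter is \(o(1)\) as \(\R_{1}\to\infty\), and conclude \(\Prob(\neg F^{*})\le 0.70+o(1)<0.75\). The paper is terser only in that it cites the quantity \(U\) already derived in the proof of Lemma~\ref{lem:hole-lb-2} for the trap/barrier term rather than re-deriving it as you do, and it does not spell out that the extra \(\cM^{*}\)-cleanness requirements of Step~\ref{defstep:clean} are automatically implied once no \(\cM\)-traps or \(\cM\)-barriers appear in \(Q(b)\) (which would have let you drop your \(O(\g^{2}\tub+\f\T^{-1})\) terms entirely).
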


This lemma corresponds to Lemma~7.8 of~\cite{GacsWalks11}.

 \begin{proof}
Consider the case of a horizontal wall, the argument also works for the case of a
vertical wall.
The probability that it is not inner H-clean is at most
\( \ncln_{\t}+3\ncln_{\w} \) (adding up the probability bounds for the inner
horizontal non-cleanness and the inner trap non-cleanness of the two endpoints).
The probability of finding a vertical barrier or trap (of \( \cM \) or \( \cM^{*} \))
is bounded by \( U \) as in Lemma~\ref{lem:hole-lb-2}, so the total bound is at most 
\begin{align*}
   \ncln_{\t}+3\ncln_{\w}+U.
 \end{align*}
Here, \( U \) can be made less than \( 0.05 \) if \( \R_{1} \) is sufficiently large, so the
total is at most \( 0.75 \).
 \end{proof}

\begin{lemma}\label{lem:emerg-hole-lb}
For emerging walls, the fitting holes
satisfy Condition \ref{cond:distr}.\ref{i:distr.hole-lb}
if \( \R_{1} \) is sufficiently large.
\end{lemma}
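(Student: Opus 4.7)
The plan is to combine the pre-wall structure of an emerging wall with Lemma~\ref{lem:hole-lb-2} (applied to any contained light $\cM$-wall) and Lemma~\ref{lem:hop-lb} (for the surrounding external hops of $\cM$). By clause~(\ref{i:emerg.hop}) of Step~\ref{defstep:emerg}, the body of a horizontal emerging wall $B=\rint{v}{w}$ either is an external hop of $\cM$, or decomposes as $J_-\cup W\cup J_+$, where $W$ is a dominant light horizontal $\cM$-wall of some rank $\r<\R^*$ and $J_\pm$ are (possibly empty) external $\cM$-hops of size $\ge\bub$.

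The harder case is when $W$ is present. I would apply Lemma~\ref{lem:hole-lb-2} to $W$ to obtain a good $\cM^*$-hole through $W$ with probability at least $(v-u+1)^{\hxp}\h(\r)-U$, capped at $0.25$. Then I would extend the hole downward into $J_-$ and upward into $J_+$ by selecting H-clean landing points in the middle thirds of these hops via Condition~\ref{cond:distr}.\ref{i:distr.clean.2}, verifying the slope conditions in the spirit of Lemma~\ref{lem:pre-hole}, and applying the reachability Condition~\ref{cond:distr}.\ref{i:distr.reachable} together with a Lemma~\ref{lem:hop-lb}-style bound to show that each extension rectangle is itself a hop of $\cM^*$ with probability bounded below by a constant.

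The quantitative slack comes from the rank gap. Since $\r<\R^*=\txp\R<\txp'\R=\hat\R$ and $\h$ is decreasing,
\[
\frac{\h(\r)}{\h(\hat\R)}\ge\lg^{\hxp(\txp'-\txp)\R}\longrightarrow\infty\quad\text{as }\R\to\infty,
\]
so the light-wall contribution $(v-u+1)^{\hxp}\h(\r)$ dominates the required $(v-u+1)^{\hxp}\h(\hat\R)$ by a factor that eventually overwhelms the correction $U$ and any constant-probability losses from the two hop extensions. In the case where $B$ contains no light wall, no barrier crossing is needed at all: a single Lemma~\ref{lem:hop-lb}-style bound together with reachability gives probability at least a constant that the corresponding rectangle is a hop of $\cM^*$, and this constant too is much larger than $(v-u+1)^{\hxp}\h(\hat\R)$ once $\R_1$ is sufficiently large. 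The inequalities~\eqref{eq:hxp-ub.2} and~\eqref{eq:emerg-xp.3} supply the exponent comparisons needed to carry this out for all $k$.

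The main obstacle is the bookkeeping required to assemble the three pieces (lower hop extension, hole through $W$, upper hop extension) into a single good hole of $\cM^*$. Fortunately, once $Y=y$ is fixed, the three sub-events depend on disjoint $x$-windows of $X$ (after suitable localization), so they are independent and their probabilities multiply. One must still verify that the gluing points are H-clean in the stronger $\cM^*$ sense of Step~\ref{defstep:clean} and that the assembled rectangle satisfies the combined no-trap and no-barrier conditions of both $\cM$ and $\cM^*$. These are handled by imposing strong cleanness at the chosen landing points using Condition~\ref{cond:distr}.\ref{i:distr.ncln}, and by enlarging the extension windows by $O(\g)$ to absorb the $\cM^*$-cleanness requirement; the extra probability penalty is sub-constant and comfortably fits inside the slack identified above.
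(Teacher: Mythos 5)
Your high-level decomposition of the pre-wall body into $J_-\cup W\cup J_+$ and the identification of the rank gap $\hat\R>\R^*>\r$ as the source of slack are correct, but the central step as written has a genuine gap. Applying Lemma~\ref{lem:hole-lb-2} to the inner light wall $W$ (with body $\rint{v_1}{w_1}$, say) produces the event $E^*(u,v_1,w_1;\,a)$: a hole through $W$ starting at some $d_1$ in $W$'s admissible range, together with the $\cM^*$-goodness of the rectangle $Q(d_1)$ reaching $\pair{d_1}{v_1}$. That is \emph{not} the target event $E(u,v,w;\,a)$ for the emerging wall, which requires a starting point $d\in\clint{b}{c}$ (computed from $v$, not $v_1$), a hole fitting the \emph{whole} body $\rint{v}{w}$, and $F(u,v;\,a,d)$ — in particular $\pair{d}{v}$ must be H-clean in $Q(d)$. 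Trap- and barrier-freeness of $Q(d)$ is inherited from $Q(d_1)$ by monotonicity, but H-cleanness of the corner $\pair{d}{v}$ is not, and nothing in $E^*(u,v_1,w_1;\,a)$ locates such a $d$ or ensures it is the left end of a path through $J_-$ to $\pair{d_1}{v_1}$. Your remedy ("enlarge the extension windows by $O(\g)$", "impose strong cleanness at the chosen landing points") is too vague to see that these can be arranged without introducing further events whose probabilities must be tracked; in particular the choice of $d$ is constrained both by the range $\clint{b}{c}$ and by the geometry of the downward crossing of $J_-$, and there is also the condition $v_1-u\le\slblb^{-2}\bub^*$ in Lemma~\ref{lem:hole-lb-2}, which can fail when $v-u$ is near its maximum.

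The structure the paper actually relies on (as signalled by the placement of Lemma~\ref{lem:hop-lb} just before this statement and by the role assigned to inequality~\eqref{eq:emerg-xp.3}) sidesteps all of this: fix the single starting point $d=b$, invoke Lemma~\ref{lem:hop-lb} to get $\Prob(F^*(u,v;\,a,b))\ge 0.25$, and then \emph{separately} lower-bound the probability of a hole fitting $B$ at the one point $b$, using the single-point hole probability $\ge\h(\r_W)\ge\h(\R^*)$ for the inner light wall and constant-probability crossings of the surrounding external hops. The two events live in disjoint $X$-windows, so the probabilities multiply, and the whole product must exceed $(v-u+1)^{\hxp}\h(\hat\R)$. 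Because the hole probability is taken at a single point, the factor $(v-u+1)^{\hxp}\le O(\T^{\txp\bubxp\hxp})$ cannot cancel and must be absorbed by $\h(\R^*)/\h(\hat\R)=\T^{\hxp(\txp'-\txp)}$; this is exactly why $\txp(\bubxp+1)<\txp'$ (inequality~\eqref{eq:emerg-xp.3}) is required and not merely $\txp'>\txp$. Your exponent computation $\h(\r)/\h(\hat\R)\ge\lg^{\hxp(\txp'-\txp)\R}$ therefore misidentifies what is doing the work: it assumes the $(v-u+1)^{\hxp}$ factors on the two sides cancel, which would hold in your range-improved version but not in the paper's single-point version, and does not actually explain why the cited inequality is needed.
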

This lemma corresponds to Lemma~7.9 of~\cite{GacsWalks11},
with Figure 22 there illustrating the proof.

Consider now a hole through a compound wall.
In the lemma below, we use \( w_{1},w_{2} \): please note that these are integer
coordinates,
and have nothing to do with the trap probability upper bound \( \tub \): we will
never have these two uses of \( w \) in a place where they can be confused.

\begin{lemma}\label{lem:compound-hole-lb}
Let \( u \le v_{1} < w_{2} \), and \( a \) be given with
\( v_{1}-u \le \slblb^{-2}\bub^{*} \).
Assume that \( Y=y \) is fixed in such a way that \( W \) is a compound
horizontal wall with body \( \rint{v_{1}}{w_{2}} \), and type \( \ang{\r_{1}, \r_{2}, i} \),
with rank \( \r \) as given in~\eqref{eq:compound-rank}.
Assume also that the component walls \( W_{1}, W_{2} \) already satisfy the hole
lower bound, Condition \ref{cond:distr}.\ref{i:distr.hole-lb}.
Let 
 \[
  E_{2} = E_{2}(u, v_{1}, w_{2};\, a) = E^{*}(u, v_{1}, w_{2};\, a)
 \]
where \( E^{*} \) was introduced in Definition~\ref{def:E-star}.
Assume
 \begin{equation}\label{eq:s-ub}
   (\slblb^{-2}\bub^{*}+1)^{\hxp} \h(\r_{j}) \le 0.25,\txt{ for } j=1,2.
 \end{equation}
Then
 \begin{equation*}
  \Prob\bigparen{E_{2}\mid Y=y} \ge 
   (v_{1}-u+1)^{\hxp}\lg^{i\hxp} \h(\r_{1}) \h(\r_{2})(1 - V)
 \end{equation*}
with \( V = 2U/ \h(\r_{1} \lor \r_{2}) \), where \( U \) comes from
Lemma~\ref{lem:hole-lb-2}.

The statement also holds if we exchange horizontal and vertical.
 \end{lemma}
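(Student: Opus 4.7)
The plan is to apply Lemma~\ref{lem:hole-lb-2} twice in succession, once to each of the component walls $W_1$ and $W_2$, and then combine the two events using the independence of the regions of $X$ they depend on. Write $W_1$ with body $\rint{v_1}{w_1}$ of rank $\r_1$ and $W_2$ with body $\rint{v_2}{w_2}$ of rank $\r_2$, with gap $d = v_2 - w_1$; by the definition of type in~\eqref{eq:compound-rank} we have $d \lor 1 \ge \lg^i$.

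First I would apply Lemma~\ref{lem:hole-lb-2}, strengthened via Lemma~\ref{lem:hole-lb-clean} to produce a \emph{good} hole, to $W_1$ starting from the corner $\pair{a}{u}$. Using hypothesis~\eqref{eq:s-ub} to stay clear of the $0.25$ truncation in that lemma, this yields with probability at least $(v_1-u+1)^\hxp \h(\r_1) - U$ (up to a $1-2\ncln_{\w}$ loss from Lemma~\ref{lem:hole-lb-clean}) a column $d_1 \in \clint{b}{c}$ such that $\Rect^\rightarrow(\pair{a}{u},\pair{d_1}{v_1})$ is free of traps and vertical barriers of both $\cM$ and $\cM^*$ and is inner H-clean, together with a hole $\rint{d_1}{t_1}$ through $W_1$ whose upper endpoint $\pair{t_1}{w_1}$ is upper-right rightward H-clean.

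Second, conditional on the outcome of the first step I would apply Lemma~\ref{lem:hole-lb-2} to $W_2$ starting from the new corner $\pair{t_1}{w_1}$, invoking the event $E^*(w_1, v_2, w_2;\, t_1)$. Since $d \le \f \ll \bub^*$ the hypothesis $v-u \le \slblb^{-2}\bub^*$ is satisfied, so the conditional probability is at least $(d+1)^\hxp \h(\r_2) - U \ge \lg^{i\hxp}\h(\r_2) - U$. The crucial point is that the upper-right rightward H-cleanness of $\pair{t_1}{w_1}$ guaranteed by the first step supplies exactly the boundary cleanness needed for the second application, and the second event depends only on values of $X$ strictly to the right of $t_1$, so conditional on $t_1$ it is independent of the first event.

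I would conclude by verifying that the intersection of the two events implies $E_2 = E^*(u, v_1, w_2;\, a)$ and by assembling the probability. The concatenated path from $\pair{d_1}{v_1}$ via $\pair{t_1}{w_1}$ and $\pair{d_2}{v_2}$ to $\pair{t_2}{w_2}$ lies inside $\clint{d_1}{t_2} \times \clint{v_1}{w_2}$; its width $t_2 - d_1$ is bounded by $\slblb^{-1}(w_2 - v_1)$ because the bridge term $d_2 - t_1 \le \slb_x^{-1} d$ obeys $\slb_x^{-1} \le \slblb^{-1}$ by~\eqref{eq:slb-lb}. The rectangle $\Rect^\rightarrow(\pair{a}{u},\pair{d_1}{v_1})$ is exactly the one demanded by $F^*(u,v_1;\, a, d_1)$, and $d_1 \in \clint{b}{c}$ because $b,c$ depend only on $v_1 - u$ in both applications. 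Multiplying the two lower bounds, factoring out $(v_1-u+1)^\hxp \lg^{i\hxp} \h(\r_1)\h(\r_2)$, and invoking~\eqref{eq:s-ub} once more, the cross-terms collapse into the $(1-V)$ factor with $V = 2U/\h(\r_1 \lor \r_2)$, since each error term $U$ is divided by at least $\h(\r_1 \lor \r_2)$. The main obstacle I anticipate is the independence step: one must decompose the probability over the random landing column $t_1$, check via Condition~\ref{cond:distr}.\ref{i:distr.indep} that the second event truly depends only on $X$-values beyond $t_1 + O(\bub)$ (here the upper-right rightward H-cleanness of $\pair{t_1}{w_1}$ is essential to absorb the necessary buffer), and then sum over $t_1$ to recover the required product lower bound.
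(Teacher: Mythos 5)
Your overall strategy matches the paper's — apply Lemma~\ref{lem:hole-lb-2} once per component wall, glue the two along a landing column, and multiply the bounds — but two of your choices would prevent you from reaching the stated inequality.

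First, the detour through Lemma~\ref{lem:hole-lb-clean} to produce a \emph{good} hole through $W_1$ is both unnecessary and counterproductive. The paper's events are $A_x$ (``$E^*(u,v_1,w_1;a)$ holds with the hole's vertical projection ending at $x$, and $x$ is the smallest such'') and $B_x = E^*(w_1,v_2,w_2;x)$; the H-cleanness of the landing corner $\pair{x}{w_1}$ that you hope to hand over from step one is already built into $B_x$, via the inner-H-cleanness clause of $F^*$. Imposing goodness in step one therefore duplicates a constraint already in $B_x$ and costs you the factor $1-2\ncln_{\w}$, which the target bound $(v_1-u+1)^{\hxp}\lg^{i\hxp}\h(\r_1)\h(\r_2)(1-V)$ has no room for. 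Worse, goodness makes your first event depend on $X$ \emph{strictly to the right of} $t_1$ (upper right rightward H-cleanness of $\pair{t_1}{w_1}$ is a function of $X(t_1+1),X(t_1+2),\dots$), precisely the region that $B_{t_1}$ also depends on; this breaks the independence your product bound relies on. Your ``absorb the necessary buffer'' remark suggests you felt the tension, but the clean fix is to drop goodness altogether and let $B_x$ certify its own boundary cleanness, as the paper does.

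Second, the decomposition over the random landing column is underspecified. To turn the union bound into a clean sum $\sum_{x}\Prob(A_x)\Prob(B_x)$ one needs the $A_x$ to be pairwise disjoint; otherwise $\sum_x\Prob(A_x)$ overcounts $\Prob(E^*(u,v_1,w_1;a))$. The ``$x$ is the smallest landing column'' stipulation achieves exactly this while simultaneously ensuring $A_x$ is measurable in $X$ up to column $x$, which is what makes $A_x$ and $B_x$ genuinely independent. That bookkeeping step is the mechanism behind $\sum_x\Prob(A_x)\Prob(B_x)\ge(F_1-U)(F_2-U)$, and without it the argument does not close. The rest of your outline — using~\eqref{eq:s-ub} to strip the $0.25$ truncation, the bound $(\D+1)\ge\lg^i$, and the width and bridge estimates for showing the concatenation implies $E_2$ — is in line with the paper.
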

The lemma corresponds to Lemma~5.9 of~\cite{GacsWalks11},
with Figure 21 there illustrating the proof.
Some parts of the proof 
are simpler, due to using \( v_{1}-u+1 \) in place of \( c-b \).
 \begin{Proof}
Let \( \D \) be the distance between the component walls
\( W_{1}, W_{2} \) of the wall \( W \), where the body of \( W_{j} \) is 
\( \rint{v_{j}}{w_{j}} \).
Consider first passing through \( W_{1} \).
For each integer \( x \in \clint{b}{c + \slblb^{-1}\bub} \), 
let \( A_{x} \) be the event that \( E^{*}(u, v_{1}, w_{1};\, a) \) holds
with the vertical projection of the hole ending at \( x \),
and that \( x \) is the smallest possible number with this property.
Let \( B_{x} = E^{*}(w_{1}, v_{2}, w_{2};\, x) \).

 \begin{step+}{step:compound-hole-lb.Ax-Bx}
We have \( E_{2} \spsq \bigcup_{x} (A_{x} \cap B_{x}) \).
 \end{step+}
 \begin{pproof}
\begin{sloppypar}
If for some \( x \) we have \( A_{x} \), then there is a
hole \( \Rect(\pair{t_{1}}{v_{1}},\pair{x}{w_{1}}) \) through the first wall
with the property that rectangle \( \Rect(\pair{a}{u},\pair{t_{1}}{v_{1}}) \)
contains no traps or barriers of \( \cM \) and is inner clean in \( \cM \).
Given that by assumption this rectangle contains no traps or barriers of \( \cM^{*} \),
event \( E^{*}(u, v_{1}, w_{1};\, a) \) holds.
If also \( B_{x} \) holds, then there is a rectangle
\( \Rect(\pair{x}{w_{1}},\pair{t_{2}}{v_{2}}) \) satisfying the requirements 
of \( E^{*}(w_{1}, v_{2}, w_{2};\, x) \),
and also a hole \( \Rect(\pair{t_{2}}{v_{2}},\pair{x'}{w_{2}}) \) through the second
wall.
\end{sloppypar}

Let us show that \( \pair{t_{1}}{v_{1}}\leadsto\pair{x'}{w_{2}} \), and thus 
the interval \( \rint{t_{1}}{x'} \) is a hole that
passes through the compound wall \( W \).

The reachabilies \( \pair{t_{1}}{v_{1}}\leadsto\pair{x}{w_{1}} \) 
and \( \pair{t_{2}}{v_{2}}\leadsto(x', w_{2}) \) follow by the definition of holes; 
the reachability \( \pair{x}{w_{1}}\leadsto\pair{t_{2}}{v_{2}} \)
remains to be proven.

Since the event \( B_{x} \) holds, by Lemma~\ref{lem:pre-hole}
\( \pair{x}{w_{1}} \), \( \pair{t_{2}}{v_{2}} \) satisfy the slope conditions.
Let us show that then actually \( \Rect(\pair{x}{w_{1}},\pair{t_{2}}{v_{2}}) \) is a
hop of \( \cM \): then its endpoint is reachable from
its starting point according to the reachability condition of \( \cM \).

To see that the rectangle is a hop:  the inner H-cleanness of \( \rint{x}{t_{2}} \)
in the process \( X \) follows from \( B_{x} \); the latter also implies that
there are no vertical walls in \( \rint{x}{t_{2}} \).
The inner cleanness of \( \rint{w_{1}}{v_{2}} \) in the process \( Y \) is 
implied by the fact that \( \rint{v_{1}}{w_{2}} \) is a compound wall.
The fact that \( W \) is a compound wall also implies that the interval
\( \rint{w_{1}}{v_{2}} \) contains no horizontal walls.
These facts imply the inner cleanness of the rectangle
\( \Rect(\pair{w_{1}}{x},\pair{v_{1}}{t_{2}}) \).
 \end{pproof}

It remains to lower-bound \( \Prob\bigparen{\bigcup_{x} (A_{x} \cap B_{x})} \).
For each \( x \), the events \( A_{x},B_{x} \) belong to disjoint intervals, 
and the events \( A_{x} \) are disjoint of each other.
 
 \begin{step+}{step:compound-hole-lb.Ax}
Let us lower-bound \( \sum_{x} \Prob(A_{x}) \).
 \end{step+}
 \begin{prooof}
   \begin{sloppypar}
We have, using the notation of Lemma~\ref{lem:hole-lb-2}:
 \( \sum_{x} \Prob(A_{x}) = \Prob(E^{*}(u, v_{1}, w_{1};\, a)) \). 
Lemma~\ref{lem:hole-lb-2} is applicable and we get 
\( \Prob(E^{*}(u, v_{1}, w_{1};\, a)) \ge F_{1} - U \) with
\( F_{1}=0.25 \land (v_{1}-u+1)^{\hxp} \h(\r_{1}) \),
and \( U \) coming from Lemma~\ref{lem:hole-lb-2}.
Now
\( (v_{1}-u+1)^{\hxp} \h(\r_{1}) 
\le (\slblb^{-2}\bub^{*}+1)^{\hxp} \h(\r_{1}) \)
which by assumption~\eqref{eq:s-ub} is \(  \le 0.25 \).
So the operation \( 0.25 \land \) can be deleted from \( F_{1} \):
 \begin{equation*}
   F_{1} = (v_{1}-u+1)^{\hxp} \h(\r_{1}).
 \end{equation*}
   \end{sloppypar}
 \end{prooof}

 \begin{step+}{step:compound-hole-lb.Bx}
Let us now lower-bound \( \Prob(B_{x}) \).
 \end{step+}
 \begin{prooof}
We have \( B_{x} = E^{*}(w_{1}, v_{2}, w_{2};\, x) \).
The conditions of Lemma~\ref{lem:hole-lb-2} are satisfied for
\( u = w_{1} \), \( v = v_{2} \), \( w = w_{2} \), \( a = x \).
It follows that \( \Prob(B_{x}) \ge F_{2} - U \) with
\( F_{2} = 0.25 \land (\D+1)^{\hxp} \h(\r_{2}) \),
which can again be simplified using assumption~\eqref{eq:s-ub} and
\( \D\le\f \):
 \[
  F_{2} = (\D+1)^{\hxp} \h(\r_{2}).
 \]
 \end{prooof}

 \begin{step+}{step:compound-hole-lb.combine}
Let us combine these estimates, using 
\( G = F_{1} \land F_{2} > \h(\r_{1}\lor \r_{2}) \).
 \end{step+}
 \begin{prooof}
We have
 \begin{align*}
   \Prob(E_{2}) &\ge \sum_{x} \Prob(A_{x})\Prob(B_{x})
      \ge (F_{1} - U)(F_{2} - U)
\\    &\ge F_{1} F_{2}(1 - U(1/F_{1} + 1/F_{2}))
       \ge F_{1} F_{2}(1 - 2 U/G)
\\    &= (v_{1}-u+1)^{\hxp} (\D+1)^{\hxp} \h(\r_{1}) \h(\r_{2}) 
       (1 - 2 U / G)
\\    &\ge (v_{1}-u+1)^{\hxp} (\D+1)^{\hxp} \h(\r_{1}) \h(\r_{2}) 
       (1 - 2 U/h(\r_{1}\lor \r_{2})).
 \end{align*}
 \end{prooof}
 \begin{step+}{step:compound-hole-lb.distance}
We conclude by showing \( (\D+1)\ge \lg^{i} \).
 \end{step+}
 \begin{prooof}
If \( \D=0 \) or \( 1 \) then \( i=D \), so this is true.
If \( \D>1 \) then \( i\le \log_{\lg}\D \), so even \( \D\ge\lg^{i} \).
 \end{prooof}
 \end{Proof}

The lemma below is essentially the substitution of the scale-up parameters into the 
above one.

 \begin{lemma}\label{lem:all-compound-hole-lb}
After choosing 
\( \aux_{3},\R_{1} \) sufficiently large in this order, the following holds. 
Assume that \( \cM=\cM^{k} \) is a mazery: then every
compound wall satisfies the hole lower bound,
Condition \ref{cond:distr}.\ref{i:distr.hole-lb}, provided its components
satisfy it.
 \end{lemma}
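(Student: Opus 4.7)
The plan is to apply Lemma~\ref{lem:compound-hole-lb} to the compound wall \( W = W_1 + W_2 \) with body \( \rint{v_1}{w_2} \) and rank \( \r = \r_1 + \r_2 - i \), and then to translate its conclusion into the form required by Condition~\ref{cond:distr}.\ref{i:distr.hole-lb} at the level \( \cM^* \). Since \( \slblb \) is unchanged by scale-up, the hypothesis \( v_1 - u \le \slblb^{-2}\bub^* \) of that lemma coincides with the hypothesis of the target condition (taking \( v = v_1 \), \( w = w_2 \)).

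First I verify assumption~\eqref{eq:s-ub}, namely \( (\slblb^{-2}\bub^* + 1)^{\hxp}\h(\r_j) \le 0.25 \) for \( j=1,2 \). Using Definition~\ref{def:exponential} we have \( \bub^* = \T^{\txp\bubxp} \), and since \( \r_j \ge \R \) the product becomes of order \( \aux_3\,\T^{\hxp(\txp\bubxp - 1)} \). This tends to \( 0 \) as \( \R_1 \to \infty \), because inequality~\eqref{eq:hxp-ub.2} forces \( \txp\bubxp < 1 \), so the assumption holds once \( \R_1 \) is large enough.

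Next I simplify the lower bound produced by the lemma. Using \( \h(\r) = \aux_3\lg^{-\hxp\r} \) and \( \r = \r_1 + \r_2 - i \), one gets the identity
\begin{align*}
\lg^{i\hxp}\h(\r_1)\h(\r_2) = \aux_3\,\h(\r),
\end{align*}
so Lemma~\ref{lem:compound-hole-lb} yields
\begin{align*}
\Prob(E_2 \mid Y=y) \ge \aux_3\,(v_1 - u + 1)^{\hxp}\,\h(\r)\,(1 - V),
\end{align*}
with \( V = 2U/\h(\r_1 \lor \r_2) \) and \( U = \T^{-\txpub\hxp - \eps} \) for some \( \eps > 0 \), coming from Lemma~\ref{lem:hole-lb-2}. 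By Lemma~\ref{lem:rank-bds}, \( \r_1 \lor \r_2 \le \txpub\R \), hence \( \h(\r_1 \lor \r_2) \ge \aux_3\,\T^{-\txpub\hxp} \) and consequently \( V \le 2\T^{-\eps}/\aux_3 \).

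To finish, it suffices that \( \aux_3(1 - V) \ge 1 \). Choosing first \( \aux_3 \ge 2 \) (compatible with the prescribed order of parameter choices: \( \aux_2 \), then \( \aux_3 \), then \( \R_1 \)), and then \( \R_1 \) large enough to make \( V \le 1/2 \), one gets \( \aux_3(1 - V) \ge \aux_3/2 \ge 1 \), which delivers \( \Prob(E_2 \mid Y=y) \ge (v_1 - u + 1)^{\hxp}\h(\r) \), exactly the hole lower bound at level \( \cM^* \). The case of vertical compound walls is symmetric. The main obstacle is not conceptual, since Lemma~\ref{lem:compound-hole-lb} has done the work: what needs care is the bookkeeping by which the single \( \aux_3 \) left over from the identity \( \lg^{i\hxp}\h(\r_1)\h(\r_2) = \aux_3\h(\r) \) is exactly what is needed to absorb the factor \( (1 - V) \), and this absorption is made possible in the limit \( \R_1 \to \infty \) by the exponent inequalities~\eqref{eq:hxp-ub.2} and~\eqref{eq:hxp-ub.3} forcing \( V \to 0 \).
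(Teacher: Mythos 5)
Your proof is correct and follows the route the paper itself signals: applying Lemma~\ref{lem:compound-hole-lb}, verifying hypothesis~\eqref{eq:s-ub}, using the identity \( \lg^{i\hxp}\h(\r_1)\h(\r_2) = \aux_3\h(\r) \), and then absorbing the factor \( (1-V) \) by first taking \( \aux_3 \ge 2 \) and then \( \R_1 \) large enough to make \( V \le 1/2 \). The paper's own remark preceding the lemma, that it ``is essentially the substitution of the scale-up parameters into the above one,'' confirms this is the intended argument.
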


This lemma corresponds to Lemma~7.10 of~\cite{GacsWalks11}.

For the hole lower bound condition for \( \cM^{*} \), there is one more case to consider.

 \begin{lemma}\label{lem:heavy-hole-lb}
After choosing \( \aux_{3},\R_{1} \) sufficiently large in this order, 
the following holds.
Assume that \( \cM=\cM^{k} \) is a mazery: then every
wall of \( \cM^{k+1} \) that is also a heavy wall of \( \cM^{k} \)
satisfies the hole lower bound,
Condition \ref{cond:distr}.\ref{i:distr.hole-lb}.
 \end{lemma}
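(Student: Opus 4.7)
A heavy wall $W$ of $\cM^{k}$ that persists as a wall of $\cM^{k+1}$ keeps its body $\rint{v}{w}$ and its rank $\r$, with $\R^{*}\le \r\le\txpub\R$ (the upper bound is Lemma~\ref{lem:rank-bds}). What must be checked is that for every admissible $u,a$ with $v-u\le\slblb^{-2}\bub^{*}$ one has
\[
\Prob(E^{*}(u,v,w;\,a)\mid Y=y)\;\ge\;(v-u+1)^{\hxp}\h(\r).
\]
The heart of the argument is already packaged in Lemma~\ref{lem:hole-lb-2}; the present lemma is essentially its application, split at the threshold $v-u=\slblb^{-2}\bub$.

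\textbf{Case 1:} $v-u\le\slblb^{-2}\bub$. Here the simpler half of the proof of Lemma~\ref{lem:hole-lb-2} shows $E\sbsq E^{*}$: every $\cM^{*}$-trap or $\cM^{*}$-barrier that could violate $E^{*}$ but not $E$ is either assembled from $\cM$-objects already forbidden by $E$, or else has size far exceeding $\bub$ (correlated traps, missing-hole traps, emerging barriers are all of size $\gg\bub$) and so cannot fit inside the rectangle $Q(d)$, whose side lengths are $O(\bub)$. Hence $\Prob(E^{*})\ge\Prob(E)\ge(v-u+1)^{\hxp}\h(\r)$ by Condition~\ref{cond:distr}.\ref{i:distr.hole-lb} applied to $\cM^{k}$.

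\textbf{Case 2:} $\slblb^{-2}\bub<v-u\le\slblb^{-2}\bub^{*}$. Writing $\mu=(v-u+1)^{\hxp}\h(\r)$, the stronger half of Lemma~\ref{lem:hole-lb-2} delivers $\Prob(E^{*})\ge 0.25\land 2\mu-U$ with $U=\T^{-\txpub\hxp-\eps}$. To conclude $\Prob(E^{*})\ge\mu$ I check two things. First, that the $0.25\land$ is not binding, i.e.\ $2\mu<0.25$ for $\R_{1}$ large: heaviness $\r\ge\R^{*}$ gives $\h(\r)\le\aux_{3}\T^{-\hxp\txp}$, while $(v-u+1)^{\hxp}=O(\T^{\hxp\txp\bubxp})$, so $\mu=O(\T^{\hxp\txp(\bubxp-1)})\to 0$ since $\bubxp<1$. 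Second, that $\mu\ge U$: using $\r\le\txpub\R$ together with $(v-u+1)^{\hxp}\ge\bub^{\hxp}=\T^{\bubxp\hxp}$ gives $\mu/U\ge\aux_{3}\T^{\bubxp\hxp+\eps}\to\infty$.

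\textbf{Main obstacle.} There is none of substance—the real work was done in Lemma~\ref{lem:hole-lb-2}. The only bookkeeping is the observation that the two sides of the heaviness condition conspire perfectly: $\r\ge\R^{*}$ keeps $\mu$ small enough that the $0.25$ clause does not bind, while $\r\le\txpub\R$ together with exponent inequality~\eqref{eq:hxp-ub.2} keeps $\mu$ large compared to $U$. The case split at $\slblb^{-2}\bub$ is essential: below the threshold, Lemma~\ref{lem:hole-lb-2} only yields a factor of $1$ (not $2$) in front of $\mu$ and the subtraction of $U$ would be fatal, which is why one has to bypass the lemma entirely there via the direct inclusion $E\sbsq E^{*}$.
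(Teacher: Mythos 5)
Your proposal is correct and matches the intended argument (the paper defers to Lemma~7.11 of the cited reference for the proof). The two-case split at $v-u = \slblb^{-2}\bub$ is exactly what Lemma~\ref{lem:hole-lb-2} is set up for: in the small range one exploits the inclusion $E\subseteq E^{*}$ established internally in that lemma's proof so as to avoid the $-U$ loss, and in the large range the factor of $2$ absorbs the $-U$ once one verifies, using $\R^{*}\le\r\le\txpub\R$ and inequality~\eqref{eq:hxp-ub.2}, that the $0.25$ clause does not bind and that $U\ll(v-u+1)^{\hxp}\h(\r)$.
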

This lemma corresponds to Lemma~7.11 of~\cite{GacsWalks11}. 

\subsection{Auxiliary bounds}

The next lemma shows that the choice made in
Definition~\ref{def:new-ncln} satisfies the requirements.

 \begin{lemma}\label{lem:0.6}
If \( \R_{1} \) is sufficiently large then
inequality~\eqref{eq:main.big-sum} holds, moreover 
\begin{align*}
\sum_{k}\bigparen{ 2\bub_{k+1}\T^{-1}_{k} + \bub_{k+1}^{2}\tub_{k}} < 1/4.
 \end{align*}
 \end{lemma}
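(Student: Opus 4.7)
The plan is to reduce the probability bound~\eqref{eq:main.big-sum} to the more explicit summation bound in the second display of the lemma, and then verify the latter by a direct calculation with the exponential parameterization from Definition~\ref{def:exponential}.

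First I would estimate $\Prob(\cF_k(\bub_{k+1}))$. Using Condition~\ref{cond:distr}.\ref{i:distr.trap-ub} for traps and Lemma~\ref{lem:pub} for barriers, the union bound over the at most $(\bub_{k+1}+1)^2$ candidate trap-starting points and the at most $2(\bub_{k+1}+1)$ candidate barrier-starting positions in the square $\clint{0}{\bub_{k+1}}^2$ yields
\[
\Prob(\cF_k(\bub_{k+1})) \le 2(\bub_{k+1}+1)\T_k^{-1} + (\bub_{k+1}+1)^2 \tub_k,
\]
which, up to a constant factor absorbed later, is exactly the summand appearing in the second display.

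Next I would bound $\Prob(\cQ_{k+1}\setminus\cQ_k)$. By Step~\ref{defstep:clean}, the origin can be upper right clean in $\cM^k$ but fail to be upper right clean in $\cM^{k+1}$ only if a wall of $\cM^k$ has its right end within distance $\f_k/3$ of the origin along one of the two axes, or if a trap of $\cM^k$ lies within distance $\g_k$ of the origin. Since $\f_k\ll\bub_{k+1}$ and $\g_k+\bub_k\ll\bub_{k+1}$ (these follow from~\eqref{eq:bub-g-f} together with the exponent inequalities $\gxp<\txp\bubxp$ and $\fxp<\txp\bubxp$ implicit in the choices of Lemma~\ref{lem:exponents-choice}), every such offending wall or trap is contained in $\clint{0}{\bub_{k+1}}^2$, so $\cQ_{k+1}\setminus\cQ_k\subseteq \cF_k(\bub_{k+1})$. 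Hence~\eqref{eq:main.big-sum} reduces to the second display up to a fixed constant.

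For the second display, I substitute the exponential parameterization. From Definitions~\ref{def:ranks} and~\ref{def:exponential},
\[
\bub_{k+1}\T_k^{-1}=\lg^{\R_k(\txp\bubxp-1)},\qquad \bub_{k+1}^2\tub_k=\lg^{\R_k(2\txp\bubxp-\tubxp)}.
\]
With the values $\bubxp=0.15$, $\txp=1.75$, $\tubxp=4.5$ fixed in Lemma~\ref{lem:exponents-choice}, both exponents are strictly negative (namely $-0.7375$ and $-3.975$), so each summand is bounded by $\lg^{-c\R_k}$ for an explicit constant $c>0$. Because $\R_k=\R_1\txp^{k-1}$ grows geometrically with $\txp>1$, the series is dominated by a doubly exponentially decaying sequence, and its total is bounded by a constant multiple of the $k=1$ term. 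Choosing $\R_1$ sufficiently large makes both halves of the sum smaller than $1/8$, proving the second display, and together with the first two paragraphs also~\eqref{eq:main.big-sum}. The main piece of care is in the first two paragraphs, where one must verify that the reduction to the union-bound expression costs only a fixed constant factor; this is routine but uses the same exponent inequalities mentioned above.
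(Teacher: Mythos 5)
Your proposal is correct and follows essentially the same route as the paper: reduce~\eqref{eq:main.big-sum} to the explicit series by showing $\cQ_{k+1}\setminus\cQ_k$ is absorbed into $\cF_k(\bub_{k+1})$ and bounding $\Prob(\cF_k(\bub_{k+1}))$ by a union bound with Lemma~\ref{lem:pub} and Condition~\ref{cond:distr}.\ref{i:distr.trap-ub}, then verify the series is small. The only difference is that you spell out the final summation using the exponential parameterization (getting exponents $\txp\bubxp-1<0$ and $2\txp\bubxp-\tubxp<0$ and the geometric growth of $\R_k$), whereas the paper defers that calculation to the corresponding lemma in \cite{GacsWalks11}.
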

 \begin{proof}
The event \( \cF_{k}(\bub_{k+1}) \) says that some wall or trap of level \( k \) appears
in \( \clint{0}{\bub_{k+1}}^{2} \).
The event \( \cQ_{k+1}\setminus \cQ_{k} \) implies that a trap of level \( k \)
appears \( \clint{0}{\bub_{k+1}}^{2} \).
The probability that a wall of level \( k \) appears in \( \clint{0}{\bub_{k+1}}^{2} \)
is clearly bounded by \( 2\bub_{k+1}\T^{-1}_{k} \).
The probability that a trap of level \( k \) appears there is
bounded by \( \bub_{k+1}^{2}\tub_{k} \).
Hence \( \Prob\bigparen{ \cF_{k}(\bub_{k+1})\cup \cQ_{k+1}\setminus \cQ_{k}} \) is
bounded by \( 2\bub_{k+1}\T^{-1}_{k} + \bub_{k+1}^{2}\tub_{k} \).

The rest of the statement and its proof
correspond to Lemma~7.6 of~\cite{GacsWalks11}.
 \end{proof}

Note that for \( \R_{1} \) large enough, the relations
 \begin{align}
  \label{eq:strong-ncln-ub}
                     \bub^{*}\T^{-1} &< 0.5(0.05 - \ncln_{\w}),
\quad            \bub^{*}\T^{-1} < 0.5(0.55 - \ncln_{\t}),
\\\label{eq:strong-f-intro}
\slopeincr\slblb^{-3}\bub /\g &< 0.5(1.1/2\R_{1} - \slb_{x}),
\quad        \slopeincr\slblb^{-3}\bub /\g < 0.5(1.1\R_{1} - \slb_{y})
 \end{align}
hold for \( \cM=\cM^{1} \) as defined in Example~\ref{xmp:base}.
This is clear for~\eqref{eq:strong-ncln-ub}. 
For~\eqref{eq:strong-f-intro}, we only need the two inequalities
\( 1/40\R_{1} > \slopeincr\slblb^{-3}\bub /\g = 8\slopeincr\R_{1}^{3} \T^{-(\gxp - \bubxp)} \),
\( \R_{1}/20 > 8\slopeincr\R_{1}^{3} \T^{-(\gxp - \bubxp)} \),
both of which are satisfied if \( \R_{1} \) is large enough.

\begin{lemma}\label{lem:ncln-ub}
Suppose that the structure \( \cM=\cM^{k} \) is a mazery and it
satisfies~\eqref{eq:strong-ncln-ub} and~\eqref{eq:strong-f-intro}.
Then \( \cM^{*}=\cM^{k+1} \) also satisfies these inequalities if \( \R_{1} \) is
chosen sufficiently large (independently of \( k \)),
and also satisfies Condition \ref{cond:distr}.\ref{i:distr.ncln}.
 \end{lemma}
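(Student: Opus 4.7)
The proof comprises three independent claims, which I would verify in turn.

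For the strong inequalities~\eqref{eq:strong-ncln-ub} and~\eqref{eq:strong-f-intro}, my plan is a direct induction using the geometric decay of the ratios $\bub\T^{-1}$ and $\bub/\g$ under each scale-up. From Definition~\ref{def:exponential},
\[
   \frac{\bub_{k+2}\T_{k+1}^{-1}}{\bub_{k+1}\T_k^{-1}}
      = \lg^{\R_{k+1}(\txp-1)(\bubxp-1/\txp)},
\qquad
   \frac{\bub_{k+2}/\g_{k+2}}{\bub_{k+1}/\g_{k+1}}
      = \lg^{\R_{k+1}(\txp-1)(\bubxp-\gxp)}.
\]
With the exponents $\bubxp=0.15$, $\gxp=0.18$, $\txp=1.75$ fixed in Lemma~\ref{lem:exponents-choice}, both bracketed exponents are negative, so both ratios are bounded by $1/2$ uniformly in $k$ once $\R_1$ is large enough. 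Combining this with the inductive hypothesis $\bub_{k+1}\T_k^{-1}<\half(0.05-\ncln_{\w,k})$ and the recursion $\ncln_{\w,k+1}=\ncln_{\w,k}+\bub_{k+1}\T_k^{-1}$ of Definition~\ref{def:new-ncln} gives
\[
  \half(0.05-\ncln_{\w,k+1})
    = \half(0.05-\ncln_{\w,k}) - \half\bub_{k+1}\T_k^{-1}
    > \half\bub_{k+1}\T_k^{-1}
    \ge \bub_{k+2}\T_{k+1}^{-1},
\]
which is~\eqref{eq:strong-ncln-ub} for $\cM^{k+1}$. The $(0.55,\ncln_\t)$ case and the two slope cases of~\eqref{eq:strong-f-intro} go through identically, using the recursion $\slb_{i,k+1}=\slb_{i,k}+\slopeincr\slblb^{-3}\bub_k/\g_k$ of Definition~\ref{def:new-slb}.

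For Condition~\ref{cond:distr}.\ref{i:distr.ncln} on $\cM^{k+1}$, I would compare the new non-cleanness events to the $\cM^k$-ones using Step~\ref{defstep:clean}. The $\cM^{k+1}$-cleanness of a point is its $\cM^k$-cleanness together with one of two extra requirements: for 1D strong cleanness of $a$ in $\rint{a}{b}$, no $\cM^k$-barrier in $\rint{a}{b}$ has its right end within $\f/3$ of $a$; for trap-cleanness of a corner of a rectangle $Q$, no $\cM^k$-trap in $Q$ lies within distance $\g$ of that corner. By Lemma~\ref{lem:pub}, the probability that a barrier starts at a fixed site is at most $\T^{-1}$, so a union bound over the $O(\f)$ admissible start sites contributes at most $O(\f\T^{-1})\le\bub^*\T^{-1}$ by~\eqref{eq:fxp-ub}. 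For the trap addition, there are $O(\g^2)$ candidate starting positions, each of conditional probability at most $\tub$ by Condition~\ref{cond:distr}.\ref{i:distr.trap-ub}; the total $O(\g^2\tub)$ is bounded by $\bub^*\T^{-1}$ by~\eqref{eq:trap-xp}. Adding these extra contributions to the inductive bounds $\ncln_\w$ or $\ncln_\t$ yields the required $\ncln^*_\w$ and $\ncln^*_\t$ under either form of conditioning.

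There is no substantial obstacle beyond bookkeeping: the inequalities~\eqref{eq:fxp-ub} and~\eqref{eq:trap-xp} imposed in Section~\ref{sec:params} are tuned precisely to absorb the two extra non-cleanness contributions into the $\bub^*\T^{-1}$ slack reserved by Definition~\ref{def:new-ncln}, while the geometric decay above handles the strong inequalities. The only mild care-point is that the trap-probability bound must apply conditionally on $Y$ (for non-cleanness relevant to paths through the $X$-coordinate) and on $X$ (for the reverse); both follow from the symmetric formulation of Condition~\ref{cond:distr}.\ref{i:distr.trap-ub}.
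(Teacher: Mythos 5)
Your proof is correct and takes essentially the same route as the paper's: it establishes the inductive maintenance of~\eqref{eq:strong-ncln-ub} and~\eqref{eq:strong-f-intro} by observing that $\bub\T^{-1}$ and $\bub/\g$ at least halve under scale-up (your exponent $\R_{k+1}(\txp-1)(\bubxp-1/\txp)$ is algebraically the same as the paper's $\R_k(\txp-1)(\txp\bubxp-1)$), and then verifies Condition~\ref{cond:distr}.\ref{i:distr.ncln} by bounding the two extra non-cleanness contributions in Step~\ref{defstep:clean} by $\f\T^{-1}$ (barriers, via Lemma~\ref{lem:pub}) and $\g^2\tub$ (traps), each absorbed into the $\bub^*\T^{-1}$ slack of Definition~\ref{def:new-ncln}. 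The only cosmetic difference is that you cite~\eqref{eq:fxp-ub} where the paper cites~\eqref{eq:bub-g-f} to get $\f < \bub^*$, which is equivalent.
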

This lemma corresponds to Lemma~7.7 of~\cite{GacsWalks11}, and its proof is
essentially also: the changed initial values and bounds of \( \slb_{x},\slb_{y} \)
and \( \ncln_{\w},\ncln_{\t} \) do not change the arguments due to the negative
exponential dependence of their increments on \( \R_{1} \).
Recall the definition of \( \slb_{i}^{*} \) in Definition~\ref{def:new-slb}, and
the definition of \( \ncln_{i}^{*} \) in Definition~\ref{def:new-ncln}.

 \begin{proof}
Let us show first that \( \cM^{*} \) also satisfies the inequalities if \( \R_{1} \) is
chosen sufficiently large.

\begin{sloppypar}
For sufficiently large \( \R_{1} \), we 
have \( \bub^{**} (\T^{*})^{-1} < 0.5 \bub^{*} \T^{-1} \).
Indeed, this says \( \T^{(\txp\bubxp - 1)(\txp - 1)} < 0.5 \).
Hence using~\eqref{eq:strong-ncln-ub} 
and the definition of \( \ncln^{*}_{\w} \) in Definition~\ref{def:new-ncln}:
\begin{align*}
   \bub^{**} (\T^{*})^{-1} &\le 0.5\bub^{*} \T^{-1}
    \le 0.5(0.05 - \ncln_{\w}) - 0.5\bub^{*}\T^{-1}
\\           &= 0.5(0.05 - \ncln^{*}_{\w}).
  \end{align*}
This is the first inequality of~\eqref{eq:strong-ncln-ub} for \( \cM^{*} \).
The second one is proved the same way.
To verify Condition \ref{cond:distr}.\ref{i:distr.ncln} for
\( \cM^{*} \), recall Definition~\ref{def:new-ncln} of \( \ncln^{*}_{i} \).
For inequality~\eqref{eq:ncln.1dim}, for an upper bound on 
the conditional probability that a point \( a \) of the line is 
strongly clean in \( \cM \) but not in \( \cM^{*} \) let us use
 \begin{equation*}
  (2\f/3+\bub)\T^{-1},
 \end{equation*}
which upper-bounds the probability that a 
vertical barrier of \( \cM \) starts in \( \rint{a - \f/3-\bub}{a + \f/3} \).
This can be upper-bounded by \( \f\T^{-1} < \bub^{*}\T^{-1} \) 
by~\eqref{eq:bub-g-f} for sufficiently large \( \R_{1} \).
Hence an upper bound on the conditional probability of not strong cleanness in
\( \cM^{*} \) is \( \ncln_{\w}+\bub^{*} \T^{-1}=\ncln_{\w}^{*} \) as required,
due to Definition~\ref{def:new-ncln}.
  \end{sloppypar}

  \begin{sloppypar}
For the other inequalities in Condition \ref{cond:distr}.\ref{i:distr.ncln},
consider a rectangle \( Q=\Rect^{\to}(u,v) \) and fix \( Y=y \).
The conditional probability that a point \( u \) is trap-clean in \( Q \) for \( \cM \) but
not for \( \cM^{*} \) is upper-bounded by the probability of the appearance of a
trap of \( \cM \) within a distance \( \g \) of point \( u \) in \( Q \).
There are at most \( \g^{2} \) positions for the trap, so a bound is
\[
    \g^{2}\tub =  \T^{2\gxp - \tubxp} < \T^{\txp \bubxp - 1},
\]
where the last inequality follows from~\eqref{eq:trap-xp}.
We conclude the same way for the first inequality.
The argument for the other inequalities in
Condition \ref{cond:distr}.\ref{i:distr.ncln} is identical.
  \end{sloppypar}

For the first inequality of~\eqref{eq:strong-f-intro}, the scale-up definition
Definition~\ref{def:new-slb} says
  \( \slb^{*}_{x} - \slb_{x} = \slopeincr\slblb^{-3}\bub/\g \). 
The inequality \( \bub^{*}/\g^{*} < 0.5\bub/\g \) 
is guaranteed if \( \R_{1} \) is large. 
From here, we can conclude the proof as for \( \ncln_{i} \); similarly for \( \slb_{y} \).
 \end{proof}

 \section{The approximation lemma}
\label{sec:approx-proof}

The crucial combinatorial step in proving the main lemma is the following.

 \begin{lemma}[Approximation]\label{lem:approx}
The reachability condition, Condition \ref{cond:distr}.\ref{i:distr.reachable},
holds for \( \cM^{*} \) if \( \R_{1} \) is sufficiently large.
 \end{lemma}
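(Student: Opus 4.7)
The plan is to show that any hop $Q=\Rect(u,v)$ of $\cM^{*}$ whose endpoints satisfy the slope conditions of $\cM^{*}$ admits a path from $u$ to $v$ in $\cG$, by decomposing the two projections of $Q$ along the walls of $\cM$ that survive inside them and reducing locally to the reachability condition of $\cM$. First I would apply Lemma~\ref{lem:new-hop} to each projection: either it contains no wall of $\cM$ at all, or all walls of $\cM$ in it are clustered into a sequence $W_{1},\dots,W_{n}$ of dominant light neighbor walls of $\cM$ separated by external hops of $\cM$ of size $>\f$ and kept at distance $\ge\f/3$ from the ends. When both projections are wall-free of $\cM$, $Q$ is already a hop of $\cM$, and since $\slb^{*}_{x}\ge\slb_{x}$, $\slb^{*}_{y}\ge\slb_{y}$ by Definition~\ref{def:new-slb}, the slope conditions of $\cM^{*}$ imply those of $\cM$, so Condition~\ref{cond:distr}.\ref{i:distr.reachable} applied to $\cM$ finishes this base case.

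In the general case the path is assembled piecewise, with one hole passage for each wall $W_{i}$ met in either projection, alternating with hop-of-$\cM$ segments between them. The key inputs are the absence in $Q$ of the three kinds of trap of $\cM^{*}$ (uncorrelated, correlated, and missing-hole) and of all walls of $\cM^{*}$ (compound or emerging). For each wall $W_{i}$, the absence of missing-hole traps combined with Lemma~\ref{lem:if-not-missing-hole} furnishes a good hole of $\cM$ through $W_{i}$ in every strip of length $\g$ contained in the adjacent external hop, positioned at a point strongly clean in $\cM^{*}$; the absence of correlated traps of $\cM^{*}$ combined with Lemma~\ref{lem:if-correl} supplies, within each such strip, a subinterval of length $\ge \L_{2}/4-2\bub>2.2\slblb^{-1}\g$ that is entirely free of traps of $\cM$. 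Choosing the connection points inside these trap-free intervals, and combining with the inner cleanness of $Q$ in $\cM^{*}$ (which, by Definition~\ref{defstep:clean}, unpacks to cleanness in $\cM$ together with the $\f/3$ separation from walls of $\cM$), turns each sub-rectangle between two consecutive passages (and the initial and final sub-rectangles) into an inner-clean hop of $\cM$. Reachability across each sub-rectangle is then Condition~\ref{cond:distr}.\ref{i:distr.reachable} for $\cM$, and each hole passage itself is the definitional reachability from Definition~\ref{def:holes}.

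The main obstacle is the slope bookkeeping: each hole passage forces the path into a particular column or row, so an individual sub-rectangle may have a slope noticeably different from $\slope(u,v)$. I would handle this by a budgeting argument. By Lemma~\ref{lem:new-hop} consecutive walls of $\cM$ in either projection are at distance $>\f$, so $Q$ contains at most $O(|Q|/\f)$ such walls. Each wall has size $\le\bub$ and its hole is of width $\le\slblb^{-1}\bub$, so the cumulative horizontal and vertical displacement imposed on the constructed path, relative to a straight line of slope $\slope(u,v)$, is $O(|Q|\cdot\slblb^{-1}\bub/\f)$. From Definition~\ref{def:f-g} one gets $\bub/\f=(\bub/\g)^{3}\ll\slblb^{12}$, which is dwarfed by the slope slack $\slb^{*}_{i}-\slb_{i}=\slopeincr\slblb^{-3}\bub/\g$ chosen in Definition~\ref{def:new-slb}. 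Distributing this slack among the sub-rectangles proportionally to their sizes, each sub-rectangle then satisfies the slope conditions of $\cM$, with the ``non-integer $v'$'' clause in the definition of the slope conditions absorbing the remaining integer quantization. Concatenating the sub-paths yields the desired path $u\leadsto v$ in $\cG$, and the argument works for both bottom-open and left-open $Q$ without change since $\cG$ has no horizontal or vertical edges anyway.
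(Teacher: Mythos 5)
Your proposal has a genuine gap that the paper's machinery exists precisely to address: you treat traps of $\cM$ as if they were absent from $Q$, but they are not. By Step~\ref{defstep:finish} of the scale-up, traps of $\cM$ are removed from the \emph{trap set of $\cM^{*}$}, so $Q$ being a hop of $\cM^{*}$ only guarantees the absence of the \emph{new} kinds of trap (uncorrelated, correlated, missing-hole); arbitrary traps of $\cM$ are still freely present inside $Q$. Already your ``base case'' fails: when both projections are wall-free of $\cM$, $Q$ is \emph{not} thereby a hop of $\cM$, because it can still contain traps of $\cM$. More generally, the sub-rectangles between consecutive wall crossings are not hops of $\cM$, so Condition~\ref{cond:distr}.\ref{i:distr.reachable} for $\cM$ cannot be applied to them directly. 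Lemma~\ref{lem:if-correl} only gives a trap-free window of size $\L_{2}/4-2\bub$ near a particular obstacle, not trap-freeness over a whole sub-rectangle of length up to $\f$.

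The paper handles this by treating the traps of $\cM$ themselves as obstacles, not just the walls. Lemma~\ref{lem:trap-cover} shows that every trap of $\cM$ in $Q$ sits in a narrow trap cover, and that distinct traps outside the same cover are at distance $\ge\f-\bub$; the covers and the surviving dominant light walls are merged into a single sequence of obstacles $s_{1},s_{2},\dots$, with Lemma~\ref{lem:one-of-three} giving that any three consecutive gaps contain one of length $>0.25\f$. The path is then constructed inside a thin channel of half-width $O(\gf)=O((\g\f)^{1/2})$ around the diagonal (Definition~\ref{def:diagonal} and the three candidate channels at offsets $-2\gf,0,2\gf$), which serves two purposes your plan lacks: (a) Lemma~\ref{lem:approx.no-walls} shows that a trap- and wall-free channel alone suffices for $u\leadsto v$, so one never needs the whole sub-rectangle to be a hop of $\cM$; and (b) the channel is narrow enough that at most one obstacle edge passes through any cross-section, and by choosing a channel avoiding all pairwise intersections of the obstacle edges one can order the obstacles linearly and pass them in groups of at most three (Lemma~\ref{lem:three}, with its case analysis of free obstacles and parallel trap--wall / wall--trap pairs). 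A secondary, minor issue is your displacement estimate of $O(\slblb^{-1}\bub)$ per wall: since the hole must be searched for within a $\g$-sized window (and the trap-free window from Lemma~\ref{lem:if-correl} is $O(\slblb^{-1}\g)$ wide), the per-obstacle drift is $O(\slblb^{-1}\g)$, which happens still to fit within the slope slack $\slopeincr\slblb^{-3}\bub/\g$ because obstacles are $\ge\f$ apart and $\g/\f=(\bub/\g)^{2}\ll\slblb^{2}\cdot\bub/\g$; but the bookkeeping has to be done with $\g$, not $\bub$.
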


The present section is taken up by the proof of this lemma.

Recall that we are considering a bottom-open or left-open or closed rectangle \( Q \)
with starting point \( u=\pair{u_{0}}{u_{1}} \) and endpoint \( v=\pair{v_{0}}{v_{1}} \)
with \( u_{\d}<v_{\d} \), \( \d=0,1 \) with the property that there is a (non-integer)
point \( v'=\pair{v'_{0}}{v'_{1}} \) with \( 0\le v_{0}-v'_{0},v_{1}-v'_{1}<1 \) 
such that
 \begin{align}\label{eq:slope-req-star}
   \slb^{*}_{x}\le \slope(u,v')\le (\slb^{*}_{y})^{-1}.
 \end{align}
We require \( Q \) to be a hop of \( \cM^{*} \).
Thus, the points \( u, v \) are clean for \( \cM^{*} \) in \( Q \), and \( Q \)
contains no traps or walls of \( \cM^{*} \).
We have to show \( u\leadsto v \).
Assume 
 \[
  Q = I_{0} \times I_{1} = \Rect^{\eps}(u, v)  
 \]
where \( \eps=\rightarrow,\uparrow \) or nothing.

 \subsection{Walls and trap covers}

Let us determine the properties of the set of walls in \( Q \).

 \begin{lemma}\label{lem:the-grate}
Under conditions of Lemma~\ref{lem:approx}, with the notation given in the
discussion after the lemma, the following holds.
 \begin{alphenum}

  \item\label{i:grate.pre-hop}
For \( \d=0,1 \), for some \( n_{\d} \ge 0 \), there
is a sequence \( W_{d,1},\dots,W_{d,n_{d}} \) of dominant 
light neighbor walls of \( \cM \) separated from each other 
by external hops of \( \cM \) of size \( >\f \), and from the ends of \( I_{\d} \)
(if \( n_{\d}>0 \)) by hops of \( \cM \) of size  \( \ge \f/3 \).

  \item\label{i:grate.hole}
For every (horizontal) wall \( W_{0,i} \) of \( \cM \) occurring in \( I_{1} \),
for every subinterval \( J \) of \( I_{0} \) of size \( \g \) such that \( J \) is at a
distance \( \ge\g+7\bub \) from the ends of \( I_{0} \), there is an outer 
rightward clean hole fitting \( W_{0,i} \), with endpoints at a distance of at
least \( \bub \) from the endpoints of \( J \).
The same holds if we interchange vertical and horizontal.
 \end{alphenum}
 \end{lemma}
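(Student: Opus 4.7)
The plan is to dispatch part (a) by a direct invocation of Lemma~\ref{lem:new-hop}, and part (b) by combining Lemma~\ref{lem:if-not-missing-hole} with the structural information supplied by part (a).

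For part (a), I would apply Lemma~\ref{lem:new-hop} to each projection $I_\d$ of $Q$. Since $Q$ is a hop of $\cM^*$, the interval $I_\d$ contains no walls of $\cM^*$; moreover, the $\cM^*$-cleanness of the endpoints $u,v$ in $Q$ (built into $Q$ being a $\cM^*$-hop) yields, via the extra condition in Step~\ref{defstep:clean}, that no wall of $\cM$ has its right (resp.\ left) end within $\f/3$ of the right (resp.\ left) endpoint of $I_\d$. These are exactly the hypotheses of Lemma~\ref{lem:new-hop}, which then delivers the desired sequence of dominant light neighbor walls of $\cM$ separated by external $\cM$-hops of size $>\f$, with the $\ge\f/3$ gap from the endpoints of $I_\d$ also provided by the cleanness condition just invoked.

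For part (b), let $\rint{v_1}{w_1}$ be the body of $W_{0,i}$, and write $J=\clint{a}{a+\g}$. I would set $b=v_1-\bub$ so that $W_{0,i}$ starts at position $b+\bub$, and apply Lemma~\ref{lem:if-not-missing-hole} to $Q$ with $I'=J$ and this choice of $b$. The hypotheses hold because $Q$ is an $\cM^*$-hop (so it contains no traps or vertical walls of $\cM^*$, and the $\cM^*$-cleanness of its endpoints prevents vertical walls of $\cM$ from coming within $\f/3$ of its sides), while part (a) already guarantees that $W_{0,i}$ is light. The conclusion furnishes a vertical hole $\rint{a_0}{b_0}\sbsq\clint{a+\bub}{a+\g-\bub}$ passing through $W_{0,i}$ that is good in the sense of Definition~\ref{def:holes}, and whose endpoints therefore lie at distance $\ge\bub$ from the endpoints of $J$, as required.

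It remains to upgrade this good hole to one that is outer rightward clean, which on top of the H-cleanness supplied by goodness demands also vertical cleanness at the two corners $\pair{a_0}{v_1}$ and $\pair{b_0}{w_1}$ of the hole rectangle. For this I would use the fact, extracted from part (a), that $W_{0,i}$ is a dominant light wall of $\cM$, so its body is flanked by external $\cM$-hops of size $\ge\bub$; by Condition~\ref{cond:distr}.\ref{i:distr.inner-clean} these adjacent hops are inner clean, which (via the monotonicity of 1D cleanness in Condition~\ref{cond:distr}.\ref{i:distr.indep.left-clean}) yields left-cleanness of $v_1$ and right-cleanness of $w_1$ in all intervals ending at or starting from them. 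Combining this vertical cleanness with the trap- and H-cleanness already built into goodness assembles the full outer rightward cleanness. The main obstacle I expect is this final piece of bookkeeping: sorting out which flavors of cleanness (strong vs.\ not, left vs.\ right, H- vs.\ V-) are given by goodness and which must be manufactured from dominance, and checking that every requirement of Definition~\ref{def:H-clean} is indeed met.
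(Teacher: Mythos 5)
Your proposal is correct and follows exactly the same route the paper does: part (a) is a direct application of Lemma~\ref{lem:new-hop} to the projections (using the $\cM^*$-cleanness of $u,v$ via Step~\ref{defstep:clean} to secure the $\f/3$ buffer), part (b) invokes Lemma~\ref{lem:if-not-missing-hole} to produce the good hole, and the missing vertical cleanness is supplied by part (a) through the flanking hops together with the monotonicity clause of Condition~\ref{cond:distr}.\ref{i:distr.indep.left-clean}. The only small inefficiency is that you appeal to Condition~\ref{cond:distr}.\ref{i:distr.inner-clean} to obtain inner cleanness of the flanking intervals — unnecessary, since Lemma~\ref{lem:new-hop} already hands you \emph{hops}, which are inner clean by Definition~\ref{def:hop}.
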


 \begin{proof}
This is a direct consequence of Lemmas~\ref{lem:new-hop}
and~\ref{lem:if-not-missing-hole}.
The vertical cleanness needed in the outer rightward cleanness of the hole 
through \( W_{0,i} \) follows from part~\eqref{i:grate.pre-hop}.
 \end{proof}

From now on, in this proof, whenever we mention a \df{wall} we mean one of
the walls \( W_{\d, i} \), and whenever we mention a trap then, unless said
otherwise, we mean only traps of \( \cM \) entirely within \( Q \) and
not intersecting any of these walls.
Let us limit the places where traps can appear in \( Q \).

 \begin{definition}[Trap cover]
A set of the form \( I_{0} \times J \)
with \( |J| \le 4 \bub \) containing the starting point of a
trap of \( \cM \) will be called a \df{horizontal trap cover}.
Vertical trap covers are defined similarly.
 \end{definition}

In the following lemma, when
we talk about the distance between two traps, we mean the distance between
their starting points.

 \begin{lemma}[Trap cover]\label{lem:trap-cover}
Let \( T_{1} \) be a trap of \( \cM \) contained in \( Q \).
Then there is a horizontal or vertical trap cover \( U \spsq T_{1} \) such that
the starting point of
every other trap in \( Q \) is either contained in \( U \) or is
at least at a distance \( \f - \bub \) from \( T_{1} \).
If the trap cover is vertical, it intersects none of the vertical walls
\( W_{0, i} \); if it is horizontal, it intersects none of the horizontal
walls \( W_{1,j} \).
 \end{lemma}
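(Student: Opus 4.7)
The plan is to derive tight geometric constraints on the traps near $T_{1}$ from the absence of uncorrelated compound traps of $\cM^{*}$ in $Q$. Let $S$ be the set of traps $T \neq T_{1}$ in $Q$ (in the running convention these are traps of $\cM$ contained in $Q$ and disjoint from every wall $W_{\delta,i}$) whose starting point has $\ell_{\infty}$-distance strictly less than $\f - \bub$ from that of $T_{1}$. By the second clause of the conclusion, it suffices to produce a horizontal or vertical trap cover $U \supseteq T_{1}$ whose defining strip, of width at most $4\bub$, contains the starting point of every $T \in S$.

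The key observation is that for any $T \in S$, the projections of $T$ and $T_{1}$ cannot both be disjoint. Otherwise $(T_{1}, T)$ would be the two component traps of an uncorrelated compound trap of $\cM^{*}$ of size at most $\bub + \f$ contained in $Q$ (see Step~\ref{defstep:uncorrel}), contradicting the hypothesis that $Q$ is a hop of $\cM^{*}$. Consequently, writing $(a_{T}, b_{T})$ for the starting point of $T$, every $T \in S$ satisfies $|a_{T} - a_{1}| \le \bub$ (if the $x$-projections overlap) or $|b_{T} - b_{1}| \le \bub$ (if the $y$-projections overlap), or both.

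I then consider the two candidate covers. The horizontal strip $J_{h} = \clint{b_{1} - 2\bub}{b_{1} + 2\bub}$ yields a cover containing $T_{1}$ and the starting point of every $T \in S$ with $|b_{T} - b_{1}| \le 2\bub$; the vertical strip $I_{v} = \clint{a_{1} - 2\bub}{a_{1} + 2\bub}$ is defined analogously. I claim one of them catches all of $S$. Suppose not, and pick $T_{2} \in S$ missed by $J_{h}$ and $T_{3} \in S$ missed by $I_{v}$. Then $|b_{T_{2}} - b_{1}| > 2\bub$ forces $T_{2}$ to share only its $x$-projection with $T_{1}$, so $|a_{T_{2}} - a_{1}| \le \bub$; symmetrically $|a_{T_{3}} - a_{1}| > 2\bub$ and $|b_{T_{3}} - b_{1}| \le \bub$. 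Coordinate-wise triangle inequalities give $|a_{T_{2}} - a_{T_{3}}| < \bub + (\f - \bub) = \f$ and $|b_{T_{2}} - b_{T_{3}}| < (\f - \bub) + \bub = \f$, hence $|T_{2} - T_{3}|_{\infty} < \f$. Applying the compound-trap argument to $(T_{2}, T_{3})$ forces their projections to overlap in $x$ or in $y$. Overlap in $x$ yields $|a_{T_{2}} - a_{T_{3}}| \le \bub$, which combined with $|a_{T_{2}} - a_{1}| \le \bub$ gives $|a_{T_{3}} - a_{1}| \le 2\bub$, contradicting the choice of $T_{3}$; overlap in $y$ contradicts the choice of $T_{2}$ by symmetry.

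Finally, I verify wall-avoidance. Since each trap in $\{T_{1}\} \cup S$ is disjoint from every wall $W_{\delta,i}$ by the running convention, and Lemma~\ref{lem:the-grate}(a) guarantees that consecutive walls in $I_{\delta}$, and the ends of $I_{\delta}$, are separated by hops of size exceeding $\f/3$, the coordinate $b_{1}$ (resp.\ $a_{1}$) lies in the interior of such a hop with margin vastly exceeding $4\bub$. Consequently the chosen strip of width at most $4\bub$ fits entirely inside the surrounding hop and meets no wall of the corresponding orientation. The main obstacle in this argument is the case analysis culminating in the compound-trap contradiction for the pair $(T_{2}, T_{3})$; once that is set up, the geometric bookkeeping on widths is immediate from the parameter separation $\bub \ll \f$ furnished by Definition~\ref{def:f-g}.
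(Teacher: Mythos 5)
Your dichotomy argument is correct and is indeed the heart of the matter: applying the uncorrelated--compound--trap constraint first to pairs $(T_1,T)$ to get that each $T\in S$ shares its $x$- or $y$-projection with $T_1$, and then to the hypothetical pair $(T_2,T_3)$ to conclude that one of the two $4\bub$-wide axis-aligned strips catches the starting points of all traps in $S$. The wall-avoidance paragraph, however, contains a genuine gap. You assert that $b_1$ (resp.\ $a_1$) lies ``in the interior of such a hop with margin vastly exceeding $4\bub$.'' That does not follow from the hypotheses: the running convention only makes the traps \emph{disjoint} from the walls $W_{\delta,i}$, not far from them. Since walls of $\cM$ can be much narrower than $\bub$ (already in $\cM^1$ a wall has size $<2\m$ while $\bub_1=\lg^{\bubxp\R_1}$ is exponential in $\m$; at higher levels walls of $\cM^k$ have size $O(\bub_{k-1})\ll\bub_k$), a strip such as $\clint{b_1-2\bub}{b_1+2\bub}$ can perfectly well straddle a horizontal wall, with traps of $S$ on both sides. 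This scenario is fully consistent with your key observation: a $T\in S$ across the wall then simply has overlapping $x$-projection with $T_1$.

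Closing the gap needs a second layer of the same uncorrelated-trap reasoning, not a margin bound. Say WLOG every $T\in S$ has $|b_T-b_1|\le 2\bub$ and let $J$ be the convex hull of $\clint{b_1}{b_1+h_1}$ and the $b_T$; it has width $\le 4\bub$. If $J$ contains a horizontal wall body, some $T\in S$ sits on the opposite side of that wall from $T_1$, hence has disjoint $y$-projection from $T_1$ and (by your observation) overlapping $x$-projection. Then (i) any $T'\in S$ with $|a_{T'}-a_1|>\bub$ must share $y$-projection with $T_1$, so it lies on $T_1$'s side of the wall; thus $(T,T')$ has disjoint $y$-projections, and since the $x$-closeness of $T$ and the $y$-closeness of $T'$ to $T_1$ keeps the $\ell_\infty$-distance $\le\bub+(\f-\bub)=\f$, they must overlap in $x$, forcing $|a_{T'}-a_1|\le 2\bub$ after all; and (ii) the vertical convex hull $I_v$ (now of width $\le 4\bub$ by (i)) cannot contain a vertical wall body, since a $T''\in S$ across that wall would have to be $y$-close to $T_1$, making $T\ne T''$ with $T,T''$ disjoint in both projections at distance $\le\f$ --- an uncorrelated compound trap in $Q$. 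So whenever the horizontal candidate strip is blocked, the vertical one is both narrow enough and wall-free. That case distinction is precisely what the second clause of the lemma is asserting, and your proof skips it.
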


This lemma corresponds to Lemma~8.3 of~\cite{GacsWalks11}.

Let us measure distances from the line defined by the points \( u,v' \).

\begin{definition}[Relations to the diagonal]\label{def:diagonal}
Define, for a point \( a = \pair{a_{0}}{a_{1}} \):
 \[
   d_{u,v'}(a)=d(a) = (a_{1}-u_{1}) - \slope(u,v')(a_{0} - u_{0})
 \]
to be the distance of \( a \) above the line of \( u,v' \), then for
\( w=\pair{x}{y} \), \( w'=\pair{x'}{y'} \):
 \begin{equation*}
   \begin{aligned}
            d(w')-d(w) &= y'-y - \slope(u,v')(x'-x),
\\            |d(w')-d(w)| &\le |y'-y|+|x'-x|/\slb_{y}.
 \end{aligned}
 \end{equation*}
We define the strip
 \[
 C^{\eps}(u, v', h_{1}, h_{2}) = \setof{w \in \Rect^{\eps}(u,v) :
   h_{1} < d_{u,v'}(w) \le h_{2}},
 \]
a channel of vertical width \( h_{2}-h_{1} \) in \( \Rect^{\eps}(u,v) \),
parallel to line of \( u,v' \) . 
\end{definition}

 \begin{lemma}\label{lem:approx.no-walls}
Assume that points \( u, v \) are clean for \( \cM \) in \( Q=\Rect^{\eps}(u,v) \),
with 
 \[
   \slb_{x}+4\bub/\g \le \slope(u,v') \le 1/(\slb_{y} + 4\slblb^{-2}\bub/\g),
 \]
where \( v' \) relates to \( v \) as above.
If \( C = C^{\eps}(u, v', -\g, \g) \) contains 
no traps or walls of \( \cM \) then \( u \leadsto v \).
(By \( C \) not containing walls we mean that its projections don't.)
 \end{lemma}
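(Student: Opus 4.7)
The plan is to reduce the claim to the reachability condition of $\cM$ (Condition~\ref{cond:distr}.\ref{i:distr.reachable}) by cutting $Q$ into a chain of short sub-rectangles that all lie inside the channel $C$, each of which is itself a hop of $\cM$ whose endpoints satisfy the slope conditions of $\cM$.

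First I pick an integer $N$ with $(v_0 - u_0)/N \approx \g$, set $p_0 = u$, $p_{N+1} = v$, and for $0 < i \le N$ let $p_i$ be an integer lattice point close to $u + (i/(N+1))(v'-u)$. Consecutive $p_i$'s thus differ by roughly $\g$ horizontally and $\g \cdot \slope(u,v')$ vertically, and every $p_i$ lies within $O(1)$ of the segment from $u$ to $v'$. Around each interior $p_i$ I find a clean point $q_i$ of $\cM$ at distance at most $\bub$ from $p_i$: the $3\bub \times 3\bub$ square centred at $p_i$ fits inside $C$ (since $3\bub \ll \g$), hence contains no wall or trap of $\cM$, so by Condition~\ref{cond:distr}.\ref{i:distr.clean.2} together with Remark~\ref{rem:distr}.\ref{i:middle-third} its middle third contains a clean point of $\cM$. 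Set $q_0 = u$ and $q_{N+1} = v$.

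Next I verify that for each $0 \le i \le N$ the rectangle $R_i$ with lower-left corner $q_i$ and upper-right corner $q_{i+1}$ is a hop of $\cM$ and that $q_i, q_{i+1}$ satisfy the slope conditions of $\cM$. Absence of walls and traps of $\cM$ inside $R_i$ is inherited from $R_i \subseteq C$; inner cleanness follows from the cleanness of $q_i, q_{i+1}$ at interior indices, and at the boundary from the hypothesized cleanness of $u, v$ in $Q$ combined with the monotonicity in Condition~\ref{cond:distr}.\ref{i:distr.indep.ur-clean}. For the slope condition, the perturbation $|q_j - p_j| \le \bub$ changes the slope of the segment from $q_i$ to $q_{i+1}$ by at most $O(\bub/\g)$ relative to $\slope(u,v')$, and the explicit slacks $4\bub/\g$ and $4\slblb^{-2}\bub/\g$ in the hypothesis of the lemma are designed precisely to keep the resulting slope in $[\slb_x, \slb_y^{-1}]$ (with the freedom of the auxiliary point in the definition of the slope conditions of $\cM$ absorbing the final rounding). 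Applying Condition~\ref{cond:distr}.\ref{i:distr.reachable} to each $R_i$ yields $q_i \leadsto q_{i+1}$, and chaining gives $u \leadsto v$. The main obstacle is the quantitative slope bookkeeping at the boundary rectangles $R_0, R_N$, where one endpoint is the fixed $u$ or $v$ rather than a freely chosen clean point; the openness of $Q$ is attached to $R_0$ (or $R_N$) while every other $R_i$ is taken closed, so that the restriction in the reachability condition (at most one of bottom-open, left-open) is respected.
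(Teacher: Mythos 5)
Your proposal is correct and follows essentially the same route as the paper: subdivide the diagonal of $Q$ into segments of length about $\g$, use Remark~\ref{rem:distr}.\ref{i:middle-third} to find clean points of $\cM$ within $O(\bub)$ of each subdivision point, show each resulting sub-rectangle lies in the channel $C$ (hence is free of walls and traps of $\cM$) and has endpoints satisfying the slope conditions of $\cM$ because the perturbation of slope by $O(\bub/\g)$ and of reciprocal slope by $O(\slblb^{-2}\bub/\g)$ is absorbed by the hypothesized slack, then invoke Condition~\ref{cond:distr}.\ref{i:distr.reachable} and chain. The only details you have left implicit (the explicit constants $4\bub/\g$ and $4\slblb^{-2}\bub/\g$, the trivial case $|I_0|<\g$) are exactly what the paper spells out; the boundary rectangles $R_0, R_N$ are in fact the easy case since only one endpoint is perturbed.
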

This lemma corresponds to Lemma~8.4 of~\cite{GacsWalks11}, and 
Figure~23 there illustrates the proof.
 \begin{proof}
Let \( \mu=\slope(u,v') \).
If \( |I_{0}| < \g \) then \( C=Q \), so
there is no trap or wall in \( Q \), therefore \( Q \) is a
hop, and we are done via Condition \ref{cond:distr}.\ref{i:distr.reachable} for
\( \cM \). 
Suppose \( |I_{0}| \ge \g \).
Let 
 \[
      n = \Cei{\frac{|I_{0}|}{0.9 \g}},
\quad h = \frac{|I_{0}|}{n}.
 \]
Then \( \g / 2 \le h \le 0.9 \g \).
Indeed, the proof of the second inequality is immediate.
For the first one, if \( n\le 2 \), we have \( \g \le |I_{0}| = n h \le 2 h \), and
for \( n\ge 3 \):
 \begin{align*}
   \frac{|I_{0}|}{0.9\g}&\ge n-1,
\\ |I_{0}|/n &\ge (1-1/n) 0.9\g \ge 0.6\g.   
 \end{align*}
For \( i = 1, 2, \dots, n-1 \), let
 \[
      a_{i} = u_{0} + i h,
\quad b_{i} = u_{1} + i h\cdot \mu,
\quad w_{i} = \pair{a_{i}}{b_{i}},
\quad S_{i} = w_{i} + \clint{-\bub}{2\bub}^{2}.
 \]
Let us show \( S_{i}\sbsq C \).
For all elements \( w \) of \( S_{i} \), we have 
\( |d(w)| \le 2(1+1/\slb_{y})\bub \), and we
know \( 2(1+1/\slb_{y})\bub < \g \) if \( \R_{1} \) is sufficiently large.
To see \( S_{i} \sbsq \Rect^{\eps}(u,v) \),
we need (from the worst case \( i=n-1 \)) 
\( \mu h > 2\bub \).
Using the above and the assumptions of the lemma: 
 \[
   \frac{2\bub}{h} \le \frac{2\bub}{\g/2}
   = 4\bub/\g \le \mu.
 \]
By Remark~\ref{rem:distr}.\ref{i:middle-third},
there is a clean point \( w'_{i} = \pair{a'_{i}}{b'_{i}} \) in the middle third 
\( w_{i}+\clint{0}{\bub}^{2} \) of \( S_{i} \).
Let \( w'_{0} = u \), \( w'_{n} = v' \).
By their definition, each rectangle \( \Rect^{\eps}(w'_{i}, w'_{i+1}) \)
rises by at most \( < \mu(0.9 \g + \bub)+\bub < \g \),
above or below the diagonal, hence
falls into the channel \( C \) and is consequently trap-free.

If \( \slb_{x}\le\slope(w'_{i}, w'_{i+1})\le 1/\slb_{y} \) this will imply
\( w'_{i}\leadsto w'_{i+1} \) for \( i<n-1 \), and \( w'_{n-1}\leadsto v \).
Let \( \mu'=\slope(w'_{i}, w'_{i+1}) \).
We know already \( \mu\ge \slb_{x} + 4\bub/\g \) and 
\( 1/\mu \ge \slb_{y}+4\slblb^{-2}\bub/\g \). 
It is sufficient to show \( \mu-\mu'\le 4\bub/\g \) and
 \( 1/\mu - 1/\mu'\le 4\slblb^{-2}\bub/\g \).

The distance from \( w'_{i} \) to \( w'_{i+1} \) is between
\( h-\bub \) and \( h+\bub \) in the \( x \) coordinate and between
\( \mu h -\bub \) and \( \mu h+\bub \) in the \( y \) coordinate. 
We have
  \begin{align*}
 \mu-\mu' \le \mu-\frac{\mu h - \bub}{h+\bub} =\frac{(\mu+1)\bub}{h+\bub} 
  \le \frac{(\mu+1)\bub}{\g/2+\bub} \le 4\bub/\g.
  \end{align*}
Similarly
  \begin{align*}
    \frac{1}{\mu}-\frac{1}{\mu'} \le \frac{1}{\mu}-\frac{h-\bub}{\mu h+\bub}
   =\frac{(\mu+1)\bub}{\mu (\mu h+\bub)} 
   \le \frac{(\mu+1)\bub}{\mu^{2}\g/2}.
 \end{align*}
The condition of the lemma implies \( \slblb\le \mu\le 1 \), and this
implies that the last expression is less than 
\( 4\bub/\mu^{2}\g \le 4\slblb^{-2}\bub/\g \).
 \end{proof}

We introduce particular strips around the diagonal.

 \begin{definition} Let \( \gf=(\g\f)^{1/2} \), \( C =  C^{\eps}(u, v', -3\gf, 3\gf) \),
where \( v' \) is defined as above.
 \end{definition}

Let us introduce the system of walls and trap covers we will have to
overcome.

 \begin{definition}
Let us define a sequence of trap covers \( U_{1}, U_{2},\dots \) as follows.
If some trap \( T_{1} \) is in \( C \), then let
\( U_{1} \) be a (horizontal or vertical) trap cover covering it according to
Lemma~\ref{lem:trap-cover}.
If \( U_{i} \) has been defined already and there is a trap \( T_{i+1} \) in
\( C \) not covered by \( \bigcup_{j \le i} U_{j} \) then let \( U_{i+1} \)
be a trap cover covering this new trap.
To each trap cover \( U_{i} \) we assign a real number \( a_{i} \) as follows.
Let \( \pair{a_{i}}{a'_{i}} \) be the intersection of the diagonal of \( Q \) and the
left or bottom edge of \( U_{i} \) (if \( U_{i} \) is vertical or horizontal
respectively).
Let \( \pair{b_{i}}{b'_{i}} \) be the intersection of the diagonal and the left edge
of the vertical wall \( W_{0,i} \) introduced in Lemma~\ref{lem:the-grate},
and let \( \pair{c'_{i}}{c_{i}} \) be the intersection of the diagonal and the
bottom edge of the horizontal wall \( W_{1,i} \).
Let us define the finite set
 \begin{equation*}
   \set{s_{1}, s_{2},\dots} 
= \set{a_{1}, a_{2}, \dots} \cup \set{b_{1}, b_{2}, \dots} \cup 
  \set{c'_{1}, c'_{2}, \dots}
  \end{equation*}
where \( s_{i} \le s_{i+1} \).

We will call the objects (trap covers or walls) belonging to the points
\( s_{i} \) our \df{obstacles}.
 \end{definition}

 \begin{lemma}\label{lem:one-of-three}
If \( s_{i},s_{j} \) belong to the same obstacle category among the three
(horizontal wall, vertical wall, trap cover) then
\( |s_{i}-s_{j}| \ge 0.75\f \) for \( \R_{1} \) sufficiently large.
\end{lemma}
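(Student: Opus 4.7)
The plan is to handle the three obstacle categories (vertical walls, horizontal walls, and trap covers) separately, with most of the work going into the trap-cover case. For the wall categories the bound should fall out of Lemma~\ref{lem:the-grate}\eqref{i:grate.pre-hop}: the vertical walls $W_{0,i}$ are separated by external hops of $\cM$ of length $>\f$, so consecutive $b_i$ differ by more than $\f$, and transitivity extends this to all pairs. For horizontal walls, $s_j=c'_j=u_0+(c_j-u_1)/\mu$, and the vertical separation $c_{j+1}-c_j>\f$ translates to $c'_{j+1}-c'_j\ge\f/\mu\ge\f$, using $\mu=\slope(u,v')\le 1/\slb^{*}_{y}\le 1$.

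For two trap covers $U_i\ne U_j$, say with $i<j$, I would first extract a lower bound on the horizontal gap between the two traps. The greedy construction combined with Lemma~\ref{lem:trap-cover} gives $|T_i-T_j|_\infty\ge\f-\bub$, since $T_j\notin U_i$ (otherwise $T_j$ would have been absorbed into $U_i$). Both traps lie in the channel $C$ of diagonal deviation at most $3\gf$, so $|\Delta y-\mu\Delta x|\le 6\gf$ for $\Delta x=x_j-x_i$, $\Delta y=y_j-y_i$; combined with $\mu\le 1$ this forces $|\Delta x|\ge\f-\bub-6\gf$ whichever coordinate attains the $L^\infty$-bound.

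The next step is to convert $|\Delta x|$ into $|a_i-a_j|$. I would introduce the auxiliary coordinate $\phi(T_k):=x_k$ when $U_k$ is vertical and $\phi(T_k):=u_0+(y_k-u_1)/\mu$ when $U_k$ is horizontal; inspection of the definition of $a_k$ yields $|a_k-\phi(T_k)|\le 4\bub/\mu$ in either case, so $|a_i-a_j|\ge|\phi_i-\phi_j|-8\bub/\mu$. The main obstacle is the mixed sub-case, where the two flavors of $\phi$ must be reconciled: substituting $y_j-u_1=d(T_j)+\mu(x_j-u_0)$ produces the identity $\phi_i-\phi_j=-\Delta x-d(T_j)/\mu$, hence $|\phi_i-\phi_j|\ge|\Delta x|-3\gf/\mu$. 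The same-orientation sub-cases are simpler: both-vertical is immediate, and both-horizontal needs only the channel bound to pass from $|\Delta y|/\mu$ to $|\Delta x|-6\gf/\mu$. Using $\mu\ge\slb^{*}_{x}\ge\slblb$, all three sub-cases give
\[
|a_i-a_j|\ \ge\ \f - \bub - O(\gf/\slblb) - O(\bub/\slblb).
\]

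To finish, I would note that since $\bubxp<\fxp$ and $(\gxp+\fxp)/2<\fxp$ the error terms are of the form $\T^{\eta}$ with $\eta<\fxp$, while the factor $\slblb^{-1}=O(\R_1)$ is swamped by the exponential gap $\T^{\fxp-\eta}$ once $\R_1$ is sufficiently large. Hence $|s_i-s_j|\ge 0.75\f$ in each of the three categories.
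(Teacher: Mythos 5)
Your proof is correct and takes the natural approach that the paper's cited source (Lemma~8.5 of the earlier scheduling paper) uses: for walls you read the $>\f$ separation of consecutive wall bodies directly from Lemma~\ref{lem:the-grate}\eqref{i:grate.pre-hop} and push it through the diagonal using $\mu=\slope(u,v')\le 1/\slb_y^*\le 1/2$; for trap covers you combine the $\ge\f-\bub$ separation of starting points from the trap-cover lemma with the channel constraint $|d|\le 3\gf$ to force an $x$-gap, then bound the discrepancy between the trap's starting point and the assigned diagonal intercept by $O(\bub/\slblb)+O(\gf/\slblb)$. The final absorption of the $\slblb^{-1}=\R_1$ factor into the exponential gaps $\T^{(\gxp-\fxp)/2}$ and $\T^{\bubxp-\fxp}$ is exactly what makes the constant $0.75$ attainable for large $\R_1$.
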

This lemma corresponds to Lemma~8.5 of~\cite{GacsWalks11}. 

It follows that for every \( i \) at least one of the three numbers
\( (s_{i+1} - s_{i}) \), \( (s_{i+2} - s_{i+1}) \), \( (s_{i+3} - s_{i+2}) \) is larger
than \( 0.25\f \).



 \subsection{Passing through the obstacles}

The remark after Lemma~\ref{lem:one-of-three} allows us to break up the sequence
of obstacles into groups of size at most three, which can be dealt with
separately.
So the main burden of the proof of the Approximation Lemma is carried by
following lemma.

 \begin{lemma}\label{lem:three}
There is a constant \( \slopeincr \) with the following properties.
Let \( u, v \) be points with
 \begin{equation}\label{eq:triangle-cnd.three}
   \begin{aligned}
     \slb_{x} + (\slopeincr-1)\slblb^{-3}\bub/\g &\le \slope(u,v'),
\\   \slb_{y} + (\slopeincr-1)\slblb^{-3}\bub/\g &\le 1/\slope(u,v'),
   \end{aligned}
 \end{equation}
where \( v' \) is related to \( v \) as above.
Assume that the set \( \set{s_{1}, s_{2}, \dots} \) defined above 
consists of at most three elements, with
the consecutive elements less than \( 0.25\f \) apart.
Assume also 
 \begin{equation}\label{eq:far-from-ends}
   v_{0} - s_{i},\; s_{i} - u_{0} \ge 0.1\f.
 \end{equation}
Then if \(\Rect^{\rightarrow}(u, v)\) or \(\Rect^{\uparrow}(u, v)\) is a 
hop of \( \cM^{*} \) then \( u\leadsto v \).
 \end{lemma}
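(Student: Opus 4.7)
The plan is to chain reachabilities along a piecewise path from $u$ to $v$ that crosses each of the $m\le 3$ obstacles $s_1<\cdots<s_m$ through carefully placed clean transit points, while traversing the obstacle-free regions between them via Lemma~\ref{lem:approx.no-walls}. Lemma~\ref{lem:one-of-three} forces obstacles of the same category to be at least $0.75\f$ apart, so the tightly packed obstacles here (consecutive ones less than $0.25\f$ apart) are automatically of pairwise distinct categories; in particular, they consist of at most one vertical wall, one horizontal wall, and one trap cover. I process them in order of diagonal coordinate.

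The crossing construction depends on the obstacle type. A vertical wall $W_{0,i}$ is crossed via the outer rightward clean good hole supplied by Lemma~\ref{lem:the-grate}(b), applied to a $\g$-long subinterval of $I_0$ straddling $b_i$; the hole gives $\cM$-reachability from the entry transit point $p^-$ to the exit transit point $p^+$ by Definition~\ref{def:holes}, and $p^-,p^+$ are clean by the outer cleanness of the hole. A horizontal wall is handled symmetrically. For a horizontal trap cover $U=I_0\times J$ with $|J|\le 4\bub$, I invoke Lemma~\ref{lem:if-correl} with $j=2$ on an $\L_2$-long subinterval of $I_0$ straddling $a_i$, yielding a trap-free vertical slab of $x$-width exceeding $2.2\slblb^{-1}\g$; I then place transit points $p^-$ just below and $p^+$ just above $J$ within this slab (their cleanness supplied by Condition~\ref{cond:distr}.\ref{i:distr.clean.2}), with horizontal separation tuned so that the rectangle between them has slope in $[\slb_x,\,1/\slb_y]$ and is a wall-free, trap-free hop of $\cM$, hence $p^-\leadsto p^+$ by Condition~\ref{cond:distr}.\ref{i:distr.reachable}. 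Vertical trap covers are symmetric. Each individual crossing translates the exit transit point from the entry transit point by at most $O(\slblb^{-1}\g)$ in either coordinate, since hole widths are bounded by $\slblb^{-1}\bub$ (paragraph after Definition~\ref{def:holes}) and the trap-cover slab has width $O(\slblb^{-1}\g)$.

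The hard part is slope bookkeeping. To guarantee that every obstacle-free subrectangle admits Lemma~\ref{lem:approx.no-walls}, I group any consecutive obstacles that are closer than some fixed small multiple of $\f$ into a single combined detour rather than introducing a transit point between them; since $m\le 3$, this yields at most three combined crossings and four obstacle-free segments. The end segments span horizontally at least $0.1\f-O(\g)$ by~\eqref{eq:far-from-ends}, and the bundling ensures each interior obstacle-free segment also spans a constant multiple of $\f$. Consequently, each obstacle-free subrectangle's slope deviates from $\slope(u,v')$ by at most $O(\slblb^{-1}\g/\f)=O(\slblb^{-1}\bub/\g)$, using $\g/\f=\bub/\g$ from~\eqref{eq:bub-g-f}.

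Combined with the per-segment overhead demanded by Lemma~\ref{lem:approx.no-walls}, which is at most $4\slblb^{-2}\bub/\g$ off $\slb_x$ and off $1/\slb_y$, the total slope budget consumed on each segment is $O(\slblb^{-2}\bub/\g)$. The hypothesis~\eqref{eq:triangle-cnd.three} provides a slack of $(\slopeincr-1)\slblb^{-3}\bub/\g$ on both sides; since $\slblb\le 1/2$ is fixed (the mazeries share $\slblb$ by Lemma~\ref{lem:main}), the extra factor $\slblb^{-1}$ in the available slack absorbs the gap. Choosing $\slopeincr$ to be a sufficiently large universal constant depending only on $\slblb$ therefore makes the slope hypothesis of Lemma~\ref{lem:approx.no-walls} hold on every obstacle-free segment, and concatenating these reachabilities with the crossing reachabilities produces $u\leadsto v$. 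This fixes the constant $\slopeincr$ promised in the statement.
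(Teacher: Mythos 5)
The overall skeleton — cross walls via holes from Lemma~\ref{lem:the-grate}, cross trap covers via Lemma~\ref{lem:if-correl}, traverse obstacle-free stretches with Lemma~\ref{lem:approx.no-walls}, and concatenate — is the right one, and the per-crossing displacement estimate $O(\slblb^{-1}\g)$ is correct. But the proposal does not actually resolve the combinatorial difficulty that the lemma is designed to handle, namely that consecutive values $s_i$ can be arbitrarily close (the hypothesis only bounds them \emph{above} by $0.25\f$).

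The paper deals with close obstacles by two distinct mechanisms, both of which are missing from your proposal. First, if two consecutive obstacles have different orientations, the three starting-edge lines intersect pairwise in at most three points of the plane, so one of the three channels $C(u,v',K-\gf,K+\gf)$ for $K\in\{-2\gf,0,2\gf\}$ avoids all these intersections. Working at that height $K$, non-parallel consecutive obstacles then automatically satisfy $x_{i+1}-x_i\ge\gf$ there, which is what feeds the slope bookkeeping. Second, if a trap cover and a wall of the \emph{same} orientation are adjacent (a ``parallel pair''), no channel shift separates them — their edges are parallel — and the paper carries out an explicit combined crossing that first locates a trap-free window in the trap cover and then a hole in the wall at the right vertical offset so that a single sloped segment traverses both (Parts~\ref{step:three.x-bound.trapc.vert}--\ref{step:three.x-bound.wall.horiz}). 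Your ``group consecutive obstacles closer than a small multiple of $\f$ into a single combined detour'' replaces both mechanisms by a placeholder: you never say what the combined detour is, and for a genuine parallel pair your free-obstacle crossings cannot simply be chained because the intermediate obstacle-free rectangle can have zero or negative width. Likewise, the claim that ``the bundling ensures each interior obstacle-free segment spans a constant multiple of $\f$'' is not something the bundling gives you; it is precisely what the channel-shift argument provides, at the smaller but still sufficient scale $\gf$.

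Two smaller points. You wrote $\g/\f=\bub/\g$, but~\eqref{eq:bub-g-f} gives $\bub/\g=(\g/\f)^{1/2}$, i.e.\ $\g/\f=(\bub/\g)^{2}$; your final slope order $O(\slblb^{-1}\bub/\g)$ happens to coincide with the paper's only because the paper's relevant separation length is $\gf$, not $\f$, and $\g/\gf=\bub/\g$. And the cleanness orientation for a hole through a vertical wall is ``outer upward clean,'' not ``outer rightward clean''; that distinction matters when you later want to apply the reachability condition on the two sides of the wall.
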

\begin{Proof}
\begin{sloppypar}
Let \( \mu=\slope(u,v') \), and note that the conditions imply \( \mu\le 1 \).
We can assume without loss of generality that there are indeed three
points \( s_{1} \), \( s_{2} \), \( s_{3} \).
By Lemma~\ref{lem:one-of-three}, they must then come from three obstacles of
different categories: \( \set{s_{1}, s_{2}, s_{3}} \) = \( \set{a, b, c'} \) where
\( b \) comes from a vertical wall, \( c' \) from a horizontal wall, 
and \( a \) from a trap cover.
There is a number of cases.
\end{sloppypar}

If the index \( i\in\{1,2,3\} \) 
of a trap cover is adjacent to the index of a wall of the same
orientation, then this pair will be called a \df{parallel pair}.
A parallel pair is either horizontal or vertical.
It will be called a \df{trap-wall pair} if
the trap cover comes first, and the \df{wall-trap pair} if the wall comes
first.

We will call an obstacle \( i \) \df{free}, if it is not part of a 
parallel pair.
Consider the three disjoint channels 
 \[
  C(u, v', K-\gf, K +\gf), \txt{ for } K=-2\gf,\, 0,\, 2\gf.
 \]
The three lines (bottom or left edges) of the trap covers or walls
corresponding to \( s_{1},s_{2},s_{3} \) can intersect in at most two places,
so at least one of the above channels does not contain such an
intersection.
Let \( K \) belong to such a channel.
Its middle is the line \( C(u,v',K,K) \).
For \( i \in\{1,2,3\} \), let
 \begin{align*}
 w_{i}=\pair{x_{i}}{y_{i}}
 \end{align*}
be the intersection point of the starting edge of obstacle \( i \) with this line.
These points will guide us to define the rather close points
 \[
w'_{i} = \pair{x'_{i}}{y'_{i}}, 
\quad  w''_{i} = \pair{x''_{i}}{y''_{i}}
 \]
in the channel \( C(u, v', K-\gf, K+\gf) \) through which an actual path will
go.
Not all these points will be defined, but they will
always be defined if \( i \) is free.
Their role in this case is the following:
\( w'_{i} \) and \( w''_{i} \) are points on the two sides of the trap cover or
wall with \( w'_{i}\leadsto w''_{i} \).
We will have
 \begin{align}\label{eq:diag-close}
   |x-x_{i}|+ |y-y_{i}| = O(\slblb^{-1}\g)
 \end{align}
for \( x=x'_{i},x''_{i} \) and \( y=y'_{i},y''_{i} \).

We will make use of the following relation 
for arbitrary \( a=\pair{a_{0}}{a_{1}} \), \( b=\pair{b_{0}}{b_{1}} \):
 \begin{equation}\label{eq:slope-by-d}
   \slope(a, b) = \mu + \frac{d(b)-d(a)}{b_{0}-a_{0}}.
 \end{equation}

For the analysis that follows, note that all points within distance \( \gf/2 \) of
any points \( w_{i} \) are contained in the channel \( C \), and hence
also in the rectangle \( Q \).

The following general remark will also be used several times below.
Suppose that for one of the (say, vertical) trap covers with starting point \( x_{i} \),
we determine that the rectangle 
\( \clint{x_{i}}{x_{i}+5\bub} \times I \) 
intersecting the channel \( C \), where \( |I|<\gf \), contains no trap.
Then the much largest rectangle \( \clint{x_{i}-\f}{x_{i}+\f}\times I \) 
contains no trap either.
Indeed, there is a trap somewhere in the intersection of the channel with the trap cover
\( C \) (this is why the trap cover is needed), and then 
the trap cover property implies that there is no other trap outside the trap cover
within distance \( \f\gg\gf \) of this trap.

  \begin{step+}{step:three.x-free.trapc.vert}
 Consider crossing a free vertical trap cover.
  \end{step+}
 \begin{prooof}
Recall the definition of \( \L_{2} \) in~\eqref{eq:Lt}.
We apply Lemma~\ref{lem:if-correl} to vertical correlated traps
\( J\times I' \), with 
\( J=\clint{x_{i}}{x_{i}+5\bub} \), \( I'=\clint{y_{i}}{y_{i}+\L_{2}} \). 
The lemma is applicable since
\( w_{i}\in C(u, v', K-\gf, K+\gf) \) implies
\( u_{1}< y_{i}-\L_{2}-7\bub < y_{i}+2\L_{2}+7\bub < v_{1} \).
Indeed, formula~\eqref{eq:far-from-ends} implies,
using~\eqref{eq:triangle-cnd.three}: 
 \begin{align*}
 y_{i} > u_{1} + 0.1\mu\f \ge u_{1}+7\bub+\L_{2}
 \end{align*}
for sufficiently large \( \R_{1} \), using \( \L_{2}\ll\f \).
The inequality about \( v_{1} \) is similar, using the other inequality
of~\eqref{eq:far-from-ends}.

Lemma~\ref{lem:if-correl} implies that there is a region 
\( \clint{x_{i}}{x_{i}+5\bub} \times \clint{y}{y+2.2\slblb^{-1}\g} \) containing no traps,
with \( \lint{y}{y+2.2\slblb^{-1}\g} \sbsq \lint{y_{i}}{y_{i} + \L_{2}} \).
Thus, there is a \( y \) in \( \lint{y_{i}}{y_{i}+\L_{2}-2.2\slblb^{-1}\g} \)
such that \( \clint{x_{i}}{x_{i}+5\bub} \times \clint{y}{y+2.2\slblb^{-1}\g} \) contains no
traps.
(In the present proof, all other arguments finding a region with no traps
in trap covers are analogous, so we will not mention
Lemma~\ref{lem:if-correl} explicitly again.)
Since all nearby traps must start in a trap cover, the region
\( \clint{x_{i}-2\bub}{x_{i}+\g} \times \clint{y}{y+2.2\slblb^{-1}\g} \)
contains no trap either. 
Thus there are clean points \( w'_{i} \) in 
\( \pair{x_{i}-\bub}{y+\bub} + \clint{0}{\bub}^{2} \) and
\( w''_{i} \) in \( \pair{x_{i}+\g-2\bub}{y+\slb_{x}\g+\bub}
+\clint{0}{\bub}^{2} \).
Let us estimate \( \slope(w'_{i}, w''_{i}) \).
We have
\begin{equation}\label{eq:vertical-trapcover-forward}  
 \begin{aligned}
  \g-2\bub &\le x''_{i}-x'_{i}\le \g,
\\  \slb_{x}\g &\le y''_{i}-y'_{i}\le \slb_{x}\g+2\bub,
\\ \slb_{x} &\le\slope(w'_{i},w''_{i})\le
\frac{\slb_{x}\g+2\bub}{\g-2\bub}\le\slb_{x}+\frac{4\bub}{\g-4\bub}
\\ &\le \slb_{x}^{*}\le 1/\slb_{y}
 \end{aligned}
\end{equation}
if \( \R_{1} \) is large, where we used Definition~\ref{def:new-slb}
and~\eqref{eq:slb-ub}.
So the pair \( w'_{i},w''_{i} \) satisfies the slope conditions.
The rectangle between them is also trap-free, due to
\( \slb_{x}\g+2\bub\le 2\g \), hence \( w'_{i}\leadsto w''_{i} \).

The point \( w'_{i} \) is before the trap cover defined by \( w_{i} \), while
 \( w''_{i} \) is after.
Their definition certainly implies the relations~\eqref{eq:diag-close}.

 \end{prooof} 

 \begin{step+}{step:three.x-free.trapc.horiz}
 Consider crossing a free horizontal trap cover.
 \end{step+}
 \begin{prooof}
There is an \( x \) in \( \lint{x_{i}-\L_{2}}{x_{i}-7\bub} \) such that
\( \clint{x}{x+2.2\slblb^{-1}\g} \times \clint{y_{i}}{y_{i}+5\bub}\) contains no trap.
Thus there are clean points 
\( w'_{i} \) in \( \pair{x+\bub}{y_{i}-\bub} + \clint{0}{\bub}^{2}\) and
\( w''_{i} \) in \( \pair{x+\slb_{x}^{-1}\g}{y_{i}+\g} + \clint{0}{\bub}^{2}\). 
Now estimates similar to~\eqref{eq:vertical-trapcover-forward} hold again, so 
\( w'_{i}\leadsto w''_{i} \).
The point \( w'_{i} \) is before the trap cover defined by \( w_{i} \), while
 \( w''_{i} \) is after.
Their definition implies the relations~\eqref{eq:diag-close}.
 \end{prooof} 

 \begin{step+}{step:three.x-free.wall.vert}
 Consider crossing a free vertical wall.
 \end{step+}
 \begin{prooof} 
Let us apply Lemma~\ref{lem:the-grate}\eqref{i:grate.hole}, with
\( I'=\clint{y_{i}}{y_{i}+\g} \). 
The lemma is applicable since by
\( w_{i}\in C(u, v', K-\gf, K+\gf) \) we
have \( u_{1} \le y_{i}-\g-7\bub < y_{i}+2\g+7\bub<v_{1} \).
It implies that our wall contains an outer upward clean
hole \( \rint{y'_{i}}{y''_{i}} \sbsq y_{i} + \rint{\bub}{\g-\bub} \) 
passing through it.
(In the present proof, all other arguments finding a hole through walls
are analogous, so we will not mention
Lemma~\ref{lem:the-grate}\eqref{i:grate.hole} explicitly again.)
Let \( w'_{i} = \pair{x_{i}}{y'_{i}} \), and let \( w''_{i}=\pair{x''_{i}}{y''_{i}} \)
be the point on the other side of the wall reachable from \( w'_{i} \).
This definition implies the relations~\eqref{eq:diag-close}.
 \end{prooof} 

 \begin{step+}{step:three.x-free.wall.horiz}
Consider crossing a free horizontal wall.
 \end{step+}
 \begin{prooof} 
Similarly to above, this wall contains an outer rightwards clean hole 
\( \rint{x'_{i}}{x''_{i}} \sbsq x_{i} + \rint{-\g+\bub}{-\bub} \) 
passing through it.
Let \( w'_{i} = \pair{x'_{i}}{y_{i}} \) and let \( w''_{i}=\pair{x''_{i}}{y''_{i}} \) 
be the point on the other side of the wall reachable from \( w'_{i} \).
This definition implies the relations~\eqref{eq:diag-close}.
 \end{prooof} 

For a trap-wall or wall-trap pair,
we first find a big enough hole in the trap cover, and
then locate a hole in the wall that allows to pass
through the big hole of the trap cover.
There are cases according to whether we have a trap-wall pair or a
wall-trap pair, and whether it is vertical or horizontal, but the results
are all similar.
Figure~24 of~\cite{GacsWalks11} illustrates the similar construction in that paper.



  \begin{step+}{step:three.x-bound.trapc.vert}
 Consider crossing a vertical trap-wall pair \( \pair{i}{i+1} \).
  \end{step+}
 \begin{prooof}
Recall \( x_{i}=s_{i} \), \( x_{i+1}=s_{i+1} \).
Let us define \( x=x_{i}-\g \).
Find a \( y^{(1)} \) in \( \lint{y_{i}}{y_{i}+\L_{2}-2.2\slblb^{-1}\g} \) such that the region
\( \clint{x_{i}}{x_{i+1}} \times \clint{y^{(1)}}{y^{(1)}+2.2\slblb^{-1}\g}\cap C \)
contains no trap.

Let \( \tilde w=\pair{x_{i+1}}{\tilde y} \) be defined by 
\( \tilde y= y^{(1)} + \mu(x_{i+1}-x_{i}) + 1.1\slblb^{-1}\g \).
Thus, it is the point on the left edge of the wall if we intersect
it with a slope \( \mu \) line from \( \pair{x_{i}}{y^{(1)}} \) and then move up
\( 1.1\slblb^{-1}\g \).
Similarly to the forward crossing in Part~\ref{step:three.x-free.wall.vert}, the
vertical wall   
starting at \( x_{i+1} \) is passed through by an outer upward clean hole 
\( \rint{y'_{i+1}}{y''_{i+1}} \sbsq \tilde y + \rint{\bub}{\g-\bub} \).
Let \( w'_{i+1} = \pair{x_{i+1}}{y'_{i+1}} \), and let \( w''_{i+1}=\pair{x''_{i+1}}{y''_{i+1}} \) 
be the point on the other side of the wall reachable from \( w'_{i+1} \).
Define the line \( E \) of slope \( \mu \) going through the point \( w'_{i+1} \). 
Let \( w = \pair{x}{y^{(2)}} \) be the intersection of \( E \) with the vertical
line defined by \( x \), then \( y^{(2)}= y'_{i+1} - \mu(x_{i+1}-x) \).
The channel of (vertical) width \( 2.2\g \) around the  line \( E \) intersects the trap
cover  in a trap-free interval (that is smallest rectangle 
containing this intersection is trap-free).

There is a clean point \( w'_{i}\in \pair{x-\bub}{y^{(2)}}+\clint{0}{\bub}^{2} \).
(Point \( w''_{i} \) is not needed.)
We have
 \begin{align}
\label{eq:bound-d}   -\bub &\le d(w'_{i})-d(w'_{i+1}) \le \bub.
 \end{align}
\begin{sloppypar}
The relation~\eqref{eq:diag-close} is  easy to prove.
Let us show \( w'_{i} \leadsto w'_{i+1} \).
Given the trap-freeness of the channel mentioned above, it is easy to see that
the channel \( C^{\eps}(w'_{i},w'_{i+1},-\g,\g) \) is also trap-free.
We can apply Lemma~\ref{lem:approx.no-walls} after checking
its slope condition.
We get using~\eqref{eq:slope-by-d}, \eqref{eq:bound-d} and~\( x'_{i+1}-x\ge\g \):
 \begin{align*}
  \mu-\bub/\g\le \slope(w'_{i},w'_{i+1})\le\mu+\bub/\g.
 \end{align*}
\end{sloppypar}
 \end{prooof} 

 \begin{step+}{step:three.x-bound.trapc.horiz}
 Consider crossing a horizontal trap-wall pair \( \pair{i}{i+1} \).
 \end{step+}
 \begin{prooof} 
Let us define \( y=y_{i}-\g \).
There is an \( x^{(1)} \) in \( \lint{x_{i}}{x_{i}+\L_{2}-2.2\slblb^{-1}\g} \)
such that the region 
\( \clint{x^{(1)}}{x^{(1)}+2.2\slblb^{-1}\g} \times \clint{y_{i}}{y_{i+1}}\cap C \)
contains no trap.
Let \( \tilde w=\pair{\tilde x}{y_{i+1}} \) be defined by 
\( \tilde x= x^{(1)} + \mu^{-1}(y_{i+1}-y_{i}) + 1.1\slblb^{-1}\g \).
The horizontal wall   
starting at \( y_{i+1} \) is passed through by an outer rightward clean hole 
\( \rint{x'_{i+1}}{x''_{i+1}} \sbsq \tilde x + \rint{\bub}{\g-\bub} \).
Let \( w'_{i+1} = \pair{x'_{i+1}}{y_{i+1}} \), 
and \( w''_{i+1}=\pair{x''_{i+1}}{y''_{i+1}} \).
Define the line \( E \) of slope \( \mu \) going through the point \( w'_{i+1} \). 
Let \( w = \pair{x^{(2)}}{y} \) be the intersection of \( E \) with the horizontal
line defined by \( y \), then
\( x^{(2)}= x'_{i+1} - \mu^{-1}(y_{i+1}-y) \).
The channel of horizontal width \( 2.2\mu^{-1}\g \) and therefore vertical width
\( 2.2\g \) around the  line \( E \) intersects the trap cover  in a trap-free
interval. 
There is a clean point \( w'_{i}\in \pair{x^{(2)}}{y-\bub}+\clint{0}{\bub}^{2} \).
The proof of~\eqref{eq:diag-close} and \( w'_{i} \leadsto w'_{i+1} \) is similar
to the one for the vertical trap-wall pair.
 \end{prooof} 

 \begin{step+}{step:three.x-bound.wall.vert}
 Consider crossing a vertical wall-trap pair \( \pair{i-1}{i} \).
 \end{step+}
 \begin{prooof} 
This part is somewhat similar to Part~\ref{step:three.x-bound.trapc.vert}: we
are again starting the construction at the trap cover.

Let us define \( x=x_{i}+\g \).
Find a \( y^{(1)} \) in \( \lint{y_{i}}{y_{i}+\L_{2}-2.2\slblb^{-1}\g} \) such that the region
\( \clint{x_{i}}{x_{i}+5\bub} \times \clint{y^{(1)}}{y^{(1)}+2.2\slblb^{-1}\g}\cap C \)
contains no trap.
Let \( \tilde w =\pair{x_{i-1}}{\tilde y} \) be defined by 
\( y_{i-1}= y^{(1)} - \mu(x_{i}-x_{i-1}) + 1.1\slblb^{-1}\g \).
The vertical wall   
starting at \( x_{i-1} \) is passed through by an outer upward clean hole 
\( \rint{y'_{i-1}}{y''_{i-1}} \sbsq \tilde y + \rint{\bub}{\g-\bub} \).
We define \( w'_{i-1} \), and \( w''_{i-1} \) accordingly.
Define the line \( E \) of slope \( \mu \) going through the point \( w''_{i-1} \). 
Let \( w = \pair{x}{y^{(2)}} \) be the intersection of \( E \) with the vertical
line defined by \( x \), then \( y^{(2)}= y''_{i-1} + \mu(x-x''_{i-1}) \).
The channel of (vertical) width \( 2.2\g \) around the  line \( E \) intersects the trap
cover  in a trap-free interval.
There is a clean point \( w''_{i}\in \pair{x}{y^{(2)}}+\clint{0}{\bub}^{2} \).
The proof of~\eqref{eq:diag-close} and \( w''_{i-1} \leadsto w''_{i} \) is similar
to the corresponding proof for the vertical trap-wall pair.
 \end{prooof} 

 \begin{step+}{step:three.x-bound.wall.horiz}
Consider crossing a horizontal wall-trap pair \( \pair{i-1}{i} \). 
 \end{step+}
 \begin{prooof} 
This part is somewhat similar to Parts~\ref{step:three.x-bound.trapc.horiz}
and~\ref{step:three.x-bound.wall.vert}.
Let us define \( y=y_{i}+\g \).
There is an \( x^{(1)} \) in \( \lint{x_{i}}{x_{i}+\L_{2}-2.2\slblb^{-1}\g} \)
such that the region 
\( \clint{x^{(1)}}{x^{(1)}+2.2\slblb^{-1}\g} \times \clint{y_{i}}{y_{i}+5\bub}\cap C \)
contains no trap.
Let \( \tilde w =\pair{\tilde x}{y_{i-1}} \) be defined by 
\( \tilde x= x^{(1)} - \mu^{-1}(y_{i}-y_{i-1}) + 1.1\mu^{-1}\g \).
The wall starting at \( y_{i-1} \) contains an outer rightward clean hole 
\( \rint{x'_{i-1}}{x''_{i-1}} \sbsq \tilde x + \rint{\bub}{\g-\bub} \) 
passing through it.
We define \( w'_{i-1} \), \( w''_{i-1} \) accordingly.
Define the line \( E \) of slope \( \mu \) going through the 
point \( w''_{i-1} \). 
The point \( x^{(2)}= x''_{i-1} + \mu^{-1}(y-y''_{i-1}) \) is its intersection with the
horizontal line defined by \( y \).
The channel of horizontal width \( 2.2\mu^{-1}\g \) and therefore vertical width
\( 2.2\g \) around the  line \( E \) intersects the trap cover  in a trap-free
interval. 
There is a clean point \( w''_{i}\in \pair{x^{(2)}}{y}+\clint{0}{\bub}^{2} \).
The proof of~\eqref{eq:diag-close} and \( w''_{i-1} \leadsto w''_{i} \) is similar
to the corresponding proof for the vertical trap-wall pair.
 \end{prooof} 

 \begin{step+}{step:three.bound}
 We have \( u\leadsto v \).
 \end{step+}
\begin{pproof}
If there is no parallel pair then \( w'_{i}\leadsto w''_{i} \) is proven
for \( i=1,2,3 \).
Suppose that there is a parallel pair.
If it is a trap-wall pair \( \pair{i}{i+1} \), then
instead of \( w'_{i}\leadsto w''_{i} \) we proved 
\( w'_{i}\leadsto w'_{i+1} \);
if it is a wall-trap pair \( \pair{i-1}{i} \), then
instead of \( w''_{i-1}\leadsto w'_{i} \) we proved 
\( w''_{i-1}\leadsto w''_{i} \).

In both cases, it remains to prove
\( w''_{i}\leadsto w'_{i+1} \) whenever \( \pair{i}{i+1} \) is not a
parallel pair and \( i=1,2 \), further  
\( u\leadsto w'_{1} \), \( w''_{3}\leadsto v \). 

The rectangle \( \Rect(w''_{i},w'_{i+1}) \) is a hop by definition.
We just need to check that it satisfies the slope condition of \( \cM \).
Since \( \pair{i}{i+1} \) is not a parallel pair they intersect, 
and by the choice of the number \( K \),
their intersection is outside the channel \( C(u, v', K-\gf, K+\gf) \).
This implies \( x_{i+1}-x_{i}\ge\gf \) since \( \slope(u,v)\le 1 \).
On the other hand, by~\eqref{eq:diag-close}, the points \( w''_{i},w'_{i+1} \)
differ from \( w_{i},w_{i+1} \) by at most \( O(\slblb^{-1}\g) \).
It is easy to see from here that
\begin{align*}
    |\slope(w''_{i},w'_{i+1}) -\mu|&
    \le \aux_{0}\slblb^{-1}\g/\gf=\aux_{0}\slblb^{-1}\bub/\g,
\\|1/\slope(w''_{i},w'_{i+1}) -1/\mu|&\le \aux_{0}\slblb^{-3}\bub/\g
 \end{align*}
for some absolute constant \( \aux_{0} \) that can be computed.
Choosing \( \slopeincr>\aux_{0}+1 \), the definition of \( \slb_{i}^{*} \)
and the assumption on \( \mu \) imply that the slope condition
 \( \slb_{x}\le\slope(w''_{i},w'_{i})\le 1/\slb_{y} \) is satisfied.

The proof of \( u\leadsto w'_{1} \) and \( w''_{3}\leadsto v \) is similar,
taking into account \( x_{1}-u_{0} \ge 0.1\f  \) 
and \( v_{0}-x_{3} \ge 0.1\f  \).

  \end{pproof} 
\end{Proof}

\begin{Proof}[Proof of Lemma~\protect\ref{lem:approx}(Approximation)]
Recall that the lemma says that if a rectangle \( Q=\Rect^{\eps}(u,v) \) contains no
walls or traps of \( \cM^{*} \), is inner clean in \( \cM^{*} \) and
satisfies the slope condition
\( \slb^{*}_{x}\le\slope(u,v')\le 1/\slb^{*}_{y} \) with \( v' \) related to \( v \) as
above, then \( u\leadsto v \).

The proof started by recalling, in Lemma~\ref{lem:the-grate},
that walls of \( \cM \) in \( Q \) can be grouped to a
horizontal and a vertical sequence, whose members are well separated from each
other and from the sides of \( Q \).
Then it showed, in Lemma~\ref{lem:trap-cover},
that all traps of \( \cM \) are covered by certain
horizontal and vertical stripes called trap covers.
Walls of \( \cM \) and trap covers were called obstacles.

Next it showed, in Lemma~\ref{lem:approx.no-walls}, that in case there are no
traps or walls of \( \cM \) in \( Q \) then there is a path through \( Q \) that stays close
to the diagonal.

Next, a series of obstacles (walls or trap covers) was defined,
along with the points \( s_{1},s_{2},\dots \) that are obtained by the intersection points of
the obstacle with the diagonal, and projected to the \( x \) axis.
It was shown in Lemma~\ref{lem:one-of-three} that these obstacles are well separated
into groups of up to three.
Lemma~\ref{lem:three} showed how to pass each triple of obstacles.
It remains to conclude the proof.

For each pair of numbers \( s_{i}, s_{i+1} \) with 
\( s_{i+1} - s_{i} \ge 0.22\f \), define its midpoint \( (s_{i} + s_{i+1})/2 \).
Let \( t_{1} < t_{2} < \dots < t_{n} \) be the sequence of all these
midpoints.
With \( \mu=\slope(u,v') \), let us define the square
 \[
  S_{i} = \pair{t_{i}}{u_{1} + \mu(t_{i} - u_{0})} + 
  \clint{0}{\bub} \times \clint{-\bub}{0}.
 \]
By Remark~\ref{rem:distr}.\ref{i:middle-third}, each of these squares
contains a clean point \( p_{i} \).

 \begin{step+}{approx.middle}
\begin{sloppypar}
For \( 1 \le i < n \), the 
rectangle \( \Rect(p_{i}, p_{i+1}) \) satisfies the conditions
of Lemma~\ref{lem:three}, and therefore \( p_{i}\leadsto p_{i+1} \).
The same holds also for \( \Rect^{\eps}(u,p_{1}) \)
if the first obstacle is a wall, and for \( \Rect(p_{n},v) \)
if the last obstacle is a wall.
Here \( \eps=\uparrow,\to \) or nothing, depending on the nature of the original
rectangle \( \Rect^{\eps}(u,v) \).
\end{sloppypar}
 \end{step+}
 \begin{pproof} 
By Lemma~\ref{lem:one-of-three}, there are at most
three points of \( \set{s_{1}, s_{2}, \dots} \) between
\( t_{i} \) and \( t_{i+1} \).
Let these be \( s_{j_{i}}, s_{j_{i}+1}, s_{j_{i}+2} \).
Let \( t'_{i} \) be the \( x \) coordinate of \( p_{i} \), then 
\( 0 \le t'_{i} - t_{i} \le \bub \).
The distance of each \( t'_{i} \) from the closest point \( s_{j} \)
is at most \( 0.11\f - \bub \ge 0.1\f \).
It is also easy to check that 
\( p_{i}, p_{i+1} \) satisfy~\eqref{eq:triangle-cnd.three}, so 
Lemma~\ref{lem:three} is indeed applicable.
 \end{pproof} 

 \begin{step+}{approx.u}
We have \( u \leadsto p_{1} \) and \( p_{n}\leadsto v \).
 \end{step+}
 \begin{pproof}
If \( s_{1} \ge 0.1\f \), then the statement is proved by an
application of Lemma~\ref{lem:three}, so suppose \( s_{1} < 0.1\f \).
Then \( s_{1} \) belongs to a trap cover.

If \( s_{2} \) belongs to a wall then \( s_{2}\ge\f/3 \), so
\( s_{2}-s_{1}>0.23\f \).
If \( s_{2} \) also belongs to a trap cover then
the reasoning used in Lemma~\ref{lem:one-of-three} gives \( s_{2}-s_{1} > \f/4 \).
In both cases, a midpoint \( t_{1} \) was chosen between \( s_{1} \) 
and \( s_{2} \) with \( t_{1}-s_{1}>0.1\f \),
and there is only \( s_{1} \) between \( u \) and \( t_{1} \).

If the trap cover belonging to \( s_{1} \) is closer to \( u \) than 
\( \g-6\bub \) then the fact that \( u \) is clean in 
\( \cM^{*} \) implies that it contains a large trap-free region where it is
easy to get through.

If it is at a distance \( \ge \g-6\bub \) from \( u \)
then we will pass through it, going from \( u \) to \( p_{1} \) similarly to
Part~\ref{step:three.x-free.trapc.vert} of the proof of Lemma~\ref{lem:three},
but using case \( j=1 \) of Lemma~\ref{lem:if-correl}, in place of \( j=2 \).
This means using \( \L_{1}=29\slblb^{-1}\bub \) in place of \( \L_{2} \).
As a consequence, we will have
 \( |x-x_{1}|+ |y-y_{1}| = O(\slblb^{-1}\bub) \) in place of~\eqref{eq:diag-close}.
This makes a change of slope by \( O(\slblb^{-1}\bub/\g) \), so an appropriate
choice of the constant \( \slopeincr \) finishes the proof just as in 
part~\ref{step:three.bound} of the proof of Lemma~\ref{lem:three}.

The relation \( p_{n}\leadsto v \) is shown similarly.
 \end{pproof} 
 \end{Proof}

\section{Proof of the main lemma}
\label{sec:main-proof}

Lemma~\ref{lem:main} asserts the existence of a sequence of mazeries \( \cM^{k} \)
such that certain inequalities hold.
The construction of \( \cM^{k} \) is complete by the definition of \( \cM^{1} \) in
Example~\ref{xmp:base} and the scale-up algorithm of
Section~\ref{sec:plan}, after fixing all parameters in Section~\ref{sec:params}.

We will prove, by induction, that every structure \( \cM^{k} \) is a mazery.
Lemma~\ref{lem:base} shows this for \( k=1 \).
Assuming that it is true for all \( i \le k \), we prove it for \( k+1 \).
The dependency properties in 
Condition \ref{cond:distr}.\ref{i:distr.indep} are satisfied according to
Lemma~\ref{lem:distr.indep}.
The combinatorial properties in
Condition \ref{cond:distr}.\ref{i:distr.combinat} have been proved in
Lemmas~\ref{lem:cover} and~\ref{lem:clean}.
The reachability property in Condition \ref{cond:distr}.\ref{i:distr.reachable}
is satisfied via Lemma~\ref{lem:approx}.

\begin{sloppypar}
The trap probability upper bound in
Condition \ref{cond:distr}.\ref{i:distr.trap-ub}
has been proved in Lemma~\ref{lem:trap-scale-up}.
The wall probability upper bound in Condition \ref{cond:distr}.\ref{i:distr.wall-ub}
has been proved in Lemma~\ref{lem:all-wall-ub}.
The cleanness probability lower bounds in
Condition \ref{cond:distr}.\ref{i:distr.ncln}
have been proved in Lemma~\ref{lem:ncln-ub}.
The hole probability lower bound in
Condition \ref{cond:distr}.\ref{i:distr.hole-lb} has been proved in 
Lemmas~\ref{lem:emerg-hole-lb}, \ref{lem:all-compound-hole-lb}
and~\ref{lem:heavy-hole-lb}.
\end{sloppypar}

Inequality~\eqref{eq:main.big-sum} 
of Lemma~\ref{lem:main} is proved in Lemma~\ref{lem:0.6}.

\section{Conclusions}
 
The complex hierarchical technique has been used now to prove three results of
the dependent percolation type: those in~\cite{GacsChat04}, \cite{GacsWalks11}
and the present one.
Each of these proofs seems too complex for the result proved, and to give only a
very bad estimate of the bound on the critical value of
the respective parameter.
In this, they differ from the related results on the undirected percolation 
in~\cite{WinklerCperc99} and~\cite{BalBollobStacCperc99} (on the other hand,
all three directed
percolations exhibit power-law behavior).

For the other two problems, given that their original form relates to 
scheduling, it was natural to ask about possible extensions of the results to more
than two sequences.
I do not see what would be a natural extension of the embedding problem in this direction.

\subsection*{Acknowledgement}

I am grateful to the anonymous referees (especially Referee B) who,
besides catching innumerable bugs, suggested some nice sharpenings
and simplifications.
Thanks are still due also to the referees of the earlier papers~\cite{GacsChat04} 
and~\cite{GacsWalks11}.


\end{document}